%% file: orbifolds.tex
\documentclass[12pt,psamsfonts,leqno,oneside,letterpaper]{amsart}
\usepackage[text={6.5truein,9truein},left=1truein,top=1truein]{geometry}
\usepackage{amssymb,amsmath,amscd,enumerate, microtype}

\usepackage[pdftex]{graphicx}
\usepackage{url}
\usepackage{tikz-cd}

\usepackage[colorlinks,linkcolor=blue,citecolor=blue,pdfstartview=FitH]{hyperref}
\input xy
\xyoption{all}
\SelectTips{cm}{12}

\usepackage{color}
\usepackage{mathrsfs}
\usepackage{overpic}
\input macros
\begin{document}

\title[The underlying manifold of a low-volume orbifold]{The underlying manifold \\ of a low-volume hyperbolic $3$--orbifold}
\author{David Futer}
\address{Department of Mathematics
\\
Temple University \\
Philadelphia, PA 19147}
\email{dfuter@temple.edu}

\author{Peter B. Shalen}
\address{Department of Mathematics, Statistics, and Computer Science
(M/C 249)\\
University of Illinois at Chicago\\
851 S. Morgan St.
Chicago, IL 60607-7045}
\email{petershalen@gmail.com}

\date{\today}

\begin{abstract}
This paper proves strong restrictions on the underlying topological space of a closed, orientable hyperbolic $3$--orbifold $\orbM$ whose volume satisfies a certain upper bound. For instance, if $\vol \orbM < 0.1491$, then the underlying space of $\orbM$ must be either a small Seifert fibered space, or the connected sum of two lens spaces (or of a lens space with $\SS^2 \times \SS^1$), or the gluing of one or two highly restricted Seifert fibered spaces along a single incompressible torus. If $\vol \orbM \leq 0.1571$ and the singular locus of $\orbM$ is a link, then the topology of the underlying space is restricted even further.

Our methods are primarily topological, and involve studying the underlying topology of orbifold books of $I$--bundles. Along the way, we provide an exposition of some foundational material about orbifolds that was previously absent from the literature.
\end{abstract}

\maketitle

\section{Introduction}

It follows from the Mostow--Prasad rigidity theorem
that the volume of a finite-volume hyperbolic $3$--manifold $M$ is a positive real-valued topological invariant of $M$. 
Furthermore, by a theorem of J\o rgensen and Thurston, this invariant is finite-to-one. The set of volumes has a rich structure \cite[Chapters 5--6]{Thurston:notes}: it is a closed, well-ordered set of order type $\omega^\omega$, and every sequence of manifolds $M_i$ whose volumes converge to some limit contains a subsequence whose geometric structures converge in an appropriate sense.

One consequence of the above results is that any upper bound on  $\vol M$ imposes topological restrictions on $M$. Making these restrictions explicit has been the subject of a great deal of research. 
Some of this research has taken the form of identifying the lowest-volume manifolds in various natural classes.

Celebrated results in this vein
include the proof by Gabai, Meyerhoff, and Milley  \cite{GMM:SmallestCusped, milley}  that the unique lowest-volume orientable hyperbolic $3$--manifold is the Weeks manifold $M_{\rm Weeks}$, of volume $\Vweeks \doteq \vol (M_{\rm Weeks}) = 0.9427 \ldots$.
More recent work by Gabai, Haraway, Meyerhoff, N.\ Thurston, and Yarmola   \cite{GHMTY} determines the unique orientable hyperbolic $3$--manifolds of second- and third-lowest volume. Relatedly, Cao and Meyerhoff identified the smallest limit volume as $2 \vtet =2.029 
\ldots$, and also identified the two (cusped) orientable hyperbolic $3$--manifolds realizing this volume \cite{CaoMeyerhoff}. Following the work of Gabai, Haraway, Meyerhoff, N.\ Thurston, and Yarmola   \cite{GHMTY}, the 14 lowest-volume cusped orientable $3$--manifolds are now known.

 The above results may be interpreted as classifying the orientable hyperbolic $3$--manifolds whose volumes satisfy a certain upper bound. 
 On the other hand, Culler, Shalen, and their collaborators have established topological restrictions, well short of a complete classification, of closed (say), orientable hyperbolic $3$--manifolds whose volumes satisfy a considerably larger upper bound. For a survey of some of the latter results, see \cite{shalensurvey}
 and the forthcoming updated survey \cite{new-survey}.

The results in \cite{shalensurvey, new-survey},
 illustrate the philosophy that the volume of a hyperbolic $3$--manifold provides a good measure of its topological complexity  (in an appropriate sense). This philosophy is 
 discussed and developed in the work of Gabai, Meyerhoff, and Milley \cite{GMM:Mom-Tech}; see \cite[Conjecture 0.1]{GMM:Mom-Tech} in particular. 
In a similar vein, there are many results that provide two-sided estimates for the volumes of hyperbolic $3$--manifolds in terms of their topological or combinatorial complexity. See e.g.\ Brock~\cite{Brock:Fibered} for results about fibered $3$--manifolds and the survey by Futer, Kalfagianni, and Purcell~\cite{FKP:Survey} for results about knot and link complements.

\Custom{Low-volume hyperbolic $3$--orbifolds}
This paper is devoted to the study of finite-volume orientable hyperbolic  \emph{$3$--orbifolds}.
While the formal definition of an orbifold appears in Section~\ref{orbisection}, the notion is simpler to explain in the hyperbolic setting.

Let $\Gamma$ be 
a discrete cocompact subgroup  of the group $\isomplus(\HH^3)$ of orientation-preserving isometries of the hyperbolic space $\HH^3$, and let $M$ denote the topological quotient $\HH^3/\Gamma$. In the case where 
$\Gamma$ is torsion-free, the space $M$
acquires the structure of a hyperbolic manifold, and it follows from Mostow rigidity  that
the isomorphism type of $\Gamma$, which is the same as that of $\pi_1(M)$, determines $\Gamma$ up to conjugacy.
In the general case, where $\Gamma$ may have torsion, it turns out that $\HH^3/\Gamma$, with its quotient topology,  is still an orientable topological $3$--manifold, but its topological type does not contain enough information  to recover the group $\Gamma$. Indeed, the group $\Gamma$ determines the pair $(M,\Sigma)$, where the  
 \emph{singular locus} $\Sigma\subset M$ is defined as the quotient
 of the set of points of $\HH^3$ that have non-trivial  $\Gamma$--stabilizers. Each component of  $\Sigma$ is either a tame simple closed curve or a tame trivalent graph  in  $M$. Furthermore, if $\sigma$ is either a simple closed curve component of $\Sigma$ or an (open) edge of a graph component of  $\Sigma$, one can define the  \emph{local group} $G_\sigma$, a finite cyclic group which is isomorphic to the $\Gamma$--stabilizer of an arbitrary point of $q^{-1}(\sigma)$. Likewise, if $v$ is a node of  $\Sigma$, the  \emph{local group} $G_v$ is a finite triangle group which is isomorphic to the $\Gamma$--stabilizer of an arbitrary point of $q^{-1}(v)$. A node $v \in \Sigma$ is called \emph{dihedral} if its local group $G_v$ is dihedral.

The singular locus and local groups of 
$\HH^3/\Gamma$ are encoded in a structure called a three-dimensional orbifold structure; see \S\ref{Def:OrbifoldChart}
and \S\ref{Sec:LocalGroups}.
An $n$--dimensional orbifold structure on a space $X$ is a generalization of a differentiable structure, in which each chart map, rather than being a homeomorphism whose domain is an open set in $\RR^n$, is the composition of a quotient map $q\from U\to U/G$ with a homeomorphism, where $G$ is a finite group of self-diffeomorphisms of an open set $U\subset\RR^n$. There is a natural definition of compatibility of such chart maps. 
The topological space $X$ is called the \emph{underlying space} of the orbifold. 

Many notions about manifolds have natural extensions to orbifolds. These include the notions of diffeomorphisms, suborbifolds, fundamental groups, and covering spaces. 
In particular, the fundamental group $\Gamma$  of a hyperbolic $3$--orbifold $\orbM = \HH^3 / \Gamma$ is determined up to isomorphism by the underlying space $M$, the singular locus $\Sigma$, and the assignment of local groups on $\Sigma$.
Furthermore, Mostow rigidity implies that the isomorphism type of $\Gamma$ determines $\Gamma$ up to conjugacy in $\isomplus(\HH^3)$, thereby determining $\orbM$ up to isometry. (Here and in the sequel, we typically use math-italic letters for manifold objects and calligraphic versions of the same letters for the corresponding orbifold objects; see \S\ref{Sec:naming} for details.
For more general orientable $3$--orbifolds, it is still the case that the data of $M$, $\Sigma$, and the local groups determines the orbifold up to diffeomorphism; we will neither prove nor use this statement.)

A hyperbolic $3$--orbifold $\orbM =  \HH^3/\Gamma$ has a well-defined  volume, 
denoted $\vol \orbM$,
which is equal to the volume of any fundamental domain for $\Gamma$ in $\HH^3$.

Generalizing the work of J\o rgensen and Thurston that was mentioned above, Dunbar and Meyerhoff showed in \cite{Dunbar-Meyerhoff} that the  set of finite volumes of hyperbolic $3$--orbifolds is a closed, well ordered set of order type $\omega^\omega$. Just as with manifolds, there are only finitely many hyperbolic $3$--orbifolds with a given finite volume.
These structural results lead to the research problem of identifying the lowest-volume orbifolds in various natural families.

Gehring, Marshall, and Martin have identified the unique orientable hyperbolic $3$--orbifold of lowest volume \cite{GehringMartin:MinOrbifold, MarshallMartin:MinOrbifold}. 
This orbifold has underlying space $\SS^3$ and volume approximately $0.03905$. 
Meyerhoff has identified the unique cusped orientable $3$--orbifold of lowest volume~\cite{Meyerhoff:MinCuspedOrbifold}. This orbifold has underlying space $\RR^3$
and volume  $\vtet/12 \approx 0.0846$. Adams \cite{Adams:LimitVolume} identified the unique orientable hyperbolic $3$--orbifold realizing the smallest limit volume. This orbifold has 
underlying space $\RR^3$ and volume $\voct/12 \approx 0.3053$.
Here, $\vtet \approx 1.0149$ denotes the volume of a regular ideal tetrahedron and $\voct \approx 3.6638$ denotes the volume of a regular ideal octahedron.

The present paper develops the philosophy, similar to that of \cite[Conjecture 0.1]{GMM:Mom-Tech}, that the volume of a hyperbolic $3$--orbifold is a good proxy for its topological complexity in some appropriate sense. In the case of orbifolds, the right notion of ``topological complexity'' should include the topology of the underlying space. Given a $3$--orbifold of sufficiently small volume, our results give very strong restrictions on the topology of the underlying space. In some cases, they also give restrictions on the local groups at nodes of the singular set. 

\begin{theorem}\label{Thm:FirstMain}
Let $\orbM$ be a closed hyperbolic $3$--orbifold such that $\vol \orbM < 0.1491$. Then the underlying space $M$ of $\orbM$ satisfies one of the following: 
\begin{enumerate}[\:(i)]
\item $M$ is Seifert fibered over $\SS^2$, with at most three singular fibers. 
\item $M$ is the connected sum of two lens spaces, or the connected sum of $\SS^2 \times \SS^1$ with a non-trivial lens space.
\item Cutting $M$ along 
an essential torus produces one or
two compact $3$--manifolds with boundary, 
each of which is a Seifert fibered space
 over a disk
with exactly two singular fibers, or over 
an annulus
with at most one singular fiber.
\end{enumerate}
Furthermore, if Alternative (i) does not hold, then 
every node of the singular set of $\orbM$ is dihedral.
\end{theorem}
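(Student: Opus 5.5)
The plan is to pass from the volume bound to a topological decomposition of $\orbM$ into an orbifold ``book of $I$--bundles'', and then read off the underlying space $M$ from the combinatorics of that book. The abstract says this is the paper's method.

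\emph{Step 1: the geometric input.} First I will show that $\vol\orbM < 0.1491$ forces the existence of a ``short'' singular configuration. Concretely, I expect to use a log-type or Margulis-type argument in the style of Culler--Shalen, adapted to orbifolds. Either some edge of the singular locus has large local order, or there is a pair of torsion elements with small displacement. The conclusion should be a compact suborbifold $\orbW \subset \orbM$ whose complement has small, controlled topology. This means $\orbW$ is a trimonic or book-of-$I$--bundles suborbifold carrying the ``most'' of $\pi_1$, with $\orbM - \orbW$ a union of orbifold handlebodies or solid-torus-like pieces. The threshold $0.1491$ should enter exactly here, as the volume below which the relevant displacement and tube-radius estimates guarantee an embedded neighborhood of the right kind.

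\emph{Step 2: underlying spaces of orbifold books.} Next I will classify the underlying manifolds of books of $I$--bundles whose Euler characteristic (or orbifold complexity) is bounded by the quantity coming from Step 1. Each page is an $I$--bundle over a $2$--orbifold of small negative Euler characteristic. Its underlying space is therefore a solid torus or a product over a disk or annulus. Each binding is an orbifold solid torus, whose underlying space is again a solid torus. Gluing these along annuli yields a Seifert-fibered underlying space, possibly with one compressible or essential torus where two bindings meet. I will then fill in the complementary pieces. By the irreducibility analysis of underlying spaces, the filling either produces a Seifert fibered space over $\SS^2$ with at most three exceptional fibers, or it creates a reducing sphere. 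In the reducing case the two sides must be lens spaces or $\SS^2\times\SS^1$, giving (ii). If instead it leaves an essential torus, the two sides are Seifert fibered over a disk with two exceptional fibers or over an annulus with at most one, giving (iii).

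\emph{Step 3: the dihedral conclusion.} The last step is to show that in cases (ii) and (iii) every node is dihedral. A non-dihedral node has local group $A_4$, $S_4$ or $A_5$. Such a node contributes a definite amount of orbifold Euler characteristic, or alternatively forces a larger local volume via a ball-packing bound around the node. That contribution would make the book a single-binding Seifert piece, which places us in alternative (i).

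The main obstacle is Step 1: making the volume bound produce a book of $I$--bundles with sharp enough complexity control. I would first try the orbifold analogue of the Agol--Storm--Thurston and Culler--Shalen guts argument. This means bounding $\chi$ of the guts of a suitably chosen incompressible suborbifold by a multiple of the volume, and then using the exact numerical constant to limit the number of pages.
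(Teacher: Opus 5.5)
There are genuine gaps at each step. The most serious is that Step 1 never produces what the rest of the argument needs: an essential $2$--suborbifold to cut along. A Margulis- or log-type ``short singular configuration'' argument gives no such surface, and nothing in your plan rules out the underlying space $M$ being hyperbolic.

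The paper obtains the cutting surface from the underlying manifold, in three moves:
\begin{itemize}
\item A degree-$d$ manifold cover $\widetilde{\orbM}\to\orbM$ has underlying map $\widetilde{\orbM}\to M$ of degree $d$, which gives $\vol\orbM\ge\vtet\|M\|$.
\item Hence $\vol\orbM<\Vweeks$, together with Gabai--Meyerhoff--Milley, rules out hyperbolic pieces of $M$.
\item Geometrization then gives either alternative (i) outright, or an essential sphere or torus in $M$. This surface is chosen transverse to $\Sigma$, minimizing the number of singular points (and then the total order, for spheres), so that the corresponding suborbifold is essential.
\end{itemize}

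You also misplace the constant $0.1491$ and omit the turnover issue entirely. The orbifold Agol--Storm--Thurston inequality $\vol\orbM\ge\voct\cdot(-\chi(\mathrm{Guts}))$, which your last paragraph correctly names, is useful only if $|\chi(\mathrm{Guts})|$ is bounded below. That lower bound is exactly where the absence of essential turnovers enters. A spherical boundary component of the guts with at most three cone points is $\pi_1$--injective, hence either toric (impossible) or an essential turnover. Otherwise one gets $\chi(\mathrm{Guts})\le -1/12$, so $\vol\orbM<\voct/12$ forces every piece to be a book of $I$--bundles. Essential turnovers are handled separately: Adams--Schoenfeld makes them totally geodesic, and Miyamoto and Atkinson--Rafalski then bound the volume from below. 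The number $0.1491$ is precisely the Atkinson--Rafalski bound in the $(2,3,7)$ case, not a tube-radius or displacement threshold.

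Step 2 rests on false claims. The volume bound gives no control over the pages beyond forcing the guts to vanish. The underlying space of a page is a classical $I$--bundle over an arbitrary compact surface, the underlying space of a binding may be a ball, and the underlying space of a book of $I$--bundles is in general not Seifert fibered. So no classification of ``small'' books can yield (ii) or (iii).

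What actually pins down the topology is the minimality of the cutting surface.
\begin{itemize}
\item For a minimal sphere $S$: the two boundary curves of a nonsingular annulus in the split manifold must have the same spanning size, since otherwise you could build an essential sphere of smaller size. This produces a ``big'' page. The complementary pieces meeting it are harmless balls or solid-torus bindings, and the component of $M$ cut along $S$ turns out to be $\SS^2\times I$ or a once- or twice-punctured lens space.
\item For a minimal torus: an analogous ``dominant'' page argument, using the complexity minimality, shows that the enlarged page and all complementary pieces are solid tori. This gives the Seifert structures in (iii).
\end{itemize}

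Step 3 does not work as stated: an interior node contributes nothing that would force a single Seifert piece. The dihedral conclusion is purely local. In an orientable orbifold $I$--bundle every node lies over a bi-mirror point of the base and is dihedral, and solid toric orbifolds have only dihedral nodes. Since the cutting surface is transverse to $\Sigma$, every node of $\orbM$ lies in some book. This is also why the conclusion is asserted only when (i) fails: in case (i) nothing is cut.
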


This result will be proved as Theorem~\ref{Thm:VolWithTurnover}.

\begin{figure}
\begin{overpic}{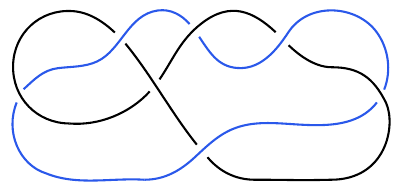}
\put(98,32){$2$}
\put(98,12){$3$}
\end{overpic}
\caption{The orbifold $\calO_L$, conjectured to have the smallest volume among hyperbolic link orbifolds. The two components of the singular locus have local groups $\ZZ/2\ZZ$ and $\ZZ/3\ZZ$.}
\label{Fig:OL}
\end{figure}

Hyperbolic link orbifolds form a natural class of $3$--orbifolds. A \emph{link orbifold} is a closed, orientable $3$--orbifold $\orbM$ whose singular locus $\Sigma$ has no nodes, meaning $\Sigma$ is a link in the underlying manifold $M$ of $\orbM$.
Atkinson and Futer have studied the problem of identifying the lowest volume hyperbolic link orbifold \cite{af}. They conjectured that the unique lowest volume hyperbolic link orbifold is an orbifold they call $\calO_L$, 
depicted in Figure~\ref{Fig:OL}, with underlying space $\SS^3$ and singular locus a two-component link \cite[Conjecture 1.4]{af}. Since $\calO_L$ is covered with degree 6 by the Weeks manifold $M_{\rm Weeks}$, its volume is $\vol(\calO_L) = \Vweeks / 6 \approx 0.1571$.
Atkinson and Futer proved that  $\calO_L$ is indeed of lowest volume among link orbifolds whose underlying space is required to be a $\ZZ/6\ZZ$--homology sphere.

In the present paper, we get the following result for link orbifolds. It has a milder volume hypothesis than Theorem \ref{Thm:FirstMain} and a stronger conclusion in case (ii).

\begin{theorem}\label{Thm:LinkMain}
Let $\orbM$ be a hyperbolic link orbifold such that $\vol \orbM \leq \vol(\calO_L) \approx 0.1571$. Then the underlying space $M$ of $\orbM$ satisfies one of the following: 
\begin{enumerate}[\:(i)]
\item $M$ is Seifert fibered over $\SS^2$, with at most three singular fibers. 
\item $M = \RR\PP^3 \# \RR\PP^3$ or $\RR\PP^3 \# L(p,q)$, where $p > 1$ is relatively prime to $6$.
\item Cutting $M$ along 
an essential torus produces one or
two compact $3$--manifolds with boundary, 
each of which is a Seifert fibered space
 over a disk with exactly two singular fibers, or over 
an annulus with at most one singular fiber.
\end{enumerate}
\end{theorem}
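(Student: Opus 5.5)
The plan is to derive Theorem~\ref{Thm:LinkMain} from the same machinery that proves Theorem~\ref{Thm:FirstMain}, specialized to the case where $\Sigma$ has no nodes. The point of that specialization is that there are then no turnovers or triangle-group vertices, which is what lets the volume hypothesis be weakened from $0.1491$ to $\vol(\calO_L)\approx 0.1571$.

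\textbf{Step 1 (a book of $I$--bundles).} Since $\orbM$ is a link orbifold, every local group is cyclic, of order $n\ge 2$ along each component of $\Sigma$. I will split into cases according to the orders present.
\begin{itemize}
\item If some order is large, or the orbifold has a suitable manifold cover, volume estimates in the style of Atkinson--Futer together with the $\ZZ/6\ZZ$--homology-sphere result should force $\vol \orbM > \vol(\calO_L)$ unless $M$ is very special.
\item In the remaining cases, I use the low-volume hypothesis and the orbifold analogue of the log~3 / Culler--Shalen argument to produce an orbifold book of $I$--bundles $\mathcal{W}\subset\orbM$ whose complement consists of (orbifold) balls, solid tori, or other controlled pieces. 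Concretely, a small volume should bound the $\chi$ of the orbifold kishkernel/guts, so that the guts are empty.
\end{itemize}

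\textbf{Step 2 (topology of the underlying space).} Next I pass from $\mathcal{W}$ to $M$. Each page of $\mathcal{W}$ is an $I$--bundle over a $2$--orbifold with cone points. The underlying space of such a page is an $I$--bundle over a surface, glued to bindings that are solid tori along annuli. Gluing these pieces together gives a graph manifold with very few pieces, because the Euler characteristic of the base orbifold is bounded by the volume. A case analysis on the number of pages and bindings then yields three possibilities:
\begin{itemize}
\item a small Seifert fibered space (alternative (i));
\item a manifold containing an essential sphere, giving a connected sum;
\item a manifold containing a single essential torus, cutting it into Seifert pieces over a disk with two singular fibers or over an annulus with at most one (alternative (iii)).
\end{itemize}

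\textbf{Step 3 (the reducible case).} In the reducible case, the general argument gives $M=L_1\# L_2$ or $(\SS^2\times\SS^1)\# L$. I must refine this to $\RR\PP^3\#\RR\PP^3$ or $\RR\PP^3\#L(p,q)$ with $\gcd(p,6)=1$, and also exclude $\SS^2\times\SS^1$. The essential sphere meets $\Sigma$, and since $\orbM$ is irreducible as an orbifold, it must be a $2$--suborbifold $S^2(n,n)$ or $S^2(a,b,c)$. With no nodes, it can only be $S^2(n,n)$, or a sphere meeting $\Sigma$ in more points. The page structure forces one summand to be the quotient structure coming from a $\ZZ/2$ piece, so that summand is $\RR\PP^3$.

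For the other summand, the order-$3$ (and order-$2$) cases are handled by the volume comparison with $\calO_L$. If $p$ were divisible by $2$ or $3$, one could pass to a cover that either lifts to an orbifold of volume too small, or contradicts the $\ZZ/6\ZZ$--homology sphere theorem of \cite{af}. This step, the arithmetic restriction on $p$ and the equality case $\vol\orbM=\vol(\calO_L)$, is where I expect the main obstacle. I would first try to construct an explicit cyclic cover of degree $2$ or $3$ of $\orbM$ whose underlying space is a $\ZZ/6\ZZ$--homology sphere, or has smaller-volume quotient, and then apply the Atkinson--Futer bound to get a contradiction. Finally, the equality case is handled by identifying $\calO_L$ itself: its underlying space is $\SS^3$, which falls into alternative (i).
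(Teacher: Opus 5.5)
There are genuine gaps.

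\textbf{The turnover premise and Steps~1--2.} The premise of your Step~1 is false: a link orbifold can contain turnovers. A sphere meeting $\Sing$ in three points must be non-separating in $M$, but such spheres can exist. So their absence is not automatic, and it is not what distinguishes the link case.

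The paper excludes turnovers by volume. An essential turnover in a link orbifold forces $\vol\orbM\ge 0.1658>\Vweeks/6$ (Lemma~\ref{Turnover volume}(3)), and every turnover in a link orbifold is essential (Lemma~\ref{essential or not}). That is why the threshold rises above $0.1491$. The no-turnover conclusion is then exactly what Proposition~\ref{what if no book} needs to get $\chi\le -1/6$ for the guts.

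Your Steps~1--2 also lack a working mechanism. Guts only make sense after cutting along an essential $2$--suborbifold. That suborbifold must come from an essential sphere or torus in $M$, supplied by $\vol\orbM\ge\vtet\|M\|$ plus geometrization, and chosen to minimize its intersection with $\Sing$ so that the resulting suborbifold is essential. The restriction on $M$ then comes from that minimality (spanning size, big and dominant pages, harmless components), not from a volume bound on Euler characteristics of pages; no such bound exists.

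Likewise, an essential sphere is never an $S^2(n,n)$. Spheres of size at most $2$ have positive Euler characteristic, so they bound discal suborbifolds and hence balls in $M$. Quoting Theorem~\ref{Thm:NoTurnoverMain} as a black box would be legitimate, but only after excluding turnovers as above (note $\Vweeks/6<\voct/6$).

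\textbf{Step~3.} Step~3 is the real content beyond Theorem~\ref{Thm:NoTurnoverMain}, and it has no valid argument; nothing in the page structure forces an $\RR\PP^3$ summand.

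The paper writes $M=Y_1\#Y_2$ with $\pi_1(Y_i)\cong\ZZ/p_i\ZZ$ and $p_i\ne 1$, and first shows each $p_i$ is $2$ or prime to $6$. Suppose $d\in\{2,3\}$ divides $p_1\ne 2$; this includes $p_1=0$, an $\SS^2\times\SS^1$ summand. Then the $d$--fold cover of $M$ induced from $Y_1$ has at least three prime summands. Pulling back the orbifold structure gives a link orbifold $\torbM$ with $\vol\torbM\le 3\vol\orbM<\voct/6$. Theorem~\ref{Thm:NoTurnoverMain} applied to $\torbM$ allows at most two prime summands, a contradiction.

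The delicate point is that $\torbM$ must have no turnovers. Lemma~\ref{Turnover volume} cannot provide this, since $\vol\torbM$ may exceed $0.1658$. Proposition~\ref{uses Agol} proves it using Kojima's hyperbolicity of $M-\Sing$ and Agol's theorem on $\pi_1$--injective pairs of pants.

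Finally, if neither $p_i$ is $2$, then $M$ is a $\ZZ/6\ZZ$--homology sphere. The Atkinson--Futer theorem then forces $\orbM=\calO_L$, whose underlying space is $\SS^3$, a contradiction. That theorem requires $\vol\le\Vweeks/6$, so it applies to $\orbM$ itself, not to a cover.

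Your idea of a cover that ``lifts to an orbifold of volume too small'' runs backwards. Covering multiplies volume, and the argument works precisely because the multiplied volume stays below the link-orbifold threshold $\voct/6$ while the number of prime summands grows.
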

\EndCustom

This result represents some progress toward the conjecture that $\calO_L$ is the unique smallest-volume hyperbolic link orbifold. It will be proved as Corollary \ref{second main}.

One can get strong restrictions on the topological type of the underlying space of a closed, orientable orbifold $\orbM$ under considerably milder volume hypotheses than those of Theorems \ref{Thm:FirstMain} and \ref{Thm:LinkMain} if one rules out  \emph{essential turnovers}. A  \emph{turnover} in $\orbM$ is a two-dimensional suborbifold $\orbS$ of $\orbM$ having a $2$--sphere as underlying space, and having a singular locus consisting of three points. One says that $\orbS$ is  \emph{essential} if the inclusion of $\orbS$ in $\orbM$ induces an injection of orbifold fundamental groups.

\Theorem\label{Thm:IntroNoTurnoverMain}
Suppose that $\orbM$ is a closed, orientable hyperbolic $3$--orbifold,
and that
either 
\begin{enumerate}[\:\:(a)]
\item $\vol \orbM < \voct / 12 \approx 0.3053$, or 
\item  $\orbM$ is a link orbifold and $\vol \orbM < \voct / 6 \approx 0.6106$. 
\end{enumerate}
Then  either one of the alternatives (i)--(iii) of Theorem \ref{Thm:FirstMain} holds, or $\orbM$ contains an essential turnover. 
Furthermore, if Alternative (i) does not hold, then 
every node of the singular set of $\orbM$ is dihedral.
\EndTheorem

This will be proved in an equivalent form as Theorem \ref{Thm:NoTurnoverMain}.

Adams's result \cite{Adams:LimitVolume} that the first limit volume of orientable hyperbolic $3$--orbifolds is $\voct/12 $ implies as a consequence that there are only finitely many orbifolds satisfying the hypotheses of Theorems \ref{Thm:FirstMain} or \ref{Thm:LinkMain}. (There are no known bounds on these finite numbers, although one might conceivably prove such a bound by adapting the methods of Kobayashi and Rieck \cite{KobayashiRieck}.)
By contrast, there are infinitely many  orbifolds satisfying the hypotheses of Theorem \ref{Thm:IntroNoTurnoverMain}. In fact, Adams's result combined with orbifold Dehn filling theory \cite[Section 2]{Adams:LimitVolume} implies that all but finitely many hyperbolic $3$--orbifolds with volume less than $\voct/12 $ must have underlying space $\SS^3$. Thus  Alternative (a) of Theorem~\ref{Thm:IntroNoTurnoverMain} provides restrictions on the topological types of this finite set, while Alternative (b) provides topological restrictions on a genuinely infinite set of orbifolds that were not previously known to have any common topological structure.

\Custom{Outline of the proofs}\label{Sec:ProofOutline}

The bulk of the paper is devoted to the proof of Theorem \ref{Thm:IntroNoTurnoverMain}. Theorems \ref{Thm:FirstMain} and \ref{Thm:LinkMain} are deduced from Theorem \ref{Thm:IntroNoTurnoverMain}. A key step in this deduction, which is encapsulated  in 
Lemma~\ref{Turnover volume},
is to apply existing results from the literature to show that if the orientable hyperbolic $3$--orbifold $\orbM$ contains an essential turnover, we have $\vol \orbM \geq 0.1491$; and that if in addition  $\orbM$ is a link orbifold, we have
$\vol \orbM \geq 0.1658$. For the rest of this outline, we assume that $\orbM$ does not contain an essential turnover.

The proof of Theorem \ref{Thm:IntroNoTurnoverMain} makes strong use of the geometrization theorem  \cite{BBBMP,Morgan-Tian-geom}, which asserts that
every closed orientable $3$--manifold $M$ has a \emph{geometric decomposition} along essential spheres and tori into pieces that are either Seifert fibered or hyperbolic. 
See \S\ref{geometry and norms} for a more detailed summary. The 
proof of 
Theorem~\ref{Thm:IntroNoTurnoverMain} 
begins with the observation that under a far milder volume hypothesis, none of these pieces can be hyperbolic.

\Proposition\label{Prop:IntroGromov}
Let $\orbM$ be a closed, orientable hyperbolic $3$--orbifold whose volume satisfies $\vol
\orbM < \Vweeks \approx 0.9427$. Then none of the pieces in the geometric decomposition of $M \doteq \mani \orbM$ is hyperbolic. Consequently, $M$ is homeomorphic to
 a connected sum of graph manifolds.
\EndProposition

The proof of Proposition~\ref{Prop:IntroGromov} uses properties of the Gromov norm $|| \cdot ||$ that are reviewed in \S\ref{Gromov norm}. By Selberg's lemma, $\orbM$ has a manifold cover  $\torbM$ of 
some finite
degree $d$, so 
\[
\vol \torbM  < d \cdot \Vweeks \quad \text{which implies} \quad || \torbM || < d \cdot || M_\textup{Weeks} ||.
\]
Now, the covering map $f \from \torbM \to \orbM$ defines a map of underlying spaces  $F \from \torbM \to M$, also of degree $d$. Thus
\[
||M || \leq \tfrac{1}{d} || \torbM || <  || M_\textup{Weeks} ||,
\]
which implies that $M$ cannot have any hyperbolic pieces. See Corollary~\ref{Cor:SimpleVolBound} for details.

Following Proposition~\ref{Prop:IntroGromov}, if $\vol \orbM < \Vweeks$, then the underlying space $M$ is either a small Seifert fibered space (which is alternative (i) of Theorems~\ref{Thm:FirstMain} and \ref{Thm:LinkMain}), or else $M$ contains an essential 
surface which is a
sphere or torus. 
By making a suitable choice of such an essential surface $S \subset M$, we can arrange
 that $S$ is the underlying surface of  
a two-dimensional  suborbifold $\orbS \subset \orbM$, which is itself essential in a natural sense. 
Removing a tubular neighborhood of $\orbS$ produces a compact $3$--orbifold $\orbM \split \orbS$, with boundary two copies of $\orbS$.
We examine the topological structure of $\orbM \split \orbS$ by transporting 
 ideas from the theory of characteristic submanifolds of compact $3$--manifolds, which we briefly recall here.

Suppose that $P$ is a closed hyperbolic $3$--manifold, $S \subset P$ is an essential surface, and $Q = P \split S$ is the result of cutting $P$ along $S$.
By the work of Jaco, Shalen, and Johannson \cite{JS,Jo}, every such $Q$ contains a canonical \emph{characteristic submanifold} $V$, whose components are either $I$--bundles or solid tori. (See Proposition \ref{char-submflds, GS edition} for a more precise statement.) The components of $Q - V$ are either thickened annuli 
(across 
which the components of $V$ can be glued to form books of $I$--bundles) or acylindrical pieces with negative Euler characteristic, called $\Guts(Q)$. Each component of the guts admits a hyperbolic metric with totally geodesic boundary. 
By the work of Agol, Storm, and Thurston \cite{AST}, the volume of $P$ is bounded below by the volume of these metrics on $\Guts(Q)$, which in turn is bounded below 
in terms of
 its Euler characteristic:
\[
\vol P \geq - \voct \cdot \chi(\Guts(Q)),
\]
where $\voct$ 
is defined as above.
In particular, if some component of $Q$ is not homeomorphic to a book of $I$--bundles, then $\chi(\Guts(Q)) \leq -1$, hence
\[
\vol P \geq \voct \approx 3.6638.
\]

All of the above ideas extend (with some effort) to  $3$--orbifolds. 
In our setting, 
a characteristic suborbifold $\orbC$ of a component $\orbN$ of $\orbM \split \orbS$ 
can be defined by considering equivariant objects in a manifold cover of $\orbN$. The components of $\orbN - \orbC$ are either thickened annular $2$--orbifolds (along which the components of $\orbC$ can be glued to form orbifold books of $I$--bundles; see \S\ref{wuzza book}) or else acylindrical pieces with negative Euler characteristic that can be called $\Guts(\orbN)$. Now, the work of Agol, Storm, and Thurston, applied in appropriate covers  of $\orbM$, 
gives
\[
\vol \orbM \geq - \voct \cdot \chi(\Guts(\orbM \split \orbS)). 
\]
See Propositions~\ref{orbifold 7.2} and \ref{Prop:GromovVolEuler} for details.

In the context of the proof of Theorem 
\ref{Thm:IntroNoTurnoverMain}, we may apply these  notions concerning orbifold guts and orbifold volume  to a suborbifold $\orbS \subset \orbM$ arising from an essential sphere or torus 
$S \subset M$.
Using our running hypothesis that $\orbS$ is not a turnover, it can be deduced
that $\chi(\orbS) \leq -1/6$. This in turn implies that, if some component  $\orbN$ of $\orbM \split \orbS$ is not an orbifold book of $I$--bundles, then $\chi(\Guts(\orbN)) \leq -1/12$, and hence
\[
\vol \orbM \geq  \voct / 12 \approx 0.3053.
\]
A variant of this argument shows that if $\orbM$ is a link orbifold, and some component  $\orbN$ of $\orbM \split \orbS$ is not an orbifold book of $I$--bundles, then $\vol \orbM \geq  \voct / 6$. Since by hypothesis we have either (a) $\vol \orbM < \voct / 12$ or (b)  $\orbM$ is a link orbifold and $\vol
\orbM < \voct / 6 $, it follows that
each component of $\orbM \split \orbS$ is an orbifold book of $I$--bundles.

The remainder of the proof of Theorem 
\ref{Thm:IntroNoTurnoverMain} consists of using the orbifold book of $I$--bundles structure on the components of $\orbM \split \orbS$ to control the topology of its underlying space, namely $M\split S$.
For this reason, 
a substantial fraction of the work in this paper is devoted to orbifold $I$--bundles and books of $I$--bundles. In Section~\ref{Sec:I-bundle}, we perform a detailed study of both the underlying spaces and singular loci of orbifold $I$--bundles. We prove that the only nodes in the singular locus of an orbifold $I$--bundle must be dihedral
 (Proposition~\ref{Prop:IBundleSingLocus})
 and that the underlying space  is homeomorphic to a (manifold) $I$--bundle over a surface (Proposition~\ref{Prop:IBundleTotalSpace}). In Section~\ref{Sec:OrbifoldBooks}, we extend this analysis to orbifold books of $I$--bundles and prove (Proposition~\ref{Prop:ExpandedPage}) that under appropriate hypotheses, 
a union of underlying sets of bindings and pages in an orbifold book of $I$--bundles may be given the structure of a manifold $I$--bundle in a useful way.

To complete the proof of Theorem 
\ref{Thm:IntroNoTurnoverMain} in the presence of an essential sphere or torus $S$ in the underlying manifold $M$ of $\orbM$, we combine the results of Sections~\ref{Sec:I-bundle} and \ref{Sec:OrbifoldBooks} with some involved combinatorial arguments. The case where $S$ is a sphere is handled in Section \ref{sphere section}, while the case where $S$ is a torus is handled in Section \ref{torus section}.

We have stated that $S$ can be chosen to be the underlying manifold of an essential $2$--suborbifold $\orbS$ of $\orbM$, and we have pointed out that this forces the components of $\orbM\split\orbS$ to be orbifold books of $I$--bundles. In the setting of Section \ref{sphere section}, where $S$ is a sphere, the essentiality of $\orbS$ is achieved by choosing $S$, among all essential $2$--spheres, in a way that minimizes its number of intersections with the singular locus of $\orbM$. 
This minimization (or a slightly stronger form of it) 
has implications far beyond the essentiality of $\orbS$: it imposes strong restrictions on the orbifold books of $I$--bundles arising as  components of $\orbM \split \orbS $. These restrictions are strong enough to imply that the underlying manifold of each of these components is homeomorphic 
 either to $\SS^2 \times I$ or 
to a manifold obtained  from a non-trivial lens space by removing the
interiors of one or two closed $3$--balls (see Theorem~\ref{Thm:HomeoTypeSpherePieces}). This fact quickly implies Alternative (ii) of  Theorem \ref{Thm:FirstMain}, since $M \split S$ has at most two components.

Lemma \ref{Lem:SameSpanningSize}  provides an instructive illustration of how the minimality property of $S$ is used in restricting the structure of the books of $I$--bundles that make up $\orbM\split\orbS$.
The underlying manifold of an orbifold book of $I$--bundles tends to contain many properly embedded essential annuli that are disjoint from the singular set. In the context of Section \ref{sphere section}, 
suppose for simplicity that $M \split S$ (the underlying manifold of $\orbM\split\orbS$) is a connected $3$--manifold with boundary. If $J \subset M \split S$ is a properly embedded annulus disjoint from the singular set $\Sigma$, 
each boundary  component $C$ of $J$ has a well-defined  \emph{spanning size}, defined to be the minimum number of singular points in a disk in $\bdy (M \split S)$ bounded by $C$. In this setting, Lemma \ref{Lem:SameSpanningSize} asserts that the two boundary components of $J$ have the same spanning size. This is proved by showing that if the spanning sizes are different then one can fit together the union of $J$ with two suitably chosen disks in $\bdy (M \split S)$ to form a
sphere that can be perturbed to be embedded, and which meets
 $\Sigma$ in fewer points than $S$ does. This contradicts the minimality property of $S$.

The notion of the \emph{spanning size} of an annulus, which is well-defined by Lemma \ref{Lem:SameSpanningSize}, turns out to be an important organizing idea in the arguments of Section \ref{sphere section}. For a more systematic outline of these   arguments,  we refer the reader to the discussion immediately following Theorem~\ref{Thm:HomeoTypeSpherePieces}.

Section \ref{torus section} addresses the case where $M$ does not contain an essential sphere, but does contain an essential torus. While the arguments of this section are driven by the same philosophy as those of Section \ref{sphere section},
the details are quite different. In place of an essential sphere $S$ with a minimality property, we consider 
an essential torus $T$ with a slightly different minimality property. 

Since a simple closed curve on a torus need not bound a disk, 
the notion of spanning size does not have a direct analogue
 in this setting. However, the existence of essential curves on tori turns out to be of positive value, because it allows one to replace many combinatorial arguments by topological ones, which ultimately makes this section simpler than Section \ref{sphere section}. The conclusion of this section is given by
Proposition~\ref{Prop:TorusBooksTotalSpace}, which asserts that the underlying space of each 
component of $M \split T$
 admits a Seifert fibration, either over the disk with two singular fibers or over the annulus with at most one singular fiber. 
This implies Alternative (iii) of the conclusion of Theorem \ref{Thm:FirstMain}.
\EndCustom

\Custom{Expository background on orbifolds}
Sections \ref{orbisection} and \ref{low-dim sec}, and part of Section
\ref{bundle section}, are devoted to general definitions and results
about orbifolds. 
There are several reasons why we have needed to
include so much foundational material.

The first reason
 is that our proofs need refined versions of a number of
notions that appear in the literature, which required some
strengthening of the foundations. For example, both our treatment of maps
between orbifolds and our treatment of orbifold coverings and orbifold
fundamental groups are inspired by the account 
by Boileau,
Maillot and Porti \cite {BoileauMaillotPorti}. However, their
definitions do not appear to give functoriality of the orbifold
fundamental group with respect to orbifold maps, which turns out to be
crucial for our arguments. We have adjusted the definitions (including the definition of an orbifold map) 
in such a way that functoriality is immediate.

A second reason for the large amount of foundational material involves
category issues. We have mentioned that the underlying space $M$ of an
orientable $3$--orbifold $\orbM$ is a topological $3$--manifold.
Although such a manifold has a unique smooth structure \emph{up to
diffeomorphism}, many of our arguments require a specific smooth
structure on $M$ which is compatible with the (smooth) orbifold
structure of $\orbM$. The relevant notion is that of an {\it
$\orbM$--accordant} smooth structure, which we develop in \S\ref{wuzzaccordant}.

A third reason is that we need to use notions about orbifolds 
that the previous literature barely touches.
For instance, the
notion of an \emph{orbifold with
corners} is needed for the very definition of an orbifold book of
$I$--bundles. In working with the singular set and local groups of an
orbifold, we have found it very convenient to use the language of {\it
strata} of the underlying space.

Finally, we have found it necessary to rework 
the basic material about
suborbifolds. The relevant definitions in the literature are not mutually consistent,
and some of them permit pathological examples; compare Examples~\ref{Exa:BMPSuborbifold}.
We have also found it useful to distinguish between ``weak suborbifolds'' and ``suborbifolds
(in the strong sense).'' For example, strata are identified with
``weak suborbifolds'' which are not ``suborbifolds.''
\EndCustom

\Custom{A bibliographical note}
There is significant overlap between the results in this paper and
those in Shalen's unpublished monograph
\cite{shalen-monograph}.
Many of the results in \cite{shalen-monograph} are  supplanted
by the main results of this paper, which are considerably stronger and
have considerably simpler proofs. 
The results in
\cite{shalen-monograph} which are not subsumed by results here, in
particular those involving bounds on orbifold homology in terms of
orbifold volume, may appear elsewhere.
\EndCustom

\Custom{Acknowledgments}
We are indebted to many people for valuable discussions,
suggestions, and encouragement regarding this paper and its
predecessor
\cite {shalen-monograph}:
Ian Agol, Chris Atkinson, Francis Bonahon, Ted Chinburg, Marc Culler,
Benson Farb,  Tom Goodwillie, Tracy Hall, Moishe Kohan, Christian Lange,
Chris Leininger, Ben Linowitz, Shawn Rafalski, David Speyer, Matthew
Stover,   John Voight, and Shmuel Weinberger.

During this project, Futer was partially supported by  NSF grant DMS--2405046.
\EndCustom

\section{Preliminaries}\label{prelim section}

\Custom{Notational conventions}\label{notational}
Throughout the paper, $\RR_+$ will denote the non-negative real numbers.
For any integer $n\ge0$, the closed unit ball in $\RR^n$ will be
denoted by $\BB^n$. The unit sphere in  $\RR^{n+1}$ will be denoted $\SS^n$.
 The open unit disk in $\CC$ will be denoted by
$\DD$.

Throughout the paper, $\HH^n$ will denote $n$--dimensional hyperbolic space.
We shall denote by $\vtet$ the volume of a regular ideal tetrahedron
in $\HH^3$, and by $\voct$ the volume of a regular ideal octahedron
in $\HH^3$. Numerically, $\vtet = 1.0149 \ldots$ and $\voct = 3.6638 \ldots$.
\EndCustom

\Definition\label{half or quarter}
Let $n\ge1$ be an integer.
We define an  \emph{open half-ball} 
  in $\RR^{n-1}\times\RR_+$ centered at a point $c \in  \RR^{n-1} \times \{0\}$
to be a set which is the intersection of    $\RR^{n-1}\times \RR_+$
with an open
ball in $\RR^n$. The
 \emph{axis} of the half-ball is its intersection with the line which passes
through $c$ and is perpendicular to $\RR^{n-1} \times \{0\}$.
If $n\ge2$, we define an  \emph{open quarter-ball}
  in $ \RR^{n-2}\times \RR_+^2$
to be a set which is the intersection of $ \RR^{n-2}\times \RR_+^2$
with an open
ball in $\RR^n$ whose center lies in $\RR^{n-2} \times \{(0,0)\}$.
\EndDefinition

\Custom{Manifold conventions}\label{manifolds}
It will always be understood that a  \emph{manifold} may have a
boundary, except where we specify otherwise.

All statements involving manifolds will be understood in the smooth
category, except when the 
topological
 or PL category is explicitly mentioned. 
(Such mentions will be frequent in Sections~\ref{Sec:OrbifoldBooks}--\ref{torus section}.)
 Thus
manifolds, submanifolds, and maps (including immersions and
embeddings) between manifolds will be understood to be smooth.

As is customary in $3$--manifold topology, we shall often use smooth versions of results that have been proved elsewhere in the category of PL manifolds. In all cases this is justified by standard smoothing theorems.
\EndCustom

\Custom{Neat embeddedness}
\label{NeatEmbed}
As is standard in differential topology, we say that a submanifold $N$
of a manifold $M$ is  \emph{neatly embedded} in $M$ if
\begin{itemize}
\item $N$ is a closed subset of $M$,
\item $N\cap\bdy M=\bdy N$, and
\item $N$ and $\bdy M$ meet transversely.
\end{itemize}
\EndCustom

The following strong form of the smoothing theorem for topological
$3$--manifolds, which is based on the results of Munkres~\cite{munkres},
 will be used in the proof of Proposition \ref{smooth underlying}.

\Proposition\label{extend}
Suppose that 
$U$ is an open
subset of a topological $3$--manifold
$M$. Then every smooth structure on 
$U$ 
extends to a smooth
structure on $M$.
\EndProposition

\Proof
We express $M$ as a countable union $\bigcup_{i=1}^{\infty} V_i$, where the $V_i$ are topological open $3$--balls, and prove by induction on $n$ that the smooth structure on $U$ extends to a smooth structure on $U \cup \left(\bigcup_{i=1}^n V_i \right)$. The inductive step involves showing  that the smooth structure on $V_n \cap \left(U \cup \bigcup_{i=1}^{n-1} V_i \right)$ extends to $V_n$. Since $V_n$ embeds in $\RR^3$, it suffices to prove the proposition in the special case where $M \subset \RR^3$, which is what we now assume.

The smooth structure on $U$ 
given by the hypothesis defines a differentiable manifold
$N$ whose underlying topological space is  $U$.
On the other hand, since $M\subset\RR^3$, the manifold
$M$ admits a smooth structure; this smooth structure on $M$ restricts
to one on 
$U$, which defines another differentiable manifold $N'$
having $U$ as underlying space. The identity map of $U$ may then
be regarded as a homeomorphism $h\from N \to N'$.

Now choose a strictly positive-valued continuous  function $\epsilon$ on
$U$ 
such that $\lim_{x\to p}\epsilon(x)=0$ for every point $p \in M - U$. 
(For example, if we fix a distance function defining the topology
of $M$, we may define $\epsilon(x)$, for each $x\in U$, to be the
distance from $x$ to the 
closed set $M-U$.)

Fix a distance function $d$ defining the topology of $N'$. 
By a theorem of Munkres
\cite[Theorem 6.3]{munkres},
there is a diffeomorphism $\ell\from N\to N'$ which is an
$\epsilon$-approximation to $h$ in the sense that
$d(h(x),\ell(x))<\epsilon(x)$ for every $x\in N$. Our choice of the
function $\epsilon$ guarantees that $\ell$, regarded as a
self-homeomorphism of 
$U$, 
extends to a homeomorphism
$\overline{\ell}\from M\to M$ which fixes every point of $M - U$. Pulling back
the smooth structure
on $M$ via the homeomorphism $\overline\ell$
gives a new
smooth structure on $M$ extending the given smooth structure
on $U$.
\EndProof

The following result, Proposition \ref{tree thing}, will be used in the proof of
Lemma \ref{big ol' lemma}. In Proposition \ref{tree thing}, and in
proof and
application, the terms ``tree'' and ``graph'' are to be understood in
their topological sense: a graph is a  CW
complex of dimension at most $1$, and a tree is a simply connected graph.

\Proposition\label{tree thing}
Let $T$ be a finite tree, and let $V$ denote its vertex set. 
Suppose that each vertex $v\in V$ is assigned an integer $m_v\ge0$. For each  set $U\subset T$, set $n_U=\sum_{v\in U\cap V}m_v$. Then $T$ has a vertex $w_0$ such that, for each component $R$ of $T-\{w_0\}$, we have $n_R\le n_T/2$.
\EndProposition

\Proof
Assume that for each vertex $w$ of $T$ there is a component $R$ of $T-\{w\}$ such that $n_R> n_T/2$. This component $R$ is clearly unique for any vertex $w$, and will be denoted $R_w$. For each vertex $w$, let  $h_w$ denote the cardinality of $R_w\cap V$. Choose a vertex $z\in V$ such that $h_z$ achieves the minimum value of $h_w$.

Since $T$ is a tree, and $R_z$ is a component of $T-\{z\}$, there is a unique vertex $z' \in R_z$ that is connected to $z$ by an edge.
Let $S$ denote the component of $T-\{z'\}$ containing $z$. Since $T$ is a tree, $V$ is the disjoint union of $R_z\cap V$ and $S\cap V$. Hence $n_T=n_{R_z}+n_S$. The definition of $R_z$ gives $n_{R_z}>n_T/2$, and hence $n_S<n_T/2$.

The definition of $R_{z'}$ gives $n_{R_{z'}}>n_T/2$. Since $n_S<n_T/2$, the components $S$ and $R_{z'}$ of $T-\{z'\}$ are distinct; in particular $S\cap V$ and $R_{z'}\cap V$ are disjoint. Since we have observed that $V$ is the disjoint union of $R_z\cap V$ and $S\cap V$, we have $R_{z'}\cap V\subset R_z\cap V$. Furthermore, we have $z'\in R_z$ but $z'\notin R_{z'}$, so that $R_{z'}\cap V$ is a \emph{proper} subset of $R_z\cap V$. This implies that $h_{z'}<h_{z}$,
contradicting the definition of $z$.
\EndProof

\Custom{Geometric decompositions}
\label{geometry and norms}
Throughout the paper, we use standard notions from $3$--manifold topology, such as irreducibility, incompressibility, and Seifert manifolds. For a recent reference on this material, see e.g.~Martelli~\cite[Chapter 9]{Martelli:GeometricTopology}.

We shall make strong use of Perelman's geometrization theorem; see \cite{BBBMP} or \cite{Morgan-Tian-geom} for accounts of the proof.  One formulation of the theorem (cf.\ \cite[Theorems 1.1.4 and 1.1.6]{BBBMP}) is that if $N$ is a closed, orientable, irreducible $3$--manifold, there
is a (possibly empty) closed, orientable $2$--manifold $\boldT\subset
N$, canonical up to isotopy, such that
\begin{itemize}
\item each component of $\boldT$ is an incompressible torus, and 
\item each
component of the manifold $N'$ obtained by splitting $N$ along $\boldT$ is
diffeomorphic to either (a) a Seifert fibered $3$--manifold or (b) the compact
core of a finite-volume hyperbolic $3$--manifold. 
\end{itemize}
(The conditions (a) and (b) are mutually exclusive.) The components of
$N'$, which are well-defined up to diffeomorphism, will be called {\it
  pieces} of $N$, and the pieces that satisfy Alternative (b) will be
called  \emph{hyperbolic} pieces. For any hyperbolic piece $P$, we shall
write $\vol P$ for the volume of the hyperbolic manifold whose compact
core is diffeomorphic to $P$. If all pieces of $N$ are Seifert
fibered manifolds, we say that $N$ is a  \emph{graph manifold}.

We also recall \cite[Theorem 1.1.3]{BBBMP}
that every closed, oriented $3$--manifold $M$ is oriented-diffeomorphic
to a connected sum 
$N_1\#\cdots\#N_k$, 
where  $k\ge0$, the
$N_i$ are oriented prime $3$--manifolds, and the decomposition is
unique up to orientation-preserving diffeomorphisms and changing the
order of terms. Here it is understood that when $k=0$, the connected
sum gives $\SS^3$, which is the identity for connected sum. We consider $\SS^3$ to be \emph{trivial}. A non-trivial $3$--manifold is said to be \emph{prime}  it cannot be expressed as a
connected sum of two non-trivial summands. A prime
$3$--manifold is either irreducible or diffeomorphic to
$\SS^2\times\SS^1$.

In particular, if we are given a closed,  \emph{orientable} $3$--manifold
$M$, the summands $N_1,\ldots,N_k$ are determined up to
diffeomorphism. The pieces of those $N_i$ which are irreducible will be called  \emph{pieces} of $M$.
\EndCustom

One consequence of the geometric decomposition described in \S\ref{geometry and norms} is

\Proposition\label{because Perelman}
If a closed orientable $3$--manifold $M$ is
not hyperbolic, and $M$ contains no essential spheres or tori, then $M$
admits a Seifert fibration over $\SS^2$ with at most three singular
fibers.
\EndProposition

\Proof
Since $M$ is not hyperbolic, and contains no essential spheres or
tori, it must be Seifert fibered 
by \S\ref{geometry and norms}.
Now, it follows from \cite[Proposition 10.4.14]{Martelli:GeometricTopology} that $M$ admits a Seifert fibration over $\SS^2$ with at most three singular fibers.
\EndProof

\Custom{Gromov norm}\label{Gromov norm}
We will also make strong use of the Gromov norm. We recall 
from \cite[Definition 13.2.1]{Martelli:GeometricTopology}
that for any topological space $X$ 
and any integer $d>0$, there is a Gromov
(semi-)norm $\|\cdot\|$ on the vector space $H_d(X;\RR)$, 
that associates a number
$\|c\| \geq 0$ to every homology class
 $c\in H_d(X;\RR)$. We will consider
only the case where $X$ is a closed, orientable \emph{topological} $3$--manifold and
$c$ is the fundamental class $[X]$. We shall write $\|X\|$ for
$\|[X]\|$. 

According to \cite[Proposition 13.2.4]{Martelli:GeometricTopology}, if
$M$ and $N$ are closed 
oriented $3$--manifolds, and there is a
degree-$d$ map from $N$ to $M$, then
\Equation\label{degree and Gromov}
 \| N \| \: \geq \: d \cdot  \|M\|.
\EndEquation

If $M$ is a closed, orientable hyperbolic $3$--manifold, it is a result
due to Gromov,  which appears
as \cite[Theorem 13.2.11]{Martelli:GeometricTopology}, that
\Equation\label{genuine gromov}
\vtet  \|M\|=\vol M,
\EndEquation
where $\vtet$ is the volume of a regular ideal tetrahedron, as in \S\ref{notational}.

A folklore result, which generalizes \eqref{genuine gromov}, is that
if $M$ is a closed, orientable $3$--manifold $M$ and $P_1,\ldots,P_m$
denote its hyperbolic pieces, then 
$
\vtet \|M\| =  \sum_{i=1}^m\vol P_i.
$
We shall not quote this fact as we do not have a reference for
it. However,  we will use
the following special case, which appears as
\cite[Theorem 6.5.5]{Thurston:notes}:
\Claim
\label{all we need}
If $M$ is a closed, orientable $3$--manifold $M$ and $P$ is a
 hyperbolic piece of $M$, then
\[
\vtet  \|M\| \geq  \vol P.
\]
\EndClaim
\EndCustom

\section{Orbifolds}\label{orbisection}
This section and the next one develop some foundational material about orbifolds. 
The notion of an orbifold originated in the work of Satake~\cite{Satake:VManifold}.
Some modern references for this material include Boileau--Maillot--Porti \cite[Chapter 2]{BoileauMaillotPorti}, Choi \cite{Choi:Orbifolds}, Cooper--Hodgson--Kerckhoff \cite[Chapter 2]{CHK:OrbifoldBook}, and  Kapovich~\cite[Chapter 6]{Kapovich:HyperbolicManifolds}. For several reasons that were surveyed in the introduction --- orbifolds with corners, category issues, functoriality, and  suborbifolds --- we were unable simply to rely on the existing references, and needed to strengthen some of the foundational definitions. Nonetheless, our treatment is not entirely self-contained, and we have endeavored to point to references as much as practically feasible.

Before giving formal definitions, 
we introduce the following running example. 

\Example\label{running example}
Let $S$ be a compact surface with non-empty boundary. Then the product $M = S
\times I$ is an example of a $3$--manifold with corners.  If $G$ is a
finite group acting on $M$ by diffeomorphisms, the quotient $\orbN =
M/G$ is an example of what will be called a very good $3$--orbifold with $2$--corners. (Compare Definition~\ref{Def:OrbifoldChart}, and \S\S\ref{why corners?}
and \ref{Orbifold covers}.) 
\EndExample

\Custom{Orbifold charts}
\label{Def:OrbifoldChart}
Let $n$ and $k$ be non-negative integers.
An  \emph{$(n,k)$--orbifold chart} in a topological space $X$ is a quadruple
$( U , \tU , \Phi, G)$, 
where: 
\begin{itemize} 
\item $ U $ is an open subset of $X$;
\item $\tU$ is an open subset of 
$\RR^{n-j}\times\RR_+^j
$ for some $j$ with $0\le j\le \min(n,k)$;
\item
$G$ is a finite group of diffeomorphisms of $\tU $; and
\item $\Phi \from  \tU  \to  U $ is a continuous map that
factors through a homeomorphism from $\tU  / G$ to
$ U $. 
\end{itemize}
We may refer to
$ U $, $\tU $, $\Phi$ and $G$ respectively as the
   \emph{support} of the chart, the \emph{domain} of the chart, the  \emph{chart map} and the  \emph{group
   associated with} the chart.

If $( U , \tU , \Phi, G)$ is a chart in $X$, and $\tU'$ is an open subset of $\tU$ which is precisely $G$--invariant (in the sense that for every $g\in G$ we have either $g (\tU')= \tU'$ or $g(\tU') \cap \tU'=\emptyset$), then the stabilizer of $\tU'$ in $G$ defines a finite group $G'$ of diffeomorphisms of $\tU'$, and 
$( \Phi (\tU') , \tU' , \Phi|\tU', G')$ is a chart in $X$. A chart defined in this way will be called a \emph{subchart} of $( U , \tU , \Phi, G)$. 

We define a  \emph{transition map} from a chart $( U _1, \tU _1, \Phi_1, G_1)$ to a chart
$( U _2, \tU _2, \Phi_2, G_2)$ in $X$ to be a diffeomorphism
$\psi$ 
from the domain $\tU_1'$ of some subchart $( U_1' , \tU_1' , \Phi_1', G'_1)$ of 
$( U_1 , \tU_1 , \Phi_1, G_1)$ of 
$\tU$ to the domain $\tU_2'$ of some subchart $( U_2' , \tU_2' , \Phi_2', G_2')$ of
$( U_2 , \tU_2 , \Phi_2, G_2)$ such that 
\begin{itemize}
\item $\psi$ is equivariant with respect to some isomorphism $J\from G_1'\to G_2'$, meaning that $\psi(g x)=J(g) \big( \psi(x) \big)$ for all $x\in \tU_1$ and $g\in G_1'$; and
\item 
$\Phi_2'\circ\psi = \Phi_1'$. 
\end{itemize}

Two
$(n,k)$--orbifold
charts $( U _1, \tU _1, \Phi_1, G_1)$ and
$( U _2, \tU _2, \Phi_2, G_2)$ are said to be {\it
  compatible} if, for every $\tx _1\in  \tU_1$ and every $\tx _2 \in
\tU_2$ with $\Phi_1(\tx _1) = \Phi_2 (\tx _2)$, there is a
transition map $\psi$ between $( U _1, \tU _1, \Phi_1, G_1)$ and
$( U _2, \tU _2, \Phi_2, G_2)$, whose domain contains
$\tx_1 $, such that 
$\psi(\tx_1)=\tx_2$. 
\EndCustom

\Custom{Orbifolds with corners and manifolds with corners}
\label{Def:Orbifold}
\label{singular points}
A (smooth)   \emph{$(n,k)$--orbifold atlas} for $X$ is a collection of pairwise
compatible $(n,k)$--orbifold charts
in $X$ whose supports cover $X$. An  \emph{$n$--dimensional orbifold with
$k$--corners} 
$\orbM$
is a paracompact Hausdorff
space 
$X$ equipped with a maximal 
$(n,k)$--orbifold atlas. 
The
space $X$ is called the  \emph{underlying space} of $\orbM$
and will often be denoted by $\mani\orbM$.
By a  \emph{chart for} $\orbM$ we mean a chart in the maximal atlas
  defining $\orbM$.
A  \emph{point} of 
$\orbM$ is defined to be a point of $\mani\orbM$.

Let $\orbM$  be an $n$--orbifold with $k$--corners.
A point $x$ of $\orbM$ is called a  \emph{$j$--corner point} of $\orbM$,
for a given  
integer $j$ with $0\le j\le \min(n,k)$
if for some,
and hence for every, chart $( U , \tU , \Phi, G)$ for
$\orbM$ with $x\in U $,
the chart domain $\tU$ is an  open subset of 
$\RR^{n-j}\times \RR_+^j$, and
$\Phi^{-1}(x)
  \subset
\RR^{n-j} \times\{0\}$ (where $0$ denotes the origin of
$\RR^j$). Thus every point of $\orbM$ is a $j$--corner point for  a
unique choice of $j$.
A $0$--corner point of $\orbM$ is called an \emph{interior point}.

If $n$ and $k$ are non-negative integers, we define a (smooth) {\it
  $n$--manifold with $k$--corners} to be an   $n$--orbifold with
$k$--corners such that every point $x$ lies in the support of a chart such that the associated group $G$ is trivial.
(A more succinct characterization, using terminology to be introduced in \S\ref{Sec:LocalGroups}, is that an  $n$--manifold with corners is an   $n$--orbifold with
corners whose singular locus is empty.)
Specializing to the
case $k=1$, we recover the notion of a (smooth) $n$--manifold.
\EndCustom

\begin{custom}{Where corners arise}
\label{why corners?}
The importance of corners in this paper arises in large part from
their role in forming Cartesian products. If $n_1$, $k_1$, $n_2$ and
$k_2$ are non-negative integers, and $\orbM_i$ is an $n_i$--orbifold
with $k_i$--corners for $i=1,2$, then there is a canonically defined
$(n_1+n_2)$--orbifold $\orbM_1\times\orbM_2$ with $(k_1+k_2)$--corners
such that
$\mani{\orbM_1\times\orbM_2}=\mani{\orbM_1}\times\mani{\orbM_2}$. In
particular, if the $\orbM_i$ are orbifolds with $1$--corners (that is, simply \emph{orbifolds} according to \S\ref{orb with boundary}) then
$\orbM_1\times\orbM_2$ is an orbifold with $2$--corners.
\end{custom}

\Custom{Standard group actions}\label{invariant neighborhood}
If $n$ and $j$ are integers with $0\le j\le n$, there is a standard
identification of $\bigO(n-j)\times\bigO(j)$ with a subgroup of the
orthogonal group $\bigO(n)$. We also have a standard identification of
the symmetric group $\scrS_j$ with a subgroup of $\bigO(j)$. With the
resulting identification of $\bigO(n-j)\times\scrS_j$ with a subgroup of
$\bigO(n)$, the restriction to $\bigO(n-j)\times \scrS_j$ of the standard
action of $\bigO(n)$ on $\inter\BB^n$ leaves
$B\cap(\RR^{n-j}\times\RR_+^j)$ invariant, for any open ball
$B\subset\RR^n$ centered at $0$. This defines a
 \emph{standard action} of $\bigO(n-j)\times \scrS_j$ on
$B\cap(\RR^{n-j}\times\RR_+^j)$.
\EndCustom

\Proposition\label{general bochner}
If a compact  group $G$ acts by diffeomorphisms on an
$n$--manifold with corners $M$, and fixes a $j$--corner point $x\in M$
for some $j \leq  n$, then
$x$ has a $G$--invariant
neighborhood $V$ which can be identified diffeomorphically with 
$\inter\BB^n\cap(\RR^{n-j}\times\RR_+^j)$
in 
such a way that $x=0$, 
and the action of $G$ factors through the standard action of $\bigO(n-j)\times \scrS_j$ on
$\inter\BB^n\cap(\RR^{n-j}\times\RR_+^j)$ defined in \S\ref{invariant neighborhood}.
\EndProposition

\Proof
This is a special case of \cite[Corollary 3.27]{log-corners}, where
the quantity denoted
by $k$ in that corollary is equal to $0$. The roles of the quantities denoted
by $l$ and $n$ in \cite[Corollary 3.27]{log-corners} are played here
by $n-j$ and $j$ respectively.

The case $j=0$ is the Bochner linearization theorem \cite{bochner}.
\EndProof

From Proposition \ref{general bochner} it is easy to deduce:
\Corollary \label{Bochner corollary}
If $G$ is a finite group of diffeomorphisms  of a connected $n$--manifold
with $k$--corners 
$M$,  
and if the action of $G$  restricts to the trivial action
on some non-empty open subset of $M$, then $G$ is trivial. \NoProof
\EndCorollary

\Custom{Standard charts}
\label{about orbifolds}

It follows from 
Proposition \ref{general bochner}
that if $x$ is a $j$--corner point of an $n$--orbifold with corners
$\orbM$, there is a chart $( U , \maybeD, \Phi, G)$ 
for $\orbM$ such that 
\begin{enumerate}[\:\:(a)]
\item $\maybeD$ is the intersection of $\RR^{n-j}\times\RR_+^j$ with some open ball centered at the origin of $\RR^n$;
\item
$\Phi(0)=x$; 
\item
$G$ is a subgroup of 
$\bigO(n-j)\times\scrS_j$, whose action on $D$ is the standard one
(see \S\ref{invariant neighborhood}).
\end{enumerate}
Such a
chart will be called a  \emph{standard chart centered at $x$}.

The supports of standard charts for $\orbM$ form a basis for the topology of $\mani\orbM$. 
\EndCustom

\Custom{Local groups}\label{Sec:LocalGroups}
If $x$ is a point of an orbifold with corners $\orbM$, 
 we  define a
 \emph{local group at $x$} to be a group which
  has the form $\Stab_G(\tx)$
for some chart $( U , \tU , \Phi, G)$ with $x\in U$ and some
$\tx\in\Phi^{-1}(x)$. 
Changing the point $\tx\in\Phi^{-1}(x)$ has the effect of conjugating
the local group within $G$, and changing the chart 
$( U , \tU , \Phi, G)$ has the effect of conjugating the local group by a
 transition map. Thus a local group at $x$ is uniquely determined up
 to isomorphism. Furthermore, the isomorphism between two local groups
 at $x$ is itself canonical modulo inner automorphisms. For this
 reason, we shall refer to {\it
  the} local group at a point $x$ of $\orbM$; it may be denoted by
$G_x^\orbM$, or by $G_x$ when it is clear which orbifold is involved. 

The group associated with a standard chart centered at a
point $x$ is
canonically identified (modulo inner automorphisms) with the local
group $G_x$. In particular, if $x$ is a $j$--corner point (where $0\le
j\le n$), then $G_x$ is canonically identified, modulo conjugacy in $\bigO(n-j)\times\scrS_j$, with
a subgroup of $\bigO(n-j)\times\scrS_j$.
Furthermore, the groups associated
to two different standard charts centered at $x$ are identified with
conjugate subgroups of $\bigO(n-j)\times\scrS_j$.
Thus 
we may regard $G_x$ as a
subgroup of 
$\bigO(n-j)\times\scrS_j$
defined up to conjugacy. 
In particular,
$G_x$ is canonically identified, up to conjugacy, with a
subgroup of 
$\bigO(n)$.

The order of $G_x^\orbM$ will be called the order of $x$; it may be denoted by $\ord^\orbM(x)$,
or by $\ord(x)$ when it is clear which orbifold is involved.

A point $x$ of $\orbM$ is said to be  \emph{singular} if 
$G_x$ is non-trivial. The set of all singular points of
$\orbM$ is a closed subset of $\mani\orbM$, which is called the {\it
  singular set} of $\orbM$ and is denoted by $\Sing_\orbM$.
\EndCustom

\Custom{Orbifolds with boundary}
\label{orb with boundary}
For any $n\ge1$ one may define an $n$--orbifold with boundary to be an
$n$--orbifold with $1$--corners. In this paper we shall use the term
 \emph{orbifold} to mean an orbifold with boundary; this is consistent
with the convention regarding manifolds that was laid out in \S\ref{manifolds}.

We define an \emph{$n$--orbifold with nonsingular $2$--corners} to be an  $n$--orbifold
with $2$--corners whose $2$--corner points are all nonsingular.

We emphasize that, according to these conventions, an orbifold (without further
specification) is permitted to have $1$--corner points but not
$k$--corner points for $k > 1$. Some of the substantive results of this
paper will concern $3$--orbifolds with nonsingular $2$--corners, but $k$--corners for
$k > 2$ will not arise after the foundational material of the present section.
\EndCustom

All the observations and definitions about orbifolds with $k$--corners
that appear below
specialize to the case of orbifolds (with boundary) by setting $k=1$,
and will be used freely in that context throughout the paper.

\Custom{Naming convention}\label{Sec:naming}
Throughout the paper, we denote orbifolds with corners  by calligraphic letters, for instance $\orbM$ and $\orbN$. 
It will often be convenient to use a single letter to denote the
underlying space of an orbifold with corners.
When this arises, we typically use the
``math-italic'' counterpart of the calligraphic letter denoting the
orbifold with corners itself: e.g.\ $M=\mani\orbM$, $N=\mani\orbN$, etc.

In addition, we use a script letter  to denote a family of orbifolds
with corners
whose typical element is
denoted by the corresponding  calligraphic letter: e.g.\ $\scrM = \{
\orbM, \orbM' \ldots \}$. We also use a script letter to denote a
(possibly) disconnected orbifold whose typical component is denoted by
the corresponding calligraphic letter: e.g.\ $\scrM =  \orbM \sqcup
\orbM' \sqcup \ldots$. 
The distinction between collections and disjoint unions will be made
clear in each instance.
In the manifold setting, we use boldface letters to denote collections
of manifolds with corners and disjoint unions of manifolds with corners: e.g.\ ${\bf M} = \{ M, M' \ldots \}$.
\EndCustom

\Custom{Orientations}
\label{orientable orbifolds}
An \emph{oriented atlas} for an orbifold with corners $\orbM$ is an atlas of charts for $\orbM$, 
together with an orientation of each chart domain, with the
property that every transition map is orientation-preserving.
This implies that for each
chart,
the associated group consists of orientation-preserving
diffeomorphisms of the chart domain.

An \emph{orientation} of an orbifold with corners $\orbM$ is a maximal oriented atlas.
We say that
$\orbM$ is  \emph{orientable} if it admits an orientation.
\EndCustom

\Custom{Orbifold maps}
\label{maps and such}
Suppose that $\orbM$ and $\orbM'$ are orbifolds
with corners, and that $F$ is a continuous map from $M=\mani\orbM$ to
$M'=\mani{\orbM'}$. Let 
$( U , \tU , \Phi, G)$ and
$( U ', \tU ', \Phi', G')$ be charts for $\orbM$ and
$\orbM'$ respectively. 
A  \emph{smooth \locallift} of $F$ is a pair $(\tF, \psi)$ where $\psi \from G \to G'$ is a group homomorphism, $\tF \from  \tU\to\tU'$ is a $\psi$--equivariant smooth map,  and 
$F\circ\Phi=\Phi'\circ\tF$. A necessary condition for the existence
of a smooth \locallift\  is that $F(U)\subset U'$. We 
shall often leave the homomorphism $\psi$ implicit and simply say that $\tF$ is a local lift of $F$.

Now suppose that two \locallift s $\tF_1$ and $\tF_2$ of $F$ are
given. Thus for $i=1,2$, the \locallift\ $\tF_i$ is a map from
$\tU_i$ to $\tU_i'$, where $( U_i , \tU_i , \Phi_i, G_i)$ and
$( U_i', \tU_i', \Phi_i', G_i')$ are charts for $\orbM$ and
$\orbM'$ respectively. We shall say that $\tF_1$ and $\tF_2$ are
 \emph{compatible} if, for every $\tx _1\in  \tU_1$ and every $\tx _2 \in
\tU_2$ with $\Phi_1(\tx _1) = \Phi_2 (\tx _2)$, there exist a
transition map $\psi$ from $( U _1, \tU _1, \Phi_1, G_1)$ to
$( U _2, \tU _2, \Phi_2, G_2)$ with domain $V \subset \tU_1$, and a
transition map $\psi'$ between $( U _1', \tU _1', \Phi_1', G_1')$ and
$( U _2', \tU _2', \Phi_2', G_2')$ whose domain contains $\tF_1(V)$, such that $\tF_2 \circ\psi=\psi'\circ(\tF_1|V)$.

An  \emph{atlas of \locallift s} for $F$ is defined to be a collection
of pairwise compatible \locallift s whose domains cover $\orbM$.
A  \emph{smooth orbifold map} from $\orbM$ to $\orbM'$ is defined by a
continuous map $F\from M\to M'$ together with an atlas of \locallift s of
$F$ which is maximal with respect to inclusion.

We shall call $F$ the  \emph{underlying map} of $f$. The \locallift s of
$F$ that belong to the maximal atlas of \locallift s defining $f$ will
be called
\emph{\locallift s\ for $f$.} 
\EndCustom

\begin{example}\label{UnderlyingMapNotEnough}
In general, a continuous orbifold map is not uniquely determined by its
underlying map of spaces. For example, consider the $1$--orbifold $\RR$ and the
$2$--orbifold $\RR^2/\langle\rho\rangle$, where $\rho$ denotes the
counterclockwise rotation of $\RR^2$ through an angle $2\pi/n$ for an integer $n>1$. The identity
map of $\RR$ and the quotient map $\RR^2\to\RR^2/\langle\rho\rangle$
are chart maps defining charts for these two orbifolds, whose supports
are all of $\RR$ and all of $\RR^2/\langle\rho\rangle$,
respectively. For each $j\in\{0,\ldots,n-1\}$, we may define a  map
$\ell_j\from\RR\to\RR^2$ between chart domains by setting
$\ell_j(x)=(x,0)$ for $x\ge0$, and $\ell_j(x)=\rho^j(|x|,0)$ for
$x\le0$. Then the $\ell_j$ are all local lifts of the same map of
spaces  $F\from\mani\RR\to \mani{\RR^2/\langle\rho\rangle}$, and
these local lifts determine $n$ distinct continuous orbifold maps having $F$ as
underlying map. This phenomenon will be important in connection with
the unique path lifting property for orbifold coverings (see
\S\ref{crucial fact} below).
\end{example}

\Custom{Orbifold maps, continued}\label{maps continued}
If $f \from \orbM\to{\orbM'}$ and $f' \from  \orbM'\to{\orbM''}$ are
smooth  orbifold maps
between orbifolds with corners, we  define the  \emph{composition} of
$f$ and $f'$ to be
the unique orbifold map $f'\circ f
\from  \orbM \to{\orbM''}$ such that (a) the underlying map of $f'\circ f$
is $F'\circ F$, where $F$ and $F'$ denote the underlying maps of
$f$ and $f'$ respectively, and (b) whenever $\tF$ and
$\tF'$ are  \locallift s for $f$ and $f'$ respectively, such that the
image of $\tF$ is contained in the domain of $\tF'$, the composition
$\tF'\circ \tF$ is a \locallift\  for $f'\circ f$.
With this notion of composition, we have a
category of orbifolds with corners in which the morphisms are smooth
orbifold maps.

The isomorphisms in this category will be called  \emph{orbifold
  diffeomorphisms}.
The underlying map of an orbifold diffeomorphism is a homeomorphism,
and the local lift assigned to each point of its domain is a manifold diffeomorphism.

If $\orbM_i$ are orbifolds for $i=1,2$, there is a canonical orbifold
map $q_i\from \orbM_1\times\orbM_2\to\orbM_i$,
called a  \emph{projection,} whose underlying map is the product
projection from $\mani{\orbM_1} \times \mani{\orbM_2}$ to
$\mani{\orbM_i}$. Moreover, $\orbM_1\times\orbM_2$ is a product object
in the category of smooth orbifolds.

By replacing ``smooth'' by ``continuous'' in the definitions given
above, we obtain the notions of a  \emph{continuous \locallift} and a {\it
  continuous \locallift\ class}, and a  \emph{continuous orbifold
map}. 
We also obtain the notion of composition of continuous orbifold maps,
and we have a category of orbifolds with corners in which the morphisms are continuous
orbifold maps.

We adopt the convention that  ``\locallift s,''  
and  ``orbifold maps'' are understood to be
smooth except when ``continuous'' is specified. We shall also often
say ``map'' in place of ``orbifold map.''
\EndCustom

\Custom{Orbifold covering maps}
\label{Orbifold covers}
Let $\orbM$ and $\orbM'$ be $n$--orbifolds with $k$--corners, and let
$f\from\orbM'\to\orbM$ be an orbifold map with underlying map $F$. We
shall say that a chart 
for $\orbM$, with support $U$ and connected chart domain $\tU$, is {\it
  evenly covered} if  every component $U'$ of  $F^{-1}(U)$ is the
support of a chart 
for $\orbM'$ with chart domain $\tU$, 
such that
the identity map on $\tU$
is a \locallift\ for $f$.
We shall say that $f$ is a  \emph{covering map} if every point of
$\orbM$ has a connected open neighborhood which is the support of an
evenly covered chart.

If $\Gamma$ is a group of orbifold  diffeomorphisms of an orbifold 
with $k$--corners
$\orbM$, and the action of $\Gamma$ on $\mani\orbM$ is properly discontinuous,
then $\orbM/\Gamma$ inherits the structure of an orbifold
with $k$--corners, 
and the
quotient map from $\orbM$ to $\orbM/\Gamma$ is a
regular covering map. 
The orbifold $\orbM/\Gamma$ is a quotient object in the (smooth) category
defined in \S\ref{maps continued}, and its underlying space is
$\mani\orbM/\Gamma$.

Conversely, if $p \from \torbM\to\orbM$  is a
regular covering map of  orbifolds with corners, 
then there is a group
$G$ of orbifold diffeomorphisms of $\torbM$, acting properly
discontinuously on $\mani\torbM$, such that $p$
is the
composition of the quotient map $\torbM\to\torbM/G$ 
with an orbifold diffeomorphism from $\torbM/G$ to $\orbM$.

These
observations apply in particular to the situation where a finite
group acts by orbifold diffeomorphisms on a given orbifold with
corners; this corresponds to the case of a finite-sheeted regular covering.

An orbifold with corners $\orbM$ is said to be  \emph{good} if it has some covering space
which is nonsingular, 
i.e.\ is a manifold with corners; 
otherwise $\orbM$ is said to be  \emph{bad}. An orbifold with corners  $\orbM$ is termed  \emph{very good} if it has some finite-sheeted covering space
which is nonsingular. 
It 
will follow from the relationship between covering spaces and
fundamental groups, to be presented  in \S\ref{path subsection},
 that every finite-sheeted  cover of $\orbM$ has a further
cover $\torbM$ such that the composition $\torbM \to \orbM$ 
is
both finite-sheeted and regular. Consequently,  every very good orbifold with corners is the quotient of a manifold with corners by a finite group action. This is the setup described in Example~\ref{running example}.

For any
$n\ge2$, we define a
 \emph{hyperbolic $n$--orbifold} to be a quotient of $\HH^n$ by a
properly discontinous group of hyperbolic isometries. In particular,
a hyperbolic orbifold (or manifold) is by definition connected and
complete in its natural metric, and has no boundary.

It follows from Selberg's lemma \cite{Alperin:SelbergLemma} that every
hyperbolic orbifold $\HH^n/\Gamma$, where $\Gamma$  is a finitely
generated discrete group of isometries of $\HH^n$,
 is very good. 

The \emph{volume} $\vol \orbM$ of a hyperbolic $n$--orbifold $\orbM=\HH^n/\Gamma$ may be defined as the volume of an arbitrary (measurable) fundamental domain for $\Gamma$ in $\HH^n$, or equivalently as $(\vol\torbM)/d$, where $\torbM$ is an arbitrary finite-sheeted manifold covering of $\orbM$ and $d$ denotes the degree of the covering map. 

Up to equivalence, a connected, non-orientable orbifold $\orbM$
has a unique orientable
two-sheeted orbifold cover $\orbM'$; we shall call $\orbM'$ the {\it
  orientation cover} of $\orbM$. While the existence of $\orbM'$ is
not hard to prove by adapting the standard proof for the manifold
setting, we shall need only the case where $\orbM$ is good, and the
proof is particularly simple in this case: if $\torbM$ denotes the
universal covering of $\orbM$, which is a manifold, we may define
$\orbM'$ to be $\torbM/\Gamma$, where $\Gamma$ denotes the index--$2$
subgroup of the deck group of $\torbM$ consisting of
orientation-preserving diffeomorphisms.

For an arbitrary  good orbifold $\orbM$, which may be disconnected and
may have orientable components, we define the orientation cover to be
the two-sheeted cover which is the disjoint union of the orientation
covers of all non-orientable components of $\orbM$, together with two copies of
each orientable component of $\orbM$.
\EndCustom

\Custom{Suborbifolds}
\label{Suborbifolds}
Let $\orbM$ be an $m$--orbifold with 
corners,  and set $M=\mani\orbM$. 
Suppose that $n$ is an integer with $0\leq n \leq m$, and that $N$ is a subset of $M$ having the following property:
\Claim\label{defines a suborbifold}
For every $x\in M$ there is a chart $( U , \tU , \Phi,
G)$ for $\orbM$, with $x\in U$, such that $\Phi^{-1}(N)$ is a
(possibly empty) $n$--dimensional submanifold with corners of $\tU$.
\EndClaim
Then for every point $y\in N$, there exist a chart $( U , \tU , \Phi,
G)$ for $\orbM$, an integer $j$ with $0\le j\le n$, an open set $\tV\subset\RR^{n-j}\times\RR_+^j
$, and a diffeomorphism $h$ of $\tV$ onto a $G$--invariant open subset  of
$\Phi^{-1}(N)$ containing the set $\Phi^{-1}(y)$. By restricting the action of $G$ to $h(\tV)$ and pulling
back via $h$, we obtain a group $\barG$
of diffeomorphisms of $\tV$ which is isomorphic to a quotient of
$G$. 
The map $\omega\doteq\Phi\circ h\from\tV\to N$ then induces a
homeomorphism of $\tV/\barG$
onto an open neighborhood $V$ of $y$ in $N$.
The quadruple
$(V,\tV,\omega,\barG)$ is then an $(n,j)$--orbifold chart 
in $N$; the set
of all charts of this form is an orbifold atlas on $N$, and therefore
defines an $n$--orbifold with 
corners $\orbN$
whose underlying space is $N$. 

For any subset $N$
of $M$ such that \ref{defines a suborbifold} holds, the orbifold
$\orbN$
constructed in this way will be denoted $\orbi N$. To say that $\orbi
N$ is defined means that \ref{defines a suborbifold} holds. By
definition, we always have $\mani{\orbi N}=N$ if $\orbi N$ is defined.

A  \emph{suborbifold with corners of $\orbM$ in the weak sense}, or more briefly a
 \emph{weak suborbifold with corners of $\orbM$}, is by definition an
orbifold with corners of the
form $\orbi N$ for some set $N\subset M$ such that \ref{defines a suborbifold} holds. 

We observe that if $\orbN$  is a weak suborbifold with corners of  $\orbM$,  and $G$ is a group of diffeomorphisms of $\orbN$ whose underlying action on $\mani\orbM$ leaves $N\doteq\mani\orbN$ invariant, then the weak suborbifold $\orbi{N/G}$ of $\orbM/G$ is defined, and is canonically identified with the quotient $\orbN/G$.

In the case where $\orbM$ is an orbifold (with boundary), a
weak suborbifold $\orbN=\orbi N$ of  $\orbM$ is said to be  \emph{neatly
  embedded}  if a stronger version of \ref{defines a suborbifold}
holds, namely that for every $x\in M$ the chart $( U , \tU , \Phi,
G)$ may be chosen in such a way that the submanifold 
$\Phi^{-1}(N)$ is neatly embedded in $\tU$.

For an arbitrary $m$--orbifold with corners $\orbM$, and an arbitrary
subset $N$ of $M\doteq\mani\orbM$ satisfying \ref{defines a suborbifold} for a given $ n \leq m$,
  choosing the support $U$ to be sufficiently small ensures that
the groups denoted $G$ and $\barG$ in
the discussion above are canonically identified 
(up to inner automorphisms) 
with the local groups $G_y^\orbM$ and $G_y^\orbN$
respectively. Thus for every weak suborbifold with corners $\orbN$ of
an orbifold with corners $\orbM$, and every point
$y$ of $\orbN$, we have a surjective homomorphism  $\theta_y\from G_y^\orbM\to G_y^\orbN$ which is canonical up to composition with  inner
automorphisms of these finite groups.

  If  $\theta_y$ is an
isomorphism for every
$y\in \orbN$, we shall say that $\orbN$ is a  \emph{suborbifold of
  $\orbM$ in the strong sense.} We adopt the convention that
``suborbifold'' without qualification means ``suborbifold in the
strong sense.''

If $\orbN$ is a suborbifold of $\orbM$, then in particular $G_y^\orbN$
and $G_y^\orbM$ are isomorphic, and therefore
$\ord^\orbN(y)=\ord^\orbM(y)$,  for every $y\in\orbN$. Hence $\Sing_\orbN
= \orbN \cap \Sing_\orbM$. These facts will be used very frequently in
later sections. They need not remain true if $\orbN$ is assumed only
to be a weak suborbifold; compare the second example in \S\ref{Exa:BMPSuborbifold}.
\EndCustom

\Custom{Examples}
\label{Exa:BMPSuborbifold}\label{Exa:StrongVsWeak}
Here are two instructive examples, with the same ambient orbifold. Let $\orbM=\BB^2/C$, where $C$ is a cyclic group of rotations about $0\in \BB^2$ of some odd
order $p>1$.

For the first example, let $f \from \BB^2 \to \orbM$ be the projection, and let $g \from I \to \BB^2$ be the embedding of a diameter. Then 
according to our definitions, the image of $I$ in $M = \mani \orbM$
 is not the underlying set of even a weak suborbifold of $\orbM$, because it does not have Property \ref{defines a suborbifold}. However, $f \circ g$ is an orbifold immersion (see \S\ref{more orbifold notions} below) and its underlying map is injective; this means that the image of $I$ is  the underlying set of a ``suborbifold'' in the sense defined by Boileau--Maillot--Porti \cite[Section 2.1.3]{BoileauMaillotPorti}. 

 For the second example, let $o\in\orbM$ be the image of $0$ under the quotient map. Then the  weak suborbifold
 $\orbN\doteq\orbi{\{o\}}$ of $\orbM$ is defined but is not a
 suborbifold in the strong sense. Indeed, we have
$\ord^\orbN(o)=1\ne p=\ord^\orbM(o)$,  and $\Sing_\orbN
=\emptyset\ne\{o\}= \orbN \cap \Sing_\orbM$.
\EndCustom

\Custom{Inclusion maps}\label{inclusions}
Let  $\orbN$ be a  suborbifold with corners of an orbifold with corners $\orbM$. Set $M=\mani\orbM$ and $N=\mani\orbN$, and let $\kappa \from N\to M$ denote the inclusion map.  It follows from the discussion in \S\ref{Suborbifolds} that for every point $y\in N$ there exist charts $( U , \tU , \Phi,
G)$ and
$(V,\tV,\omega,\barG)$ for $\orbM$ and $\orbN$ respectively such that the following condition holds:
\Claim\label{for inclusion}
We have $y\in V\subset U$, and if $\tkappa\from\tV\to\tU$ denotes the inclusion map, then $(\tkappa,\theta_y^{-1})$ is a local lift of $\kappa$,
where $\theta_y^{-1} \from G_y^\orbN \to G_y^\orbM$ is the inverse of the isomorphism $\theta_y$ defined in
\S\ref{Suborbifolds}.
\EndClaim

Each point $y\in N$ and each pair of charts $( U , \tU , \Phi,
G)$ and
$(V,\tV,\omega,\barG)$ for which \ref{for inclusion} holds define a local lift of $\kappa$. The set of all local lifts defined in this way is an atlas of local lifts of $\kappa$, and therefore define an orbifold map from $\orbN$ to $\orbM$ with underlying map $\kappa$. This orbifold map will be called the \emph{inclusion map} $\orbN \hookrightarrow \orbM$. We emphasize that we have defined the inclusion map only when $\orbN$ is a suborbifold with corners in the strong sense: if $\orbN$ were only a suborbifold with corners in the weak sense, there would appear to be no naturally defined homomorphism to play the role of $\theta_y^{-1}$ in \ref{for inclusion}. 

If $\orbM'$ and $\orbM$ are orbifolds with corners, we
define an  \emph{embedding} of  $\orbM'$ in $\orbM$ to be a
smooth map from $\orbM'$ to $\orbM$ which is the composition of a
diffeomorphism of $\orbM'$ onto a suborbifold with corners $\orbN$ of
$\orbM$ with the inclusion map from $\orbN$ to $\orbM$.

If 
$f$ is an arbitrary smooth map from $\orbM$ to an orbifold with corners
$\orbN$, 
and $\orbZ$ is
any  suborbifold with corners of $\orbM$, we define $f|\orbZ\from \orbZ\to\orbN$ to be
the composition of the inclusion map $\orbZ\hookrightarrow\orbM$ with $f$.
\EndCustom

The following result will be used extensively in later sections  to recognize 
 suborbifolds (in the strong sense).

\Proposition\label{codim 1 is nice}
Suppose that $\orbN$ is a weak suborbifold with
corners  of an 
orbifold with corners $\orbM$. Suppose that
at least one of the following conditions holds:
\begin{enumerate}[\:\:(i)]
\item\label{Hyp:Codim0} $\orbN$ has codimension $0$ in $\orbM$, or
\item\label{Hyp:Codim1Bdy} $\orbN$ has codimension $1$ in $\orbM$ and $\mani\orbN$ is disjoint from
  $\mani{\inter\orbM}$, or
\item\label{Hyp:Codim1Ori} $\orbN$ has codimension $1$ in $\orbM$ and $\orbM$ is
  orientable.
\end{enumerate}
Then $\orbN$ is a  suborbifold with
corners of $\orbM$ (in the strong sense).
\EndProposition

\Proof
  According to the definition given in \S\ref{Suborbifolds}, we must
  show that the
  surjective homomorphism  $\theta_y\from G_y^\orbM\to G_y^\orbN$ has
  trivial kernel for every
$y\in \orbN$.   It follows from the discussion in
\S\ref{Suborbifolds} that we may fix a chart $( U , \tU , \Phi,
G)$ for $\orbM$, with $y \in U$, such that $G_y^\orbM$ is identified
with $G$, while $G_y^\orbN$ is identified
with the group $\barG$ of restrictions of elements of $G$ to 
a $G$--invariant open subset $\tV$ of $\Phi^{-1}(N)$, 
and $\theta_y$ is the
restriction homomorphism from $G$ to $\barG$. 
We may take $\tU$ to be connected. To show that
$\theta_y$ has trivial kernel, we must show that any element $g$ of $G$
which fixes $\tV $ pointwise is the identity. According to Corollary~\ref{Bochner corollary}
we need only show that $g$
fixes pointwise some non-empty open subset of $\tU$. This is
immediate  if hypothesis \eqref{Hyp:Codim0}  holds.

If hypothesis \eqref{Hyp:Codim1Bdy} holds, then $\tV $ is an $(n-1)$--manifold with
corners contained in the $n$--manifold with corners $\tU$, and contains
no interior points of $\tU$. Hence $\tV $ contains a $1$--corner point
$\tw$ of $\tU$. We may regard $\tw$ as a boundary point of the $G$--invariant manifold  $\tU'$ consisting of all $0$-- and $1$--corner points of $\tU$. By Proposition \ref{general bochner},
applied with $\langle g\rangle$ and
$\tU'$ playing the respective roles of $G$ and $M$, some $\langle g\rangle$--invariant neighborhood 
$Y $ of $\tw$ in $\tU'$ can be identified diffeomorphically with 
the open half-ball $(\inter \BB^n)\cap(\RR^{n-1}\times\RR_+)$
in 
such a way that $\tw = 0$, 
and the action of $\langle g\rangle$ factors through the standard action of 
 $\bigO(n-1)$. We may suppose $ Y  $ to have been chosen small enough so
that $\bdy  Y  = Y  \cap\bdy\tU'\subset\tV $. Since the standard
action of $\bigO(n-1)$ on the boundary of the open half-ball $ Y  $ is effective, and
$g$ fixes $\tV $ pointwise, it follows that $g$ also fixes $ Y  $
pointwise; thus $ Y  $ is the required open subset of $\tU$ in this case.

If hypothesis \eqref{Hyp:Codim1Ori} holds but hypothesis \eqref{Hyp:Codim1Bdy} does not, we may
choose a point $z\in\tV \cap\inter\tU$.  We choose a $\langle
g\rangle$--invariant neighborhood $T$ of $z$ in $\inter\tU$ which is
small enough to guarantee that $\tV \cap T$ is closed in the
subspace topology of $T$. If $\tV _0$ denotes the component of
$\tV \cap T$ containing $z$, we may then write $ T=T_1\cup T_2$, where
the $T_i$ are $n$--manifolds with boundary and $T_1\cap T_2=\bdy
T_1=\bdy T_2=\tV _0$. Now $g$ fixes $\tV _0\subset\tV $ pointwise,
and it preserves the orientation of $T$ since 
$\orbM$ 
is orientable
in this case. Hence $g$ cannot interchange $T_1$ and $T_2$, and it
must therefore leave each of them invariant. We can now finish the
proof exactly as in Case \eqref{Hyp:Codim1Bdy}, with $T_1$ and  $z\in\bdy T_1$ playing the
respective roles of $\tU'$ and $\tw \in \bdy\tU'$.
\EndProof

In  \S\S\ref{suborbs continued}--\ref{more orbifold notions},
$\orbM$ will denote an $n$--orbifold with
corners, and we shall set $M=\mani\orbM$.

\Custom{Suborbifolds, continued}
\label{suborbs continued}
If $U$ is an arbitrary open
subset of $M$, then 
$\orbi U$ is defined, and 
by Proposition \ref{codim 1 is nice} it
 is an $n$--dimensional suborbifold with corners (in the strong sense).
We shall call $\orbi U$ an  \emph{open
  suborbifold with corners} of $\orbM$. 

In particular, 
if $( U ,
\tU , \Phi, G)$, is any chart for an orbifold 
with corners
$\orbM$, 
there is an open suborbifold 
with corners $\orbU\doteq\orbi U$ of $\orbM$. 
The map 
$\Phi$ defines a
finite-sheeted regular orbifold covering map from $\tU$ to the open
suborbifold 
with corners
$\orbi U $ of $\orbM$. Furthermore, if 
$( U , D, \Phi, G)$ is a standard chart, then $D$ is the
universal cover of $\orbU = \orbi U $ and,
up to equivalence, is the unique connected manifold cover of $\orbU$.

For another example, the set of interior points of $\orbM$ defines an open suborbifold, called the \emph{interior of $\orbM$} and denoted $\inter \orbM$.

An  \emph{open neighborhood} of a point $x$
of $\orbM$ is an open suborbifold with corners having $x$ as a point. 

A  \emph{standard neighborhood} of a point 
$x$ of $\orbM$ is
an open neighborhood of the form $\orbU=\orbi U$, where $U$ is the support
of a standard chart  centered at $x$.

Any component $K$  of $M$ is an open subset of $\mani\orbM$, and
hence $\orbi K$ is defined. We call $\orbi K$
a  \emph{component} of $\orbM$. Note
that $\orbM$ is connected if and only if it has exactly one component.

We define a {\it
  suborbifold} or  \emph{weak suborbifold} of $\orbM$ to be,
respectively a suborbifold with corners or a weak suborbifold with
corners which is itself an orbifold. 
Note that we use this terminology even when $\orbM$ is not itself an
orbifold (that is, $\orbM$ has $k$--corner points for $k > 1$). 
This slight abuse of language is analogous
to speaking, for example, of a subfield of a ring. Likewise we may
refer to a ``submanifold'' (with or without corners) of the
general orbifold $\orbM$, meaning a suborbifold of  $\orbM$ which is itself a manifold.

If  $Z$ denotes the set of $1$--corner points of 
an orbifold $\orbM$, then $\orbi Z$
is defined. We shall refer to $\orbi Z$ as the
 \emph{boundary} of $\orbM$ and denote it $\bdy\orbM$.
It follows from Proposition \ref{codim 1 is nice} that $\bdy\orbM$
is a suborbifold (in the strong sense) of $\orbM$.

An  \emph{immersion} from $\orbM$ to an orbifold with corners $\orbM'$
is defined to be a smooth map  $f\from \orbM\to  \orbM'$ with the property that
every point of $\orbM$ has a neighborhood $\orbU$ such that
$f|\orbU\from \orbU\to\orbM'$ is an embedding.
\EndCustom

\Custom{Weak suborbifolds that are manifolds}
\label{when it's a manifold}
We will often encounter situations in which a 
subset $Y$ of $M$ has the property that $\orbi Y$ is defined and
is a manifold with corners (and possibly a manifold).
We  will then write
$Y=\orbi Y$, and refer to $Y$ as a manifold with corners (or a manifold). This is consistent with
the universal convention in manifold theory whereby a manifold is not
distinguished notationally from its underlying set or topological
space. 

In particular, if $x$ is any point of $\orbM$ then 
$\orbi{\{x\}}$ is  defined, and is a
$0$--manifold. 
\EndCustom

\Custom{Notational shortcuts}
\label{more orbifold notions}

An orbifold 
with corners
$\orbM$ is said to be  \emph{compact} or  \emph{connected} if
$\mani\orbM$ is, respectively, compact or connected.

A good many set-theoretic or general-topological operations on subsets
of $M$ can be interpreted as operations on suborbifolds with corners, in
situations where certain conditions hold. For example, if $\orbY$ and $\orbZ$
are suborbifolds with corners of $\orbM$, if we set
$Y=\mani\orbY$, $Z=\mani \orbZ$, and if $\orbi{Y\cap Z}$ is defined, we will 
denote $\orbi{Y\cap Z}$ by 
$\orbY\cap\orbZ$. 
We follow similar conventions regarding the intersection or union of
any  family of suborbifolds, for inclusion 
relations between
suborbifolds, and for set-theoretic differences of suborbifolds. 
(The intersection and union of any finite family of open suborbifolds is always defined.)

If $\orbY$ is a suborbifold with corners of  $\orbM$, if we set $Y=\mani\orbY$, and
if $\orbi
{\overline Y}$ is defined, we denote $\orbi
{\overline Y}$ by $\overline{\orbY}$ and refer to it as the {\it
  closure} of $\orbY$; we use a similar convention for frontiers of suborbifolds.

If $f$ is a smooth map of an orbifold with corners $\orbN$ into $\orbM$  defined
by a map $F\from \mani\orbN\to M$, if $\orbY$ is a suborbifold with
corners of $\orbN$, if we set $Y=\mani\orbY$, and if
$\orbi {F(Y)}$ is
defined, we will denote $\orbi {F(Y)}$ by $f(\orbY)$. 
If $f$ is an embedding of $\orbN$ in $\orbM$, then 
$f(\orbY)$
  is defined for any suborbifold $\orbY$ of $\orbN$; in particular the  \emph{image} $f(\orbM)$ of
$f$ is defined in this case. We follow a similar convention for
preimages. The preimage of an open suborbifold under an orbifold map
is always defined. 

We shall use a number of  locutions that generalize familiar locutions for
manifolds, and are self-explanatory; for example, to say that a map
$f$ from $\orbM$ to an orbifold with  corners $\orbN$ 
maps a suborbifold 
with corners $\orbY$ of $\orbM$
diffeomorphically onto a suborbifold with corners $\orbZ$ of $\orbN$ means that
$f(\orbY)=\orbZ$ and that $f|\orbY\from \orbY\to\orbZ$ is a diffeomorphism.
\EndCustom

\Custom{Straightening angles}\label{what's sta}
Let $\orbM$ be an arbitrary  $n$--orbifold with nonsingular
$2$--corners. 
Set $M=\mani\orbM$, and let $K\subset M$ denote the set of all
$2$--corner points of $\orbM$. If we set $V\doteq M-\Sing_\orbM\supset K$, then
with the conventions of \S\ref{when it's a manifold}, $V=\orbi V$ is
an open
submanifold with corners of $\orbM$. On the other hand,
$\orbW\doteq\orbi{M-K}$ is an open suborbifold (with boundary) of
$\orbM$.

Let $V'$ denote the manifold
(with boundary) obtained from $V$ 
by the standard operation of \sta\ 
(see Douady and H\'erault's appendix to \cite{borel-serre}). 
Then $V'$ has the same underlying set as $V$, and the smooth manifold
structure of $V'$ --- which may in particular be regarded as a (smooth)
orbifold structure --- agrees with the orbifold structure of 
$\orbW$ on
the open subset $(M-K)\cap V$ of $M$. Hence there is a unique 
orbifold structure on $M$ which restricts to the orbifold-with-corners
structure of $\orbW$ 
and to the smooth manifold structure of $V'$. The
resulting orbifold (with boundary)
will be said to be  \emph{obtained from $\orbM$ by \sta.} 
\EndCustom

\Custom{Strata}[\rm{cf.\ Choi~\cite[Section 4.5]{Choi:Orbifolds}}]
\label{strata def}
Let $\orbM$ be an 
orbifold with corners. 
For each
positive integer 
$p$, let $Z_p(\orbM)$ denote the subset of $\mani\orbM$ consisting of
all points whose order is $p$. 
We define a  \emph{stratum} of $\orbM$ to be a
subset of $\mani\orbM$ which is a connected component of $Z_p$ for some $p >
0$.

It follows from this definition that the strata of $\orbM$ form a
partition of $M\doteq\mani\orbM$.

 Let $( U , \maybeD, \Phi, G)$ be any standard chart
  centered at an arbitrary 
point $x\in\orbM$. Thus for some $j\le n$ we have
$D=(\inter \BB^n) \cap (\RR^{n-j}\times\RR_+^j)$ and $G\le\bigO(n-j)\times\scrS_j$.
For each subgroup $H$ of
  $G$, let $R_H\subset\maybeD$ denote the set of all points whose
  stabilizer in $G$ is precisely $H$.
Thus if 
$W_L\subset \RR^{n-j}\times\RR_+^j$  denotes the fixed set
of a subgroup
$L$ of $G$---which is the intersection of $\RR^{n-j}\times\RR_+^j$ with
a linear subspace of $\RR^n$---we have 
\Equation\label{what RH looks like}
R_H=D\cap \Big( W_H-\bigcup_{H< L\le G}W_L \Big).
\EndEquation

If $y$ is any point of $D$, and we set $H=\Stab_G(y)$ and $p=|H|$, 
then $y$ has a neighborhood $V\subset D$
such that $\Stab_G(z) \leq H$ for every $z\in V$. Hence for any $z\in
V$, we have $| \Stab_G(z) | =p$ if and only if $\Stab_G(z)=H$. This means
that 
\Equation\label{why was six}
V\cap\Phi^{-1}(Z_p)=V\cap R_H.
\EndEquation
 It follows 
 from \ref{why was six} 
that:

\Claim\label{was six}
A subset of $D$ is a component of $R_H$ for some $H\le G$
if and only if it is a component of $\Phi^{-1}(\sigma)$ for some
  stratum $\sigma$ of $\orbM$.
Hence a subset of $U$ is a component of $\Phi(R_H)$ for some $H\le G$
if and only if it is a component of $U\cap\sigma$ for some
  stratum $\sigma$ of $\orbM$.
\EndClaim

It follows from \ref{was six} and the definition of the sets $R_H$
that for any stratum $\sigma$ of $\orbM$, all points of
$\Phi^{-1}(\sigma)$ have the same stabilizer in $G$. As this holds for
 every standard chart $( U , \maybeD, \Phi, G)$  for $\orbM$, we deduce:
\Claim\label{was three}
For any stratum $\sigma$ of $\orbM$, the local groups of all points of
$\sigma$, regarded as subgroups of $\bigO(n)$, all coincide up to
conjugacy. Thus $\sigma$ has a  \emph{local group} which is a subgroup
of $\bigO(n)$, well-defined up to conjugacy. It will be denoted by $G_\sigma$.
\EndClaim

Again considering an arbitrary  stratum  $\sigma$ of $\orbM$ and an arbitrary standard chart
$( U , \maybeD, \Phi, G)$, 
observe that \eqref{what RH looks like} implies that for each
subgroup $H$ of $G$, the set $R_H$ is a manifold with corners, and
that for any $k\le n$, the set of $k$--corner points of $R_H$ is the
intersection of $R_H$ with the set of $k$--corner points of $D$. 
Note that $H$ is the largest subgroup of $G$ that acts trivially on $R_H$.
Hence, if we set $E=\Stab_G(R_H)$, 
then
$H$ is a normal subgroup of $E$, and the action of $E$ induces an
action of $E/H$ on $R_H$. This action is free (since $\Stab_G(y)=H$ for every 
$y\in R_H$), and thus the quotient is a manifold. 
This proves:

\Claim\label{was too}
For each stratum $\sigma$ of $\orbM$, 
the weak
suborbifold $\orbi\sigma$ of  $\orbM$
is defined, and
is a (smooth)
manifold
with corners. Thus according to the convention of 
\S\ref{when it's a manifold} 
we
may write $\sigma=\orbi\sigma$ and regard $\sigma$ as a
manifold
with corners. 
Furthermore, for each $k \leq n$, the set of $k$--corner points of $\sigma$ is the
intersection of $\sigma$ with the set of $k$--corner points of $\orbM$. 

In particular,  each stratum of
  $\orbM$ has a well-defined  \emph{dimension}. A stratum whose
  dimension is a given integer $d$ 
may be called a
  \emph{$d$--stratum}. 
\EndClaim

The proof of \ref{was too} shows that for 
any  stratum  $\sigma$, and any standard chart
$( U , \maybeD, \Phi, G)$ centered at a point of $\sigma$, we have
$\dim\sigma=\dim R_G$. In particular: 
\Claim\label{was four}
A stratum $\sigma$ of $\orbM$ has dimension $n$ if and only if
  $G_\sigma=\{1\}$. 
Hence
 $\Sing_\orbM$ is the union of all
  strata of $\orbM$ having dimension strictly less than $n$.
These will be called \emph{singular strata}.
\EndClaim

Finally, we observe, as a consequence of \eqref{what RH looks like}, that 
for 
\delete{any  stratum  $\sigma$, and} any standard chart
$( U , \maybeD, \Phi, G)$, the collection of all components of the sets 
$R_H$ with $H \le G$ is
finite; and that for each set $J$ in this collection, $\overline{J}-J$
  is a union of sets in the collection having lower dimension than
  $J$. Hence:
\Claim\label{missing piece}
The set of all strata of $\orbM$ is locally finite, and for each
stratum $\sigma$, the set $\overline{\sigma}-\sigma$
  is a union of strata having lower dimension than
  $\sigma$.
\EndClaim
\EndCustom

\Custom{Orbifold fundamental groups}
\label{path subsection}
The treatment of orbifold fundamental groups in this subsection is
inspired by the material in Boileau, Maillot, and Porti~\cite[Section
2.2]{BoileauMaillotPorti}. However, we have had to reorganize the
material here, and coordinate it with the material in \S\S\ref{maps and such} and \ref{maps continued}, in order to guarantee
functoriality of the orbifold fundamental group, which is used strongly
in the later sections of this paper.

The unit interval $I$ is a $1$--manifold, and in particular a
$1$--orbifold. We define an  \emph{orbifold path} in an orbifold with
corners $\orbM$ to be a continuous orbifold map (see \S\ref{maps and such}) from
$I$ to $\orbM$. The underlying map of an orbifold path $\alpha$ is a
path $A$ in the topological space $\mani\orbM$; the initial and
terminal points of $A$ will be called the initial and
terminal points of $\alpha$. The  \emph{image} of the orbifold path
$\alpha$ is defined to be $A(I)$.

We emphasize that in general an orbifold path is not uniquely
determined by its underlying map. Indeed, Example~\ref{UnderlyingMapNotEnough}
 can easily be adapted to illustrate this.

Suppose that $\alpha$ and  $\beta$ are orbifold paths in $\orbM$, that
the terminal point of $\alpha$ is a nonsingular point $c\in\orbM$,
and that $c$ is the initial point of $\beta$. Let $u$ and $v$ denote
the unique orbifold self-maps of $I$ whose underlying maps are given
by $t\mapsto t/2$ and $t\mapsto (t+1)/2$ respectively. Then there is a
unique orbifold path $\alpha\cdot\beta$ in $\orbM$,
called the  \emph{concatenation} of $\alpha$ and $\beta$,
 such that
$(\alpha\cdot\beta)\circ u=\alpha$ and $(\alpha\cdot\beta)\circ
v=\beta$ (where composition of orbifold maps is defined as in
\S\ref{maps continued}). 
The uniqueness assertion depends on the assumption that $c$ is
nonsingular.

An  \emph{orbifold homotopy} between orbifold paths $\alpha_0$ and
$\alpha_1$ in $\orbM$ is a continuous orbifold map $h\from I\times
I\to\orbM$ such that $h\circ q_i=\alpha_i$ for $i=0,1$, and $h\circ
r_i$ is a constant orbifold path for $i=0,1$, where $q_i,r_i\from I\to
I\times I$ are the orbifold maps whose underlying maps are given
respectively by $q_i(t)=(t,i)$ and $r_i(t)=(i,t)$.

If $h$ is an orbifold homotopy between orbifold paths $\alpha$ and
$\alpha'$, and $h'$ is an orbifold
 homotopy between $\alpha'$ and another orbifold path $\alpha''$, then
 there exists an orbifold homotopy $H$ between $\alpha$ and
 $\alpha''$ such that the equalities of orbifold maps $H\circ (\id\times u)=h$ and $H\circ (\id\times
 v)=h'$ hold, where $u$ and $v$ are the maps defined above. (The
 homotopy $H$ is not in general unique, although its underlying map is
 determined by those of $h$ and $h'$. The construction of the maximal
 atlas of local lifts defining $H$ requires a little care.) This shows
 that homotopy of orbifold paths is an equivalence relation.

Suppose an orbifold with corners $\orbM$ is equipped with a nonsingular basepoint
$\star$. If $\alpha$ and $\beta$ are orbifold loops based at $\star$,
i.e.\ orbifold paths having $\star$ as both initial and terminal point,
the orbifold homotopy class of $\alpha\cdot\beta$ depends only on those of
$\alpha$ and $\beta$. Thus the set 
$\pi_1(\orbM, \star)$ of all orbifold homotopy classes of orbifold loops based at
$\star$ inherits a binary operation. With respect to this operation, $
\pi_1(\orbM,\star)$ is a group. 

Now suppose that $f$ is a continuous orbifold map from an orbifold
with corners $\orbM$ to an orbifold
with corners  $\orbM'$.
Then for any orbifold path $\alpha$ in
$\orbM$, 
the composition of orbifold maps
$f\circ\alpha$ is  an orbifold path in
$\orbM'$.  Furthermore, if $h$ is an orbifold homotopy between orbifold paths
$\alpha$ and $\alpha'$ in $\orbM$, then $f\circ h$ is a homotopy
between $f\circ\alpha$ and $f\circ\alpha'$. Using these observations
and the definition of the group operation in the orbifold fundamental
group, one deduces that if $\orbM$ and $\orbM'$ are equipped with nonsingular basepoints
$\star$ and $\star'$, and if $f$
carries $\star$ to $\star'$, then $f$ induces a homomorphism
$f_* \from\pi_1(\orbM,\star)\to \pi_1(\orbM',\star')$. 
This makes
$\pi_1$ a functor from the category of orbifolds with corners,
equipped with nonsingular basepoints (and with continuous basepoint-preserving orbifold
maps as morphisms), to the category of groups.

If an orbifold with corners $\orbM$ is connected, then for any two points $x,y\in\orbM$ there
is a path with initial point $x$ and terminal point $y$. If $\alpha$
is any such orbifold path, there is a well-defined isomorphism from
$\pi_1(\orbM,x)$ to $\pi_1(\orbM,y)$ given by
$[\gamma] \mapsto [\baralpha\cdot\gamma\cdot\alpha]$. From this one
deduces that if $f\from\orbM\to\orbM'$ is an 
orbifold map between connected orbifolds with corners, replacing one
consistent pair of nonsingular basepoints for $\orbM$ and $\orbM'$ by another has
the effect of precomposing and postcomposing $f_*$ by group
isomorphisms. In particular, it makes sense to say that 
$f_*\from\pi_1(\orbM)\to \pi_1(\orbM')$ is injective, or
surjective, without specifying basepoints.

These observations apply when $\orbM$ is a connected suborbifold with corners of a connected orbifold
with corners
$\orbM'$ and $f$ is the inclusion map. We shall say that
$\orbM$
is \emph{$\pi_1$--injective} in $\orbM'$ if the inclusion homomorphism 
$f_* \from\pi_1(\orbM)\to \pi_1(\orbM')$ is injective. 
An arbitrary suborbifold $\orbY$ of an arbitrary orbifold $\orbM$ will
be termed \emph{$\pi_1$--injective} in $\orbM$ if each component of
$\orbY$ is $\pi_1$--injective in the component of $\orbM$ containing it.

An orbifold path $\alpha$ in an orbifold with corners $\orbM$ will be
termed  \emph{generic} if $\alpha^{-1}(\Sing_\orbM)$ is a finite subset
  of $(0,1)\subset[0,1]$. The following fact is crucial in relating
  orbifold fundamental groups to orbifold covering spaces:

\Claim\label{crucial fact}
Suppose that $f\from\orbM'\to\orbM$ is an orbifold covering, and that
$\alpha$ is a generic orbifold path in $\orbM$. Let $\star$ denote the
initial point of $\alpha$, and let $\star'$ be a 
point of $f^{-1}(\star) \subset \orbM'$.
Then there is a unique orbifold
path $\alpha'$ in $\orbM'$ which is a lift of $\alpha$ (in the sense
that $f\circ\alpha'=\alpha$) and has initial point $\star'$.
\EndClaim

To prove \ref{crucial fact}, one writes $\alpha$ as a concatenation of orbifold
paths $\alpha_1,\ldots,\alpha_m$ where each $\alpha_k$ has
nonsingular  initial and terminal points $i_k$ and $t_k$; we have
$i_{k+1}=t_k$ whenever $1\le k<m$,  and the  image of each $\alpha_k$
is contained in  the support of an
evenly covered chart $( U_k ,
\tU_k , \Phi_k, G_k)$. If $F$ denotes the underlying map of $f$,
and $U_1'$ denotes the component of $F^{-1}(U_1)$ containing $\star'$, it follows from the definitions that
$\alpha_1$ admits a lift to $\orbM'$ whose image is contained in
$U_1'$, and that $G_1$ acts transitively on all such lifts. Since
$\star$ is nonsingular, there is a unique lift $\alpha_1'$ of $\alpha_1$ to
$\orbM'$ having initial point $\star'$. The same argument shows that
if $1\le k<m$, and if $\alpha_k'$ is a lift of $\alpha_k$, there is a
unique lift of $\alpha_{k+1}$ whose initial point is the terminal point
of $\alpha_k'$. Now \ref{crucial fact} follows by induction.

The usefulness of \ref{crucial fact} arises from the following fact,
which 
in turn follows from Assertions \ref{was six} and \ref{what RH looks like} in the discussion of strata:

\Claim\label{homotop to generic}
Any path in an orbifold with corners, whose initial and terminal
points are nonsingular, is homotopic to a generic path.
\EndClaim

The following fact is easier to verify than \ref{crucial fact},
because there is no uniqueness assertion, and the step involving the
group action is therefore not needed:

\Claim\label{lifting homotopies}
Suppose that $f\from\orbM'\to\orbM$ is an orbifold covering, and that $h$
is an orbifold homotopy between orbifold paths $\alpha_0$ and
$\alpha_1$ in $\orbM$. If $\alpha_0'$ is a lift of $\alpha_0$ to
$\orbM'$, then $h$ admits a lift which is a homotopy from $\alpha_0'$
to some lift of $\alpha_1$.
\EndClaim

Using \ref{crucial fact}, \ref{homotop to generic}, and \ref{lifting homotopies}, it is now quite straightforward to establish the
direct generalization of the familiar facts relating
manifold covers and fundamental groups to the context of orbifold covers of connected orbifolds with
corners, equipped with nonsingular basepoints. These include the lifting criterion (for a
basepoint-preserving map between orbifolds with corners equipped with nonsingular
basepoints) and the correspondence between connected covers of an orbifold with
corners and subgroups of its fundamental group. In particular, $\orbM$ has a universal cover $\torbM$ 
that is simply connected, and there is an isomorphism
between the orbifold fundamental group $\pi_1(\orbM)$ and the deck group of  $\torbM $. Compare \cite[Theorems 2.1 and 2.2]{BoileauMaillotPorti}.
\EndCustom

\Custom{Size and weight of a subset}
\label{size 'n' stuff}
Let $\orbM$ be an orbifold
with corners, 
and let $Z$ be a subset of
$\mani\orbM$. We define the  \emph{size} of $Z$, denoted
 $\size(Z)$, to be $\card(Z\cap\Sing_\orbM)$; note that $\size(Z)$ may
 be an infinite or a finite cardinal. In the case where 
$\size(Z)$ is
 finite,  we define the  \emph{weight} of $Z$, denoted $\weighttwo(Z)$,
 to be 
$\sum_{x\in Z\cap\Sing_\orbM}\ord(x)$.
\EndCustom

\section{Low-dimensional orbifolds}\label{low-dim sec}

This section develops some key definitions and structural results about orbifolds of dimension $n \leq 3$. 
This dimension assumption makes some of the foundational results 
of Section \ref{orbisection} considerably more concrete.
In particular, Propositions
\ref{Prop:LocalStructure2Orbifold} and \ref{Prop:LocalStructure3Orbifold} describe the strata of the singular locus of a $2$--orbifold and $3$--orbifold, respectively. Notably, \S\ref{wuzzaccordant} introduces the key idea of an \emph{$\orbM$--accordant} smooth structure on the underlying space $\mani \orbM$. Several definitions, lemmas, and propositions in this section provide orbifold analogues of standard $3$--manifold notions such as essential surfaces, irreducibility, and parallelism. 

Finally, Proposition~\ref{cause gromov} and Corollary~\ref{Cor:SimpleVolBound} establish a key connection between the volume of a hyperbolic $3$--orbifold $\orbM$ and the Gromov norm of its underlying space $M \doteq \mani \orbM$. These results imply Proposition~\ref{Prop:IntroGromov} in the introduction and provide a key ingredient for the proof of the main theorems.

\Custom{Orbifolds of dimension $1$}
\label{folded}
Any compact, connected $1$--orbifold is either a manifold
diffeomorphic to $\SS^1$ or $I$, or is diffeomorphic to the quotient of
$\SS^1$ or $I$ by a reflection $\tau$. The orbifold $\SS^1/\tau$ has one
$1$--stratum and two $0$--strata, whereas $I/\tau$ has one
$1$--stratum and one $0$--stratum. We shall refer to an orbifold
diffeomorphic to $I/\tau$ as a \emph{folded \arc.}
\EndCustom

By combining 
Assertions \ref{was six} and \ref{what RH looks like}
with the classification (up to conjugacy) of finite subgroups of
$\bigO(2)$, one obtains 
the following result.

\begin{proposition}[\rm{cf.\ Cooper, Hodgson, and Kerckhoff \cite[Theorem 2.3 and the ensuing
discussion]{CHK:OrbifoldBook}}]
\label{Prop:LocalStructure2Orbifold}
For any stratum  $\sigma$  of  a $2$--orbifold  $\orbS$,
exactly one of the following alternatives holds.
\begin{enumerate}[\:\:$(1)$]
\item  
$\sigma\cap\Sing_\orbS=\emptyset$ (so that $\dim\sigma=2$ and
  $G_\sigma=\{1\}$ by 
Assertion \ref{was four}).
In this case, for any
  standard neighborhood $\orbU$
 of a point $x$ of $\sigma$, the set
  $U\doteq\mani\orbU$ is disjoint from $\Sing_\orbS$. If
  $x\in\inter\orbS$ then $U$ is an open $2$--disk. If
  $x\in\bdy\orbS$ then $U$ is homeomorphic to an open half-ball in
$\RR\times\RR_+$.
\item   $\sigma$ has dimension $0$ and is contained in $\inter\orbS$, and $G_\sigma$ is a cyclic group of rotations
  of some order 
$p > 1$. 
Furthermore, $\sigma$ is a full component of
  $\Sing_\orbS$, and if $\orbU$ is a standard neighborhood of the unique point $x$
  of $\sigma$ then $\mani\orbU$ is 
homeomorphic to
an open disk.
In this case, $x$ is called a \emph{cone point} of
  order 
$p$. 
\item $\sigma$ has dimension $1$, and $G_\sigma $ is an order-two cyclic group
  generated  by a reflection. 
For every point $x$ of $\sigma$ and every standard neighborhood $\orbU$ 
  of $x$ in $\orbS$, the intersection of $U\doteq\mani\orbU$ with
  $\Sing_\orbS$ is contained in $\sigma$. If $x\in\inter\orbS$, there
  is a homeomorphism of $U$ onto an open half-ball in $\RR\times\RR_+$, mapping 
  $U\cap\sigma$ to the boundary of the half-ball.
If $x\in\bdy\orbS$, there
  is a homeomorphism of $U$ onto 
an open quarter-ball 
  in $\RR_+ \times \RR_+$, mapping
  $x$,  $U\cap\sigma$ and $U\cap\mani{\bdy\orbS}$ into
  $(0,0)$,
  $\RR_+ \times\{0\}$ and $\{0\}\times\RR_+$ respectively.
In this case, $\sigma$ is called a \emph{mirror
    stratum}, and its points are called \emph{mirror points}.
\item $\sigma$ has dimension $0$ and is contained in $\inter\orbS$,
  and $G_\sigma$ is a dihedral group of order 
$2p$ for some $p > 1$,
  generated by reflections about two lines that meet at an angle of
  $\pi/p$. 
Furthermore, if $\orbU$ is a standard neighborhood of the
  unique point $x$ of $\sigma$, then $U\doteq\mani\orbU$ is 
homeomorphic to
an open quarter-ball 
in $\RR_+ \times \RR_+$, 
 with $x$ mapping to $(0,0)$, and 
 the two
  components of $\bdy U-\{x\}$ mapping to intervals in 
   $\RR_+ \times\{0\}$ and $\{0\}\times\RR_+$ and contained in mirror strata.
    In this case,  $x$ is called a \emph{bi-mirror point} of order
$2p$.
\end{enumerate}

We shall denote by $\mirror \orbS\subset\Sing_\orbS$  the union of all
mirror and bi-mirror strata of $\orbS$. It follows from the assertions
above that $\mirror \orbS$ may be described as the closure of
the union of  all mirror strata of $\orbS$, or as the
set of all points whose local group contains an orientation-reversing
element.  It also follows from the assertions above that $\mani\orbS$ is a topological
$2$--manifold, and that $\bdy \mani\orbS=\mani{\bdy\orbS}\cup
\mirror\orbS$. In particular, if $\orbS$ is orientable then  $\bdy
\mani\orbS=\mani{\bdy\orbS}$.
\NoProof
\end{proposition}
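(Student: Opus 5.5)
The proof is a local computation in a standard chart. Fix a point $x\in\sigma$ and a standard chart $(U,D,\Phi,G)$ centered at $x$. Here $D=\inter\BB^2\cap(\RR^{2-j}\times\RR_+^j)$ with $j\in\{0,1\}$, since $\orbS$ is an orbifold with boundary. The group $G$ is a finite subgroup of $\bigO(2-j)\times\scrS_j$, identified with $G_x$.

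The first step is to enumerate the possibilities for $G$. If $j=1$, then $G\le\bigO(1)\times\scrS_1\cong\ZZ/2$, so $G$ is trivial or is generated by the reflection of the half-disk across its axis. If $j=0$, we use the classification of finite subgroups of $\bigO(2)$. Up to conjugacy, $G$ is one of: trivial, cyclic of order $p>1$ generated by a rotation, of order $2$ generated by a reflection, or dihedral of order $2p$ with $p>1$, generated by reflections in two lines meeting at angle $\pi/p$.

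For each case we compute the sets $R_H$ from \eqref{what RH looks like}. By \ref{was six}, the components of $\Phi(R_H)$ are exactly the components of $U\cap\sigma'$ for strata $\sigma'$. This identifies the stratum through $x$, namely $\Phi(R_G)$, and its local group. We also compute $U=D/G$ explicitly in each case:
\begin{itemize}
\item In the trivial case, $U\cong D$ is an open disk or half-ball, giving alternative (1).
\item In the rotation case, $R_G=\{0\}$, every other point has trivial stabilizer, and $D/G$ is an open disk (cone). This gives (2), and $\{x\}$ is a full component of $\Sing_\orbS$ since $U\cap\Sing_\orbS=\{x\}$.
\item In the reflection case with $j=0$, $R_G$ is the mirror line and $D/G$ is a half-ball with the line going to the boundary.
\item In the reflection case with $j=1$, $D/G$ is a quarter-ball, with the axis going to one edge and $\bdy D$ going to the other edge. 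These two subcases give (3).
\item In the dihedral case, $R_G=\{0\}$, and each of the reflection lines minus $0$ has stabilizer of order $2$. The quotient is a sector of angle $\pi/p$, that is, a quarter-ball whose two edges consist of mirror points. This gives (4).
\end{itemize}
Since the conjugacy class of $G_\sigma$ is constant along $\sigma$ by \ref{was three}, the alternatives are mutually exclusive and the alternative that holds is determined by $\sigma$ rather than by $x$. The statement that $U\cap\Sing_\orbS\subset\sigma$ in (3) follows because in the reflection chart the only points with nontrivial stabilizer lie on the axis or mirror line. The dimension claims come from $\dim\sigma=\dim R_G$, as in the proof of \ref{was too}.

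For the closing assertions, note that the points whose local group contains an orientation-reversing element are exactly those in cases (3) and (4). Each bi-mirror point lies in the closure of mirror strata by the description in (4), which gives both characterizations of $\mirror\orbS$. Every local model is an open disk, a half-ball, or a quarter-ball. Quarter-balls are homeomorphic to half-balls, so $\mani\orbS$ is a topological $2$--manifold. The topological boundary points are exactly the boundary points of the half-ball models in cases (1) and (3), together with the corner points of the quarter-ball models. These are precisely $\mani{\bdy\orbS}\cup\mirror\orbS$. If $\orbS$ is orientable, all chart groups are orientation-preserving, so $\mirror\orbS=\emptyset$.

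The main obstacle is mostly bookkeeping. One must check carefully that the quotient maps $D\to D/G$ realize the stated homeomorphisms, and in particular that the dihedral sector is identified with a quarter-ball compatibly with the edges. Each of these checks is elementary.
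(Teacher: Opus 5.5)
Your proposal is correct and is the argument the paper has in mind. The paper writes out no proof; it only says that the proposition follows by combining \ref{was six} and \eqref{what RH looks like} with the classification of finite subgroups of $\bigO(2)$, which is exactly your case-by-case computation of the sets $R_H$ and the quotient $D/G$ in a standard chart. One step is passed over quickly: the closure characterization of $\mirror\orbS$ needs every limit point of mirror points to be a mirror or bi-mirror point, and this follows at once from your local models, since the standard neighborhoods in cases (1) and (2) contain no mirror points.
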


\Custom{\Reflective\ coverings}
\label{reflective}
Let $\orbS$ be a $2$--orbifold, set $S=\mani\orbS$, and let $\tS$
denote the topological $2$--manifold obtained by doubling $S$ along the
$1$--manifold $\mu\doteq\mirror\orbS\subset\bdy S$. Thus $\tS$ is a union of
two topological submanifolds $S_0$ and $S_1$, where each $S_i$ is equipped
with a canonical homeomorphism $h_i$ onto $S$. The $h_i$ agree on
$R\doteq S_0\cap S_1$, and map $R$ homeomorphically onto $\mu$. Let
$F\from\tS\to S$ denote the map such that $F|S_i=h_i$ for
$i=0,1$. Then there 
exist a unique $2$--orbifold $\torbS$ and a unique two-sheeted orbifold covering
map $f\from\torbS\to\orbS$ such that
$\mani\torbS=\tS$ and  $F$ is the underlying map of $f$. 
We shall call $\torbS$ the {\it
  \reflective\ covering} of $\orbS$. 

We have $R\doteq S_0\cap S_1=F^{-1}(\mirror\orbS)$, 
and the underlying map of 
the non-trivial deck transformation of $\torbS$
interchanges $S_0$ and $S_1$.

We have
$\mirror\torbS=\emptyset$, and the singular points of $\torbS$ are as follows. For any order--$p$ cone point $x \in \orbS$, the preimage $f^{-1}(x)$ consists of two order--$p$ cone points, namely $h_0^{-1}(x)$ and $h_1^{-1}(x)$. For any  order--$2p$ bi-mirror point  $x \in \orbS$, the preimage $f^{-1}(x)$ is a single order--$p$ cone point, namely $\tx = h_0^{-1}(x) = h_1^{-1}(x) \in R$. 
If
$( U , \maybeD, \Phi, G)$ is a standard chart  for
$\torbS$ centered at $\tx$, then $\Phi^{-1}(R)$ is a union of $p$
diameters of the open disk $D$. Thus $R$ is  \emph{not} in general the
underlying set of a suborbifold of $\torbS$. Likewise, $S_0$ and $S_1$
are not in general the
underlying sets of  suborbifolds of $\torbS$. 

For any small enough neighborhood $U$ of $\mirror\orbS$ in $S$, if we
set $\orbU=\orbi U$ and $\torbU=f^{-1}(\orbU)$, the covering
$f|\torbU\from\torbU\to\orbU$ is equivalent to the (orbifold)
orientation cover of $\orbU$. However, the \reflective\ covering of
$\orbS$ is not in general equivalent to its orientation covering. For
example, if $\orbS$ is a non-orientable closed $2$--manifold, regarded as a closed
$2$--orbifold with empty singular set, the orbifold orientation cover
of $\orbS$ coincides with its usual manifold orientation cover, but its
\reflective\ cover is a disjoint union of two trivial covers.
\EndCustom

By combining Assertions \ref{was six} and \ref{what RH looks like}
with the classification (up to conjugacy) of finite subgroups of
$\bigSO(3)$, one obtains:

\begin{proposition}[\rm{cf.\ \cite[Theorem 2.5]{CHK:OrbifoldBook}
and \cite[Figure 2.3]{BoileauMaillotPorti}}]
\label{Prop:LocalStructure3Orbifold}
If $\orbM$ is  an orientable $3$--orbifold with nonsingular
$2$--corners, then
for any stratum  $\sigma$  of   $\orbM$,
exactly one of the following alternatives holds.

\begin{enumerate}[\:\:$(1)$]
\item \label{ThreeOrbNonsing}
$\sigma\cap\Sing_\orbM=\emptyset$ (so that $\dim\sigma=3$ and
  $G_\sigma=\{1\}$ by 
Assertion \ref{was four}).
In this case, for any
  standard neighborhood $\orbU $ of a point $x$ of $\sigma$, the set
  $U\doteq\mani \orbU $ is disjoint from $\Sing_\orbM$. If
  $x\in\inter\orbM$ then $U$ is an open $3$--ball. If
  $x\in\bdy\orbM$ then $U$ is homeomorphic to  
an open half-ball in
  $\RR^2 \times \RR_+$, 
with $x\in\bdy U$.
 \item \label{ThreeOrbCone}
  $\sigma$ has dimension $1$, and $G_\sigma$ is a cyclic group of rotations
  of some order $p > 1$. 
Furthermore, 
if $\orbU $ is a standard neighborhood of a point $x$ of $\sigma$, then 
there is a homeomorphism $j$ from $U\doteq\mani \orbU $ to 
an open ball in $\RR^3$ (if $x\in\inter\orbM$) or to
an open half-ball in $\RR^2 \times \RR_+$ (if $x\in\bdy\orbM$).
In the respective cases, $j(U\cap\Sing_\orbM )$ is an  axis of the
ball or the  axis of the half-ball.

\item \label{ThreeOrbNode}
$\sigma$ has dimension $0$ and is contained in $\inter\orbM$, 
 and $G_\sigma$ is a spherical triangle group, given by a presentation
$$\langle \xi,\eta:\xi^{p_1}=\eta^{p_2}=(\xi\eta)^{p_3}=1\rangle$$
for some integers 
$p_1,p_2,p_3$
satisfying $1<p_1\le p_2\le p_3$ and $1/p_1+1/p_2+1/p_3>1$. 
Furthermore, 
if $\orbU$ is a standard neighborhood of the
unique point $x$ of $\sigma$, then 
$\mani\orbU\cap\Sing_\orbM$ has the form
  $\{x\}\cup\tau_1\cup\tau_2\cup\tau_3$, where for $i=1,2, 3$ the set $\tau_i$ is a component of
  the intersection of $\mani\orbU$ with a $1$--dimensional stratum of order 
$p_i$; and there is a homeomorphism
 of $\mani\orbU$ onto an open ball $E\subset\RR^3$ which
carries $x$ to the center $c$ of $E$, and carries each $\tau_i$ onto
$\rho_i-\{c\}$ for some radius $\rho_i$ of $E$. 

\end{enumerate}

In particular it follows that $M\doteq\mani\orbM$ is an orientable topological
$3$--manifold and
that $\bdy \mani\orbM=\mani{\bdy\orbM}$.
Furthermore,
every stratum of
$\orbM$ has dimension $0$, $1$ or $3$, and $\Sing_\orbM$ is homeomorphic to a $1$--complex.
\NoProof
\end{proposition}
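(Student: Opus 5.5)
The proof is local: every assertion concerns a single stratum $\sigma$ and a standard neighborhood of one of its points. So I would fix $x\in\sigma$ and a standard chart $( U , \maybeD, \Phi, G)$ centered at $x$, as in \S\ref{about orbifolds}. Because $\orbM$ is orientable, $G$ preserves orientation (\S\ref{orientable orbifolds}). Since $\orbM$ has nonsingular $2$--corners, a $2$--corner point has trivial local group. At such a point, and at every nonsingular point, Alternative~(1) follows at once: $U\cong D$, which is an open ball, a half-ball, or a quarter-ball, and after \sta\ a quarter-ball is homeomorphic to a half-ball. This leaves the singular points that are interior or $1$--corner points. For these, $G$ is a nontrivial finite subgroup of $\bigO(3)$ contained in $\bigSO(3)$, or of $\bigO(2)\times\scrS_1=\bigO(2)$ acting on the $\RR^2$ factor of $\RR^2\times\RR_+$. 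Orientation-preservation of $\RR^2\times\RR_+$ then forces $G\le\bigSO(2)$.

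Next I would apply the classification of finite subgroups of $\bigSO(3)$: cyclic, dihedral, $A_4$, $S_4$, $A_5$. Each is either cyclic or a spherical triangle group with $(p_1,p_2,p_3)=(2,2,n),(2,3,3),(2,3,4),(2,3,5)$, so $1/p_1+1/p_2+1/p_3>1$. Using \eqref{what RH looks like} and \ref{was six}, I would compute the sets $R_H$. A nontrivial element of $\bigSO(3)$ fixes exactly an axis, so $W_H$ is a line when $H$ is nontrivial cyclic and $W_H=\{0\}$ when $H$ is noncyclic. Consequently the only singular strata have dimension $1$ or $0$, and no $2$--strata occur. If $G$ is cyclic of order $p$, then $\Sing\cap U$ is the image of the axis, and $D/G$ is homeomorphic to a ball with that axis as an axis. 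Concretely, $z\mapsto z^p/|z|^{p-1}$ on the plane factor gives a radial homeomorphism $\RR^2/G\to\RR^2$. The same formula handles the half-ball case, where $G\le\bigSO(2)$ is necessarily cyclic. This is Alternative~(2), and it also shows that no node lies on $\bdy\orbM$.

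If $G$ is noncyclic, then $x$ is the unique point with stabilizer $G$. The rotation axes meeting $\partial D$ fall into exactly three $G$--orbits of half-axes, corresponding to the three conjugacy classes of maximal cyclic stabilizers of orders $p_1,p_2,p_3$. I would check this case by case, or more uniformly via the orbifold Euler characteristic of $\SS^2/G$. These orbits map to three arcs $\tau_i$ of $1$--strata of order $p_i$. For the homeomorphism onto a ball, I would note that $D/G$ is the open cone on $\SS^2/G$. Here $\SS^2/G$ is a topological sphere with three cone points, since it is a closed orientable $2$--orbifold of positive Euler characteristic with three cone points, and Proposition~\ref{Prop:LocalStructure2Orbifold} identifies it as a topological surface. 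Coning a homeomorphism $\SS^2/G\to\SS^2$ that sends the three cone points to the endpoints of three radii then gives the required map. This is Alternative~(3).

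The final assertions follow by gluing these local models. $M$ is locally Euclidean or half-Euclidean, hence a topological $3$--manifold, orientable because the charts are, and with boundary equal to the image of the $1$--corner points. Moreover $\Sing_\orbM$ is locally a point, an arc, or a cone on three points, so with \ref{missing piece} it is a $1$--complex. I expect the main obstacle to be the triangle-group case: checking carefully that the half-axes give exactly three orbits with the stated orders, and that the quotient of $\SS^2$ is a sphere, so that the cone description is justified. The rest is bookkeeping with \eqref{what RH looks like}.
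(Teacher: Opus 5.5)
Your proposal is correct and is essentially the paper's argument: the paper's proof consists only of combining \eqref{what RH looks like} and \ref{was six} with the classification of finite subgroups of $\bigSO(3)$, and you have written out those details. Two small corrections. The three $G$--orbits of half-axes do not always correspond to three conjugacy classes of maximal cyclic subgroups: for $A_4$, and for dihedral groups of order $2n$ with $n$ odd, there are only two such classes, because the two ends of a single axis lie in different orbits, so count orbits of poles on $\SS^2$ instead. Also, the topological boundary of $M$ contains the $2$--corner points as well as the $1$--corner points.
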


\Custom{Cone strata and nodes}
\label{what's a node}
Let $\orbM$ be an  orientable $3$--orbifold with nonsingular $2$--corners,
and let $\sigma$ be a singular  stratum of
    $\orbM$.
If Alternative \eqref{ThreeOrbCone} 
of the conclusion  of
    Proposition \ref{Prop:LocalStructure3Orbifold} holds, 
we shall call $\sigma$ a  \emph{cone stratum
  of order 
$p$}, where $p$ is the integer given by \eqref{ThreeOrbCone}. When Alternative \eqref{ThreeOrbNode}
holds, we call the unique point of $\sigma$ a  \emph{node of type
  $(p_1,p_2,p_3)$}, where   $(p_1,p_2,p_3)$ 
is the triple given by \eqref{ThreeOrbNode}. 

We define a   \emph{dihedral
  node} of $\orbM$ to be a node of type 
$(2,2,p)$ for some $p \geq 2$.

Now, suppose that $\orbM$ is compact, and that  $\tau$ is a $1$--stratum of $\Sing_\orbM$, equipped with an orientation. If there is a continuous map 
$\alpha\from [0,1]\to\mani\orbM$ 
which restricts to
an orientation-preserving 
diffeomorphism of $(0,1)$ onto
the $1$--manifold $\tau\cap\mani{\inter\orbM} $, 
such that $\alpha(1)$ is a
node, then $\alpha(1)$ is called a \emph{terminal node} of $\tau$. 
Note that an oriented $1$--stratum can have at most one terminal node.
Note also that $\alpha(0)$ may be the same node, a different node, or a $1$--corner point of $\orbM$.

For each node
$x$ of $\orbM$, there are exactly three oriented $1$--strata of
$\orbM$ having $x$ as their terminal nodes. The orders of these
oriented $1$--strata are 
$p_1$, $p_2$ and $p_3$, where $(p_1,p_2,p_3)$
is the type of $x$. It may happen that two of these oriented strata
are defined by distinct orientations of the same underlying stratum.
\EndCustom

\Custom{Orbifold Euler characteristic}\label{Euler}
Let $\orbM$ be a compact $n$--orbifold with 
$n\le2$, or a compact, orientable $3$--orbifold with nonsingular
$2$--corners. According to Assertion \ref{missing piece}, since $\orbM$ is
compact it has only finitely many strata; and for each stratum
$\sigma$, the set $\barsigma-\sigma$ is a union of strata. 
It then follows that $M$ admits a PL
structure with the property that the closure of each stratum 
of $\orbM$ is a PL subset. (See \S\ref{folded} when
$n=1$; Proposition \ref{Prop:LocalStructure2Orbifold} and the
triangulation theorem for $2$--manifolds~\cite[Theorem 4.1]{Thomassen:Jordan-curve} when $n=2$; and
Proposition \ref{Prop:LocalStructure3Orbifold} and
the triangulation theorem for $3$--manifolds~\cite[Theorem 8.1]{moise-tame}
when $n=3$.)
Hence for each stratum $\sigma$ of $\orbM$, the sets
$\barsigma$ and
$\barsigma-\sigma$ are compact PL spaces and therefore have
well-defined Euler characteristics. We define the  \emph{Euler
  characteristic of $\orbM$} 
  to be
\Equation\label{that's what chi is}
\chi(\orbM)=\sum_\sigma\frac{ \chi(\overline \sigma) - \chi(
\barsigma-\sigma)}{ \ord(\sigma)},
\EndEquation
where $\sigma$ ranges over the strata of $\orbM$.

In the special case where $\orbS$ is a compact, orientable $2$--orbifold, 
the only singular points are (isolated) cone points, and \eqref{that's what chi is} immediately implies 
 the well-known formula
\Equation\label{hurwitz}
\chi(\orbS)=\chi(\mani \orbS )-
\sum_{x\in\Sing_\orbS}\bigg(1-\frac1{\ord(x)}\bigg).
\EndEquation

Under the general hypotheses of this subsection, 
it follows from the observations made above that $M \doteq \mani \orbM$ admits a triangulation in
which $\barsigma$ and $\barsigma-\sigma$ are subcomplexes for each
stratum $\sigma$ of $\orbM$. 
Now, definition \eqref{that's what chi is} implies
 that for such a triangulation we have 
\Equation\label{in terms of simplices}
\chi(\orbM)=\sum_\tau (-1)^{\dim\tau}\frac{1}{\ord(\tau)},
\EndEquation
where $\tau$ ranges over the open simplices of $M$, and
$\ord(\tau)$ denotes the order of the stratum containing $\tau$. In
particular 
 the right hand side of \eqref{in terms of simplices} is independent of the choice of a triangulation of
$M$ with the stated properties. Compare \cite[Definition on p.~39]{BoileauMaillotPorti} and \cite[Section 2.4]{CHK:OrbifoldBook}.

From the formula \eqref{in terms of simplices} it follows readily that 
if $\torbM$ is an $n$--sheeted
covering  of
 $\orbM$, where $n<\infty$, then
$\chi(\torbM)=n\chi(\orbM)$. The same formula implies that if $\orbY$ and $\orbZ$ are
compact suborbifolds with corners  of $\orbM$
 such that
$\orbY\cap \orbZ$ and $\orbY\cup \orbZ$ 
are defined (see \S\ref{more orbifold notions}), 
then $\chi(\orbY\cup\orbZ)=\chi(\orbY)+\chi(\orbZ)-\chi(\orbY\cap\orbZ)$.
\EndCustom

\Remark
It can  be shown
that if the dimension and orientability assumptions are removed from the discussion in
\S\ref{Euler}, then for each stratum $\sigma$ of $\orbM$, 
the sets
$\barsigma$ and
$\barsigma-\sigma$ are compact locally triangulable PL spaces.
Shmuel Weinberger has pointed out to us that, as a consequence of the results of \cite{hanner} and \cite{west}, 
such spaces are homotopy-equivalent to compact CW complexes, and therefore
have
well-defined Euler characteristics. Furthermore, if $\chi(\orbM)$ is defined
by \eqref{that's what chi is}, then the assertions in \S\ref{Euler}
involving finite coverings, unions and intersections still hold. As
we will not need these results, we omit the proofs.
\EndRemark

\Lemma\label{very good near cone stratum}
Let $\sigma$ be a cone stratum in an orientable $3$--orbifold
$\orbM$. Then $\sigma$ has an open neighborhood $\orbW$ in $\orbM$ 
that
is very good. In fact,
$\orbW$ has a degree-$\ord(\sigma)$ regular covering $\torbW$ which is
a $3$--manifold, and the covering projection $\phi\from\torbW\to\orbW$ maps
$\phi^{-1}(\sigma)$ diffeomorphically onto $\sigma$.
\EndLemma

In Proposition~\ref{smooth underlying}, which depends on Lemma~\ref{very good near cone stratum}, we will show that $\orbW$ can be chosen in a way that makes it a tubular neighborhood of $\sigma$. However, this will require an additional argument.

\Proof
It follows from Proposition \ref{Prop:LocalStructure3Orbifold} that
$\sigma$ is a locally flat 
one-dimensional 
topological
submanifold of the topological manifold
$M\doteq\mani\orbM$. Therefore, according to 
\cite[Theorem 8.1]{moise-tame},
there is a PL structure on $M$ in which $\sigma$ is a PL
submanifold. Hence, by orientability, there is an open neighborhood $W$ 
of $\sigma$ in $M$, with $W\cap\Sing_\orbM=\sigma$, such that the topological pair $(W,\sigma)$ is
homeomorphic to $(\sigma\times \inter \BB^2, \sigma\times\{0\})$. In
particular, 
if we fix a transverse orientation of $\sigma$, then
each closed curve $\gamma$ in $W-\sigma$ has a well-defined winding
number
about $\sigma$, which we will denote by 
$d(\gamma)\in\ZZ$.

If $U$ is the support of a standard chart centered at a point
of $\sigma$, and $U\subset W$, then by the definition of a cone
stratum, $U\cap\Sing_\orbM=U\cap\sigma$ 
is an unknotted open arc in the open ball $U$; hence 
for any closed curve $\gamma$ in $U-(U\cap\sigma)$, the winding number
of $\gamma$ about $U\cap\sigma$ is defined and is equal to
$d(\gamma)$. If $p$ denotes the order of the cone stratum $\sigma$,
and if $\bard(\gamma)$ denotes the image of $d(\gamma)$ in $\ZZ/p\ZZ$
for an arbitrary  closed curve $\gamma$ in $W-\sigma$, we deduce:
\Claim\label{I need this}
If $U\subset W$ is the support of a standard chart centered at a point
of $\sigma$, and $\gamma$ is a closed curve in $U-(U\cap\sigma)$, 
then $\gamma$ represents the identity in $\pi_1(\orbi U)$ if and only if $\bard(\gamma)=0$. 
\EndClaim
Now suppose that $\gamma$ is a closed curve in $W-\sigma$ which
represents the identity in the fundamental group of $\orbW\doteq\orbi W$. Then the continuous
orbifold map $\gamma\from \SS^1\to\orbW$ extends to a  continuous
orbifold map $\delta\from \BB^2 \to\orbW$. We may choose $\delta$ so
that $\delta^{-1}(\sigma)$ is a finite set $\{x_1,\ldots,x_m\}$. If
$U_1,\ldots,U_m$ are pairwise disjoint supports of standard charts for
$\orbM$ centered at $x_1,\ldots,x_m$, we may choose subdisks
$D_1,\ldots,D_m$ of $\BB^2$ centered at the $x_i$ such that
$\delta(D_i)\subset U_i$; the restrictions of $\delta$ to 
$\bdy D_1,\ldots,\bdy D_m$ then define closed curves
$\gamma_i$ in the $U_i$, and each $\gamma_i$ represents the identity
in $\pi_1(\orbi U_i)$. In view of \ref{I need this} it now follows
that $\bard(\gamma_i)=0$ for $i=1,\ldots,m$, and hence that
$\bard(\gamma)=\sum_{i=1}^m\bard(\gamma_i)=0$. This shows that 
for every closed curve $\gamma$ in $W-\sigma$ which
represents the identity in $\pi_1(\orbW)$, we have $\bard(\gamma)=0$. From
this it follows that there is a well-defined homomorphism
$h\from\pi_1(\orbW)\to \ZZ/p\ZZ$ given by $h([\gamma])=\bard(\gamma)$.
Let $\phi\from\torbW\to\orbW$ denote the $p$--fold covering of $\orbW$ defined by the
subgroup $\ker h$ of $\pi_1(\orbW)$
(see \S\ref{path subsection}).

If $U\subset W$ is the
support of a standard chart centered at a point of $\sigma$, 
another application  of \ref{I need this}  shows that 
the
composition of the inclusion homomorphism $\pi_1(\orbi
U)\to\pi_1(\orbW)$ with $h \from \pi_1(\orbW)\to \ZZ/p\ZZ$ is an isomorphism. Hence
 the induced covering $\phi^{-1}(\orbi U)$ of $\orbi U$ is diffeomorphic to the
 universal cover of $\orbi U$, which is a ball. As this holds for all
 choices of $U$, the $p$--fold cover $\torbW$ of $\orbW$ is a manifold.
It follows from the construction that $\phi$ maps
$\phi^{-1}(\sigma)$ diffeomorphically onto $\sigma$.
\EndProof

\Custom{$\orbM$--accordant smooth structures}
\label{wuzzaccordant}
Let $\orbM$ be 
an orientable $3$--orbifold, so that $M\doteq\mani\orbM$ is an
orientable topological $3$--manifold by Proposition \ref{Prop:LocalStructure3Orbifold}.
A smooth   $3$--manifold structure 
on $M$
 will be termed \emph{$\orbM$--accordant} if it 
satisfies the following conditions: 
\begin{enumerate}
\item\label{Itm:Accord1} Each stratum of $\orbM$, with the smooth structure given by
Assertion 
\ref{was too},
is a submanifold of $M$.
\item\label{Itm:Accord2} If $\sigma$ is any cone stratum
of $\orbM$, and if $p=\ord(\sigma)$,  then there exist an open subset $U$
of  $M$ with $U\cap\Sing_\orbM=\sigma$, 
such that the
self-map $(z,t)\mapsto(z^p,t)$ of $\DD\times \sigma $ is the
composition of the 
underlying map of  
some orbifold covering map $\DD\times \sigma \to \orbi U$ with some manifold diffeomorphism $\zeta\from  U\to \DD\times \sigma $. 

\end{enumerate}

We observe that if a smooth  structure on $M$ is $\orbM$--accordant, then
in particular Condition \eqref{Itm:Accord1} above must hold for every  
codimension--$0$
stratum of $\orbM$. Hence the smooth manifold structure that
$M-\Sing_\orbM$ inherits from $M$ coincides with the smooth manifold
structure that $\orbi{M-\Sing_\orbM}$ possesses as an open suborbifold
of $\orbM$. 
\EndCustom

\Proposition\label{smooth underlying}
Let $\orbM$ be 
an orientable $3$--orbifold.
Then there exists an $\orbM$--accordant smooth structure on the
topological $3$--manifold $M\doteq\mani\orbM$.
\EndProposition

\Proof
We let $\orbM'\subset\orbM$ denote the complement of the set of
nodes in $\Sing_\orbM$ 
(see \S\ref{what's a node}).
The orientability of $\orbM$ implies that the singular strata of
$\orbM'$ are all one-dimensional.
For each singular stratum $\sigma$ of $\orbM'$, Lemma
\ref{very good near cone stratum} gives
an open suborbifold $\orbW_\sigma$ of $\orbM'$, 
\delete{with $\mani{\orbW_\sigma}=\sigma$,} and
a finite-sheeted regular cover
$\phi_\sigma\from \torbW_\sigma\to \orbW_\sigma$ with cyclic  deck
group $G_\sigma$ of order $p_\sigma\doteq\ord(\sigma)$, such that $\torbW_\sigma$ is a
$3$--manifold, and  $\phi_\sigma$ maps $S_\sigma\doteq\phi_\sigma^{-1}(\sigma)$
diffeomorphically onto $\sigma$.
We may take
the $\orbW_\sigma$ to be pairwise disjoint as
$\sigma$ ranges over the cone strata of $\orbM'$. 
For each $\sigma$,
the (possibly non-compact) $1$--manifold $S_\sigma$ is then $G_\sigma$--invariant.
Fix a $G_\sigma$--invariant tubular neighborhood $V_\sigma$ of $S_\sigma$ in
$\torbW_\sigma$. 
Since $G_\sigma$ is cyclic of order $p_\sigma$ and
$\orbW_\sigma\subset\orbM$ is orientable,
$V_\sigma$ may
be diffeomorphically identified with $\DD\times S_\sigma$ in such a way that some generator of
$G_\sigma$ acts on $V_\sigma$ by $(z,s)\mapsto(e^{2\pi i/p_\sigma}z,s)$. We may then
identify $\phi_\sigma(V_\sigma)$ homeomorphically with $\DD\times S_\sigma$ in such a way
that $\phi_\sigma|V_\sigma$ is the map $(z,s)\mapsto(z^{p_\sigma},s)$. 
The standard smooth
structure on $\DD$ and the smooth structure  on $S_\sigma$ inherited
from $\torbW_\sigma$ define a smooth structure on the product
$\DD\times S_\sigma$, and therefore on  $\phi_\sigma(V_\sigma)$.

On the other hand, the open suborbifold  $\orbM'-\Sing_{\orbM'}=M'-\Sing_{\orbM'}$ of $\orbM'$ is a
smooth manifold. 
For each singular stratum $\sigma$ of $\orbM'$, the smooth structure
of   $\orbM'-\Sing_{\orbM'}$ agrees on $\phi_\sigma(V_\sigma)-\sigma$ with
the smooth structure that we have constructed on $\phi_\sigma(V_\sigma)$. 
Hence
there is a smooth structure on $M'$ that restricts to the smooth
structures that we have described on the sets $\phi_\sigma(V_\sigma)$ and on
$M'-\Sing_{\orbM'}$. By Proposition \ref{extend},
this smooth
structure on $M'$ extends to one on $M$. It follows from the
construction that this smooth structure on $M$
is $\orbM$--accordant.
\EndProof

\Remark\label{not unique}
The $\orbM$--accordant  smooth structure given by Proposition
\ref{smooth underlying} is not in general unique, and it is not clear
to us whether there is a canonical $\orbM$--accordant  smooth
structure for a given  orientable $3$--orbifold $\orbM$. At
several points in this paper it will be necessary to make a choice of 
 $\orbM$--accordant  smooth
structure; our final results are independent of these choices.
\EndRemark

\Custom{Suborbifolds and transversality}
\label{Suborbifolds-of-3orbifolds}
Let $\orbM$ be an orientable $3$--orbifold.
If $M\doteq\mani\orbM$ is  equipped with an
$\orbM$--accordant 
smooth $3$--manifold structure, 
we shall  often refer to it 
as the  \emph{underlying manifold}
of $\orbM$.
In particular, it makes sense to  speak of smooth submanifolds
of $M$, and of transversality between such submanifolds. A
smooth submanifold of  $M$ will be said to be  \emph{transverse
  to $\Sing_\orbM$} if it is transverse to all the strata of
$\Sing_\orbM$ (which are themselves smooth submanifolds of
$M$ by the definition of $\orbM$--accordance).

If $M$ is equipped with an $\orbM$--accordant smooth structure,
and if $F$ is a neatly embedded submanifold
of $M$ of 
dimension at most $2$
which is transverse to $\Sing_\orbM$, then $\orbi F$
is defined and is a neatly embedded suborbifold of $\orbM$ (in the
strong sense). To verify this, we must check that Condition
\ref{defines a suborbifold} holds with $F$ playing the role of $N$,
and that the homomorphism $\theta_y$ defined in \S\ref{Suborbifolds} is
an isomorphism for every $y\in F$.   Condition \eqref{Itm:Accord1} of \S\ref{wuzzaccordant} 
implies that \ref{defines a suborbifold} holds when the given
point
$x\in M$ is nonsingular, and that $\theta_y$ is
an isomorphism when the point $y\in F$ is a nonsingular point of $\orbM$.
Condition \eqref{Itm:Accord2} of \S\ref{wuzzaccordant} 
implies that \ref{defines a suborbifold} holds when 
$x\in M$ lies in a cone stratum of $\orbM$, and that $\theta_y$  is
an isomorphism when $y\in F$ lies in a cone stratum. In view of
transversality, the case when
$y\in F$ is a node of $\orbM$ does not arise. Thus $\orbF \doteq \orbi F$ is defined.

According to \S\ref{Suborbifolds}
we then have $\Sing_{\orbF}=
F\cap\Sing_\orbM$. 
In particular, we have $\size F=\card \Sing_{\orbF}$ 
(where $\size F$ is defined as in 
\S\ref{size 'n' stuff}, 
and is finite if $F$ is compact).
It also follows from \S\ref{Suborbifolds} that
$\ord^\orbF(x)=\ord^\orbM(x)$ for each $x\in F$; hence if $F$ is
compact
then $\weighttwo(F)$ 
is the sum of the orders of the singular points of
$\orbi F$.

We will sometimes encounter the special case in which a  submanifold $F$ of
  $M$ is  \emph{disjoint} from $\Sing_\orbM$. 
In this case,
$\orbi F$ is a manifold having the same underlying set and the same
smooth manifold structure as the given submanifold  $F$ of
$ \mani\orbM$. Thus the convention laid out in 
\S\ref{when it's a manifold},
according to which we write $F=\orbi F$, leads to no ambiguity.

The observation 
  made above about obtaining suborbifolds from submanifolds
  transverse to the singular set
has a partial converse: if $\orbM$  is an orientable $3$--orbifold, then every orientable, \neatly embedded
 $2$--suborbifold of $\orbM$
has the form $\orbi F$ for some orientable, \neatly embedded
two-dimensional submanifold $F$ of $ \mani\orbM$. (This becomes false
if one removes the hypothesis of orientability. For example, if $\rho$
is a $\pi$--rotation of $\SS^3$ and $C$ denotes the fixed circle
of $\rho$, then for any $2$--sphere $Y\supset C$ the suborbifold
$\orbN\doteq Y/\langle\rho\rangle$ of $\orbM\doteq
\SS^3/\langle\rho\rangle$ is defined, but  $\mani\orbN$ contains $\Sing_\orbM$ rather than being transverse to  $\Sing_\orbM$.)
\EndCustom

The remainder of this section primarily consists of definitions and results that extend certain familiar $3$--manifold notions to $3$--orbifolds.

\Custom{Splitting an orbifold}
\label{new splitting}

There is a notion of splitting an orbifold along a suitable
type of suborbifold, which generalizes the familiar notion of
splitting a manifold along a \neatly embedded codimension-$1$ submanifold.

Suppose that $\orbS$ is a \neatly embedded, $(n-1)$--dimensional
suborbifold of an  $n$--orbifold (with boundary) $\orbM$. 
We shall
say that an $n$--orbifold with $2$--corners $\orbM'$ is  \emph{obtained by splitting $\orbM$
  along $\orbS$,} and that an orbifold immersion
$\iota\from\orbM'\to\orbM$ is  \emph{associated with the splitting,} if 
\begin{enumerate}
\item $\iota|\iota^{-1}(\orbM-\orbS)\from \iota^{-1}(\orbM-\orbS)
\to (\orbM-\orbS)$ is an orbifold
  diffeomorphism, and
\item for every $x\in\orbS$ there is a connected open neighborhood
  $\orbV$ of $x$ in $\orbM$ such that $\iota^{-1}(\orbV)$ has exactly
  two components, and $\iota$ maps each of these components
  diffeomorphically onto the closure, relative to $\orbV$, of a component
  of $\orbV\cap(\orbM-\orbS)$.
\end{enumerate}

If $\orbM'_1$ and $\orbM'_2$ are two orbifolds with $2$--corners  obtained by splitting
$\orbM$ along $\orbS$, and if $\iota_1$ and $\iota_2$ are the
associated immersions, one can deduce directly from the definition of
splitting that there is a unique diffeomorphism $\alpha\from
\orbM'_1\to\orbM'_2$ such that $\iota_2\circ\alpha=\iota_1$. In view
of this uniqueness property of an orbifold with $2$--corners obtained
by splitting, we shall refer to  \emph{the} orbifold obtained by splitting
$\orbM$ along $\orbS$ if such an orbifold exists, and denote it by
$\orbM\split\orbS$. To say that $\orbM\split\orbS$ is defined means
that such a split orbifold exists. We shall refer to the associated
immersion as the  \emph{canonical immersion} $\iota\from
\orbM\split\orbS\to \orbM$.

Whenever  $\orbM' \doteq\orbM\split\orbS$ is defined, we have
$\Sing_{\orbM'}=\iota^{-1}(\Sing_{\orbM})$. Furthermore, 
$\iota^{-1}(\orbS)$ 
is
defined and is disjoint from
$\inter\orbM'$; by Proposition \ref{codim 1 is nice},
$\iota^{-1}(\orbS)$ 
is a suborbifold of $\bdy\orbM'$ (in the strong
sense). The set of $2$--corner points of $\orbM'\doteq\orbM\split\orbS$ is
$\mani{\bdy(\iota^{-1}(\orbS))}$. 
The canonical immersion
$\iota\from\orbM'\to\orbM$ restricts to a
two-sheeted orbifold covering map from 
$\iota^{-1}(\orbS)$
to 
$\orbS$. 
The non-trivial deck transformation of this orbifold covering will be called the \emph{canonical involution} of $\iota^{-1}(\orbS)$.

We recall that if $M$ is a manifold and $S\subset M$ is a \neatly
embedded codimension-$1$ submanifold, then
$M\split S$ is always defined.
For example, if $M$ is connected and $S$
separates $M$, we may take $M\split S$ to be the disjoint union of the
closures of the components of $M-S$, and take the canonical immersion to agree with the
inclusion on the closure of each component. In  general we may take
$M\split S$ to be the closure in $M$ of $M-R$, where $R$ is a tubular
neighborhood of $S$ in $M$; the definition of $\iota$ then requires a
little care.

While it can be shown  that, in the orbifold  context, $\orbM\split
\orbS$ exists under fairly broad conditions, we shall need only the case in which 
  $\orbM$ is an orientable $3$--orbifold and $\orbS$ is a \neatly
  embedded, orientable two-dimensional suborbifold
of $\orbM$; and in this case, there is a construction of $\orbM\split
\orbS$ which is particularly well-adapted to the applications in this
paper. Choose an $\orbM$--accordant smooth structure on 
$M\doteq\mani\orbM$, which exists by Proposition \ref{smooth underlying}. By an observation made in \S\ref{Suborbifolds-of-3orbifolds}, we have $\orbS=\orbi S$ for some \neatly
  embedded, orientable two-dimensional submanifold $S$
of the orientable $3$--manifold $M$. By the discussion above, $M'\doteq
M\split
S$ is defined. If $J\from M'\to M$ denotes the canonical
immersion, it is straightforward to check that 
there exist a unique
$3$--orbifold $\orbM'$ and a unique orbifold immersion
$\iota\from\orbM'\to\orbM$ such that $\mani\orbM'=M'$ and  $J$ is the
underlying map of $\iota$. 
Furthermore, Conditions
(1) and (2) above hold with these choices of $\orbM'$ and
$\iota$. Thus $\orbM\split\orbS=\orbM'$ is defined in this situation, and it
follows from the uniqueness pointed out above that $\orbM'$ is
independent of the choice of an $\orbM$--accordant smooth structure. 
\EndCustom

\Lemma\label{split injective}
Let $\orbS$ be a \neatly
  embedded, orientable, $\pi_1$--injective two-dimensional suborbifold
of a connected, orientable $3$--orbifold $\orbM$. 
Let $\orbM'$ denote a component of $\orbM \split \orbS$, and let 
$\iota \from \orbM' \to\orbM$ denote the canonical
immersion. Then the induced homomorphism
$\iota_*\from\pi_1(\orbM') \to \pi_1(\orbM)$ 
is injective.
\EndLemma

\Proof
When $\orbM$ is a manifold, this is standard, 
and one of the standard proofs involves constructing the universal
covering of the given manifold as a tree of spaces
(cf.\ Scott and Wall \cite{scott-wall}). We shall adapt this proof to the orbifold
setting, using
the orbifold versions of standard results in
covering space theory (see \S\ref{path subsection}).

We shall denote by $\tau$ the canonical involution of $\iota^{-1}(\orbS)$ (see \S\ref{new splitting}).

 Any orbifold covering
space of $\orbM$ induces an orbifold covering of  $\orbM\split \orbS$. To
prove the lemma, it suffices to 
construct an orbifold covering space $\torbM$ of $\orbM$ such that every
component of the induced orbifold
covering space of $\orbM\split\orbS$ is simply connected. (It is
not necessary to show that $\torbM$  itself is simply connected,
although this could be done.)

For $n\ge0$ we shall recursively define orbifolds $\torbM_n$, orbifold
immersions $p_n\from\torbM_n\to\orbM$, and orbifold embeddings
$j_n\from\torbM_n\to\torbM_{n+1}$ such that $p_{n+1}\circ j_n=p_n$. For
each $n$, each component of the suborbifold $p_n^{-1}(\orbS)$ of
    $\torbM_n$ will be either properly embedded in $\torbM_n$ or
    contained in $\bdy\torbM_n$. 
Furthermore, if  the  components of $p_n^{-1}(\orbS)$ that are
    contained in $\bdy\torbM_n$ are faithfully indexed as $(\torbS_\alpha)_{\alpha\in J_n}$,
where $J_n$ is some index set,
then for each  $\alpha\in J_n$ 
 here will be a simply connected 
suborbifold $\orbE_\alpha$ of $\torbM_n$, containing $\torbS_\alpha$,
 such that
    $p_n|\orbE_\alpha\from\orbE_\alpha\to\orbM$ is the composition with
    $\iota$ of
    some covering map $p_\alpha$ from $\orbE_\alpha$ to a component of $\orbM\split\orbS$.

We take $\torbM_0$ to be a simply connected orbifold
covering of some component $\orbM_0$ of $\orbM\split\orbS$, and define
$p_0$ to be the composition of the covering map with $\iota$. 

Now suppose that
$\torbM_n$ and $p_n$ have been constructed. For each $\alpha\in J_n$, set $\orbS_\alpha=p_\alpha(\torbS_\alpha)$ and $\orbT_\alpha=\tau(\orbS_\alpha)$.
The simple connectivity of $\orbE_\alpha$ and the 
$\pi_1$-injectivity of $\orbS_\alpha$ imply that $\torbS_\alpha$ is simply
connected. Likewise, if $\orbM'_\alpha$ denotes the component of
$\orbM\split\orbS$ containing $\tau(\orbS_\alpha)$, and if we fix a simply
connected covering $q_\alpha\from\orbF_\alpha\to\orbM'_\alpha$ and a
component $\torbT_\alpha$ of $q_\alpha^{-1}(\tau(\orbS_\alpha))$, then
$\torbT_\alpha$ is also simply connected; hence for each 
$\alpha\in J_n$ we may fix a lift
$\ttau_\alpha\from\torbS_\alpha\to\torbT_\alpha$ of $\tau|\orbS_\alpha\from\orbS_\alpha\to\orbT_\alpha$ which
is an orbifold diffeomorphism.

Consider the
disjoint union
of $\tM_n\doteq\mani\torbM_n$ with all spaces of the form
$\orbF_\alpha$, where $\alpha$ ranges over $J_n$. Define $\tM_{n+1}$ to be the quotient space of this disjoint
union obtained by identifying $\mani\torbS_\alpha$ with $\mani{\torbT_\alpha}$, via the
underlying homeomorphism of
$\ttau_\alpha$, for each  $\alpha\in J_n$. There is a
well-defined map $P_{n+1}\from \tM_{n+1}\to \mani\orbM$ which restricts
to $P_n$ on $\tM_n$ and to $q_\alpha$ on  $\orbF_\alpha$, for each $\alpha\in J_n$. There exist a
unique orbifold $\torbM_{n+1}$ and a unique orbifold immersion
$p_{n+1}\from\torbM_{n+1}\to\orbM$ such that
$\mani{\torbM_{n+1}}=\tM_{n+1}$ and $P_{n+1}$ is the underlying map
of $p_{n+1}$. The natural map from $\tM_n$ to $\tM_{n+1}$ is the
underlying map of an orbifold embedding $j_n$. With these definitions,
$\torbM_{n+1}$, $p_{n+1}$ and $j_n$ have the properties required to complete the
recursion step. (If  the  components of $p_{n+1}^{-1}(\orbS)$ that are
    contained in $\bdy\torbM_{n+1}$ are faithfully indexed as $(\torbS_\beta)_{\beta\in J_{n+1}}$,
then for each  $\beta\in J_{n+1}$ there is a unique $\alpha\in J_n$ such that $\torbS_\beta\subset \orbF_\alpha$. We may then take $\orbE_\beta=\orbF_\alpha$ and $p_{\beta}=q_\alpha$.)
This completes the recursive construction.

The  directed system defined by the $\torbM_n$ and the
$j_n$ has a well-defined direct limit $\torbM$ in the category of orbifolds.
The maps $p_n$ then define a covering map $\torbM \to \orbM$ which has the
required properties.
\EndProof

\Custom{Special types of orbifolds}
\label{spherical etc}
An orbifold (of any dimension) will be termed  \emph{discal} or {\it
  spherical} if it is covered by a ball
or a sphere respectively. A
$2$--orbifold will be termed  \emph{toric} or  \emph{annular} if it is
covered by a torus or annulus respectively. A  \emph{solid toric
  orbifold} is a $3$--orbifold which is covered by a solid torus. 

  These definitions apply to
both orientable and non-orientable orbifolds. 

If $\orbN$ is an orientable discal
$2$--orbifold then  $\mani\orbN$ is a disk, and 
$\Sing_\orbN$ either is empty or consists of a single cone point.
If $\orbN$ is an orientable annular
$2$--orbifold then either $\mani\orbN$ is an annulus and
$\Sing_\orbN=\emptyset$, or $\mani\orbN$ is a disk and
$\Sing_\orbN$ consists of two cone points of order $2$. 

If $\orbN$ is an orientable discal
$3$--orbifold, 
then the Orbifold Theorem  
 \cite{BoileauMaillotPorti,
  CHK:OrbifoldBook} implies that
 $N\doteq\mani\orbN$ is a topological $3$--ball.
 If $\orbN$ is 
a  solid toric
$3$--orbifold, then the Orbifold Theorem similarly implies that
$N\doteq\mani\orbN$ is  a (topological)  solid torus or  $3$--ball.
Furthermore, if $N$ is a solid torus then $\Sing_\orbN$ is either  the
empty set or a single cone stratum which is a
core curve of $N$. If $N$ is a ball, then
$\Sing_\orbN$ is contained in a (tame) properly embedded disk $D \subset N$,  and consists of either two disjoint cone strata or three cone strata forming the letter $H$.

A  \emph{turnover} is an orientable $2$--orbifold $\orbS$ such that
$\mani\orbS$ is a $2$--sphere and $\Sing_\orbS$ has cardinality $3$. If
the orders of the singular points of $\orbS$ are $p$, $q$ and $r$, we
may refer to $\orbS$ as a  \emph{$(p,q,r)$-turnover.} If $\orbM$ is a
$3$--orbifold, we shall often use the phrase ``turnover in $\orbM$'' to mean a suborbifold of $\orbM$ which is a turnover.
\EndCustom

\Custom{\Clean\ pairs}
\label{Clean pairs}
We define a \emph{\clean\ pair} to be an ordered pair $(\orbY,\orbZ)$ such
that 
\begin{enumerate}
\item\label{Clean1} $\orbY$ is a compact, orientable $3$--orbifold with $2$--corners, 
\item\label{Clean2} $\orbZ$ is a
$2$--dimensional suborbifold (with boundary) of $\orbY$, 
\item\label{Clean3} $\mani{\orbZ}$ is disjoint from
 $\mani{\inter\orbY}$, and 
 \item\label{Clean4} $\mani{\bdy\orbZ}$ is the set of all $2$--corner points of $\orbY$.
\end{enumerate}
If $(Y,Z)$ is a \clean\ pair such that $Y$ is a manifold with corners
(and hence $Z$ is a manifold), we shall call $(Y,Z)$ a  \emph{clean
  manifold pair}.

If $(\scrY,\scrZ)$ is a \clean\ (orbifold) pair, then
$(\orbY,\scrZ\cap\orbY)$ is a \clean\ pair for any
component $\orbY$ of $\scrY$. A \clean\ pair of this form  is called a 
  \emph{component} of $(\scrY,\scrZ)$. We say that $(\scrY,\scrZ)$ is
  \emph{connected} if it has exactly one component, or equivalently if
 $\scrY$ is connected.

If $(\orbY,\orbZ)$ is a \clean\ pair then
$\orbY$ has nonsingular $2$--corners. To see this, note that by
\S\ref{Suborbifolds} we have $
\orbZ \cap \Sing_\orbY
=\Sing_\orbZ
$.  If $K$ denotes the set of $2$--corner points of $\orbY$, we
have $K=\mani{\bdy\orbZ}$ by the definition of a clean pair. Hence
$K\cap\Sing_\orbY=\mani{\bdy\orbZ}\cap\Sing_\orbZ$. The latter
intersection is empty since $\orbZ$ is an orientable $2$--orbifold, and
the assertion follows.

If $\orbY$ is a compact, orientable $3$--orbifold with $2$--corners and
$\orbZ$ is a 
two-dimensional
weak suborbifold of $\orbY$,
and if Conditions \eqref{Clean1}, \eqref{Clean3} and \eqref{Clean4} of the definition above hold, then
$(\orbY,\orbZ)$ is a clean pair. 
Indeed, Proposition \ref{codim 1 is nice} implies that $\orbZ$ is a suborbifold of $\orbY$ in the strong sense, hence Condition \eqref{Clean2} holds also.
\EndCustom

\Custom{Essentiality and Irreducibility}\label{Pi1Injective}
For the following set of definitions, suppose $\orbM$ is a compact orientable $3$--orbifold.

Let $\orbS_0$ and $\orbS_1$ be 
compact
$2$--suborbifolds 
of a $3$--orbifold $\orbM$, 
such that
$\bdy\orbS_i\subset\bdy\orbM$ for $i=0,1$.
We shall say that $\orbS_0$ and
$\orbS_1$ are  \emph{parallel} if 
there is an embedding 
 $f \from \orbS_0 \times [0,1] \to \orbM$, such
that $f$ maps $\orbS_0 \times \{i\}$
diffeomorphically onto  $\orbS_i$ for
$i=0,1$, and $f( \bdy\orbS_0 
\times [0,1] ) \subset \bdy \orbM$. The image 
(see \S\ref{Suborbifolds})
of such an embedding
is called a  \emph{parallelism} between $\orbS_0$ and $\orbS_1$. A 
\neatly\ embedded 
$2$--suborbifold
  of $\orbM$ is said to be  \emph{boundary-parallel}
if it is parallel to some $2$--suborbifold contained in $\bdy \orbM$.

A connected $2$--suborbifold $\orbS$ of 
$\orbM$ is called \emph{essential} if $\orbS$ is neatly embedded,
orientable
and $\pi_1$--injective, is not the boundary of a discal
$3$--suborbifold, and is not boundary-parallel.
A possibly disconnected $2$--suborbifold $\orbR$ is called \emph{essential} if every component of $\orbR$ is essential.

As a special case, we have the notion of an  \emph{essential turnover} in
a $3$--orbifold.

We say that $\orbM$ is \emph{irreducible} if $\orbM$ is connected and every closed  $2$--suborbifold of $\orbM$ with strictly positive Euler characteristic is the boundary of 
a discal $3$--suborbifold of $\orbM$. 
Since every bad $2$--orbifold has positive Euler characteristic by \cite[Corollary 2.28]{CHK:OrbifoldBook}, we observe that an irreducible $3$--orbifold cannot contain any bad, essential $2$--suborbifolds.
We also observe that every hyperbolic $3$--orbifold is irreducible.

We say that $\orbM$ is \emph{boundary-irreducible} if $\bdy \orbM$ is $\pi_1$--injective.
\EndCustom

\Proposition\label{when injective}
{\rm (cf.\ \cite[Corollary
3.20]{BoileauMaillotPorti}).}
Let $\orbW$ be an orientable two-dimensional closed suborbifold of the interior of
a very good, orientable $3$--orbifold
$\orbM$. Then the following conditions are equivalent:
\begin{itemize}
\item $\orbW$ is $\pi_1$--injective in $\orbM$.
\item for every two-dimensional 
orientable
discal suborbifold $\orbD$ of $\orbM$
  such that $\orbC\doteq\bdy\orbD=\orbD\cap\orbW$,
there is  a discal suborbifold of $\orbW$ whose boundary is $\orbC$.
\end{itemize}
\EndProposition

\Proof
First suppose that $\orbW$ is  $\pi_1$--injective in $\orbM$. Let
$\orbD$ be any two-dimensional 
discal suborbifold of $\orbM$
  such that $\orbC\doteq\bdy\orbD=\orbD\cap\orbW$. Then $\orbC$ is
  a simple closed curve disjoint from the singular set of $\orbM$, and
  represents (up to conjugacy and inversion) an element of finite
  order in $\pi_1(\orbM)$. Since $\orbW$ is  $\pi_1$--injective, $\orbC$ 
  represents  an element of finite
  order in $\pi_1(\orbW)$, and 
is therefore the  boundary of 
a  discal suborbifold of $\orbW$.

Now  suppose that $\orbW$ is not $\pi_1$--injective in $\orbM$.
Set $\orbM'=\orbM\split\orbW$, 
denote the canonical immersion
from $\orbM'$ to $\orbM$ by $\iota$,  and set
$\orbW'=\iota^{-1}(\orbW)$
(which, as observed in \S\ref{new splitting}, is a
well-defined  suborbifold of $\bdy\orbM'$).
Then it follows from 
Lemma \ref{split injective}
that $\orbW'$ is not $\pi_1$--injective in
  $\orbM'$. 
Fix an orbifold loop $\alpha$ in  $\orbW'$  which is orbifold-homotopically trivial 
in $\orbM'$ but not in $\orbW'$. 

Since $\orbM$ is  very good, we may fix a finite-sheeted regular cover
$p\from N\to\orbM'$ such that $N$ is a manifold. Let $G$ denote the covering
group. Since $\alpha$  is
orbifold-homotopically trivial 
in $\orbM'$, it follows from \ref{lifting homotopies} and \ref{homotop to generic}
that $\alpha$ admits a lift $\talpha$ which is a loop in
$N$. Since $\alpha([0,1])\subset\orbW'$, we have $\talpha([0,1])\subset
V\doteq p^{-1}(\orbW')\subset\bdy N$. Since $\alpha$  is
orbifold-homotopically trivial 
in $\orbM'$ but not in $\orbW'$, the 
orbifold loop 
$\talpha$  is
orbifold-homotopically trivial 
in $N$ but not in $V$. 
Hence by
Meeks and Yau's
equivariant loop theorem  \cite[Theorem 3]{MY-sphere},
$N$ contains a non-empty, $G$--invariant, \neatly 
embedded
submanifold $\boldE$, each
component of which is an essential disk. Let $\boldD$ denote the
frontier in $N$ of a 
$G$--invariant 
regular neighborhood of
$\boldE$. Then $\boldD$ is itself $G$--invariant and its components are
essential disks; and in addition, no component of $\boldD$ is
invariant under an element of $G$ which reverses its orientation. It
now follows that
$p(\boldD)$ is a well-defined 
orientable, \neatly embedded $2$--suborbifold of $\orbM'$  having
boundary in $\orbW'$, and that
each component of $p(\boldD)$  is
discal.  Since
the components of $\boldD$ are essential, no boundary component of
$p(\boldD)$ is the boundary of a discal suborbifold of
$\orbW'$. Hence the image under $\iota$ of an
arbitrary component of $p(\boldD)$
is an orientable discal
weak 
suborbifold  $\orbD$ of $\orbM$
  such that $\orbC\doteq\bdy\orbD=\orbD\cap\orbW$,
and
there is  no discal suborbifold of $\orbW$ whose boundary is $\orbC$.
Since  $\orbD$ has codimension $1$ in  $\orbM$,  and  $\orbM$ is
orientable, it follows from Proposition \ref{codim 1 is nice} that
$\orbD$ is in fact a suborbifold of $\orbM$ (in the  strong sense). 
\EndProof

\Proposition\label{if three}
Let $\orbM$ be a closed, orientable, very good
$3$--orbifold. Let $S\subset\mani \orbM$ be a
$2$--sphere which is transverse to $\Sing_\orbM$ 
and has
size\
at most $3$  (see \S\ref{size 'n' stuff}).
 Then $\orbS\doteq\orbi S $ is
$\pi_1$--injective in $\orbM$. 
\EndProposition

\Proof
According to Proposition \ref{when injective},
$\pi_1$--injectivity 
of $\orbS\doteq\orbi S $ in $\orbM$
is equivalent to the assertion
that if $C$ is any simple closed curve in
$S-(S\cap\Sing_\orbM)$ 
such that $C = \orbi C $
 bounds 
an orientable
discal suborbifold of $\orbM$,
then $C$ bounds a discal suborbifold of $\orbS $. But since $S$ is
a sphere of size\ at most $3$, 
for \emph{every}
simple closed curve $C$ in $S-(S\cap\Sing_\orbM)$ there is a disk
bounded by $C$ that contains at most one point of $\Sing_\orbM$, and hence
$C $ 
bounds a discal suborbifold of $\orbS $. 
\EndProof

\Proposition\label{cause gromov}
Let $\orbM$ be a closed, orientable hyperbolic $3$--orbifold. Observe
that by Proposition~\ref{Prop:LocalStructure3Orbifold}, $M \doteq \mani
\orbM$ is a closed topological $3$--manifold,
so that the Gromov norm $\| M \|$ is defined by \S\ref{Gromov norm}.
Then $\vol \orbM \geq \vtet \| M \|$ (where $ \vtet$  is defined as in \S\ref{notational}).
\EndProposition

\Proof
Since $\orbM$ is hyperbolic, it is very good. Let $\torbM$ be a
finite-sheeted covering space of $\orbM$ which is a hyperbolic
manifold, and let $d$ denote the degree of the covering map
$p\from \torbM\to\orbM$. 
Then  the underlying map of
$p$ is a degree-$d$ map from
$\torbM$ to $M$. 
It now follows from \eqref{degree and Gromov}, \eqref{genuine gromov}, 
and the multiplicativity  of volume under covers that
\[ 
d \cdot \vtet \|M\|
\: \leq \:  \vtet \| \torbM \| 
\: = \:  \vol \torbM
 \: = \: d \cdot \vol \orbM,
\]
which implies the result.
\EndProof

We recall that Gabai, Meyerhoff, and Milley \cite{GMM:SmallestCusped, milley} have identified the unique lowest-volume orientable hyperbolic  $3$--manifold. This is the Weeks manifold $M_{\rm Weeks}$, of volume $\Vweeks \doteq \vol (M_{\rm Weeks}) = 0.9427 \ldots$.
Combining their result with Proposition~\ref{cause gromov} and the properties of the Gromov norm yields the following corollary, which was stated in the introduction as Proposition~\ref{Prop:IntroGromov}.

\Corollary\label{Cor:SimpleVolBound}
Let $\orbM$ be a closed, orientable hyperbolic $3$--orbifold with $\vol
\orbM < \Vweeks$. Then 
$M \doteq \mani \orbM$ is homeomorphic to
 a connected sum of graph manifolds.
\EndCorollary

\Proof
Let us fix a smooth structure on
$M$. Assume that the smooth, orientable $3$--manifold $M$ is not a
graph manifold, so that by \S\ref{geometry and norms} it has at least
one 
hyperbolic piece, say $P$. Then
by combining the result due to Gabai, Meyerhoff, and
Milley mentioned above with Assertion \ref{all we need},
Proposition~\ref{cause gromov}, and the hypothesis of the present
corollary, we find
\[
\Vweeks \leq \vol P 
 \leq \vtet \|M\| \leq \vol \orbM < \Vweeks,
\]
a contradiction.
\EndProof

Corollary~\ref{Cor:SimpleVolBound}, which is much easier than Theorems \ref{Thm:FirstMain} and \ref{Thm:LinkMain}, can already serve as a proof of concept for the idea that upper bounds on volume imply restrictions on the topology of $M = \mani \orbM$. As explained in Section~\ref{Sec:ProofOutline}, this corollary is also a key ingredient in the proof of those theorems. Indeed, given a hyperbolic orbifold $\orbM$ of volume less than $\Vweeks$, Corollary~\ref{Cor:SimpleVolBound} implies that the underlying space $M$ is either small Seifert fibered, reducible, or toroidal. If $M$ is small Seifert fibered, we already have conclusion (i) of Theorems \ref{Thm:FirstMain} and \ref{Thm:LinkMain}. The case where $M$ is reducible is handled in Section~\ref{sphere section}, and the case where $M$ is toroidal is handled in Section~\ref{torus section}.

Before proceeding to an analysis of essential spheres and tori in $M$, we need to develop some background on orbifold $I$--bundles and books of $I$--bundles. This is carried out in Sections~\ref{bundle section}--\ref{Sec:OrbifoldBooks}.

\section{Orbifold fibrations}
\label{bundle section}

The next two sections of the paper are devoted to orbifold fibrations over orbifolds. This section develops general background in all dimensions, while Section~\ref{Sec:I-bundle} specializes to two-dimensional bases and one-dimensional fibers. The main result of this section is Proposition~\ref{FiberPreservingAction}, which provides a way to obtain an orbifold fiber bundle as the quotient of a manifold fiber bundle by a finite group.

Our definitions in this section are modeled on those of Cooper, Hodgson, and Kerckhoff \cite[Section 2.7]{CHK:OrbifoldBook}, although some of the terminology has changed and our treatment is more formal. Meanwhile, our definitions differ in content from those of 
Boileau, Maillot, and Porti \cite[Section 2.4]{BoileauMaillotPorti}. See Remark~\ref{Rem:NoGenericFiber} for more detail.

\Definition\label{Def:OrbifoldFiberBundle}
Suppose that $\orbP$ is  an orbifold with $2$--corners, and $\orbF$ and $\orbH$ are orbifolds (with
boundary).
A smooth orbifold
map
$q \from \orbP \to \orbH$
 is said to be an
\emph{orbifold $\orbF$--fibration} if
for every  $x \in \orbH$, there exist
\begin{itemize}
\item  
an open neighborhood $\orbU$ of $x$,
\item a manifold $\torbU$,
\item  a finite group $\frakG$, equipped with  actions on $\torbU$ and  $\orbF$, such that the diagonal action of $\frakG $ on
the product orbifold with $2$--corners $\torbU\times \orbF $ (see
\S\ref{why corners?}) is effective,
\item a 
smooth map
 $\phi\from \torbU\to\orbU$ 
which induces an orbifold diffeomorphism
  between $\torbU/\frakG $ and $\orbU$, and
\item a 
smooth map
$\xi$ from 
$\torbU\times \orbF$ to the open suborbifold 
with $2$--corners
 $q^{-1}(\orbU)$ of $  \orbP$
which induces an orbifold diffeomorphism
  between $(\torbU\times \orbF )/\frakG $  and $q^{-1}(\orbU)$,
\end{itemize}
such that 
the following diagram, where $\tq\from 
\torbU\times \orbF \to\torbU$ denotes projection to the first factor, 
commutes:
\begin{center}
\begin{tikzcd}
\torbU \times \orbF \arrow[r, "\xi"] \arrow[d, "\tq"] & q^{-1}(\orbU) \arrow[d, "q"] \\
\torbU \arrow[r, " \phi"] & \orbU
\end{tikzcd}
\end{center}

A quintuple $(\orbU,\torbU,\frakG,\phi,\xi)$ such that the bulleted
conditions above hold will be called a  \emph{local trivialization of}
the orbifold fibration $q$  \emph{near} the point $x$.
This notation leaves the actions implicit.

  We observe that
  if $q$ is an orbifold fibration, 
the 
underlying
map of  
$q$ 
is surjective.
We also observe
that if
$(\orbU,\torbU,\frakG,\phi,\xi)$ is a  local trivialization of
$q$, then
$\xi\from \torbU\times \orbF \to q^{-1}(\orbU)$
is a regular covering whose deck group is $\frakG$; and $\phi\from \torbU\to\orbU$ is a regular covering whose deck group is
some (possibly proper) quotient of
$\frakG$.

An orbifold $\orbF$--fibration $q\from \orbP\to\orbH$ may also be referred
to as an orbifold $\orbF$--fibration of $\orbP$ with base $\orbH$. An
orbifold 
with $2$--corners $\orbP$
equipped with an orbifold fibration over some base will be
called an \emph{orbifold bundle.}
\EndDefinition

\Custom{Standard local trivializations}
\label{Rem:StandardNbdFibration}

Suppose that $q\from\orbP\to\orbH$ is an orbifold
$\orbF$--fibration, and that
$(\orbU,\torbU,\frakG,\phi,\xi)$  is a local trivialization of $q$
near a point $x\in\orbP$.
Then for any connected open neighborhood $\orbU'\subset\orbU$ of $x$,
there is another local trivialization $(\orbU',\torbU',\frakG',\phi',\xi')$
near $x$ given by taking
$\torbU'$ to be a component of $\phi^{-1}(\orbU')$,  setting
$\frakG'=\Stab_\frakG(\orbU')$,  
$\phi'=\phi|\torbU'$, and $\xi'=\xi|(\torbU'\times \orbF)$, and
restricting  the action of $\frakG$ on $\torbU$ to obtain an action of $\frakG'$ on $\torbU'$.  (Since
the diagonal action of $\frakG'$ on $\torbU\times \orbF $ is effective, it
follows from Corollary~\ref{Bochner corollary} that its diagonal action  on
$\torbU' \times \orbF $ is also effective.)
 
It follows that there is a local trivialization $(\orbU,\torbU,\frakG,\phi,\xi)$ of $q$ such
that $\orbU$ is a standard
neighborhood of $x$ (as defined in \S\ref{Suborbifolds}), and $\torbU$
is connected.

According to an observation made in \S\ref{Orbifold covers}, the standard neighborhood $\orbU$ of $x$  has only one connected manifold cover up to
equivalence. Hence 
 there is a standard chart 
 $(U, D, \Phi,G)$ centered at $x$, where $U = \mani\orbU$,
the group $G$ is the quotient of $\frakG$ that acts effectively on
$\torbU$, and
the  open ball or half-ball $D = \orbi D$ may be
 smoothly and $\frakG$--equivariantly identified with $\torbU$ in such
 a way that $\Phi$ is the 
underlying map of 
$\phi$. Under this identification the given local
trivialization becomes 
$(\orbU,D,\frakG,\phi,\xi)$. A local trivialization of the latter
form, associated with a suitable standard chart $(U, D, \Phi,G)$
centered at $x$,  will be termed  \emph{standard}.

If it happens that
 the action of $\frakG$ on
$D = \torbU$ is effective, then $\frakG$ is also identified with $G_x$.
\EndCustom

\Custom{Fibers and vertical weak suborbifolds}
\label{vertical weak}
If $f\from \orbP\to\orbH$ is an  orbifold $\orbF$--fibration, 
with underlying
map $F\from \mani\orbP\to\mani\orbH$, then 
for any weak
suborbifold $\orbZ$ of $\orbH$, 
it follows from the definitions that
$Y\doteq F^{-1}(\mani\orbZ)$
is  the underlying set of a weak suborbifold-with-corners
of $\orbP$. Following the convention introduced in
  \S\ref{more orbifold notions},
we denote the weak suborbifold-with-corners
$\orbi Y$ of $\orbP$ by $f^{-1}(\orbZ)$.

A weak suborbifold 
$\orbN$ of  $\orbP$ is called \emph{vertical} if $\orbN =
f^{-1}(\orbZ)$ for a
weak
 suborbifold $\orbZ$ of $\orbH$.

In particular, 
for every $x\in\orbH$, we have a
well-defined 
vertical
weak suborbifold $f^{-1}(x)$ of $\orbP$. This
weak 
suborbifold 
is orbifold-diffeomorphic
to a quotient of $\orbF$ by a finite group action, and is called
the  \emph{fiber} over $x$.
\EndCustom

\Remark\label{Rem:NoGenericFiber}
Our definition of an orbifold $\orbF$--fibration is equivalent to the
one given in Cooper, Hodgson, and Kerckhoff \cite[Section 2.7]{CHK:OrbifoldBook}. Our definition is weaker than
the one given in Boileau, Maillot, and Porti \cite[Section 2.4]{BoileauMaillotPorti}, where it is
assumed that
near every point $x\in\orbH$ there is a local trivialization
$(\orbU,\torbU,\frakG,\phi,\xi)$ such that
the action of  $\frakG $ on $\orbU$ is
effective. In \cite{CHK:OrbifoldBook} the orbifold $\orbF$ is
referred to as a ``generic fiber,'' but we have avoided this term
because, in general, the map $q\from \orbP\to\orbH$ need not have any fiber
which is 
orbifold-diffeomorphic to $\orbF$. 
(For example, if $\torbF \to \orbF$ is a finite characteristic cover, then any
$\orbF$--fibration of a given orbifold $\orbP$ is also
an $\torbF$--fibration. In this situation, a product $\orbH \times
\orbF$ will not have any fiber diffeomorphic to $\torbF$.
This also shows that an orbifold $\orbF$--fibration
$q\from\orbP\to\orbH$, where $\orbF$, $\orbP$ and $\orbH$ are manifolds, is not necessarily
an $\orbF$--fibration in the usual sense.)
\EndRemark

\Example\label{Ex:SeifertManifold}
Let $M$ be an orientable Seifert fibered $3$--manifold. 
Identifying every Seifert fiber to a point produces a quotient $q \from M \to \orbH$, where $\orbH$ is a $2$--orbifold. Then $q \from M \to \orbH$ satisfies Definition~\ref{Def:OrbifoldFiberBundle} with $\orbF = \SS^1$. The regular Seifert fibers are exactly the preimages of nonsingular points in $\orbH$, while the singular Seifert fibers are the preimages of cone points in $\orbH$.
\EndExample

\Lemma\label{fibrations surject pi one}
Let $q \from \orbP \to \orbH$ be an orbifold $\orbF$--fibration. Then
for every generic orbifold path $\alpha$ in $\orbH$, and for every point $z$ in the fiber over the
initial point of $\alpha$, there is a lift of $\alpha$ to $\orbP$
having initial point $z$. Furthermore, if
$\orbP$ and $\orbF$ are connected, then the homomorphism $q_* \from \pi_1(\orbP) \to
\pi_1(\orbH)$ (see \S\ref{path subsection}) is surjective. 
\EndLemma

\Proof
We begin by proving the first assertion under the additional
assumption that the image of $\alpha$ is contained in $\orbU$ for some
local trivialization $(\orbU,\torbU,\frakG,\phi,\xi)$ of $q$. In this
case, we may select a point $(x,y)\in \torbU\times\orbF$ such that
$\xi(x,y)=z$. It follows from the assertion \ref{crucial fact} that $\alpha$
admits a lift $\talpha$ to the orbifold covering
$\phi\from\torbU\to\orbU$ having initial point $x$. Now let
$s\from\torbU\times\orbF\to\orbF$ denote the projection to the second
factor. Since $\torbU \times \orbF$ is a product object in the smooth category of
orbifolds (see \S\ref{maps continued}), there is an orbifold path
$\beta$ in $\torbU\times\orbF$ such that $\tq\circ\beta=\talpha$ and
$s\circ\beta$ is the constant path at $y$. Then $\xi\circ\beta$ is a lift of $\alpha$ to $\orbP$
having initial point $z$.

To prove the assertion in the general case, we write the given generic
orbifold path $\alpha$ as a composition $\alpha_1\cdots\alpha_n$,
where each $\alpha_i$ is a generic path whose image is contained in $\orbU_i$ for some
local trivialization $(\orbU_i,\torbU_i,\frakG_i,\phi_i,\xi_i)$ of
$q$. Using the special case  that has been
proved, we may recursively choose lifts $\talpha_1,\ldots,\talpha_n$
of $\alpha_1,\ldots,\alpha_n$ such that $\talpha_1$ has initial point
$z$, and for any $i$ with $1<i\le n$, the initial point of $\talpha_i$
is the terminal point of $\talpha_{i-1}$. Then the composition
$\talpha_1\cdots\talpha_n$ is defined and is a lift of $\alpha$ to $\orbP$
having initial point $z$.

To prove the second assertion, it suffices to show that if $\orbF$ is
connected, and $z$ is a
point of $\orbP$ such that $w\doteq q(z)$ is nonsingular, then $q_* \from \pi_1(\orbP,z) \to
\pi_1(\orbH,w)$ is surjective (see \S\ref{path subsection}). 
By Assertion \ref{homotop to generic}, any element $g$ of
$\pi_1(\orbH,w)$ is represented by a generic loop
$\alpha$ based at $w$. By the first assertion of the present lemma,
the path $\alpha$ admits a lift $\talpha$ to $\orbP$ whose initial point is
$z$. Since $\alpha$ is a loop, the terminal point $z'$ of $\talpha$
lies in the same fiber $\orbF_0$ as $z$. Since $\orbF$ is connected, $\orbF_0$ is
connected, and hence by an observation made in \S\ref{path subsection}
there is an orbifold path $\gamma$ in $\orbF_0$ with initial point $z'$
and terminal point $z$. If we set
$h=[\talpha\cdot\gamma]\in\pi_1(\orbP,z)$, then $q_*(h)=g$.
\EndProof

\Lemma\label{Lem:InducedCover}
Let $q \from \orbP \to \orbH$ be an orbifold $\orbF$--fibration, where
$\orbP$ and $\orbF$ are connected. Then, for every connected cover
$\theta  \from \torbH \to \orbH$, there 
exist an 
orbifold $\orbF$--fibration $\tq \from \torbP \to \torbH$ and a cover $\upsilon  \from \torbP \to \orbP$ such that $\theta $ and $\upsilon $ have the same degree and the following diagram commutes:
\[
\begin{tikzcd}
\torbP \arrow[r, "\upsilon "] \arrow[d, "\tq"] & \orbP \arrow[d, "q"] \\
\torbH \arrow[r, "\theta "] & \orbH
\end{tikzcd}
\]
Such an orbifold $\orbF$--fibration $\tq \from \torbP \to
\torbH$ will be said to be \emph{induced} by the orbifold
$\orbF$--fibration $q \from \orbP \to \orbH$.
\EndLemma

\begin{proof}
Let $\upsilon  \from \torbP \to \orbP$ be the cover of
$\orbP$ corresponding to the subgroup $q_*^{-1}(\theta _*(\pi_1
\torbH))$. Now, the lifting criterion (see \S\ref{path subsection}) ensures that $q \circ \upsilon  \from \torbP \to \orbH$ lifts to a smooth map $\tq \from \torbP \to \torbH$.

Since  $\orbF$ is connected, 
it follows from Lemma~\ref{fibrations surject pi one} that
$q_* \from \pi_1 \orbP \to \pi_1 \orbH$ is surjective. Thus 
\[
\big[ \pi_1 \orbH : \theta _*(\pi_1 \torbH) \big] = \big[ \pi_1 \orbP : q_*^{-1}(\theta _*(\pi_1 \torbH)) \big],
\]
hence $\theta $ and $\upsilon $ have the same degree. 

It remains to check that $\tq \from \torbP \to
\torbH$ is an orbifold $\orbF$--fibration. This can be shown
by verifying that for every $\tx \in \torbH$, a
standard local trivialization of $q$ near $x = \theta (\tx)
\in \orbH$ 
gives rise to
a standard local trivialization of $\tq$ near $\tx$.
\end{proof}

The  material in \S\S\ref{what's fiber preserving} and \ref{collars and actions} will be needed for
the statement and  proof of Proposition
\ref{FiberPreservingAction}.

\Custom{Fiber-preserving actions}
\label{what's fiber preserving}
Suppose that $r\from M\to Z$ 
is a smooth
fibration, where $M$ and $Z$ are manifolds (with boundary). A
self-diffeomorphism $g$ of $M$ is said to \emph{preserve the
  fibration $r$} if
$g\cdot F$ is a fiber of $r$ for each fiber $F$ of $r$.
\EndCustom

\Custom{Boundary collars and group actions}\label{collars and actions}
A  \emph{parametrized boundary collar} for a manifold 
$M$ is defined to be an embedding $c\from (\bdy
M) \times [0,1) \to M$ such that $c(x,0)=x$ for each $x\in\bdy
M$. One can construct a parametrized boundary collar for any manifold 
$M$ by fixing a vector field
$Y$ on $M$ which is
inward-pointing at every point of $\bdy M$,
integrating $Y$, and restricting the resulting flow to a
small neighborhood of $\bdy M$. 

If we are given an action by diffeomorphisms of a finite
group $G$ on $M$, then by averaging 
a vector field
$Y_0$  which is
inward-pointing at every point of $\bdy M$, we obtain a
$G$--invariant vector field $Y$ with the same property; the construction
referred to above, with this choice of $Y$, then gives
a parametrized boundary collar $c$ for $M$
which is {\it
$G$--equivariant} in the sense that
$g\cdot c(x,t)=c(g\cdot x,t)$ for each $(x,t)\in (\bdy
M) \times [0,1) $. 

Now suppose that we are given a smooth fibration $r \from M\to Z$ of the
compact manifold $M$ over a closed manifold $Z$, whose fiber is a
compact manifold with boundary, and that the action of $G$ on $M$
preserves the fibration (\S\ref{what's fiber preserving}).
Then the 
inward-pointing vector field $Y_0$ used in that construction given
above may
be taken (by a partition-of-unity argument) to be 
everywhere tangent to the fibers of $r$.
With this choice of $Y_0$, the
$G$--equivariant parametrized boundary collar $c_0$ given by the
construction will be
compatible with the fibration $r$ in the sense that $c_0(x\times[0,1))$ is
contained in a fiber of $r$ for each $x\in\bdy M$.
\EndCustom

\Proposition\label{FiberPreservingAction}
Suppose that $r\from M\to Z$ 
is a smooth
fibration with fiber $F$, where $M$ is a compact manifold with
  $2$--corners and $F, Z$ are 
compact
manifolds (with boundary).
Let $G$ be a finite group of  diffeomorphisms of $M$, and
suppose that each $g\in G$ sends fibers to fibers, 
so that there is an induced action of $G$ on  $Z$. 
Let $q\from M/G  \to Z/G$ denote the
unique orbifold map such that the diagram
\[
\begin{tikzcd}
M \arrow[r, ""] \arrow[d, "r"] & 
M/G \arrow[d, "q"] 
 \\
Z \arrow[r, ""]
& Z/G
\end{tikzcd}
\]
commutes, where the horizontal arrows indicate quotients by the $G$--action.
Then $q$ 
is an
orbifold $F$--fibration. 
\EndProposition

\Proof
We first consider the case in which $Z$ and $F$ are both closed, so that $M$ is closed.

We fix a $G$--invariant Riemannian metric on $M$.

We  set $\orbN=M/G$ and $\orbH=Z/G$, and we denote by $p$ the quotient map from
$Z$ to $\orbH$. We set $m=\dim M$
and $n=\dim Z$, so that $F$ is a manifold of dimension $k\doteq m-n$. 

Let a point $x\in\orbH$ be given. Let us choose a point $\tx\in
p^{-1}(x)\subset Z$. Set $F_0=r^{-1}(\tx)\subset M$, so that $F_0$ is
orbifold-diffeomorphic to $F$. 

Let $J\le G$ denote the
stabilizer of $\tx$ 
under the action of $G$ on $Z$, which is also the stabilizer of $F_0$ under the action of $G$ on 
$M$.
For a small enough
positive number $\epsilon$, 
the proof of the standard tubular neighborhood theorem for smooth
submanifolds shows that
the $\epsilon$--neighborhood $V=V_\epsilon$
of $F_0$ in $M$ (with respect to the chosen Riemannian metric) is the
domain of a well-defined nearest-point map $t\from V\to F_0$, and $t$ is a
(manifold) fibration of $V$ with base $F_0$, whose fiber is an open ball of
dimension $m-k=n$. The $G$--invariance of the
metric on $M$ implies that $V$ is $J$--invariant, and that $t$ is a
$J$--equivariant 
map. Since the action of $G$ on $Z$ is induced by its
action on $M$, the map $r|V\from V\to Z$ is also $J$--equivariant. Hence, if
the product $Z\times F_0$ is endowed with the product $J$--action, we may
define a $J$--equivariant map $\eta\from V\to Z\times F_0$ by
$\eta(z)= (r(z),t(z))$. 

Note that $\eta$ is a smooth map between $m$--manifolds. We claim:

\Claim\label{invertible}
For every point 
$z\in F_0$, 
the linear map $d\eta_z\from T_zV\to
T_{\eta(z)}(Z\times F_0)$ is invertible.
\EndClaim

To prove \ref{invertible}, we let $D$ denote the $n$--ball
$t^{-1}(z)\subset V$. 
We write $T_z V=A\oplus B$, where $A$ and
$B$ are canonically isomorphic to $T_z D$ and $T_{t(z)} F_0$  respectively.
We also write $T_{\eta(z)} (Z\times F_0) = A' \oplus B$, where $A'$ and
$B$ are canonically isomorphic to $T_{r(z)} Z$ and $T_{t(z)} F_0$
respectively. 
Since $t|F_0$ is the identity map and $r|F_0$ is constant, 
$d\eta_z$ maps $B$ isomorphically onto $B$.

Since $r$ is a fibration, $dr_z\from T_zV\to T_{r(z)}Z$ is surjective. But
$dr_z|B$ is the zero map, and hence $dr_z|A \from  A \to A' = T_{r(z)}Z$ is surjective. Since
$A$ and $A'$ are both $n$--dimensional, this means that $dr_z|A$
maps $A$ isomorphically onto  $A'$. But $t|D$ is
constant, and hence $dt_z|T_zD=0$; it follows that $d\eta_z$ maps $A$ isomorphically onto $A'$. 
Thus \ref{invertible} is established.

It follows from \ref{invertible} and the compactness of $F_0$ that some
open neighborhood of $F_0$ in $V$ is mapped diffeomorphically by $\eta$
onto an open neighborhood of $\{\tx\} \times F_0$ in $Z\times F_0$. Again
using the compactness of $F_0$, we see that the latter neighborhood
contains one of the form $\torbU\times F_0$ where $\torbU$ is an open neighborhood
of $\tx$ in $Z$; and since $J$ fixes $\tx$, we may assume after
replacing $\torbU$ by a possibly smaller neighborhood of $\tx$ that $\torbU$ is
$J$--invariant, and is disjoint from its image under every element of $G-J$. Some open neighborhood $W$ of $F_0$ is mapped
diffeomorphically onto the $J$--invariant open set $\torbU\times F_0$, and the
$J$--equivariance of $\eta$ then implies that $W$ is $J$--invariant and
that $\eta|W\from W\to \torbU\times F_0$ is $J$--equivariant. 

Let $\psi\from \torbU\times F_0\to\orbN$ denote the composition of the
diffeomorphism $(\eta|W)^{-1}\from \torbU\times F_0\to W\subset M$ with the
quotient map $M\to M/G=\orbN$. Then the conditions of
Definition \ref{Def:OrbifoldFiberBundle} hold with $\orbN$, $F$,
$p(\torbU)$, $J$ and $p|\torbU$ playing the respective roles of
$\orbP$, $\orbF$, $\orbU$, $\frakG $ and $\phi$.
This shows that $q$ is an orbifold $F$--fibration,
and completes the proof in the case where $Z$ and $F$ are both closed.

This argument does not quite work in  the case in which $Z$ is closed but
$\bdy F\ne\emptyset$, because for an arbitrary $G$--invariant
metric on $M$, the proof of the tubular neighborhood theorem does not show
that the map $t$ appearing in the argument is a fibration for an
arbitrary point $x\in\orbH$; to guarantee this we must exercise a little more care in
choosing the $G$--invariant metric. 
For this purpose we use 
the construction of \S\ref{collars and actions} to produce a
$G$--equivariant parametrized boundary collar in $M$ which is
compatible with the fibration $r$, in the sense defined in that subsection.

Now define a metric $\rho_0$ on 
$c_0((\bdy
M)\times[0,1))$ 
by pushing forward, via $c_0$, the product of some
$G$--invariant metric on $\bdy M$ with the standard metric on
$[0,1)$. There is a metric $\barrho_0$ on $M$ which agrees with
$\rho_0$ on some neighborhood of $\bdy M$, and averaging
$\barrho_0$ via the action of $G$ gives a $G$--invariant metric $\rho$ on $M$ which
again agrees with
$\rho_0$ on some neighborhood of $\bdy M$. This implies that
the restriction of the metric $\rho$ to some neighborhood of $\bdy M$ can itself be obtained by pushing forward 
a product metric via some 
parametrized boundary collar $c\from (\bdy
M)\times[0,1)\to M$ which is  compatible with the fibration $r$. This
is enough to guarantee that the map
$t$ appearing in the argument given above is a fibration for an
arbitrary point $x\in\orbH$. This  completes the proof in the case in which $Z$ is closed but
$\bdy F$ may be non-empty.

To prove the proposition in the general case, we consider certain doubled
manifolds $DM$ and $DZ$. Here $DM$ is obtained by doubling $M$ along
the manifold (with boundary) 
$E\doteq r^{-1}(\bdy Z)$.
The manifold $DM$ is the union of two manifolds with $2$--corners
$M_1$ and $M_2$,
each of which is identified diffeomorphically with $M$ 
and therefore
has a submanifold $E_i$ identified with $E$; and we have
$M_1\cap M_2=
E_1=E_2$.
The manifold $DZ$ is the double of $Z$ along its full boundary; it is
the union of two submanifolds
$Z_1$ and $Z_2$,
each of which is identified diffeomorphically with $Z$, and $Z_1\cap Z_2=\bdy
Z_1=\bdy Z_2$. There is a doubled map $Dr\from DM\to DZ$ which, for
each $i$,
restricts to a map from  $M_i$ to $Z_i$ that is identified with $r$;
and there is a doubled action of $G$ on $DM$ which, for
each $i$,
restricts to an action on  $M_i$ that is identified with the
action of $G$ on $M$. The hypotheses of the proposition continue to
hold when $M$, $Z$ and $r$ are replaced by  $DM$, $DZ$ and $Dr$, and
$G$ acts on $DM$ by the doubled action. By the case of the proposition
already proved, the map $Dq\from 
(DM)/G  \to (DZ)/G$ induced by $Dr$ is an orbifold fibration. Now
since $Z_1/G$ is a suborbifold of $(DZ)/G$, and
$M_1/G=(Dq)^{-1}(Z_1)$, the restricted map $q_1=Dq|(M_1/G)\from M_1/G\to
Z_1/G$ is also an orbifold fibration. The conclusion follows.
\EndProof

\section{Orbifold $I$--bundles}\label{Sec:I-bundle}

This section focuses attention on orbifold fiber bundles whose bases are compact $2$--orbifolds and whose fibers are compact $1$--orbifolds. For most of the section, we will furthermore assume that the total space $\orbP$ is orientable.

We will prove several structural results about the total space and singular locus of such an $I$--bundle $\orbP \to \orbH$. Notably, Proposition~\ref{Prop:IBundleSingLocus} describes a detailed dictionary between the singular locus of $\orbP$ and the singular locus of the base $\orbH$, and proves in particular that any node in $\Sing_\orbP$ is dihedral. Proposition~\ref{Prop:IBundleTotalSpace} proves that the underlying space $P = \mani \orbP$ is homeomorphic to a (manifold) $I$--bundle over a surface.

\Custom{Orbifold $I$--fibrations}\label{Def orbifold I-bundle}
By specializing the notion of an orbifold $\orbF$--fibration (resp.\ orbifold $\orbF$--bundle) defined in \S\ref{Def:OrbifoldFiberBundle} to the case where $\orbF$
is the $1$--manifold $I=[0,1]$, we obtain the notion of an
\emph{orbifold $I$--fibration} 
(resp.\ \emph{orbifold $I$--bundle}).

If $q \from \orbP\to\orbH$ is an $I$--fibration,
the \emph{vertical boundary of $\orbP$}, denoted $\vertbdy \orbP$, is
defined to be
$q^{-1}(\bdy \orbH)$,
which according to  \S\ref{vertical weak} is  \emph{a
priori} a weak
suborbifold of $\orbP$. But since $\bdy\orbH$ has codimension $1$
in $\orbH$, the codimension of $\vertbdy\orbP$ in $\orbP$ is also
equal to $1$; hence, by Proposition \ref{codim 1 is nice},
$\vertbdy\orbP$ is a suborbifold of $\orbP$ (in the strong sense).
 The \emph{horizontal boundary of $\orbP$},
denoted $\horizbdy \orbP$, is the $\bdy I$--bundle over $\orbH$
obtained by restricting $q$. 
It follows that every non-interior point of $\orbP$ belongs to either $\vertbdy \orbP$ or $\horizbdy \orbP$, and the intersection $\vertbdy \orbP \cap \horizbdy \orbP$ is exactly the set of $2$--corner points of $\orbP$.
If $\orbP$ is orientable, then  $\horizbdy
\orbP$ is the orientation cover of $\orbH$
(see \S\ref{Orbifold covers}).
In particular, if we assume  $\orbP$ is orientable, then $\horizbdy \orbP$ is disconnected if and only if $\orbH$ is orientable.
\EndCustom

\Example\label{Ex:FiberedDisk}
Let $A = \SS^1 \times I$.
Then the projection $q \from A \to \SS^1$ is an $I$--fibration.
Now, suppose $\lambda \from A \to A$ is an involution acting by reflection on both $\SS^1$ and $I$.
The quotient $\orbP = A / \langle \lambda \rangle$ is a $2$--orbifold
with underlying space $\BB^2$ and two cone points of order
$2$. Furthermore, $\orbP$ admits an $I$--fibration over 
$\orbJ \doteq \SS^1 / \langle \lambda \rangle$. Compare \cite[Example
2.34]{CHK:OrbifoldBook}. 
\EndExample

\Custom{Fibrations and cleanliness}
\label{clean fibration}
If a compact, orientable $3$--orbifold $\orbP$ is equipped with an
$I$--fibration, then $(\orbP,\vertbdy\orbP)$ is a \clean\ pair. Indeed,
we observed in \S\ref{Def orbifold I-bundle} that $\vertbdy\orbP$ is
a suborbifold of $\orbP$ (in the strong sense), establishing Condition \eqref{Clean2}. Conditions \eqref{Clean3} and \eqref{Clean4}
of the definition in \S\ref{Clean pairs}  follow from the
definition of a fibration.
\EndCustom

\Proposition\label{fibered annular}
Let $\orbA$ be a compact connected $2$--orbifold which is an $I$--bundle 
over a closed $1$--orbifold $\orbJ$. Then $\orbA$ is annular.
\EndProposition

\Proof
Since the fibers of $\orbA$ are quotients of $I$, and have non-empty
boundary by \S\ref{folded},
we have
$\bdy \orbA \neq \emptyset$.
Thus it follows from \cite[Theorem 2.22]{CHK:OrbifoldBook} that $\orbA$ is very good.

Consequently, $\orbA$ 
is covered by a compact orientable $2$--manifold $\torbA$ that is itself an $I$--bundle over a closed base. 
Thus $\chi(\torbA) = 0$, 
hence $\torbA$ is an annulus, and the result follows.
\EndProof

We remark that in Proposition~\ref{fibered annular}, if $\orbA$ is orientable, then $\orbA$ is either an annulus, or the quotient of an annulus by an involution as in Example~\ref{Ex:FiberedDisk}. We will not need this stronger conclusion.

\Corollary\label{Cor:VertBoundaryStructure}
Let $\orbP$ be a compact $3$--dimensional orbifold $I$--bundle.
Then every component  of $\vertbdy \orbP$  is an annular $2$--orbifold. 
\EndCorollary

\Proof
The $I$--fibration $q \from \orbP \to \orbH$ induces an $I$--fibration of each
component of $\vertbdy \orbP$ over a component of $\bdy \orbH$. 
Observe that each component of $\bdy \orbH$ is closed. The conclusion now follows from
Proposition  \ref{fibered annular}.
\EndProof

\Lemma\label{Lem:SameGroup}
Let  $q \from \orbP \to \orbH$ be an orbifold $I$--fibration, 
where $\orbP$ is an orientable $3$--orbifold.
Let a point $x \in \orbH$ be given and let
$(\orbU,\torbU,\frakG,\phi,\xi)$ be a standard local trivialization of
$q$  near $x$.
Then an element $g$ of $\frakG$ reverses the orientation of $I$ if and only if $g$ reverses the orientation of $\torbU$. 
Furthermore, $\frakG$ acts effectively on $\torbU$, and can therefore be identified
with the local group  $G_x^\orbH$ by  \S\ref{Rem:StandardNbdFibration}.
In particular, $\frakG$ is either cyclic or dihedral by Proposition
\ref{Prop:LocalStructure2Orbifold}.
\EndLemma

\Proof
Since $\orbP$ is orientable, the diagonal action of $\frakG$ 
must preserve the orientation of the product $\torbU \times I$. This
implies the first conclusion of the lemma.

Now suppose that some non-trivial element
$g$ of $\frakG$ acts trivially on $\torbU$. Since the action of
$\frakG$ on $\torbU\times I$ is effective, $g$ must act non-trivially
on $I$. But any non-trivial periodic diffeomorphism of $I$ is conjugate
to a reflection; hence  $g$ reverses the orientation of $I$; as it acts
trivially on  $\torbU$, this contradicts the first assertion.
This shows that
$\frakG$ acts effectively on $\torbU$. As indicated in the statement, the
remaining assertions follow from 
\S\ref{Rem:StandardNbdFibration} and Proposition
\ref{Prop:LocalStructure2Orbifold}.
\EndProof

\Convention\label{no more Gamma}
According to Lemma \ref{Lem:SameGroup}, if  $q \from \orbP \to \orbH$ is an orbifold $I$--fibration, 
where $\orbP$ is an orientable $3$--orbifold, and if
$(\orbU,\torbU,\frakG,\phi,\xi)$ be a standard local trivialization of
$q$  near a point $x \in \orbH$, then $\frakG$ and $\torbU$ are canonically
identified with the local group $G_x$ and a disk or half-disk $D$. In such a
situation we shall therefore generally denote the  local
trivialization in question by $(\orbU,D,G_x,\phi,\xi)$.
\EndConvention

\Lemma\label{Lem:FoldedIntervals}
Let  $q \from \orbP \to \orbH$ be an orbifold 
$I$--fibration, where $\orbP$ is an orientable $3$--orbifold.
Let a point $x\in\orbH$ be given.
If $x \in  \mirror \orbH$, 
the fiber $q^{-1}(x)$    is a folded \arc.
If $x \notin  \mirror \orbH$, the
 fiber $q^{-1}(x)$  is diffeomorphic to $I$.
\EndLemma

\Proof
By \S\ref{Rem:StandardNbdFibration} and \S\ref{no more Gamma},
we may fix a standard local trivialization 
$(\orbU, D, G_x,\phi,\xi)$ near $x$.
The fiber $q^{-1}(x)$ is diffeomorphic to $\big( \{x\}\times I \big)/G_x$, and
is therefore a folded arc if and only if the action of the local group $G_x$ on
$I$ is not orientation-preserving. 
By  Lemma \ref{Lem:SameGroup}, this is in turn equivalent to saying
that the action of $G_x$ on
$D$ is not orientation-preserving, i.e.\ that 
$x \in \mirror \orbH$. 
\EndProof

\Custom{$I$--bundles have nonsingular corners}\label{why nonsingular}
Recall that if a compact,
orientable $3$--orbifold $\orbP$ admits an $I$--fibration, then $(\orbP,
\vertbdy \orbP)$ is a \clean\ pair
by
\S\ref{clean fibration}, 
and hence that $\orbP$ has nonsingular $2$--corners by
\S\ref{Clean pairs}.

Recall also from 
\S\ref{what's a node}
that in a compact, orientable $3$--orbifold with 
nonsingular
corners, every singular stratum is either a
node or a cone stratum. 
For the  $I$--bundle case, we have the following stronger statement.
\EndCustom

\Proposition\label{Prop:IBundleSingLocus}
Let  $q$
be an $I$--fibration of a compact, orientable $3$--orbifold $\orbP$ over
a $2$--orbifold $\orbH$.
Then for every singular stratum $\sigma$ of $\orbP$, exactly one of the
following alternatives holds:
 \begin{enumerate}[\:\:(i)]
 \item\label{Itm:Vert} 
$\sigma$ is a cone stratum of some order $n$, and its closure in $\orbP$ is a
fiber over
a cone point of order $n$ or a bi-mirror point of order $2n$. Such a stratum $\sigma$ will be
called \emph{vertical}. 
\item\label{Itm:Horiz} 
$\sigma$ is a cone stratum of order $2$,  its image under $q$ is a
mirror stratum $\tau$ of  
$\orbH$, and $q|\sigma$ is a diffeomorphism between $\sigma$ and
$\tau$ (which are smooth manifolds by 
Assertion \ref{was too}).
Such a stratum $\sigma$ will be
called  \emph{horizontal}. 
\item\label{dihedral alternative}
$\sigma=\{u\}$
for some dihedral node $u$,
which is a terminal node of two oriented horizontal $1$--strata (of
order $2$), 
and of one oriented vertical $1$--stratum which
projects to a bi-mirror point of $\orbH$. 
\end{enumerate}
\EndProposition

It follows from Proposition \ref{Prop:IBundleSingLocus} that a singular
stratum $\sigma$ of $\orbP$ is a  vertical stratum if and only if
$\overline{\sigma}$ is a vertical weak suborbifold of $\orbP$ (in the
sense of \S\ref{vertical weak}). However, a vertical stratum $\sigma$
need not itself be a vertical weak suborbifold, as its closure may
contain a dihedral node. This mild abuse of language will
not lead to any ambiguities.

\begin{proof}[Proof of Proposition \ref{Prop:IBundleSingLocus}]
We denote by $Q\from\mani\orbP\to\mani\orbH$ the 
underlying map of 
$q$.
    
For each point $x \in \orbH$, 
we fix a standard local trivialization 
$(\orbU_x, D_x, G_x,\phi_x,\xi_x)$ near $x$, following the conventions of \S\ref{Rem:StandardNbdFibration}
and \S\ref{no more Gamma}.
By
\S\ref{Rem:StandardNbdFibration}, there is a standard chart $(U_x,D_x,\Phi_x,G_x)$ centered at $x$, where
 $U_x=\mani\orbU_x$.
For each $x \in \orbH$, we set $\orbW_x = q^{-1}(\orbU_x)$, an open suborbifold of $\orbP$.

According to Proposition \ref{Prop:LocalStructure2Orbifold}, every
point of $\orbH$ is a nonsingular point, a cone point, a mirror point
or a bi-mirror point. Hence for a singular stratum $\sigma$ of $\orbP$,
at least one of the following alternatives must hold:
\begin{enumerate}
\item\label{Itm:SomeNonsing} For some $z\in\sigma$, the point $Q(z)$ is nonsingular.
\item\label{Itm:SomeCone} For some $z\in\sigma$, the point $Q(z)$ is a cone point.
\item\label{Itm:EveryMirror} For \emph{every} $z\in\sigma$, the point $Q(z)$ is a mirror point.
\item\label{Itm:SomeBiMirror} For some $z\in\sigma$, the point $Q(z)$ is a bi-mirror
  point.
\end{enumerate}

If \eqref{Itm:SomeNonsing} holds, and if we choose a point $z\in\sigma$ such that $x\doteq
Q(z)$ is nonsingular, then $G^\orbH_x=\{1\}$. Hence $\orbW_x$, 
which by Definition~\ref{Def:OrbifoldFiberBundle} is orbifold-diffeomorphic to 
 $(D_x\times \orbF )/G_x$, is a manifold, and $z\in
  \orbW_x$ is a nonsingular point; this is a contradiction,
  since $\sigma$ is a singular stratum. Thus Case \eqref{Itm:SomeNonsing} cannot occur.

If \eqref{Itm:SomeCone} holds, and if we choose a point $z\in\sigma$ such that $x\doteq
Q(z)$ is 
a cone point of order 
$n > 1$, then 
$G_x \cong \ZZ / n\ZZ$ acts on $D_x$
by
rotations. 
Since every $g \in G_x$ preserves the orientation
  of $D_x$, Lemma~\ref{Lem:SameGroup} implies that $G_x $ acts
  trivially on the $I$ factor of $D_x \times I$. Hence  $G_x $ acts
  on the cylinder $D_x \times I$ 
by rotations about the axis $\{0\}\times I$. Thus the singular set
of  $\orbW_x$ is the closed arc  $Q^{-1}(x)$. It follows that
$Q^{-1}(x)$ is  a vertical cone stratum of order $n$ of $\orbP$, in the sense defined in
the statement of the Proposition. Since this vertical stratum contains $z$,
it must coincide with $\sigma$. 
Thus Alternative \eqref{Itm:Vert} of the conclusion of the proposition holds in this case.

If \eqref{Itm:EveryMirror} holds, then since $\sigma$ is connected, $Q(\sigma)$ must be
contained in a single mirror stratum $\tau$ of $\orbH$.
Define $S = \barsigma\cap Q^{-1}(\tau)$, and observe that $\sigma \subset S$, hence $S \neq \emptyset$.

Consider an arbitrary point $w$ of
$S$. Since $v\doteq Q(w)\in\tau$,
the point $v$ is a mirror point. Hence
the generator $g$ of $G _v\cong \ZZ/2\ZZ$ acts on $D_v$ by a
  reflection; by  Lemma~\ref{Lem:SameGroup}, $g$ acts on $I$ by a reflection as well. The
  $3$--dimensional action of $g$ is
a composition of reflections through two perpendicular planes,
  which is a $\pi$--rotation about a horizontal open line segment in $D_v \times I$.
Thus the singular locus of $\orbW_v$ is an open arc $A$ which
contains $w$ and is contained
in an order-$2$ cone stratum $\sigma'$ of $\orbP$; and $Q$ maps $A$
diffeomorphically onto an arc in the mirror stratum  of $\orbH$
containing $v$, namely $\tau$. Furthermore, since $A$ is the full singular locus of $\orbW_v$, and $Q|A$ is one-to-one, the only singular point in the fiber $Q^{-1}(v)$ is $w$ itself.

In particular, $A$ is a
neighborhood of $w$ relative to  $\Sing_\orbP$; since $w\in
S\subset\barsigma$, the arc $A$ must meet $\sigma$. Hence
$\sigma'=\sigma$, which shows that $\sigma$  is a cone stratum of order
$2$, and that $w\in\sigma$.
Since $w$ was an arbitrary point of $S$, we have shown that
$S\subset\sigma$. 
We have already observed the reverse inclusion $\sigma\subset S$. Since each point $w \in S = \sigma$ is the only singular point of its fiber, it follows that $Q | \sigma$ is one-to-one and  is therefore a diffeomorphism to its image.

It follows that $Q(\sigma)$ is an open subset of $\tau$. But since $\sigma=S=\barsigma\cap
Q^{-1}(\tau)$, it follows that $Q(\sigma)=\tau\cap Q(\barsigma) = Q(\barsigma)$, hence $Q(\sigma)$ is also closed in $\tau$. We conclude that $Q | \sigma$ is a diffeomorphism onto the stratum $\tau$, verifying that $\sigma$ is a horizontal stratum in the sense defined in the statement
of the Proposition; 
that is, Alternative \eqref{Itm:Horiz} of the conclusion holds.

Finally, suppose that \eqref{Itm:SomeBiMirror} holds.
Let us choose a point $z\in\sigma$ such that $x\doteq Q(z)$ is a
bi-mirror point. Then $G_x$ is a dihedral group of 
some finite order $2n$. 
There is a cyclic index--$2$ subgroup $C$ of $G_x$ that acts
on $D_x$ by rotations; since the action of $C$ preserves the
orientation of $D_x$, it must act trivially on $I$ by Lemma
\ref{Lem:SameGroup}. Thus $C$ acts on $D_x \times I$ by rotations about a vertical axis. On the other hand, every element of $G_x - C$
reverses the orientation of $D_x$, and by Lemma
\ref{Lem:SameGroup} it must also reverse the orientation of
$I$. Hence such an element acts on $D_x \times I$ as a product of
two reflections, which is  a $3$--dimensional $\pi$--rotation about a
horizontal axis. From this one deduces that  $G_x$ acts on $D_x \times I$ as the orientation-preserving symmetries of a prism over a regular $n$--gon.
The singular set 
of the orbifold $\orbW_x$ is the union of a
closed arc $\alpha=Q^{-1}(x)$ with two half-open arcs $\beta_1$ and
$\beta_2$, such that
$\alpha\cap\beta_1=\alpha\cap\beta_2=\beta_1\cap\beta_2=\{ u \}$ for
some point $u $. If we set $\alpha'=\alpha-\{u \}$, and $\beta_i'=\beta_i-\{u \}$ for $i=1,2$, then the singular strata of  $\orbW_x$ are
$\{u \}$, $\alpha'$, $\beta_1'$ and $\beta_2'$. The map
$Q$ 
maps each $\beta_i'$ diffeomorphically onto an open arc in a mirror
stratum of $\orbH$.

Since $\alpha'$ and $\{u \}$ do not meet the frontier of $\orbW_x$ in $\orbP$, they are singular strata of $\orbP$. 
Each $\beta_i$ is
contained in a singular stratum $\gamma_i$ of $\orbP$. 
Since $z\in Q^{-1}(x)=\alpha=\alpha'\cup\{u \}$, 
the stratum $\sigma$
is equal to either $\{u \}$ or $\alpha'$, which is a vertical stratum over a
bi-mirror point in the language of the proposition.

Let us summarize what we have shown so far in this case:

\Claim\label{what if four}
If \eqref{Itm:SomeBiMirror} holds for a given singular stratum $\sigma$ of $\orbP$, then
$\sigma$ is contained in a single fiber $Q^{-1}(x)$ for a bi-mirror point $x \in \orbH$.
Furthermore, either
\begin{enumerate}[\:\:(a)] 
\item $\sigma$ is a vertical stratum over $x$, or
\item 
$\sigma = \{u \}$, and $u $ has a neighborhood in
$\Sing_\orbP$ which is the union of a
closed arc $\alpha=Q^{-1}(x)$ with two half-open arcs $\beta_1$ and
$\beta_2$, such that
$\alpha\cap\beta_1=\alpha\cap\beta_2=\beta_1\cap\beta_2=\{u \}$. Furthermore,
if we set $\alpha'=\alpha-\{u \}$, and $\beta_i'=\beta_i-\{u \}$ for $i=1,2$, then 
$\alpha'$ is a vertical stratum of $\orbP$, while each $\beta_i'$ is
contained in a
stratum and is mapped diffeomorphically by $Q$ onto an open arc in a mirror
stratum of $\orbH$.
\end{enumerate}
\EndClaim

To complete the proof of the proposition, we
note that 
 Alternative (a) of \ref{what if four} implies Alternative \eqref{Itm:Vert} of the
conclusion 
of the proposition.
Now suppose that (b) holds, and
for $i=1,2$, let $\gamma_i$ denote the stratum of $\orbP$ that
contains $\beta_i$. (Note that $\gamma_1$ and $\gamma_2$ are not
necessarily distinct.)
For each $\gamma_i$, one of the alternatives \eqref{Itm:SomeNonsing}--\eqref{Itm:SomeBiMirror} must hold.
We have seen that \eqref{Itm:SomeNonsing} cannot occur, and that \eqref{Itm:SomeCone} implies
that the given stratum $\gamma_i$ is 
contained in a fiber, which is
impossible since $q$ maps $\beta_i'$ diffeomorphically to its image. If
\eqref{Itm:SomeBiMirror} holds 
for $\gamma_i$, then  \ref{what if four} likewise implies that $\gamma_i$ is contained in a fiber, which we have seen is impossible.
Hence each stratum $\gamma_i$ must satisfy \eqref{Itm:EveryMirror}, and is therefore 
a horizontal stratum, in the sense defined in the statement of the
proposition. In particular the $\gamma_i$ are cone strata of order $2$; and it
follows that $u $ is a dihedral node,
which is a terminal node of two oriented horizontal $1$--strata of
$\orbP$, and of one oriented vertical $1$--stratum which
projects to 
a bi-mirror point of $\orbH$. 
This is Alternative \eqref{dihedral alternative} of the conclusion of the proposition.
\end{proof}

\Remark\label{Seifert}
The structural information provided by Proposition \ref{Prop:IBundleSingLocus} is closely related to the well-known structure of orbifold Seifert fibered spaces \cite[Section 2.8]{CHK:OrbifoldBook}. For the sake of this discussion, an \emph{orbifold Seifert fibered space} is an orientable $3$--orbifold $\orbM$ equipped with an $\SS^1$--fibration over a $2$--orbifold $\orbH$. (This generalizes Example~\ref{Ex:SeifertManifold}, where $\orbM$ was required to be a manifold.) In an orbifold Seifert fibered space $\orbM$, it is once again true that every $1$--stratum of $\Sing_\orbM$ is either vertical or horizontal, with every vertical $1$--stratum projecting to a cone point and every horizontal $1$--stratum projecting homeomorphically to a mirror stratum of $\Sing_\orbH$.
Furthermore, just as in Proposition \ref{Prop:IBundleSingLocus}, every $0$--stratum is a dihedral node projecting to a bi-mirror point.
Compare \cite[Pages 41--42]{CHK:OrbifoldBook}. 
\EndRemark

  \Custom{\Clasibun s}
  \label{classical}
We define a  \emph{\clasibun} to be a topological $3$--manifold (with
boundary) $P$ equipped with a
locally trivial fibration $Q\from P\to H$ in which the base  $H$ is a topological
$2$--manifold (with
boundary), and the fiber is the unit interval $I$. 
In this situation,
$Q\from P\to H$ may be referred to as a
\emph{classical $I$--fibration}. 
A subset of $P$
is called  \emph{vertical} if it is saturated in the fibration $Q$. We define the {\it
  vertical boundary} of $P$ to be the submanifold $\vertbdyclas
P\doteq Q^{-1}(\bdy H)$
of $\bdy P$, and its  \emph{horizontal boundary, } denoted $\horizbdyclas
P$, to be its
associated $\bdy I$--bundle. 

We note that smooth and PL structures play no role in this definition.
\EndCustom

\Lemma\label{Lem:IBundleTotalSpaceOriBase}
Let $q \from \orbP \to \orbH$ be an orbifold $I$--bundle,
where $\orbP$ is compact, connected, orientable, and $3$--dimensional. Suppose in addition that
$\mirror\orbH=\emptyset$.
Then the 
underlying
 map $Q \from \mani \orbP \to \mani \orbH$ of $q$ 
is a classical $I$--fibration.
\EndLemma

\begin{proof}
Let $P \doteq \mani \orbP$ and $H \doteq \mani \orbH$.  Since 
$\mirror\orbH=\emptyset$,
Proposition~\ref{Prop:LocalStructure2Orbifold} says that the singular
locus $\Sing_\orbH$ consists entirely of cone points; hence
Proposition~\ref{Prop:IBundleSingLocus} implies that $\Sing_\orbP$ consists
entirely of vertical strata. Thus each stratum $\sigma$ of $\orbP$ is
the fiber over some cone point of $\orbH$, which will be denoted
$x_\sigma$.
In particular we have $Q^{-1}(\Sing_\orbH)=\Sing_\orbP$.

By \S\ref{Rem:StandardNbdFibration}, for each cone point $x\in\orbH$,
we may fix a standard local trivialization near $x$, which by
\S\ref{no more Gamma} we may denote
$(\orbU_x, D_x, G_x,\phi_x,\xi_x)$. We may choose the local
trivializations in such a way that  $\orbU_x\cap\orbU_{x'}=\emptyset$ 
whenever $x$ and $x'$ are distinct cone points.

Since $\mirror \orbH =
\emptyset$, Lemma~\ref{Lem:FoldedIntervals} says that for every
$x\in\orbH$, the fiber $\sigma_x\doteq q^{-1}(x)$ is
diffeomorphic to $I$;
  that is, the action of $G_x$ on $I$ is trivial.
Hence $(D_x\times I )/G_x $  may be canonically identified with
$(D_x /G_x)\times I $. On the other hand, according to
the definition of a local trivialization, $\phi_x$ induces an orbifold
diffeomorphism between $D_x /G_x$ and $\orbU_x$. Under these
identifications,  $\xi_x$ becomes a smooth map from $D_x\times I$
to $q^{-1}(\orbU_x)$, which induces an orbifold diffeomorphism $\eta_x$
  between $\orbU_x\times I $  and $q^{-1}(\orbU_x)$. The
  commutativity of the diagram in \S\ref{Def:OrbifoldFiberBundle} then
  says that $q\circ\eta_x\from \orbU_x\times I\to\orbU_x$ is the projection
  to the first factor. Thus $Q \from P \to H$  is a locally trivial fibration of topological spaces.
\end{proof}

As a consequence of Proposition~\ref{Prop:IBundleSingLocus} and Lemma~\ref{Lem:IBundleTotalSpaceOriBase}, we can prove the following useful fact about Euler characteristics.

\Lemma\label{Lem:SameEuler}
Let $q \from \orbP \to \orbH$ be an orbifold $I$--fibration,
where $\orbP$ is compact, connected, orientable, and $3$--dimensional. 
Then $\chi(\orbP) = \chi(\orbH)$.
\EndLemma

\Proof
Set $P = \mani \orbP$ and $H = \mani \orbH$. 
Let $Q \from \mani \orbP \to \mani \orbH$ denote the 
underlying map of 
$q$. 

We first consider the case in which $\mirror\orbH=\emptyset$. 
In this case,  $\Sing_\orbH$ consists of finitely many cone points. It
follows from Proposition~\ref{Prop:IBundleSingLocus} that
for every  cone point $x$  of $\orbH$, the set $Q^{-1}(x)$ is a cone
stratum of $\orbP$, and furthermore that $Q^{-1}(\Sing_\orbH)$ is the
full singular set of $\orbP$. Since $\orbP$ is connected, $\orbH$ is
also connected; this implies that $H-\Sing_\orbH$ is the unique smooth
stratum of $\orbH$, while $P-\Sing_\orbP=Q^{-1}(H-\Sing_\orbH)$ is the unique smooth
stratum of $\orbP$.

In particular, the assignment $\sigma\mapsto Q^{-1}(\sigma)$ is a
bijection between the strata of $\orbH$ and those of
$\orbP$. By \eqref{that's what chi is}, we have
$$
\chi(\orbH)=\sum_\sigma\frac{ \chi(\overline \sigma) - \chi(
\barsigma-\sigma)}{ \ord(\sigma)} \qquad {\rm and} \qquad
\chi(\orbP)=\sum_\sigma\frac{ \chi\big(\overline{Q^{-1}( \sigma)}\big) - \chi\big(
\overline{Q^{-1}(\sigma)}-Q^{-1}(\sigma)\big)}{ \ord(Q^{-1}(\sigma))},
$$
where $\sigma$ ranges over the strata of $\orbH$. Hence, to establish
the conclusion in this case, it suffices to show that for every
stratum $\sigma$ of $\orbH$, we have
\Equation\label{what we need}
\chi\big(\overline{Q^{-1}(\sigma)}\big)=\chi(\barsigma), \quad \chi\big(
\overline{Q^{-1}(\sigma)}-Q^{-1}(\sigma)\big)=\chi(
\barsigma-\sigma),
\quad{\rm and}\quad
\ord(Q^{-1}(\sigma))=\ord(\sigma).
\EndEquation

If $\sigma$ is a singular stratum of $\orbH$, then $\sigma$ consists
of  a cone
point of some order $p$. By Proposition~\ref{Prop:IBundleSingLocus},
the cone stratum $Q^{-1}(\sigma)$ also
has order $p$, and $Q^{-1}(\sigma)$
is a closed topological arc since $\mirror\orbH=\emptyset$.  The sets
$\barsigma-\sigma$ and
$\overline{Q^{-1}(\sigma)}-Q^{-1}(\sigma)$
are empty. This establishes 
\eqref{what we need} when $\sigma$ is a singular stratum.

If $\sigma$ is the nonsingular stratum of $\orbH$, then $Q^{-1}(\sigma)$
is the nonsingular stratum of $\orbP$; we have
$\barsigma=H$ and $\overline{Q^{-1}(\sigma)}=P$. The equality
of $\chi\big(\overline{Q^{-1}(\sigma)}\big)$ and $\chi(\barsigma)$ therefore
follows from Lemma~\ref{Lem:IBundleTotalSpaceOriBase}. 
To establish the equality
$\chi\big(
\overline{Q^{-1}(\sigma)}-Q^{-1}(\sigma)\big)=\chi(
\barsigma-\sigma)$ in this situation, we note that 
$\barsigma-\sigma=\Sing_\orbH$ and that
$\overline{Q^{-1}(\sigma)}-Q^{-1}(\sigma)=\Sing_\orbP$. Since the
components of $\Sing_\orbP$, which are closed arcs, are in bijective
correspondence with the points that make up $\Sing_\orbH$, the desired
equality follows. Finally, the orders of $\sigma$ and $Q^{-1}(\sigma)$
are both equal to $1$. Thus \eqref{what we need} is proved for all
strata, and the proof of the lemma in the case
$\mirror\orbH=\emptyset$ is complete.

Now suppose that $\mirror\orbH \neq \emptyset$. Let $\torbH\to\orbH$ denote the 
  \reflective\ covering of $\orbH$ (see \S\ref{reflective}). Lemma~\ref{Lem:InducedCover} provides an induced $I$--fibration
$\tq \from \torbP \to \torbH$ and a two-fold covering $\torbP \to \orbP$. Since $\mirror\torbH = \emptyset$, the case of the lemma that we have already proved, combined with the multiplicativity of Euler characteristic under covers, implies
\[
2 \chi(\orbP) = \chi(\torbP) = \chi(\torbH) = 2 \chi(\orbH).
\]
Thus $\chi(\orbP) = \chi(\orbH)$, completing the proof.
\EndProof

\Lemma\label{hands-on part}
Suppose that $P_0$ is a \clasibun, and that $A$ is a two-dimensional, compact 
vertical submanifold of $\vertbdyclas P_0$ (so that each component of
$A$ is a topological annulus or disk). 
Suppose that $ T \from A \to A$ is
an involution which leaves invariant each fiber of 
$P_0$ contained in $A$,
and reverses the orientation of each such
fiber. 
Let $f \from P_0 \to E$ be the quotient map that identifies every orbit of $T$ to a point.
Then $E $ is a topological $3$--manifold, and
may be given the structure of
a \clasibun\ in such a way that $f$ 
carries $\overline{(\vertbdyclas P_0)-A}$ onto a vertical
submanifold of $\vertbdyclas E$.
\EndLemma

\begin{figure}
\begin{overpic}[width=6in]{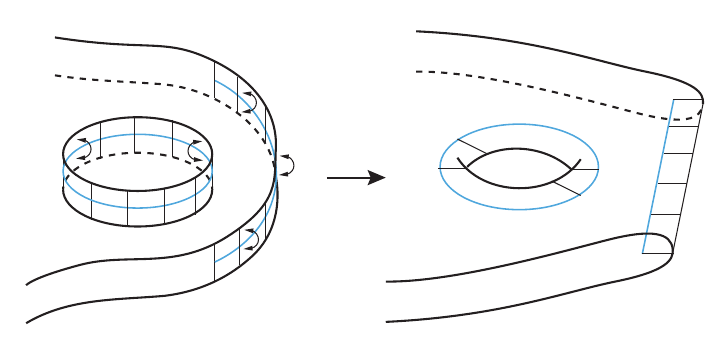}
\put(48,25){$f$}
\put(14,18){$A$}
\put(30,11){$A$}
\put(89,23){$f(A)$}
\put(66,27.5){$f(A)$}
\put(5,12){$P_0$}
\put(55,12){$E$}
\end{overpic}
\caption{The quotient map $f \from P_0 \to E$ of Lemma~\ref{hands-on part}. The involution $T \from A \to A$ is depicted by curved double arrows, and the fixed point set of $T$ appears in blue. The homeomorphism $\eta\from P_0\to E$ can be chosen to be the identity outside a small neighborhood of $A$.   }
\label{Fig:Folding}
\end{figure}

\Proof
Let  $Q \from P_0\to H_0$ denote the 
locally trivial fibration that defines the \clasibun\ structure of
$P_0$. 
The hypothesis implies that there is a compact $1$--manifold
$\alpha\subset\bdy H_0$ such that $A=Q^{-1}(\alpha)$. The
hypothesis also implies that $A$ may be identified homeomorphically
with $\alpha\times I$ 
in such a way that $Q|A$ is the projection to the
first factor, and $ T (x,t)=(x,1-t)$ for every $(x,t)\in A$.

From this description of $T$, it follows that $E$ is a topological
$3$--manifold, and that there is a homeomorphism
$\eta\from P_0\to E$ which agrees with the quotient map $f \from P_0\to E$
outside an arbitrarily small neighborhood of $A$ in $P_0$. 
See Figure~\ref{Fig:Folding}.
In
particular, if $N$ is a regular neighborhood of $A$ relative to
$\vertbdyclas P_0$, which is a vertical submanifold of $P_0$,  we may take $\eta$ to be the identity on
$\overline{(\vertbdyclas P_0)-N}$. 
It now follows that
$Q'\doteq Q\circ\eta^{-1}\from E\to H_0$ is a classical $I$--fibration. 
In terms of the $I$--bundle structure of $E$ defined by $Q'$, the map 
$f$ carries $\overline{(\vertbdyclas P_0)-N}$ onto a vertical
submanifold of $\vertbdyclas E$. Hence, by precomposing $Q'$ with a
self-homeomorphism of $E$ that maps $A$ onto $N$, we obtain a
classical $I$--fibration with the required properties.
\EndProof

\Proposition\label{Prop:IBundleTotalSpace}
Let $q \from \orbP \to \orbH$ be an orbifold $I$--bundle,
where $\orbP$ is compact, connected, orientable, and $3$--dimensional.
Then $P \doteq \mani \orbP$,
regarded as a topological $3$--manifold,
may be given the structure of
a \clasibun\ in such a way that $\mani{\vertbdy \orbP}$ is a vertical
submanifold of $\vertbdyclas P$.
\EndProposition

\Proof
If  $\mirror\orbH=\emptyset$, 
then the result follows from Lemma~\ref{Lem:IBundleTotalSpaceOriBase}. 

Now, assume 
that $\mirror\orbH\ne\emptyset$.
Let $\theta \from\torbH\to\orbH$ denote the 
  \reflective\ covering of $\orbH$ (see \S\ref{reflective}).
Set $H=\mani\orbH$ and $\tH=\mani\torbH$, and let $\Theta \from\tH\to H$
denote the underlying
map of $\theta $.
According to \S\ref{reflective}, we may write $\tH$ as a union of
two topological $2$--submanifolds $H_0$ and $H_1$, such that
$\Theta $ maps $H_i$  homeomorphically  onto $H$ for $i=0,1$.

Lemma~\ref{Lem:InducedCover} provides an induced $I$--fibration
$\tq \from \torbP \to \torbH$ and a covering map $\upsilon  \from \torbP \to \orbP$.
According to that lemma,
we have $\deg \upsilon =\deg \theta =2$; hence  $\torbP$ has a
unique non-trivial deck transformation $\delta$, which is an
involution.
Lemma~\ref{Lem:InducedCover} also implies that there is a commutative diagram
\Equation\label{involution-diagram}
\begin{tikzcd}
 \torbP \arrow[r, "\delta"] \arrow[d, "\tq"] & \torbP \arrow[d, "\tq"] \\
\torbH \arrow[r, "\lambda"] & \torbH
\end{tikzcd}
\EndEquation
where $\lambda$ denotes the non-trivial deck transformation
of $\torbH$.

Let $Q \from P \to H$, $\tQ \from \tP \to \tH$,  $\Upsilon  \from \tP \to P$, $\Lambda\from\tH\to\tH$ and
$\Delta\from\tP\to\tP$ denote the 
underlying maps of 
$q$, $\tq$,  $\upsilon $, $\lambda$ and $\delta$
respectively. 

For $i=0,1$, set $P_i=\tQ^{-1}(H_i)$. Since
$\tH=H_0\cup H_1$, we have $\tP=P_0\cup P_1$. 
According to \S\ref{reflective}
we have $R\doteq H_0\cap H_1=\Theta ^{-1}(\mirror\orbH)$, 
and the involution
$\Lambda$ of $\tH$  interchanges $H_0$ and $H_1$.
It follows
that $A\doteq P_0\cap P_1=\tQ^{-1}(R)$, 
and, in view of the commutativity of \eqref{involution-diagram},  that the involution $\Delta$
of $\tP$  interchanges $P_0$ and $P_1$. In particular $\Delta$ leaves $A$ 
invariant, and therefore restricts to a self-homeomorphism
of $A$ which  \emph{a priori} is periodic with period at most $2$. From \S\ref{reflective}
we also have that $\Lambda$ fixes $R$ 
pointwise, which with the commutativity of \eqref{involution-diagram},
implies that  $\Delta$
leaves the set  $E_\tx\doteq \tQ^{-1}(\tx)\subset A$ invariant for each point $\tx\in R$.

If $\tx$ is any point of $R$, then $x\doteq
\theta (\tx)\in \mirror\orbH$, so that Lemma \ref{Lem:FoldedIntervals}
gives that the orbifold fiber $q^{-1}(x)$ is a folded arc. On the
other hand, since $\mirror\torbH=\emptyset$,  Lemma \ref{Lem:FoldedIntervals}
gives that the orbifold fiber $\tq^{-1}(\tx)$ is diffeomorphic to
$I$. Hence $ \Delta |E_\tx$ is an 
orientation-reversing  involution of $E_\tx$ for each $\tx\in R$. In
particular $ \Delta | A $ is an involution.

According to \S\ref{reflective}, we have
$\mirror\torbH=\emptyset$. 
Lemma~\ref{Lem:IBundleTotalSpaceOriBase} then implies that $\tQ \from \tP \to \tH$ is a locally trivial fibration of topological spaces. Hence if we set
$P_0=\tQ^{-1}(H_0)$, then
$\tQ$ restricts to a locally trivial fibration  $Q_0\from P_0\to H_0$, and $P_0$ thus acquires the structure of a
\clasibun. From the perspective of this structure, $A$ is a vertical
subset of $P_0$, and the sets of the form $E_\tx$ for $\tx\in R$ are
the fibers of $P_0$ contained in $A$. Thus $\Delta | A $ is an involution
of the vertical set $A$ which restricts to an orientation-reversing
involution of each fiber contained in $A$. Furthermore, Proposition
\ref{Prop:LocalStructure2Orbifold} implies that $\mirror\orbH$ is a
one-dimensional topological submanifold of the boundary of the
topological $2$--manifold $H$;
since $\Theta $ maps $H_0$  homeomorphically  onto $H$, it follows that 
$R$ is a
one-dimensional topological submanifold of $\bdy H_0$, and
hence that $A$ is a two-dimensional topological submanifold of 
$\vertbdyclas P_0$. Thus $P_0$, $A$ and $ T \doteq \Delta | A$ satisfy the
hypotheses of Lemma  \ref{hands-on  part}.

We have $\Upsilon \circ \Delta=\Upsilon $. Since $\Upsilon  \from \tP \to P$ is
surjective, and 
since $\Delta$  interchanges $P_0$ and $P_1$
and leaves $A$ invariant, 
the map $\Upsilon _0\doteq
\Upsilon |P_0\from P_0\to P$ 
induces a continuous bijection from
the quotient space $E$ defined in Lemma \ref{hands-on part} to
$P$. Since $E$ is compact and $P$ is Hausdorff, this induced
map is a homeomorphism. It therefore follows from Lemma \ref{hands-on part} that $P$ 
may be given the structure of
a \clasibun\ in such a way that 
 $\Upsilon _0(\overline{(\vertbdyclas P_0)-A})$ is a vertical
submanifold of $\vertbdyclas P$.

Now Proposition \ref{Prop:LocalStructure2Orbifold} implies that
$\mani{\bdy\orbH}=\overline{(\bdy H)-\mirror\orbH}$.
Hence 
\[
\overline{(\vertbdyclas P_0)-A}
= (Q\circ \Upsilon _0)^{-1}(\mani{\bdy\orbH})=\Upsilon _0^{-1}(\mani{\vertbdy\orbP}),
\]
which with the surjectivity of $\Upsilon _0$ shows that
 $\Upsilon _0(\overline{(\vertbdyclas P_0)-A})=\mani{\vertbdy
  \orbP}$. It now follows that $\mani{\vertbdy\orbP}$ is a vertical
submanifold of $\vertbdyclas P$.
\EndProof

\section{Orbifold books of $I$--bundles}
\label{Sec:OrbifoldBooks}

In $3$--manifold theory, a book of $I$--bundles is constructed by taking some number of $I$--bundles (\emph{pages}) and gluing their vertical boundaries to solid tori (\emph{bindings}). (See e.g.\ \cite[Definition 4.2]{GS-homotoping} for details.) In this section, we adapt this theory to the setting of $3$--orbifolds, leading to the careful definition of an orbifold book of $I$--bundles in \S\ref{wuzza book}. This leads to the proof, in Proposition~\ref{nodes of a book}, that any node in the singular locus of an orbifold book of $I$--bundles is dihedral.

In the course of studying the pages and bindings of an orbifold book of $I$--bundles, we divide the complement of (the underlying space of) a page into components, some of which are labeled \emph{harmless}. Harmless components are topological $3$--balls with connected frontier (\S\ref{Def:Harmless}).  In Proposition~\ref{Prop:ExpandedPage}, we will attach harmless components to the underlying space of a page, and show that the resulting $3$--manifold admits the structure of a manifold $I$--bundle. This sort of manifold $I$--bundle will turn out to be very useful in the topological arguments of Sections~\ref{sphere section} and~\ref{torus section}.

\Custom{Conventions about smoothness and transversality}\label{AccordantConvention}
In the remainder of the paper, 
whenever we consider an orientable $3$--orbifold  $\orbM$, it will be  understood that
$\mani\orbM$ is equipped with
an $\orbM$--accordant smooth  structure, which exists in view of
Proposition \ref{smooth underlying}.

In \S\ref{Suborbifolds-of-3orbifolds}, we described
the correspondence between orientable, \neatly embedded
 $2$--suborbifolds of $\orbM$ and \neatly embedded
two-dimensional submanifolds  of $M\doteq \mani\orbM$ transverse to $\Sing_\orbM$. This correspondence will be used very
freely, as will the connection --- also explained in \S\ref{Suborbifolds-of-3orbifolds} ---
between the singular structure of such suborbifolds of $\orbM$ and the 
size and weight (see \S\ref{size 'n' stuff}) of the corresponding two-dimensional submanifolds  of
$ \mani\orbM$. 
We will also freely use the fact (see \S\ref{new splitting}) that an orientable $3$--orbifold can be split along a neatly
embedded $2$--suborbifold. This includes the special
case that for a \clean\ submanifold $K$ of $M$ with frontier transverse
to $\Sing_\orbM$, we have a well-defined suborbifold $\orbi K$. 
\EndCustom

\Custom{\Clean\ suborbifolds}\label{clean}

We shall say that a suborbifold with corners 
$\orbY$ of a compact, orientable
$3$--orbifold $\orbN$
is  \emph{\clean}  if
\begin{enumerate}
\item $\Fr_{\orbN}\orbY$ 
    is defined and is a \neatly
    embedded suborbifold of $\orbN$, and
  \item\label{clean pair part} $(\orbY, \Fr_{\orbN}\orbY)$ is a \clean\ pair 
 (see \S\ref{Clean pairs}).
  \end{enumerate}
  Note that \eqref{clean pair part} includes the information that
  $\dim\orbY=3$ and that $\dim\Fr_{\orbN}\orbY=2$.

If $\orbY$ is a \emph{weak} suborbifold of a compact, orientable
$3$--orbifold $\orbN$ such that $\Fr_{\orbN}\orbY$ 
    is defined and is a \neatly
    embedded \emph{weak} suborbifold of $\orbN$, and if Condition \eqref{clean pair part} above holds, then it follows from an observation made in 
\S\ref{Clean pairs} that $\orbY$ and $\Fr_{\orbN}\orbY$  are suborbifolds of $\orbN$ (in the strong sense), and hence that $\orbY$ is a clean suborbifold. This will make it simpler to recognize clean suborbifolds.

Specializing to the case in which $\orbN$ is a manifold, we obtain the notion of a
\clean\ submanifold with corners. 

Note that if
$\orbY$ is a clean suborbifold with corners of a compact, orientable $3$--orbifold
 $\orbN$ (so that  $N\doteq\mani\orbN$ has a fixed $\orbN$--accordant smooth structure by 
the convention of
\S\ref{AccordantConvention}),
then the induced smooth manifold structure on
 $Y\doteq\mani\orbY$ is $\orbY$--accordant, and $Y$ 
 is a clean submanifold with corners of $N$. To go
in the other direction, by analogy with   \S\ref{Suborbifolds-of-3orbifolds}, if $Y$ is a clean submanifold with corners of $N$, and $\Fr_{N} Y$ is transverse to $\Sing_\orbN$, then $\orbi Y$ is defined. Furthermore, by Proposition~\ref{codim 1 is nice},
$\orbi Y$ is a suborbifold in the strong sense.
It then follows readily that $\orbi Y$ is a clean suborbifold.

It follows from \S\ref{Clean pairs}
that if $\orbY$ is a \clean\  suborbifold with corners of a compact, orientable
$3$--orbifold $\orbN$ then $\orbY$ has nonsingular corners.

If $\orbY$ is \cleansub $\orbN$, 
then with the conventions established in
\S\ref{Suborbifolds}, the suborbifolds (with boundary)
$\Fr_\orbN\orbY$,
$\orbY\cap\bdy\orbN$
and
${\orbN-\orbY}$
of $\orbN$, and the suborbifold with corners
$\overline{\orbN-\orbY}$, are defined.
Note  that $\Fr_\orbN\orbY$ is
a \neatly embedded $2$--suborbifold of $\orbN$ and that 
$\overline{\orbN-\orbY}$ is \cleansub $\orbN$.
Note also that  $\bdy(\orbY\cap\bdy\orbN)=
\bdy(\Fr_\orbN\orbY)$. 

If $\scrY$ is a family of \clean\  suborbifolds
with
$2$--corners in $\orbN$, and if for all $\orbY,\orbY'\in\scrY$, the
suborbifold $\orbY\cap\orbY'$ is defined and is a union of common
frontier components of $\orbY$ and $\orbY'$, then
$\bigcup_{\orbY\in\scrY} \orbY$ is defined and is \cleansub
$\orbN$.

We have the following immediate consequence of Lemma \ref{split injective}: 

\Claim\label{because split injective}
If $\orbY$ is \cleansub an orientable $3$--orbifold $\orbN$, and $\Fr_\orbN\orbY$ is
$\pi_1$--injective in $\orbN$, then $\orbY$ and $\overline{\orbN-\orbY}$ are
$\pi_1$--injective in $\orbN$.
\EndClaim
\EndCustom

\Custom{Bindinglike and pagelike suborbifolds}
\label{PagelikeBindinglike}

Let $\orbN$ be a compact, orientable $3$--orbifold.

A suborbifold
with corners
$\orbB$ 
of $\orbN$ is said to be  \emph{bindinglike} if
\begin{itemize}
\item $\orbB$ is
  \cleansub $\orbN$,
\item the orbifold obtained by \sta\ of
  $\orbB$ 
(which is defined since the clean suborbifold $\orbB$ has nonsingular corners  by \S\ref{clean}; see \S\ref{what's sta})
is a solid toric orbifold, and 
\item $\Fr_\orbN\orbB$ 
is a non-empty
disjoint union of annular $2$--orbifolds which are $\pi_1$--injective in
$\orbB$.
\end{itemize}

This definition 
implies that 
$\orbB$ is connected and that 
$\orbB\cap\bdy\orbN$ 
is also a non-empty
disjoint union of annular $2$--orbifolds  which are $\pi_1$--injective in
$\orbB$.

According to the discussion of solid toric orbifolds
in \S\ref{spherical etc}, if $\orbB$ is a bindinglike suborbifold of $\orbN$ then
$B \doteq \mani\orbB$ either is  a topological ball,
i.e.\ is homeomorphic 
 to $\BB^3$, or is  a topological solid torus,
i.e.\ is homeomorphic to $\BB^2\times\SS^1$.
Furthermore, if  $B$ is
a topological solid torus, then the components of $\Fr B$ are annuli with non-zero
winding number in $B$.

A  suborbifold
with corners
$\orbP$  
of $\orbN$ is said to be  \emph{pagelike} if 
$\Fr_\orbN\orbP$ is defined and neatly embedded in $\orbN$, and
$\orbP$ 
admits an  $I$--fibration over a 
  connected $2$--orbifold 
with non-positive (orbifold) Euler characteristic,
such that
  $\Fr_\orbN\orbP$ is the vertical boundary of the $I$--bundle $\orbP$.
It then follows from \S\ref{clean fibration} that $(\orbP,\Fr_\orbN \orbP)$ is a clean pair. By an observation made in \S\ref{clean}, this in turn implies that $\orbP$ and $\Fr_{\orbN}\orbP$  are suborbifolds of $\orbN$ in the strong sense, and hence that $\orbP$ is a clean suborbifold. 

Note that this definition
implies that $\orbP$ is connected,
and has non-positive Euler
characteristic (by Lemma~\ref{Lem:SameEuler}), 
that $\orbP\cap\bdy\orbN$ is the horizontal
boundary of the $I$--bundle $\orbP$, and that   $\Fr_\orbN\orbP$ is $\pi_1$--injective in
$\orbP$. Note as well that  $\Fr_\orbN\orbP$ may be empty.
\EndCustom

The properties of being a bindinglike suborbifold and of being a
pagelike suborbifold are not mutually exclusive. In fact, a pagelike
suborbifold is bindinglike if and only if its Euler characteristic is
$0$,
or equivalently, if and only if its orbifold fundamental group is
virtually cyclic.
This equivalence is easily deduced from Lemma \ref{Lem:SameEuler}.
A \emph{purely \bindinglike} suborbifold  of  $\orbN$ is a \bindinglike\ suborbifold 
which is not \pagelike.
A \emph{purely \pagelike} suborbifold  of  $\orbN$ is a \pagelike\ suborbifold 
which is not bindinglike. 

\Custom{Orbifold books of $I$--bundles}
\label{wuzza book}
We define an \emph{orbifold book of $I$--bundles} to be a compact,
connected, orientable $3$--orbifold $\orbN$ equipped with two
\clean\ suborbifolds with corners
$\scrB$ and $\scrP$ such that 
\begin{itemize}
\item every component of $\scrB$ is purely
bindinglike, and  every
  component of $\scrP$ is purely
pagelike; and
\item $\scrB\cup\scrP=\orbN $
and $\scrB\cap\scrP=\Fr_\orbN\scrB=\Fr_\orbN\scrP$.
\end{itemize}

The components of $\scrB$ and $\scrP$ are respectively
 called the \emph{bindings}  and the \emph{pages} of $\orbN$. 

Note that according to this definition, each component of
$\scrB\cap\scrP$ is an   
orientable, annular,  \neatly embedded suborbifold of $\orbN$,
and $\scrP$ admits
an $I$--fibration in which $\scrP\cap\bdy\orbN$ is the
horizontal boundary and $\scrB\cap\scrP$ is the vertical boundary.

Note also that every binding has a
virtually cyclic fundamental group, and that no page has a
virtually cyclic fundamental group.

It follows from an observation made in \S\ref{clean} that any union of
pages and bindings of a book of $I$--bundles $\orbN$ is a clean
suborbifold with corners of $\orbN$. Furthermore, it follows from
Assertion \ref{because split injective} and the discussion in 
\S\ref{PagelikeBindinglike} that 
every page or binding of a book of $I$--bundles $\orbN$ --- and more
generally  every union of pages and bindings of $\orbN$ --- is
$\pi_1$--injective in $\orbN$, 
and so is the frontier of every page or binding. These observations
will be used extensively in the following sections.

When the structure of an orbifold book of $I$--bundles on $\orbN$ has
been fixed, we denote the suborbifolds $\scrB$ and $\scrP$
by  $\scrB_\orbN$ and $\scrP_\orbN$ respectively. Furthermore,
if $\orbS$ is any union of components of $\bdy \orbN$,
we set
$\scrA_{\orbN,\orbS}=\scrB_\orbN\cap\orbS$ and
$\scrH_{\orbN,\orbS}=\scrP_\orbN\cap\orbS$;
  these are well-defined $2$--suborbifolds of $\orbS$. Note  that every component of $\scrA_{\orbN,\orbS}$ has
 Euler characteristic $0$,  that every component of $\scrH_{\orbN,\orbS}$ has
strictly negative Euler characteristic, and that $\scrA_{\orbN,\orbS}\cup \scrH_{\orbN,\orbS}=\orbS$.
We also set 
$\boldC_{\orbN,\orbS}=\scrA_{\orbN,\orbS}\cap
\scrH_{\orbN,\orbS}=\Fr_{\orbS}\scrA_{\orbN,\orbS}=\Fr_{\orbS}\scrH_{\orbN,\orbS}$,
a well-defined $1$--manifold. It can be deduced from Lemma~\ref{split injective} that $\boldC_{\orbN,\orbS}$ is $\pi_1$--injective in the orbifold $\orbN$.

We mention as a mnemonic device that $\scrA$ stands for ``annular,'' $\scrH$ stands for ``horizontal boundary,'' and~$\boldC$ stands for ``curves.''
\EndCustom

\Proposition\label{nodes of a book}
If $\orbN$ is a book of $I$--bundles then every node of the
$3$--orbifold $\orbN$ is dihedral. 
\EndProposition

\Proof
Set $\scrB=\scrB_\orbN$ and $\scrP=\scrP_\orbN$. Let
$\scrF$ denote the common frontier of $\scrB$ and $\scrP$. By
\S\ref{wuzza book}, $\scrF$ is an orientable, \neatly embedded
suborbifold of $\orbN$; hence by \S\ref{Suborbifolds-of-3orbifolds},
$\mani\scrF$ is a neatly embedded two-dimensional submanifold of
$\mani\orbN$, transverse to $\Sing_\orbN$.
This implies that every node of $\orbN$ is either a
node of $\scrB$ or a node of $\scrP$. It follows from
the description of solid toric orbifolds given in
 \S\ref{spherical etc} that the components of $\scrB$ only have dihedral nodes.
On the other hand, the components of
$\scrP$ are pagelike, which by definition implies that they admit
$I$--fibrations over $2$--orbifolds; according to Proposition
\ref{Prop:IBundleSingLocus}, it follows that every node of $\scrP$
is dihedral.
\EndProof

\Lemma\label{saddy}
Let $\orbN$ be an orbifold book of $I$--bundles, and let $\orbJ$ be a
connected
  two-dimensional
suborbifold of   $\bdy \orbN$ with
$\bdy \orbJ\subset\boldC_{\orbN,\bdy\orbN}$. Suppose that
$\chi(\orbJ)=0$. Then $\orbJ$ is a component of $\scrA_{\orbN,\bdy\orbN}$.
\EndLemma

\Proof
Let us set $\scrA=\scrA_{\orbN,\bdy\orbN}$,
$\scrH=\scrH_{\orbN,\bdy\orbN}$, and $\boldC = \boldC_{\orbN,\bdy\orbN}$.
Since $\bdy \orbJ\subset\boldC$, 
the orbifold $\orbJ$ is a
union of components of $\scrA$ and of $\scrH$. Since the
components of  $\boldC = \scrA\cap
 \scrH$ are simple closed curves, $\chi(\orbJ)$ is the sum of
 the Euler characteristics of components of  $\scrA$ and of
 $\scrH$ contained in $\orbJ$. We observed in \S\ref{wuzza book} that every component of $\scrA$ has
 Euler characteristic $0$, while every component of $\scrH$ has
strictly negative Euler characteristic. Hence $\orbJ$ is a
union of components of $\scrA$, and since it is connected, the
conclusion follows.
\EndProof

\Lemma\label{just four simpler}
Let $\orbN$ be
an orbifold book of $I$--bundles. Let $\orbB$ be a binding of
$\orbN$, and suppose that  the frontier of $B\doteq\mani \orbB$ in
$N\doteq\mani \orbN$ has a component $F$
which is an annulus, and that each component of $\bdy F$ bounds a disk
in $\bdy N$ 
whose \textweighttwo\ (see 
\S\ref{size 'n' stuff}) 
is $4$. Then $B$ is a topological $3$--ball and $F$ is the full frontier of $B$ in $N$.
\EndLemma

\Remark\label{what weight 4 means}
Under the hypotheses of Lemma \ref{just four simpler}, it follows from
the definition of an orbifold book of $I$--bundles that a component $C$
of
$\bdy F$ is $\pi_1$--injective in $\orbN$. 

In general, 
if $D\subset\orbN$ is a disk, transverse to $\Sing_\orbN$ and bounded by a
curve $C$ which is $\pi_1$--injective in $\orbN$, 
the condition that $D$ has \textweighttwo\ $4$ is
equivalent to saying that $D$ has exactly two singular points, each of which has
  order $2$. 
This is because, if  $D$ had exactly one singular point
  and its order were $4$, then  $\orbi D$ would be a discal orbifold,
  a contradiction to the $\pi_1$--injectivity of $C$.
\EndRemark

\Proof[Proof of Lemma \ref{just four simpler}]
Let $C_1$ and $C_2$ denote the components of $\bdy F$. By
hypothesis, each $C_i$ bounds a \textweighttwo--$4$ disk
$D_i\subset\bdy N$. 
It follows from Remark \ref{what weight 4 means}
 that $\orbD_i\doteq\orbi {D_i}$ has Euler
characteristic $0$.
Furthermore, since $F$ is a frontier component of 
$B = \mani \orbB$,
we have  $C_i\subset\boldC_{\orbN,\bdy\orbN}$.
Hence by Lemma \ref{saddy}, 
each $\orbD_i$ is a component of $\scrA\doteq\scrA_{\orbN,\bdy\orbN}$; that is, $D_i$ is
a component of $\boldA \doteq \mani \scrA$
for $i=1,2$. Since the
boundaries $C_1$ and $C_2$ of $D_1$ and $D_2$ are distinct,
the components $D_1$ and $D_2$ of $\boldA$ are distinct
and therefore disjoint. Hence $W\doteq D_1\cup F\cup D_2$ is a
topological
$2$--sphere. 
We may regard $\boldB\doteq\mani{\scrB_\orbN}$ as a topological
manifold, each of whose components is a topological ball or solid
torus, and $B$ is one of these components.
Since $F\subset \bdy B\subset\bdy\boldB$ and
$D_i\subset \boldA \subset\bdy\boldB$, we have $W\subset\bdy\boldB$; and
since $F\subset B$, the component of $\boldB$ containing $W$ must be
$B$. Since $W$ is a topological sphere,  $B$ must be a topological ball, and $W$ must be the entire
boundary of $B$. In particular we have $\Fr_NB=F$.
\EndProof

\Custom{Harmless components}
\label{Def:Harmless}
Let  $\orbP$ be a page of an orbifold book of $I$--bundles $\orbN$. 
Set $N=\mani\orbN$ and $P=\mani\orbP$.
Note that any component  $V$ of 
$\overline{N -P}$ is a union of underlying sets of
pages and bindings. As observed in \S\ref{wuzza book}, this implies that
$\orbi V$ is defined and is a \clean\ suborbifold with corners of $\orbN$, so that
$V$ is a \clean\ submanifold with corners  of $N$.

We
shall say that $V$ is \emph{\harmless} if 
$V$ is  a topological ball and $\Fr_{N}V$ 
is connected. 

Note that since $\orbP$ is a page of the orbifold book of $I$--bundles
$\orbN$, each component of $\Fr_{\orbN}\orbP$ is an annular orbifold, and
hence each component of $\Fr_{N}P$ is 
an annulus or a disk. 
It follows
that the
frontier of each \harmless\ component of 
$\overline{N-P}$
is  an  annulus or a disk.

Hence if $V$ is a harmless
   component of $\overline{N-P}$, then $V$ is diffeomorphic to either 
$\BB^2\times I$ or $\BB^3 \cap (\RR^2 \times \RR_+)$. Under the diffeomorphism, $\Fr_N V$ corresponds to $\bdy \BB^2 \times I$ or $\BB^2 \times \{0\}$, respectively.
\EndCustom

\Proposition\label{Prop:ExpandedPage}
Let $\orbP$ be a page of an orbifold book of $I$--bundles $\orbN$. 
Set $N=\mani\orbN$ and $P=\mani\orbP$.
Let $\boldV$ be the union of some number of 
harmless components  of  
$\overline{N-P}$.
  Let $\hatP $ denote the clean submanifold  with corners
    $P\cup \boldV$ 
 of $N$.
Then the following hold:

\begin{enumerate}[\:\:$(1)$]
 \item \label{ExpandedPageFrontier} 
$\Fr_N\hatP = \Fr_N P - \Fr_N \boldV$.
\item\label{ExpandedPageIsBundle} $\hatP $,
regarded as a topological $3$--manifold,
may be given the structure of
a \clasibun\ in such a way that 
$\Fr_N\hatP$ is a vertical subset of $\vertbdyclas \hatP$.
\item\label{ExpandedPageEuler} The Euler characteristic of $\hatorbP \doteq \orbi \hatP$ is strictly negative,
and each component of
$\Fr_\orbN\hatorbP=\orbi {\Fr_{N} \hatP }$
 is an annular $2$--orbifold.
\end{enumerate}
\EndProposition

\Proof
Set $\orbF =\Fr_\orbN \orbP$ and $F=\mani\orbF=\Fr_NP$.
According to the definition of a
harmless component, each component of $\boldV$ is a  component  of  
$\overline{N-P}$ having connected frontier. This proves conclusion \eqref{ExpandedPageFrontier}.

Since $\orbN$ is an orbifold  book of $I$--bundles, the page $\orbP$
of $\orbN$ is equipped with an
orbifold
 $I$--bundle structure in which $\orbF $
is the vertical boundary. It therefore follows from  Proposition
\ref{Prop:IBundleTotalSpace} that the
topological
$3$--manifold 
$P$ may be given the structure of a \clasibun\ 
in such a way that 
$F $
is a
vertical subset of 
$\vertbdyclas P$.

Consider an arbitrary component $V$ of $\boldV$. Since $V$ is harmless,
it is homeomorphic to $\BB^3$, and its frontier is a single annulus or
a single disk. Since $\Fr_{N }V$ is a component of $F $, it is a
vertical submanifold of
$\vertbdyclas P$; thus the \classical\ fibration of $ P$
restricts to a 
locally trivial fibration of $\Fr_{N }V$ over a $1$--manifold, with
fiber $I$. Since $N=\mani\orbN$ is orientable, this fibration of
$\Fr_NV$ is trivial.

Now observe that any trivial topological
 fibration, with fiber $I$, of a topological annulus or disk contained in $\bdy \BB^3$ extends to
a trivial fibration of $ \BB^3$. Hence the fibration 
of $\Fr_{N }V$
obtained by restriction extends to a trivial $I$--fibration of $V$. Since this
holds for every $V$, the $I$--fibration of $ P$ extends to a \classical\
$I$--fibration of $\hatP $. 
By Conclusion~\eqref{ExpandedPageFrontier},
$\Fr_{N }\hatP $ is a union of components of $F $; hence 
$\Fr_{N }\hatP $ is a vertical submanifold of $ P $, and is therefore
a vertical submanifold of $\hatP  $. 
Since $\Fr_{N }\hatP $ is
vertical in $\hatP$ and is contained in the boundary of the
topological $3$--manifold $\hatP$, it is in fact contained in 
$\vertbdyclas \hatP$.
This establishes Conclusion~\eqref{ExpandedPageIsBundle}.

Next
note that $\hatorbP\doteq \orbi \hatP $ is a union  of pages
and bindings; hence we may write $\hatorbP=\scrB'\cup\scrP'$,
where $\scrB'$ is a union of bindings of $\orbN$ and $\scrP'$ is a
union of pages. Since $\orbP\subset\scrP'$, we have
$\scrP'\ne\emptyset$; and by the definition of an orbifold book of
$I$--bundles, each component of $\scrP'$ has strictly negative
orbifold Euler characteristic. Hence $\chi(\scrP')<0$. Each
component of $\scrB'$ 
becomes a solid toric orbifold upon \sta, 
and hence
$\chi(\scrB')=0$. Each component of $\scrB'\cap\scrP'$ is a
component of the vertical boundary of the $I$--bundle $\scrP'$ and
is therefore annular; thus $\chi(\scrB'\cap\scrP')=0$. It now
follows from the discussion in \S\ref{Euler} that
$\chi(\hatorbP)=\chi(\scrP')+\chi(\scrB')-\chi(\scrP'\cap\scrB')<0$. 

Finally, by Conclusion~\eqref{ExpandedPageFrontier},
each component of
$\Fr_{\orbN} \hatorbP $
is also a component of
  $\Fr_\orbN\orbP=\vertbdy\orbP$,
and is therefore annular
 by
Corollary \ref{Cor:VertBoundaryStructure}. This establishes Conclusion~\eqref{ExpandedPageEuler}.
\EndProof

\section{Essential spheres}\label{sphere section}

In this section, we consider a closed hyperbolic $3$--orbifold $\orbM$ such that $M \doteq \mani \orbM$ contains an essential $2$--sphere $S$. The main result is Corollary~\ref{Cor:HomeoTypeReducible}, which can be paraphrased as follows: if, for an appropriate choice of $S$, each component of $\orbM \split \orbi S$ is an orbifold book of $I$--bundles, then $M$ is either the connected sum of two lens spaces or the connected sum of $\SS^2 \times \SS^1$ with a lens space. In fact, Corollary~\ref{Cor:HomeoTypeReducible} will follow as an immediate consequence of Theorem~\ref{Thm:HomeoTypeSpherePieces}, which describes the topological structure of each component of $M \split S$. 
The proof of Theorem~\ref{Thm:HomeoTypeSpherePieces} will occupy the bulk of this section. We point the reader to the discussion immediately after the statement for a top-level summary of the argument.

\Custom{Smoothing subsurfaces}
\label{what's smoothing}

We will frequently encounter situations in which $\orbM$ is a closed,
orientable $3$--orbifold, and we have a topological surface
$F$ in $M\doteq\mani \orbM$ and a closed $1$--manifold $L\subset F\cap (M-\Sing_\orbM)$
such that 
\begin{enumerate}[\:\:(1)]
\item $L$ and
$F-L$ are smooth submanifolds of $M$, and $F-L$ is transverse to $\Sing_\orbM$; and 
\item
 there exist a tubular
neighborhood $J$ of $L$ in $M-\Sing_\orbM$, and
a diffeomorphism of $J$ onto $\BB^2\times L$ carrying $F\cap
J$ onto $V\times L$, where $V$ is a union of two line radii in
$\BB^2$. 
\end{enumerate}
Such a surface $F$ is called \emph{smoothable}.
If $F$ is closed and smoothable, there is a topological ambient isotopy of
$M$ which is supported on $J$ and carries $F$ onto a smooth
closed surface $F'$ which is transverse to $\Sing_\orbM$. We shall say that such a surface $F'$ is \emph{obtained  by smoothing $F$.} We observe that in this situation, 
$\size(F')=\size(F)$.
\EndCustom

\Custom{Minimal essential spheres}
\label{minimal}
Specializing the definition of an essential suborbifold given in
\S\ref{Pi1Injective}, we shall say that a (smooth) $2$--sphere 
$S$ in a closed orientable $3$--manifold $M$ is \emph{essential} if $S$ does not bound a $3$--ball in $M$.

Let $\orbM$ be a closed, orientable $3$--orbifold,
and set $M=\mani\orbM$.
 We define an \emph{$\orbM$--minimal essential sphere} in $M$ to
 be a $2$--sphere $S\subset M$ such that 
\begin{enumerate}[\:\:(1)]
\item $S$ is essential in $M$, 
\item $S$ is transverse to $\Sing_\orbM$, and 
\item
for every
$2$--sphere $S'\subset M$ which is essential in $M$ and
transverse to $\Sing_\orbM$, we have $\size(S')\ge\size(S)$. 
(See \S\ref{size 'n' stuff}. The sizes of $S$ and $S'$ are finite by transversality.)
\end{enumerate}
We define a \emph{strongly $\orbM$--minimal essential sphere} in $M$ to be an $\orbM$--minimal essential sphere $S$ in $M$ such that for every $\orbM$--minimal essential sphere $S'$ in $M$, we have $\weighttwo(S')\ge\weighttwo(S)$.

Note that if $\orbM$ is a closed, orientable $3$--orbifold and $M\doteq\mani \orbM$ contains an essential $2$--sphere, then $M$ contains 
a strongly $\orbM$--minimal essential sphere.
\EndCustom

\Lemma\label{it's incompressible}
Let 
$\orbM$ be a closed, orientable, hyperbolic $3$--orbifold, and let
$S$ be an $\orbM$--minimal essential $2$--sphere in $M\doteq\mani
\orbM$. 
Then $\orbS \doteq \orbi S $ is essential in $\orbM$. 
\EndLemma

\Proof
We first observe that $\orbS$ is not the boundary of a three-dimensional discal
suborbifold of $\orbM$.  Indeed, if $\orbH$ were a discal suborbifold
with $\bdy \orbH=\orbS$, then by \S\ref{spherical etc}, $
H\doteq
\mani \orbH\subset
\mani \orbM$ would be a $3$--ball with 
$\bdy H=S$, 
and this is impossible since $S$ is essential
in $M$.

It remains
to show that 
$\orbS$ is $\pi_1$--injective in $\orbM$. 
Since $\orbM$ is hyperbolic, it is very good. 
According to
Proposition \ref{when injective},
proving the $\pi_1$--injectivity of $\orbS$ amounts to showing that
if $\orbD$ is a two-dimensional
orientable
  discal suborbifold of
$\orbM$ with $\orbD \cap \orbS = \bdy \orbD$, then $\bdy \orbD$ bounds a discal suborbifold of $\orbS$. 
Since $\orbD$ is an orientable discal suborbifold, $ D \doteq \mani
\orbD$ is a 
disk transverse to $\Sing_\orbM$
  (see \S\ref{Suborbifolds-of-3orbifolds}),
 with $\size(D)\le 1$ and $\size(\bdy D) = 0$.

Now the curve $C\doteq\bdy D$ bounds two disks $ D_1,  D_2 \subset
S$. 
For $i = 1, 2$, 
set $n_i =\size ( D_i)$,
so that $\size(S) = n_1 + n_2$. 
According to \S\ref{what's smoothing}, the sphere $S_i$
obtained by smoothing
$D \cup  D_i$ has the same size as $D \cup  D_i$; hence
$\size S_i = n_i + \size( D) \le n_i + 1$. 
Since $S$ is essential, at least one of the $S_i$ is essential, so we
may assume after relabeling that $S_1$ is essential. The minimality
of $S$ then gives $\size(S) \le \size(S_1)$, i.e.\ $n_1 + n_2 \le
\size(S_1) \le n_1 + 1$. So $n_2 \le 1$, and it follows that $\orbi{
  D_2}$
  (which is defined by virtue of transversality)
is a discal suborbifold of $\orbS$ bounded by $\bdy \orbD$.
\EndProof

The following theorem is the main result of this section.

\Theorem\label{Thm:HomeoTypeSpherePieces} 
Let $\orbM$ be a closed, orientable, hyperbolic $3$--orbifold.
Let $S$ be
a 
strongly
$\orbM$--minimal essential $2$--sphere in $M\doteq\mani \orbM $. Let $\orbN$ be a component of $\orbM \split \orbi S $. 
Assume that $\orbN$ admits the structure of an orbifold book of $I$--bundles. Then $ N \doteq\mani
\orbN $ is homeomorphic 
 either to $\SS^2 \times I$ or 
to a manifold obtained  from a non-trivial lens space by removing the
interiors of one or two closed $3$--balls.
\EndTheorem

The proof of Theorem~\ref{Thm:HomeoTypeSpherePieces} will occupy the remainder of this section. We begin with some
definitions and general lemmas. In particular, Lemmas \ref{therefore}
and \ref{what about} will help us identify disks  in $N$ that cut off
$3$--balls in $N$. In Lemmas~\ref{not more than one, now}
and~\ref{more four}, we gain some structural information about the
bindings in the book of $I$--bundles $\orbN$.

In \S\ref{Subsec:BigPage}, we introduce the crucial notion of a
\emph{big page}: roughly speaking, this will be a page in $\orbN$
whose intersection with $S$ cuts $S$ into disks whose size is at most
$\frac{1}{2} \size(S)$. Big pages anchor our structural understanding
of $\orbN$. In Lemma~\ref{big ol' lemma}, we will prove that a big
page always exists. In Lemma~\ref{both verses}, we will show that
components of the complement of a big page are frequently harmless
(see \S\ref{Def:Harmless}).
 By adjoining harmless $3$--balls to the underlying space of a big page, we will flesh out enough of the topology of $N$ to prove Theorem \ref{Thm:HomeoTypeSpherePieces}.

Theorem  \ref{Thm:HomeoTypeSpherePieces} has the following
immediate corollary, which 
essentially covers the reducible case of Theorem \ref{Thm:IntroNoTurnoverMain}.

\Corollary\label{Cor:HomeoTypeReducible}
Let $\orbM$ be a closed, orientable, hyperbolic $3$--orbifold.
Suppose that 
$M\doteq\mani \orbM $ 
contains  a  
strongly
$\orbM$--minimal essential $2$--sphere $S$ such that each component of
$\orbM \split \orbi S $ 
admits the structure of
a book of $I$--bundles. Then 
$M$ 
is homeomorphic to either a connected sum of two non-trivial lens spaces, or the connected sum of $\SS^2\times \SS^1$ with a possibly trivial lens space.
\NoProof
\EndCorollary

We now proceed to setting up the tools needed to prove Theorem  \ref{Thm:HomeoTypeSpherePieces}.

\Definition\label{was-height def}
Let $\orbM$ be a closed, orientable $3$--orbifold, and let $S$ be a
$2$--sphere in 
$M\doteq\mani \orbM$
which is transverse to $\Sing_\orbM$. Let $N = M \split S$, and let $\iota \from N \to M$ be the canonical immersion.
If $C$ is any simple closed curve in 
$\bdy N$, disjoint from $\Sing_\orbM$, 
then 
$C$ bounds two disks  $D_1, D_2 \subset \bdy N$. 
  Recall from \S\ref{new splitting} that
$N$ is the underlying space of a
$3$--orbifold $\orbN =  \orbM \split \orbi S$, 
so that $\size(D_i)$ is well-defined.
We define the \emph{\textspanningsize} of $C$ to be
\[
\spanningsize(C) = \min \{ \size(D_1), \: \size(D_2) \}.
\]
Note that  $\size(D_i) = \size(\iota(D_i))$, and therefore $\spanningsize(C)\le\size(S)/2$.
\EndDefinition

\Lemma\label{Lem:SameSpanningSize}
Let $\orbM$ be a closed, orientable $3$--orbifold. Set $M=\mani \orbM$.
Suppose that $S$ is an $\orbM$--minimal essential $2$--sphere in $M$. 
Let $N$ be a component of $M \split S$, with canonical immersion $\iota\from  N \to M$.
Suppose that $J$ is a \neatly embedded annulus in $N$ with $\size(\iota(J))=0$. 
Then the two components of $\bdy J$ have 
the same \textspanningsize.
\EndLemma

\Proof
Let $n = \size(S)$. Let $\gamma$ and $\gamma'$ denote the components of $\bdy J$.
Set $s = 
\spanningsize(\gamma)$ and $s' = \spanningsize(\gamma')$.
By symmetry, it is enough to show that $s'\geq s$.

Let $\Psi$ denote the component of $\bdy N = \iota^{-1}(S)$ containing
$\gamma$.
Then $\gamma$ bounds two disks in $ \Psi$; they may be labeled
$\Delta_1$ and $\Delta_2$ in such a way that
$\size(\Delta_1)=s$ and
$\size(\Delta_2)=n-s$. 
Furthermore, $\gamma'$ bounds a disk $E\subset \bdy N$ with $\size(E)=s'$.

Note that $J\cup E$ is a
smoothable topological disk whose boundary is $\gamma$, and that
$\size(J\cup E)=s'$. By a small non-ambient isotopy in $N$ we may
replace $J\cup E$ by a 
smooth,
 \neatly embedded disk $\Delta_3$ in $N$ with
$\bdy \Delta_3=\gamma$ and $\size(\Delta_3)=s'$. 
(This is a  minor variant on the smoothing operation described in
\S\ref{what's smoothing}.)

The disks $\Delta_i$ for $i=1,2,3$ all have boundary $\gamma$, and any two of
them intersect precisely in $\gamma$. Furthermore,
 the map
$\iota|(\Delta_1 \cup  \Delta_2 \cup \Delta_3)$ is
one-to-one. 
So if we set $D_i=\iota(\Delta_i)$, and $C=\iota(\gamma)$, then the
disks $D_i$ all have boundary $C$,
and any two of them intersect precisely in $C$. We have
$D_1\cup D_2 = \iota(\Psi) = S$.
By  \S\ref{what's smoothing}, for $i=1,2$, the sphere $S_i$  obtained by smoothing $D_i \cup
D_3$ has the same size as $D_i \cup
D_3$.
Since $S$ is essential in $M$, at least one of the spheres $S_1$ and $S_2$ is essential.
Since $S$ is minimal and has size\ $n$, at least one of the inequalities $\size(S_1)\ge n$, $\size(S_2)\ge n$ holds.

But we have $$\size(S_1)=\size(D_1)+\size(D_3)=\size(\Delta_1)+\size(\Delta_3)=s+s',$$ and similarly
$\size(S_2)=(n-s)+s'$. Furthermore, the definition of size\ gives $s\le n/2$, so that $s\le n-s$. Thus in any event we have $s'+(n-s)\ge n$, so we conclude that $s'\ge s$ as required.
\EndProof

\Custom{Spanning size of  an annulus or binding}
\label{what's the point}
Let $\orbM$ be a closed, orientable $3$--orbifold. Set $M=\mani \orbM$.
Suppose that $S$ is an $\orbM$--minimal essential $2$--sphere in
$M$. Let $N$ be a component of $M \split S$, 
  so that according to the discussion in \S\ref{new splitting}, 
$N$ is canonically identified with $\mani\orbN$ for some component
$\orbN$ of $\orbM\split\orbi S$.  
If $J$ is a \neatly embedded annulus in $N$ with $\size(\iota(J))=0$, 
then by Lemma \ref{Lem:SameSpanningSize}, the two components of $\bdy J$ have 
the same \textspanningsize. This common \textspanningsize\ will be referred to as the
\emph{\textspanningsize\ of $J$}.

Now suppose that $\orbN $  is
equipped with the structure of
an orbifold book of $I$--bundles, and
let $\orbB $ be a 
binding of $\orbN$ such that $B \doteq \mani\orbB$ is a topological solid torus.
Then every component of 
$\Fr_{N}B $ or $(\bdy N)\cap B $ 
is a size--$0$ annulus. If $A$
is any component of 
$(\bdy N)\cap B $, 
then the two components $C_1$ and $C_2$ of $\bdy A$ also have the same
\textspanningsize\ because they are isotopic in $\bdy
N-(\Sing_\orbN\cap\bdy N)$; hence if $J_i$ denotes the component
of $\Fr_{N}B $ containing $C_i$, we have $\spanningsize(J_1)=\spanningsize(J_2)$.
Since $\bdy B $ 
is connected, it follows that all components of 
$\Fr_{N}B $ 
have the same \textspanningsize. Their common \textspanningsize\ will be
called the \emph{\textspanningsize\ of $\orbB$}.
Note that we have defined the \textspanningsize\ of a binding
  $\orbB$ only in
the case where $\mani\orbB$ is  a
topological solid torus
rather than a topological ball (see \S\ref{PagelikeBindinglike}).
\EndCustom

\Lemma\label{therefore}
Let $\orbM$ be a closed, orientable, hyperbolic $3$--orbifold.  Suppose that $W$ is a (smooth) $2$--sphere in $M\doteq\mani \orbM$, transverse to $\Sing_\orbM$, such that $\orbi W $ is a toric $2$--orbifold. Suppose that the inclusion homomorphism $\pi_1(\orbi W)\to\pi_1(\orbM)$ has infinite image. Then  there is a ball 
$J\subset M$ 
such that $\bdy J=W$, and $\orbi J $ (which is defined by \S\ref{clean}) is a solid toric $3$--orbifold.
\EndLemma

\Proof
Since $\orbM$ is closed and hyperbolic, it has a finite-sheeted regular cover $\torbM$ which is a closed hyperbolic $3$--manifold. Let $p\from \torbM\to \orbM$ denote the covering projection, and $G$ the covering group. Fix a component $T$ of $p^{-1}(\orbi W )$. Since $\orbi W $ is toric, its finite-sheeted manifold covering $T$ is a torus. Since the inclusion homomorphism $\pi_1(\orbi W )\to\pi_1(\orbM)$ has infinite image, and the covering $p\from \torbM\to \orbM$ is finite-sheeted,
the inclusion homomorphism $\pi_1(T)\to\pi_1(\torbM)$ has infinite image.

Since $\torbM$ is closed and hyperbolic, $\torbM$ is irreducible and $T$ is compressible in $\torbM$. We recall that a compressible torus in a hyperbolic manifold either is contained in a ball or bounds a solid torus. If $T$ were contained in a ball in $\torbM$ then
the inclusion homomorphism $\pi_1(T)\to\pi_1(\torbM)$  would be trivial, a contradiction. Hence $T=\bdy V$ for some solid torus $V\subset\torbM$. 

If $g$ is an arbitrary element of $G$, we have either $g\cdot T=T$ or $(g\cdot T)\cap T=\emptyset$. Hence one of the following four alternatives holds: 
\begin{enumerate}[\:\:(i)]
\item\label{Vdisjoint} $(g\cdot V)\cap V=\emptyset$;
\item\label{Vcontains}  $g\cdot V\subset V$; 
\item\label{Vcontained} $g\cdot V\supset V$; or 
\item\label{Vlens} $(g\cdot V)\cup V=\torbM$. 
\end{enumerate}
If \eqref{Vlens} holds then $\torbM$ has a cyclic fundamental group, which is impossible since $\torbM$ is a closed hyperbolic $3$--manifold. If \eqref{Vcontains} holds, and if the inclusion given by \eqref{Vcontains} is strict, then $g^n\cdot V$ is a proper subset of $V$ for every integer $n>0$; this is impossible, since $g$ has finite order. Hence \eqref{Vcontains} implies that $T$ is invariant under $g$; and by the same argument, with $g^{-1}$ in place of $g$, \eqref{Vcontained} also implies that $T$ is invariant under $g$. 

We conclude that for each $g\in G$ we have either $(g\cdot V)\cap
V=\emptyset$ or $g\cdot V= V$. Hence $p$ maps 
$V$ 
onto a suborbifold of $\orbM$ which is (orbifold-) diffeomorphic to the
quotient of $V$ by 
$\Stab_G(V)$.
In particular $\orbJ\doteq p(V)$ is a solid toric orbifold. We have
$\bdy\orbJ=p(\bdy V)=p(T)=\orbi W $. Now set $J=\mani
{\orbJ}$. By orientability we have $\bdy J=\mani {\bdy
  \orbJ}=W$. Since $\orbJ$ is a solid toric orbifold and $W$ is a
sphere, it follows
  by \S\ref{spherical etc}
that $J$ is a ball.
\EndProof

\Lemma\label{what about}
Let $\orbM$ be a closed, orientable, hyperbolic $3$--orbifold.
Suppose that $S$ is a strongly $\orbM$--minimal essential $2$--sphere
in $M\doteq\mani \orbM$. Let $\orbN$ be a component of  $\orbM
\split \orbi S $.
Suppose that $F$ is a \neatly embedded disk in
$ N \doteq\mani \orbN$, transverse to $\Sing_\orbN$,  such that
$\orbi F $ 
(which is defined by \S\ref{Suborbifolds-of-3orbifolds})
is an annular orbifold and is $\pi_1$--injective in
$\orbN$. Then there is a
  topological
ball $K\subset  N $ such that
$\Fr_{ N }K=F$, and $\orbi K $ is a bindinglike suborbifold of $\orbN$.
\EndLemma

\Proof
According to \S\ref{new splitting},
$N$ is
canonically identified with
 a component of 
$M\split S$.
Let $\iota \from  N \to M$ denote the 
restriction to $N$ of the canonical immersion of 
$M\split S $ in $M$. 
Since $F$ is a disk and $\orbF \doteq \orbi F $ is an orientable annular
suborbifold, $F\cap\Sing_\orbN =\Sing_\orbF $ consists of two points, both of order
$2$. Now the curve $C\doteq\bdy F$ is contained in some component
$\tS$ of $\bdy N $, and bounds two disks $D_1,
D_2 \subset \tS$. Set $n_i =\size (D_i)$ for $i = 1, 2$
and $n = \size(S) = n_1 + n_2$. 
According to  \S\ref{what's smoothing}, for $i = 1, 2$, the sphere $S_i$
obtained by smoothing
$\iota (F \cup D_i)\subset M$ has the same size as $\iota (F \cup
D_i)$; hence
$\size S_i = n_i + \size(F) = n_i + 2$. 
Since $S$ is essential
  in $M$,
at least one of the $S_i$ is essential,
so we may assume after relabeling that $S_1$ is essential. 
The
minimality of $S$ then gives $\size(S) \le \size(S_1)$, i.e.\ $n_1 +
n_2 \le \size(S_1) = n_1 + 2$. So $n_2 \le 2$. If $n_2$ were at most
$1$, the curve $\bdy F=\bdy D_2$ would represent the conjugacy
class of an element of finite order in the orbifold fundamental group
$\pi_1(\orbN)$, contradicting the $\pi_1$--injectivity of the annular
orbifold $\orbi F $.
Hence $n_2=2$. It now follows that $\size(S_1)=n$, so that $S_1$ is $\orbM$--minimal. The strong $\orbM$--minimality of $S$ then implies that $\weighttwo(S)\le\weighttwo(S_1)$. 

Since $n_2=2$, the $2$--orbifold $\orbi  {D_2}$ has two singular
points, whose orders are integers strictly greater than $1$; we denote 
these integers 
by $p$ and $q$. We then have 
\[
\weighttwo(S)=\weighttwo(D_1)+\weighttwo(D_2)=\weighttwo(D_1)+p+q. 
\]
On the other hand, since the two singular points of $\orbi F $ are of order $2$, we have
\[
\weighttwo(S_1)=\weighttwo(D_1)+\weighttwo(F)=\weighttwo(D_1)+4. 
\]
The inequality
$\weighttwo(S)\le\weighttwo(S_1)$ then becomes $\weighttwo(D_1)+p+q\le
\weighttwo(D_1)+4$, so that $p+q\le4$ and hence $p=q=2$. 
Since $S_2$ is obtained by smoothing $\iota(F\cup D_2)$, it follows that
$\orbi  {S_2}$ 
has four singular points, all of order $2$. Since $S_2$ is a sphere, it follows that $\orbi  {S_2}$ is a toric orbifold.
Furthermore, since $\orbi F \subset\orbi { S_2}$ is annular and $\pi_1$--injective in $\orbN$,  the inclusion homomorphism $\pi_1(\orbi  
{S_2})\to\pi_1(\orbM)$ has infinite image. We may therefore apply
Lemma \ref{therefore}, with $S_2$ playing the role of $W$, to deduce
that there is a topological ball $J\subset M$ such that $\bdy J=S_2$, and
$\orbi J $ is a solid toric $3$--orbifold.
Hence $\iota(F \cup D_2)$ bounds a topological $3$--ball $K^* \subset
M$, and $K^*$ is isotopic to $J$ by a topological ambient isotopy
of $M$  which is constant on $\Sing_\orbM$. In particular, \sta\ of  the
orbifold with corners $\orbK^*\doteq\orbi{K^*}$ yields a solid toric orbifold.

Since  $S$ is essential in $M$, we cannot have $S\subset K^*$. Hence 
there is a topological $3$--ball $K \subset N$ whose frontier is $F$,
such that $\iota$ maps $K$ diffeomorphically onto $K^* \subset M$. Now $\orbi K$ is a well-defined \clean\ suborbifold with corners
 of $\orbN$, and is orbifold-diffeomorphic to
$\orbK^*$; thus \sta\ of  
$\orbi{K}$ yields a solid toric orbifold. It
then
follows from the definition in \S\ref{PagelikeBindinglike} that $\orbi K$ is bindinglike.
\EndProof

\Custom{Standing hypotheses and conventions}
\label{standing}
For the remainder
of the section, the hypotheses of Theorem~\ref{Thm:HomeoTypeSpherePieces} will be assumed to
hold. Thus it will be understood that we are given a closed,
orientable, hyperbolic $3$--orbifold  $\orbM$ and a 
strongly
$\orbM$--minimal essential $2$--sphere $S$ in $M\doteq\mani \orbM
$; we shall write $\orbS=\orbi S$, which is a well-defined suborbifold
of $\orbM$ by 
\S\ref{Suborbifolds-of-3orbifolds}.
It will further be understood that we are given 
 a component  $\orbN$ of $\orbM \split \orbi S $, so that by
\S\ref{new splitting},
$\orbN=\orbi
 N$ for some component $N$ of $M\split S$. It will be assumed that $\orbN$
is equipped with the structure of an orbifold book of $I$--bundles. We
shall write $n=\size(S)$.
\EndCustom

\Lemma\label{not more than one, now}
Let $\scrbstar$ denote
the union of all bindings of $\orbN$ whose underlying spaces are
topological solid tori, and whose  \textspanningsize\
is $n/2$
(see \S\ref{what's the point}).
Then each component of $\bdy N $ contains at most one component of $\mani {\scrbstar}\cap\bdy N $. 
\EndLemma

\Proof
Suppose there is a component $\tS$ of $\bdy  N $ which contains
two distinct components of $\mani \scrbstar\cap\bdy N $, say
$A_1$ and $A_2$. Let $D_1$ denote the component of
$\overline{\tS-A_1}$ that contains $A_2$. Then $D_1$ contains a unique
component $D_2$ of $\overline{\tS-A_2}$. Both $D_1$ and $D_2$ are
disks. Since each $A_i$ is a component of the intersection of
$\bdy  N $ with a  topological solid torus which is the
underlying manifold of a binding $\orbB_i$,
and $\orbB_i$ has \textspanningsize\ $n/2$, 
the boundary curves of $A_i$ have \textspanningsize\ $n/2$
and hence 
$\size(D_i)=n/2$.
Now if $J$ denotes the annulus $\overline{D_1-D_2}$, we
have $\size(D_1)=\size(D_2)+\size(J)$, and hence
$\size(J)=0$. 
It follows that $\orbJ\doteq\orbi J$ has Euler characteristic
$0$. Since $\bdy J=\bdy A_1\cup\bdy
A_2\subset\boldC_{\orbN,\bdy\orbN}$, 
it follows from Lemma \ref{saddy} that $\orbJ$ is
a component of $\scrA\doteq\scrA_{\orbN,\bdy\orbN}$. 
Since $J$ shares a boundary curve
with the component $A_1 \ne J$ of $\boldA\doteq\mani \scrA$, 
this is
a contradiction. 
\EndProof

\Lemma\label{more four}
Let $\orbB$ be a binding of
$\orbN$ such that $B\doteq\mani \orbB$ is a topological ball, and let $F$ be a
component of $\Fr_NB$ which is an annulus. Then 
each component of $\bdy F$ bounds a disk of \textweighttwo\ $4$ in $\bdy
  N$. 
(As discussed in Remark \ref{what weight 4 means}, such a disk must have
exactly two cone points, each of which has order $2$.)
\EndLemma

\Proof
According to Definition \ref{PagelikeBindinglike}, 
\sta\ of $\orbB$ yields a
solid toric orbifold, and $\orbF\doteq\orbi F$ is $\pi_1$--injective in
$\orbN$. Since $B=\mani \orbB$ is a topological ball, we have $\size(\bdy
B)=4$, and each singular stratum of $\orbB$
  is a cone stratum of order $2$ (see \S\ref{spherical etc}).
Since
$F\subset\bdy B$ is a size--$0$ annulus, and $\orbF$ is
in particular $\pi_1$--injective in
$\orbB$, 
any
boundary component $C$ of $F$ bounds a disk $D\subset\bdy B$ such
that 
$\weighttwo(D)=4$. 
Furthermore,
  $\orbD\doteq\orbi D$
is $\pi_1$--injective in $\orbB$; since the
$\pi_1$--injectivity of $\orbF$ in $\orbN$ implies that $\orbB$ is
$\pi_1$--injective in  $\orbN$, the orbifold $\orbD$ is also
$\pi_1$--injective in  $\orbN$. We can modify $D$ by a small non-ambient isotopy in $B$ to obtain
a \neatly embedded disk $D_1$ in $B$, disjoint from $\Fr_NB$ and
transverse to $\Sing_\orbB$,  such that  
$\weighttwo(D_1)=4$, and
  $\orbD_1\doteq\orbi{D_1}$
is $\pi_1$--injective in  $\orbN$. Note  that $D_1$ is
\neatly embedded in $N$. We may therefore apply Lemma \ref{what about}, with $D_1$ playing the role of $F$,
to obtain  a topological ball $K\subset  N $
such that $\Fr_{ N }K=D_1$ and $\orbi K$ is bindinglike, so that straightening the angles of
$\orbi K $ yields a solid toric $3$--orbifold.
  We therefore have $\weighttwo(\bdy K)=8$.
Now $E\doteq\overline{(\bdy
  K)-D_1}$
is a disk in $\bdy N$ whose boundary is isotopic in
$(\bdy N)-(\Sing_\orbN\cap\bdy N)$ to $C$, and
$\weighttwo(E)=\weighttwo(
  \bdy K)-\weighttwo(D_1)=4$.
Hence $C$ bounds a \textweighttwo--$4$ disk in $\bdy N$.
\EndProof

\Custom{Big pages}\label{Subsec:BigPage}
Recall that our goal is to prove Theorem~\ref{Thm:HomeoTypeSpherePieces},
which describes the topological structure of $N$ 
under the standing assumptions given in \S\ref{standing}.
This topology will be anchored by (the underlying space of) a \emph{big page}, which we define now.

A page $\orbP$  of
$\orbN$ is called \emph{\Sbig} if there exist a 
boundary component $\tS$ of $N\doteq \mani \orbN$ 
and a component $H$ of $\mani \orbP\cap\tS$, such that every component of $\overline{\tS-H}$ has size\ at most $n/2$. 
\EndCustom

\Remark\label{Rem:BigIsRare}
It is straightforward to show that big pages are quite rare.
Indeed, suppose $\orbP$ is a big page and $H$ is the component of $\mani \orbP \cap \tS$ stipulated in the definition. Let $D_1, \ldots, D_k$ be the components of $\overline{\tS-H}$, and let $C_i = H \cap D_i$. Observe that if $k \geq 2$ and $1 \leq i \neq j \leq k$, then $C_i$ and $C_j$ co-bound an annulus in $\tS$, which can be shown to have non-zero size by Lemma~\ref{saddy}. Thus at most one $D_i$ can have size equal to $n/2$.
If $\size(D_i) < n/2$, then $D_i$ cannot intersect the underlying space of any  big page. If $\size(D_i) = n/2$, then another application of Lemma~\ref{saddy} implies that $D_i$ has at most one component of intersection with the underlying space of any big page. Thus $\tS$ intersects at most two underlying spaces of big pages. A more careful version of this argument implies that $\orbN$ can contain at most two big pages; see the final paragraph of  \S\ref{ChaiProof}.

Despite the difficulty of satisfying Definition~\ref{Subsec:BigPage}, the next lemma shows that big pages always exist.
\EndRemark

\Lemma\label{big ol' lemma}
The book of $I$--bundles
$\orbN$ has at least one \Sbig\ page.
\EndLemma

\Proof
Choose a component $\tS$ of $\bdy  N $, 
and set $\tcalS=\orbi \tS$. 
Using the notation of \S\ref{wuzza book}, set 
$\scrA=\scrA_{\orbN,\tcalS}$, $\scrH=\scrH_{\orbN,\tcalS}$,
and $\boldC = \boldC_{\orbN,\tcalS}$. 
Then $\boldH \doteq\mani \scrH$ and $\boldA \doteq\mani
\scrA$ are $2$--manifolds with boundary whose union is $\tS$,
and $\boldC=\boldH\cap\boldA $
is a closed $1$--manifold. 
A pair of components $A \subset \boldA$ and $H \subset \boldH$ are called \emph{adjacent} if $A \cap H \neq \emptyset$. Note that $A \cap H \subset \boldC$, and since every component of $\boldC$ separates $\tS$,  adjacency is equivalent to saying that $A \cap H$ is a component of $\boldC$.
Let $T$ be the adjacency graph of components of $\boldA$ and $\boldH$. That is, every vertex of $T$ corresponds to a component of $\boldA$ or a
component of $\boldH$, and two vertices are connected by an edge if and only if the corresponding components are adjacent. Since every simple closed curve on $\tS$ is separating, $T$ is necessarily a tree.

For each vertex $v$ of $T$,
let us denote by $K_v$ the component of $\boldA$ or of $\boldH$
corresponding to $v$, and let us set $m_v=\size(K_v)$. 
Given this definition of $m_v$, we may define 
$n_U=\sum_{v\in U\cap V }m_v$ for each  set $U\subset T$, as in the statement of Proposition~\ref{tree thing}. Note that $n_T=\size(\tS)=n$. Proposition~\ref{tree thing} now gives a vertex $w_0\in V $ such that, for each component $R$ of $T-\{w_0\}$, we have $n_R\le n_T/2=n/2$. Then $L\doteq K_{w_0}$ is a component of either $\boldH$ or $\boldA$. 

If $Z$ is any component of $\overline{\tS-L}$, there is a component $R$ of $T-\{w_0\}$ such that $Z=\bigcup_{v\in R\cap V }K_v$. Hence $\size(Z)=\sum_{v\in R\cap V }\size(K_v)=n_R\le n/2$. Thus:

\Claim\label{i knew it}
Every component of $\overline{\tS-L}$ has size\ at most $n/2$.
\EndClaim

We now claim:

\Claim\label{i still know it}
There is a component $H$ of $\boldH$ such that every component of $\overline{\tS-H}$ has size\ at most $n/2$.
\EndClaim

To prove \ref{i still know it}, first note that in the case where $L$ is a component of $\boldH$, the assertion follows immediately from \ref{i knew it} upon setting $H=L$.

We now turn to the case in which $L$ is a component of $\boldA$. Then $\orbi  L$ is an annular orbifold. Hence either (a) $L$ is an annulus and $\size(L)=0$, or (b) $L$ is a disk, $\size(L)=2$, and each of the two singular points of $\orbi  L$ is of order $2$. If (a) holds, 
let $H$ be a component of $\boldH$ adjacent to $L$.
Since (a) holds, it now follows that $H$ is isotopic to $L\cup H$ in $\tS\cap( N -\Sing_\orbN)$. Hence any component of $\overline{\tS-H}$ is isotopic in $\tS\cap( N -\Sing_\orbN)$ to a component of $\overline{\tS-(L\cup H)}$, which is in turn contained in a component of $\overline{\tS-L}$ and therefore has size\ at most $n/2$ by \ref{i knew it}. Since an isotopy in $\tS\cap( N -\Sing_\orbN)$ obviously preserves size, this shows that every component of $\overline{\tS-H}$ has size\ at most $n/2$. This establishes \ref{i still know it} in the subcase where (a) holds.

There remains the subcase in which (b) holds. In this subcase,
$\overline{\tS-L}$ is a disk and has size
 $n-2$.  
Hence
\ref{i knew it} gives $n-2\le n/2$, so  that $n\le4$. 

If $n\le3$ then $\size(\overline{\tS-L})\le1$. This means that
$\orbi{\overline{\tS-L}}$ is a discal orbifold, so that $\bdy L$
defines an element of finite order in $\pi_1(\orbN)$; this is a
contradiction, since $\bdy L$ is a component of 
$\boldC=\orbi
\boldC$,
which is
 $\pi_1$--injective in $\orbN$ by
\S\ref{wuzza book}. 
Hence $n=4$.

Now let $H$ denote the component of $\boldH$ 
adjacent to $L$. 
The component $L$ of $\overline{\tS-H}$ has size\ $2$. If $Y$ is any component of $\overline{\tS-H}$ distinct from $L$, we have 
\[
4 = n = \size(\tS)\ge\size(Y)+\size(L)=\size(Y)+2,
\]
so that $\size(Y)\le2$. Thus every component of $\overline{\tS-H}$ has size at most $2=n/2$. This completes the proof of \ref{i still know it}.

Now if $H$ is the component of $\boldH$ given by \ref{i still know it}, there is a page $\orbP$ of $\orbN$ such that $L$ is a component of $\mani \orbP\cap\tS$. It then follows from \ref{i knew it}, and the definition of a \Sbig\ page, that $\orbP$ is a \Sbig\ page of $\orbN$. This gives the conclusion of the lemma.
\EndProof

\Lemma\label{middle-sized lemma}
Let $\orbP$ be a page of $\orbN$,
and set
$ P=\mani\orbP$. 
Assume that  some component of $\Fr_{  N } P$ is an annulus of \textspanningsize\ $n/2$. Then $\orbP$ is \Sbig.
\EndLemma

\Proof
Fix a  component $F$ of $\Fr_{ N } P$ which is an annulus of \textspanningsize\ $n/2$,  fix a component $C$ of $\bdy F$, and let $\tS$ denote the component of $\bdy  N $ containing $C$. Then $C$ is 
contained in the boundary
of some component $H$ of $ P\cap\tS$. According to the definitions, we have $\spanningsize(C)=\spanningsize(F)=n/2$. Hence each of the two disks contained in $\tS$ and bounded by $C$ has size\ equal to $n/2$. These disks may be labeled $D$ and $D'$, where $D$ is a component of $\overline{\tS-H}$ and $D'\supset H$. Each component of $\overline{\tS-H}$ either is equal to $D$ or is contained in $D'$; hence each component of $\overline{\tS-H}$ has size\ at most $n/2$. It now follows from the definition that $\orbP$ is \Sbig.
\EndProof

\Lemma\label{EveryCompBigSize} 
Let a \Sbig\ page $\orbP$  of $\orbN$
be given. Let $\tS$ be any 
boundary component of $ N \doteq \mani \orbN$, and let $H$ be any
component of the intersection of
$ P\doteq\mani \orbP$ with $\tS$. 
Then every component of $\overline{\tS-H}$ has size\ at most $n/2$.
\EndLemma

\Proof
The orbifold $I$--bundle structure of $\orbP$ implies that
$ P\cap\tS$ has at most two components. If it has only one
component, the assertion is immediate from the definition  of a \Sbig\ page. Now suppose it has two components, $H_0$ and $H_1$. Let $\tS_i$ denote the component of $\bdy  N $ containing $H_i$. (The $\tS_i$ are not necessarily distinct.) Since $\orbP$ is \Sbig, we may suppose the $H_i$ to be labeled in such a way that every component of $\overline{\tS_0-H_0}$ has size\ at most $n/2$. It remains to show  that every component of $\overline{\tS_1-H_1}$ has size\ at most $n/2$.

Let $q \from \orbP \to \orbH$ be the fibration map of the $I$--bundle $\orbP$.
Since the horizontal boundary $\horizbdy \orbP$ has two components $\orbi {H_0}$ and
$\orbi {H_1}$, it follows from   \S\ref{Def orbifold I-bundle} that the base $\orbH$ is orientable, and that each of $\orbi {H_0}$ and
$\orbi {H_1}$ is
orbifold-diffeomorphic to $\orbH$. In particular, 
$w\doteq\size(H_0)=\size(H_1)$. Each component of $\vertbdy \orbP $ must meet both components of $\horizbdy \orbP$.
In particular no component of $\Fr_{ N } P = \mani {\vertbdy \orbP}$ has connected boundary;
since  the components of $\Fr_{ N } P$ are underlying surfaces of annular orbifolds, they must be size--$0$ annuli. Hence $H_0$ and $H_1$ have the same number of boundary curves, which we denote by $m\ge0$, and we may write $\bdy H_i=\bigcup_{1\le j\le m}C_i^j$, where $C_0^j$ and $C_1^j$ are joined by a size--$0$ annulus in $ N $ for $1\le j\le m$. According to 
Lemma \ref{Lem:SameSpanningSize}, we have
$\spanningsize(C_0^j)=\spanningsize(C_1^j)$ for every $j$.

For all $i$ and $j$,
let $D_i^j$ denote the component of
$\overline{\tS_i - H_i}$ containing $C_i^j$. For $1\le j\le m$ we have
$\size(D_0^j)\le n/2$, and we must show that $\size(D_1^j)\le
n/2$. This is vacuously true if $m=0$, so we may assume $m>0$. Since
the disks $D_1^j\subset\tS_1$ are pairwise disjoint, we have
$\sum_{1\le j\le m}\size(D_1^j)\le \size\tS_1=n$; hence there is at
most one index $j$ for which $\size(D_1^j)>n/2$. Thus, after
re-indexing if necessary, we may assume that $\size(D_1^j)\le n/2$ for
$1\le j<m$.  We now
  need only prove that $\size(D_1^m)\le n/2$.

For $i=0$ and $1\le j\le m$, and also for $i=1$ and $1\le j<m$, since $\size(D_i^j)\le n/2$, the definition of \textspanningsize\ gives $\size(D_i^j)=\spanningsize(C_i^j)$. Note also that for $i=0,1$ we have $w+\sum_{1\le j\le m}\size(D_i^j)=\size(\tS_i)=n$. Hence
\begin{align*}
w+\sum_{1\le j\le m}\spanningsize(C_0^j)&=w+\sum_{1\le j\le m}\size(D_0^j) \\
&= w+\sum_{1\le j\le m}\size(D_1^j)\\
&=w+ \Big(\sum_{1\le j<m}\spanningsize(C_1^j) \Big) +\size(D_1^m)\\
&=w+ \Big( \sum_{1\le j< m}\spanningsize(C_0^j) \Big)  +\size(D_1^m),
\end{align*}
which gives $\size(D_1^m)=\spanningsize(C_0^m)$. The definition of \textspanningsize\ now implies that $\size(D_1^m)\le n/2$, as required.
\EndProof

Our next step on the way to proving Theorem~\ref{Thm:HomeoTypeSpherePieces} is to show that when $\orbP$ is a big page, the components of $N - \mani \orbP$ are often harmless.

\Lemma\label{both verses}
Let $\orbP$ be a \Sbig\
page of $\orbN$, and set $ P=\mani\orbP$. Let $F$ be a component of
$\Fr_{  N } P$ such that either 
\begin{enumerate}[\:\:(a)]
\item $F$ is an annulus  with
$\spanningsize(F)<n/2$, or 
\item 
the binding of $\orbN$ containing  $\orbi F$ has a 
topological ball as its  underlying space.
\end{enumerate}
Let $V$ denote the component of $\overline{ N - P}$ containing
$F$. Then $V$ is \harmless, and is disjoint from 
the underlying manifold of
every \Sbig\ page of $\orbN$ other than $\orbP$.
\EndLemma

\Proof
First we observe that if there is a
\Sbig\ page $\orbQ\ne \orbP$ of $\orbN$ such that  $\mani
\orbQ$ has non-empty intersection with 
$V$, then since $\mani\orbQ$ is connected and disjoint from $P$ we must have
$\mani\orbQ\subset V$. It therefore suffices to prove that (I) $V$ is
\harmless, and (II) there is no big page $\orbQ\ne \orbP$ of $\orbN$
such that $\mani\orbQ\subset V$.

Consider the case in which (a) holds. We set $h=\spanningsize(F)$, so that 
$h<n/2$. Let $C_1$ and $C_2$ denote the components of $\bdy F$. For $i=1,2$ let $H_i$ denote the component of $ P\cap\bdy  N $ containing $C_i$, and let $\tS_i$ denote the component of $\bdy N $ containing $H_i$. (The $H_i$ are not necessarily distinct, and in particular the $\tS_i$ are not necessarily distinct.) 
For $i=1,2$ let $D_i$ denote the component of $\overline{\tS_i-H_i}$ containing $C_i$. Then $D_i$ is a disk, and it follows from Lemma \ref{EveryCompBigSize} that $\size(D_i)\le n/2$. The definitions of the \textspanningsize\ of a simple closed curve, and  of the \textspanningsize\ of an annulus, then imply that  $\size(D_i)$ is equal to $h$ (and is in particular strictly less than $n/2$).

We claim:
\Claim\label{snores} 
For $i=1,2$, the interior of $D_i$ is disjoint from 
the underlying manifold of every \Sbig\ page of $ \orbN$, 
including $\orbP$.
\EndClaim

If \ref{snores} is false, then $\inter D_i$ contains a component 
$H$ of $\mani \orbQ\cap\tS_i$ for some \Sbig\ page $\orbQ$ of $\orbN$. But then the disk $\overline{\tS_i-D_i}$ is contained in some component $E$ of $\overline{\tS_i-H}$. According to Lemma \ref{EveryCompBigSize} we have $\size(E)\le n/2$. But $\size(E)\ge\size(\overline{\tS_i-D_i})=n-h>n/2$, a contradiction. This proves \ref{snores}.

According to \ref{snores}, $\inter D_i$ is in particular disjoint from $P$ for $i=1,2$. Hence:
\Claim\label{shores} 
For $i=1,2$, the disk $D_i$ is a component of $\overline{(\bdy N )-( P\cap \bdy N )}$. 
\EndClaim

The disks $D_1$ and $D_2$, which by \ref{shores} are components of
$\overline{(\bdy N )-( P\cap \bdy  N )}$, 
are distinct because their boundaries $C_1$ and $C_2$ are
distinct. Hence $D_1\cap D_2=\emptyset$. Since the annulus $F$ is
\neatly embedded in $ N $, we have $D_i\cap F=C_i$ for $i=1,2$, and
$ D_1\cup F\cup D_2$ is a smoothable 
$2$--sphere. 
By a minor variant of the smoothing operation described in \S\ref{what's smoothing},  $ D_1\cup F\cup D_2$ is non-ambiently isotopic to  a smooth $2$--sphere $R$ contained in the interior of $N$ and transverse to the singular set of
$\orbN$, with the same
 size as $ D_1\cup F\cup D_2$.

Since $\size(D_1)=\size(D_2)=h$ and $\size(F)=0$, we
have $\size(R)=2h$.  The
canonical immersion from $ N $ to $M$ maps $R$
diffeomorphically 
to a smooth 
sphere $R'$ in $M$ which is
transverse to the singular set of $\orbM$ and has 
size  $2h$. Since $2h<n$ and $S$ is 
$\orbM$--minimal, the sphere $R'$ cannot be essential in
$M$; hence $R'$ bounds a smooth  ball $W\subset M$. 
Now $\bdy W=R'$ is disjoint from $S$, and since $S$ is
essential it cannot be contained in $\inter W$. Hence $W$ is
disjoint from $S$. It now follows that 
$ D_1\cup F\cup D_2$ 
bounds a topological ball $K\subset
 N $. Thus we 
have $\Fr_{ N }K=F$. 
Since $F$ is a component of $\Fr_{ N } P$, either (i) $K$ is a component of $\overline{ N - P}$ or (ii) $K\supset P$. But (ii) would imply  $ P\cap\bdy N \subset K\cap\bdy N =D_1\cup D_2$, which contradicts \ref{shores}. Hence (i) holds, i.e.\ 
$K=V$. In particular 
$V$ is a topological
ball and its frontier is the single annulus $F$. Thus 
$V$ is \harmless, and Assertion (I) is established in this case.

To prove Assertion (II) in this case, suppose that there is a
\Sbig\ page $\orbQ\ne \orbP$ of $\orbN$ such that  $Q\doteq\mani
\orbQ\subset V=K$.
Since $\orbQ$ is a page of the book of $I$--bundles
$\orbN$, we have $Q\cap\bdy N \ne\emptyset$. But we
have $K\cap\bdy N  =D_1\cup D_2$, and hence
$Q $ must meet the interior of 
either $D_1$ or $D_2$. 
This contradicts \ref{snores}, 
and completes the proof of the lemma in the case where (a) holds.

We now turn to the case in which (b) holds.
Consider the
subcase in which $F$ is a disk.
In this subcase, 
since $\orbi F $ is $\pi_1$--injective in $\orbN$ 
(see \S\ref{wuzza book}),
it follows from  Lemma \ref{what about} that there is a topological ball 
$K\subset N$ 
such that $\Fr_{ N  }K=F$, and $\orbi K $ is a bindinglike suborbifold of $\orbN$.
Hence $\pi_1(\orbi K )$ is virtually cyclic.
Since $F$ is a component of $\Fr_{ N  } P $, either (i) $K$ is a component of $\overline{ N  - P }$ or (ii) $K\supset P $.   If (ii) holds, then  since $\orbP$ is a $\pi_1$--injective suborbifold of $\orbN$ by
\S\ref{wuzza book},
$\orbP$ is in particular a $\pi_1$--injective suborbifold of  $\orbi K
$,
 and hence the page $\orbP$ of $\orbN$ has a virtually cyclic (orbifold) fundamental group. This contradicts
an observation made in \S\ref{wuzza book}.
Hence (i) holds, i.e.\ 
$K=V$. In particular 
$V$ is a topological ball and its frontier is the single disk $F$. Thus 
$V$ is \harmless, and 
Assertion (I) is established in this subcase.

To prove Assertion (II) in this
subcase,
suppose that for some \Sbig\ page $\orbQ\ne \orbP$ of $\orbN$ we have $\mani \orbQ \subset V$. 
Since $\orbQ$ is a $\pi_1$--injective suborbifold of $\orbN$ by
\S\ref{wuzza book},
$\orbQ$ is in particular a $\pi_1$--injective suborbifold of  $\orbi K
$, and hence the page $\orbQ$ of $\orbN$ has a virtually cyclic
(orbifold) fundamental group. Again this contradicts
an observation made in \S\ref{wuzza book}.

There remains the subcase in which $F$ is an annulus. Let $\orbB$
denote the binding of $\orbN$ containing $\orbi F$. 
Since (b) holds,
$B\doteq\mani \orbB$ is a topological ball. It therefore follows from Lemma
\ref{more four}
and Remark \ref{what weight 4 means}
that 
every component of $\bdy F$ bounds a disk in $\bdy N$ that has \textweighttwo\ $4$.
Thus, by Lemma \ref{just four simpler}, $F$ is the full frontier of
$B$ relative to $N$. 
It follows that $V=B$ in
this case, so that $V$ is a topological ball whose frontier is $F$. This means that
$V$ is \harmless, and 
Assertion (I) is established
in this subcase. 
Assertion (II) follows immediately, because $V$ is the underlying space of the binding $\orbB$,
and so cannot contain the underlying space of any page.
\EndProof

\Custom{Global structure: the proof of Theorem \ref{Thm:HomeoTypeSpherePieces}}\label{ChaiProof}

In \S\S\ref{not more than one, now}--\ref{both verses}, we
have seen how the hypotheses laid out in \S\ref{standing} give
information about the book of $I$--bundles $\orbN$. We shall now assemble
these facts to prove Theorem~\ref{Thm:HomeoTypeSpherePieces}, which describes the
topological type of $N=\mani\orbN$ under these hypotheses. 
In outline, the proof runs as follows. For every big page $\orbP_j$ of $\orbN$, we adjoin certain harmless components to the underlying space $P_j$ to form a set $\hatP_j$ that is itself a \clasibun. It turns out that $N$ is the union of the \clasibun s $\hat P_1, \ldots, \hatP_k$ with
certain solid torus bindings. We then show that every $\hatP_j$ is a \clasibun\ over $\SS^2$, $\RR\PP^2$, or $\BB^2$, and in the latter case $N$ is formed by attaching one or two 
$2$--handles to a solid torus.  
The theorem then follows easily.

\Proof[Proof of Theorem \ref{Thm:HomeoTypeSpherePieces}]
As in \S\ref{standing}, we set 
$n=\size(S)$.
Let $\orbP_1,\ldots,\orbP_k$ be the big pages of $\orbN$, and observe that $k \geq 1$ by Lemma \ref{big ol' lemma}.
For
$j=1,\ldots,k$, 
we set $ P_j=\mani{\orbP_j}$, and we
let $\boldF_j$ denote the set of all components $F$  of $\Fr_{ N 
} P_j $ such that either 
\begin{enumerate}[\:\:(a)]
\item $F$ is an annulus  with
$\spanningsize(F)<n/2$, or 
\item
the underlying space of the binding  of $\orbN$ 
containing  $\orbi F$ is homeomorphic to a ball.
\end{enumerate}
For each $F\in\boldF_j$, let $V_F$ denote the component of $\overline{ N  - P_j}$ containing $F$. Then  Lemma \ref{both verses} implies:
\Claim\label{zeero}
For each $j\in\{1,\ldots,k\}$ and each $F\in\boldF_j$, the component
$V_F$ of $\overline{ N  - P_j}$ is \harmless, and is disjoint
from $ P_{j'}$ 
for every $j'\ne j$.
\EndClaim

Let an index $j\in\{1,\ldots,k\}$ be given.
According to the definition of a harmless component,
 for each 
$F\in\boldF_j$, the frontier of $V_F$ is
connected and is therefore equal to $F$. In particular, if we fix an
index $j$, then for any distinct elements $F$ and $F'$ of $\boldF_j$ we
have $V_F\ne V_{F'}$. For each $j$ we set $ \hatP 
_j= P_j\cup\bigcup_{F\in\boldF_j}V_F$. We apply 
 Proposition \ref{Prop:ExpandedPage}, letting $\orbP_j$ play the role of
 $\orbP$, and letting $\boldV_j\doteq \bigcup_{F\in\boldF_j} V_F$
 play  the role of   $\boldV$.   
 Proposition \ref{Prop:ExpandedPage}\eqref{ExpandedPageIsBundle} implies 
that the  manifold $ \hatP_j$ may be given the structure of
a \clasibun\ in such a way that $\Fr_{ N  } \hatP_j$ is a vertical
subset of $\vertbdyclas \hatP_j$. As we observed in \S\ref{Def:Harmless}, 
$\hatorbP_j\doteq\orbi { \hatP _j}$ is a well-defined \clean\ suborbifold
with corners of $\orbN$, so that $\hatP _j$ 
is a clean submanifold with corners of $N$.
Furthermore, Proposition \ref{Prop:ExpandedPage}\eqref{ExpandedPageFrontier} and the definition of $\boldV_j$ imply that
$\Fr_{ N  }\hatP _j$ is the union of all components of $\Fr_{ N 
} P_j$ that do not belong to $\boldF_j$, i.e.\ do not satisfy (a) or (b). A component $E$ of $\Fr_{ N 
} P_j$ fails to satisfy (b) if and only if  the underlying set 
of the
binding containing $\orbi E$ 
is
a  topological solid torus, which by 
\S\ref{spherical etc}
implies
in particular that $E$ is
an annulus. An annulus component $E$ of $\Fr_{ N 
} P_j$ fails to satisfy (a) if and only if  
$\spanningsize(E)=n/2$. Hence we have shown:

\Claim\label{jeepers}
 $\Fr_{ N  }\hatP _j$ is the union of all components of $\Fr_{ N 
 } P_j$ which are annuli of \textspanningsize\ $n/2$
and are contained in 
topological
solid 
tori that are underlying manifolds of bindings.
\EndClaim

We claim:

\Claim\label{fish and chips}
The 
sets
$ \hatP  _1,\ldots, \hatP  _k$ are pairwise disjoint.
\EndClaim

To prove \ref{fish and chips}, we must
 check that when  $j,j'\in\{1,\ldots,k\}$ are distinct indices, three conditions hold: first, $ P_j\cap  P_{j'}=\emptyset$; second, $ P_{j'}\cap V_F=\emptyset$ for each $F\in\boldF_j$; and third, $V_F\cap V_{F'}=\emptyset$ for all $F\in\boldF_j$ and $F'\in\boldF_{j'}$.
If  $F\in\boldF_j$ and $F'\in\boldF_{j'}$, it
is immediate that $ P_j\cap  P_{j'}=\emptyset$ since $\orbP_j$ and $\orbP_{j'}$ are distinct pages of $\orbN$. According to Lemma \ref{both verses}, $V_F$ is  disjoint from every \Sbig\ page of $\orbN$ other than $\orbP_j$; in particular $V_F\cap \orbP_{j'}=\emptyset$. Finally, to show that the connected, proper subsets $V_F$ and $V_{F'}$ of $ N  $ are disjoint, 
it is enough to prove that each is disjoint from the frontier of the other; and by symmetry, we need only prove that  $V_F$ is disjoint from $\Fr_{ N  }V_{F'}$. But this follows from the fact that 
$V_F$ is disjoint from $\orbP_{j'}$, as the latter set contains $\Fr_{ N  }V_{F'}$. Thus
\ref{fish and chips} is established.

Next, we claim:

\Claim\label{Claim:IfEmptyFrontier}
If $\Fr_N \hatP_j = \emptyset$ for some $j$, then $N$ is homeomorphic to either $\SS^2 \times I$ or the $3$--manifold obtained by removing 
the interior of a closed
 $3$--ball from $\RR \PP^3$.
\EndClaim

To prove \ref{Claim:IfEmptyFrontier}, assume that $\Fr_N \hatP_j = \emptyset$. Since $N$ is connected, it follows that $\hatP_j = N$ (and that $j=k =1$). Thus $N$ is homeomorphic to a \clasibun\ over some closed surface of positive Euler characteristic. 
Thus $ N  $ is homeomorphic
either to the trivial \clasibun\ 
$\SS^2 \times I$ or to a twisted \clasibun\ over
$\RR \PP^2$. In the second case, $N$ is homeomorphic to the $3$--manifold obtained by removing 
the interior of a closed
 $3$--ball from $\RR \PP^3$. 
 
 According to \ref{Claim:IfEmptyFrontier}, the conclusion of the theorem holds if
$\Fr_N \hatP_j = \emptyset$ for some $j$. Thus, for the remainder of the proof, we 
assume:
\Claim\label{non-empty frontiers}
$\Fr_N \hatP_j \neq \emptyset$ for every $j$.
\EndClaim

For $j=1,\ldots,k$, let $H_j$ denote the base of the
\clasibun\ 
$\hatP _j$.
Then $H_j$ is connected since $\hatP _j$ is connected.
If for some $j$ the surface $H_j$  is closed, then we have 
$\vertbdyclas \hatP_{j}=\emptyset$. In particular,
$\Fr_{ N  } \hatP_{j}
\subset\vertbdyclas \hatP_{j}$ is empty, 
contradicting
\ref{non-empty frontiers}. This proves:

\Claim\label{not closed}
For $j=1,\ldots,k$ we have $\bdy H_j\ne\emptyset$.
\EndClaim

Now set $\hatboldP = \hatP  _1\cup\cdots\cup \hatP  _k$, which is a disjoint union
according to \ref{fish and chips}. The 
\clasibun\ structures on the
$\hatP _j$ define a \clasibun\ structure
 on $\hatboldP$. 
As in the statement of Lemma \ref{not more than one, now}, we
let
$\scrbstar$ 
denote the union of all bindings 
of $\orbN$ which have 
topological
solid tori as underlying 
sets
and have  \textspanningsize\
$n/2$.
Set $\boldbstar=\mani{\scrbstar}$.
We claim:

\Claim\label{and whaddya got}
We have $N=\hatboldP\cup\boldbstar$ and
$\hatboldP\cap\boldbstar=\Fr_{ N } \hatboldP=\Fr\boldbstar$. Furthermore, each component of
$\hatboldP\cap\boldbstar$ is an annulus which is vertical in the
\clasibun\ $\hatboldP$ 
and has non-zero winding number on the 
(topological solid torus) 
component of $\boldbstar$ containing it.
\EndClaim

To prove \ref{and whaddya got}, first note that  
if $F$ is any component of $\Fr_{ N  }\hatboldP$, then
by \ref{fish and chips},
$F$ is a component of $\Fr_{ N  }\hatP_j$ for some $j\le k$;
and according to \ref{jeepers},
$F$ is therefore an annulus  component of 
$\Fr_N  P_j$ 
having \textspanningsize\ $n/2$. Let $\orbB $ denote the binding of $\orbN$
such that $B\doteq\mani \orbB  $ 
shares the frontier component $F$ with $ P_j$.
Then \ref{jeepers} also implies that 
$B  $ is a topological solid torus
(so that $\spanningsize(\orbB  )$ is defined by \S\ref{what's the point}),
and that $\spanningsize(\orbB  )=\spanningsize(F)=n/2$. 
Thus by definition $\orbB $ is a component of ${ \scrbstar}$. 
This shows that $\Fr_{ N  }\hatboldP\subset\Fr_{ N  }\boldbstar$.

Conversely, suppose that $F$ is a component of $\Fr_{ N  }
\boldbstar$. Then $F$
is an annulus of \textspanningsize\ $n/2$, and is a frontier component of
$\mani \orbP $ for some page $\orbP$ of $\orbN$. By Lemma
\ref{middle-sized lemma}, $\orbP$ is a \Sbig\ page of $\orbN$, so that
$\orbP=\orbP_j$ for some $j\in\{1,\ldots,k\}$. 
Since $F$ is a \textspanningsize--$n/2$ component of $\Fr_NP_j$, and
since  the
component of $\boldbstar$ containing $F$ is a topological solid torus by the
definition of $\boldbstar$, it follows from
\ref{jeepers} that $F$ is a component of $\Fr_N\hatP _j$. 
This establishes the reverse inclusion $\Fr_{ N  }\boldbstar\subset\Fr_{ N  }\hatboldP$.

Thus we have $\Fr_{ N  }\boldbstar = \Fr_{ N  }\hatboldP$. 
To prove that this set coincides with $\hatboldP \cap \boldbstar$, it suffices to show that 
if  $F$ is a component of $\Fr_{ N  }\boldbstar = \Fr_{ N  }\hatboldP$, and $W$ is  a tubular neighborhood of $F$ in $N$, then $(W-F)\cap\hatboldP$ and $(W-F)\cap \boldbstar$ are contained in different components of $W-F$. But \ref{jeepers} implies that $(W-F) \cap \hatboldP$ is contained in some $P_j$, which is the underlying space of a page $\orbP_j$, while $(W-F) \cap \boldbstar$ is contained in the underlying space of a binding $\orbB \subset \scrbstar$. Thus 
$(W-F)\cap\hatboldP$ and $(W-F)\cap \boldbstar$ are disjoint, 
and we have shown that $\hatboldP\cap\boldbstar=\Fr_{ N} \hatboldP=\Fr_{ N  }\boldbstar$. 

Since
$\hatboldP\cap\boldbstar=\Fr_{ N 
}\hatboldP=\Fr_{ N  }\boldbstar$, the
compact  subset
$\hatboldP\cup\boldbstar$ of $ N  $ is an open subset of $ N $. Since we have seen (via a crucial application of 
Lemma \ref{big ol' lemma}) that $k\ge1$, we have $\emptyset \ne P_1\cup\cdots \cup P_k\subset\hatboldP$, and in particular
$\hatboldP\cup\boldbstar\ne\emptyset$. Since $ N  $ is connected it
follows that $\hatboldP\cup\boldbstar= N  $, and the 
first sentence of \ref{and whaddya got} is proved. 

To prove the second sentence of \ref{and whaddya got}, consider an arbitrary component $F$ of
$\hatboldP\cap\boldbstar$. Then according to 
the first sentence of \ref{and whaddya got},
$F$ is both a component of the
frontier of some component $B$ of $\boldbstar$, and a component of the
frontier of  $\hatP _j$ for some $j$.
The definition of
$\boldbstar$ implies that $\orbB\doteq\orbi B$ is a binding of
$\orbN$, 
and that $B$ is a topological solid torus.
According to an observation made in \S\ref{wuzza book},
$F$ is an annulus which has non-zero winding number in
the topological solid torus $B$. 
On the other hand, since $F$ is a component of
$\Fr_N\hatP _j$, our choice of the $I$--bundle structure on $\hatP _j$ guarantees
that $F$ is a vertical 
subset
of $\bdy \hatP _j$ and therefore of
$\bdy\hatboldP$.
The proof of \ref{and whaddya got} is now complete.

Now we claim: 
\Claim\label{one annulus}
For each component $K$ of $\bdy N$, the intersection $K\cap\boldbstar$ is a single annulus.
\EndClaim

To prove \ref{one annulus}, we first note that \ref{and whaddya got} directly implies that each component of $K\cap\boldbstar$ is an annulus. Furthermore, it follows from Lemma \ref{not more than one, now} that $K\cap\boldbstar$ has at most one component. It remains only to show that $K\cap\boldbstar\ne\emptyset$. If we assume that $K\cap\boldbstar$ is empty, then since $N=\hatboldP\cup\boldbstar$ by \ref{and whaddya got}, we  have $K\subset\hatboldP$. But the component $K$ of $\bdy N$ is a $2$--sphere, and in particular a closed surface. Hence $K$ must be a full boundary component of $\hatP_{j}$ for some $j\in\{1,\ldots,k\}$. 
Thus the \clasibun\ $\hatP_{j}$ has a boundary component which is a sphere; and the base $H_{j}$ of the \clasibun\ has non-empty boundary by \ref{not closed}. It follows that $H_j$ must be a disk. But then $\hatP_j$ is a topological ball whose full boundary is the sphere $K$; this contradicts the essentiality of the sphere $S$ in $M$. Thus \ref{one annulus} is proved.

Now consider an arbitrary component $K$ of $\bdy N$. By \ref{one annulus}, $K\cap\boldbstar$ is an annulus, and it follows from \ref{and whaddya got} that $K\cap\hatboldP=\overline{K-(K\cap \boldbstar)}$. 
Since $K$ is a sphere, it follows that $K\cap\hatboldP$ is the union of two disjoint disks. In particular:
\Claim\label{They're disks}
Every component of $\hatboldP\cap\bdy N$ is a disk.
\EndClaim

Now we claim:
\Claim\label{each hatpj is a ball}
For each $j\le k$, the manifold with corners $\hatP_j$ is a topological ball, and $\hatP_j\cap\boldbstar$  is a single annulus.
\EndClaim

To prove \ref{each hatpj is a ball}, we first recall that 
$\Fr_N \hatP_j \neq \emptyset$ for every $j$
by \ref{non-empty frontiers}. We choose a component $F$ of $\Fr_N \hatP_j$, which by \ref{and whaddya got} is an annulus. Let $C_1$ and $C_2$ denote the components of $\bdy F$. For $i=1,2$, it follows from \ref{and whaddya got} 
that $C_i$ is contained in a component $D_i$ of $\hatboldP\cap\bdy N$. By \ref{They're disks}, each $D_i$ is a disk. In particular we have $\bdy D_i=C_i$ for $i=1,2$. Since $D_i\subset\hatboldP$ and $C_i\subset\hatP_j$, we have $D_i\subset\hatP_j$. It now follows that $R \doteq D_1\cup F\cup D_2$ 
is a topological $2$--sphere contained in $\bdy \hatP_j$. But $\hatP_j$ is a \clasibun\ over the compact surface $H_j$, which by \ref{not closed} has non-empty boundary. It follows that the manifold obtained by \sta\ of $\hatP_j$ is irreducible. Since $\bdy\hatP_j$ contains the $2$--sphere $R$, we deduce that $\hatP_j$ is a topological ball with boundary $R$. We have $\hatP_j\cap\boldbstar= R \cap\boldbstar=F$. This completes the proof of \ref{each hatpj is a ball}.  

It follows from \ref{each hatpj is a ball} that
$ N  $ is homeomorphic 
to a topological manifold obtained from $\boldbstar$ by
adding $k$ topological $2$--handles.
As $ N  $ is connected, it follows that $\boldbstar$ is connected,
i.e.\ is a single topological solid torus. A topological manifold
obtained by adding $k$ topological $2$--handles to a solid torus along 
annuli with non-zero winding number
 is homeomorphic to 
a manifold obtained from a (possibly trivial) lens space by removing
 the interiors of $k$ disjoint closed $3$--balls. 
In particular, $k$ is the number of boundary components of $N$, and is therefore equal to $1$ if $S$ separates $M$, and to $2$ otherwise.
If $k=1$, then $N$ is homeomorphic to a manifold obtained from a lens space by
removing  the interior of a closed $3$--ball; and in this case, since
$S$ is essential, the lens space in question must be non-trivial.  If
$k=2$, then $N$ is homeomorphic 
to a manifold obtained from a (possibly trivial) lens space by
removing the interiors of two disjoint closed $3$--balls.
In either case, the conclusion of the theorem follows.

(We remark that the above proof shows that $N$ has at most two big pages. Indeed, if $\Fr_N \hatP_j = \emptyset$ for some $j$, then there is exactly one big page, and otherwise the number $k$ of  big pages is equal to the number of components of $\bdy N$.)
\EndProof
\EndCustom

\section{Essential tori}\label{torus section}

In this section, we consider a closed hyperbolic $3$--orbifold $\orbM$ such that $M = \mani \orbM$ is irreducible but contains an essential torus $T$. 
In \S\ref{TorusComplexity} and Lemma~\ref{it's still incompressible}, we  build an essential torus $T$ that is minimal in an appropriate sense, and show that it defines an essential $2$--orbifold $\orbi T$. The main result of this section is Proposition~\ref{Prop:TorusBooksTotalSpace}, which says that if $N$ is a component of $M \split T$ and $\orbN = \orbi N$
is an orbifold book of $I$--bundles, then $N$ admits a Seifert fibration, either over the disk with two singular fibers or over the annulus with at most one singular fiber.

The proof of Proposition~\ref{Prop:TorusBooksTotalSpace} shares a common feature with the arguments of Section~\ref{sphere section}. Recall that the proof in Section~\ref{sphere section} proceeded
 by finding a big page $\orbP$, and then adjoining harmless $3$--balls to the underlying space $P$ of this page. 
 In the current section, we will consider a \emph{dominant} page $\orbP$, whose underlying space $P$ contains a non-trivial curve in every component of  $P \cap \bdy N$. In particular, the key definition is topological rather than combinatorial. The existence of a dominant page is established in Lemma~\ref{Lem:DominantExists}. In Lemma~\ref{doughnut hole}, we prove that the topological non-triviality of $P$ persists even after attaching harmless components to form a space $\widehat P$. After these preliminary steps, the proof of Proposition~\ref{Prop:TorusBooksTotalSpace}  amounts to showing that $\widehat P$ and 
all the components of $N - \widehat P$ are solid tori. 
This is enough  to find the claimed Seifert fibration.

 \Custom{Complexity and $\orbM$--minimal tori}\label{TorusComplexity}
 Let $\orbM$ be a closed, orientable, hyperbolic $3$--orbifold such that
$M\doteq\mani \orbM $ is irreducible. 
According to the conventions laid out in \S\ref{AccordantConvention},
$\orbM$ is understood to be equipped with an $\orbM$--accordant
smooth structure.
Suppose that $M$ contains a $\pi_1$--injective torus $T$ that is transverse to $\Sing_\orbM$. Define the \emph{complexity} of $T$ to be
\[
\comp(T) = 
\begin{cases}
\size(T) & \text{if $T$ is separating,} \\
2 \size(T) & \text{if $T$ is non-separating}.
\end{cases}
\]
A torus $T \subset M$ is called \emph{$\orbM$--minimal} if $T$ 
\begin{enumerate}
\item\label{Itm:Comp1} is $\pi_1$--injective, 
\item\label{Itm:Comp2} is transverse to the singular locus, and 
\item\label{Itm:Comp3} has the smallest complexity among all tori in $M$ satisfying \eqref{Itm:Comp1} and \eqref{Itm:Comp2}.
\end{enumerate}
Observe that if $M$ contains a $\pi_1$--injective torus, then it contains an $\orbM$--minimal one.
 \EndCustom
 
The above definition of complexity  has the feature that if $R$  is a well-chosen tubular neighborhood of $T$ in $M$, the size of 
the boundary of each component
of $ \overline{M-R}$ is equal to $\comp(T)$. This feature
proves to be crucial in the proof of Proposition~\ref{Prop:TorusBooksTotalSpace}, specifically Claims~\ref{they separate}--\ref{how's this?}.

\Lemma\label{it's still incompressible}
Let 
$\orbM$ be a closed, orientable, hyperbolic $3$--orbifold such that
$M\doteq\mani \orbM$ is irreducible, and let
$T$ be an $\orbM$--minimal
torus in $M$.
Then $\orbi T $ is essential in $\orbM$. 
\EndLemma

\Proof
Set $\orbT = \orbi T $.
We first observe that since $T$ is a torus,
$\orbT$ 
cannot be spherical, and thus cannot bound a discal $3$--orbifold. 
It remains
to show that 
$\orbT$ is $\pi_1$--injective in $\orbM$. According to
Proposition \ref{when injective},
this amounts to showing that
if $\orbD$ is a two-dimensional orientable discal suborbifold of $\orbM$ with $\orbD \cap \orbT = \bdy \orbD$, then $\orbC\doteq\bdy \orbD$ bounds a discal suborbifold of $\orbT$.

Since $\orbD$ is an orientable discal suborbifold, $ D \doteq \mani
\orbD$ is a 
disk, transverse to $\Sing_\orbM$,
 with $\size(D)\le 1$ and $\size(\bdy D) = 0$.
(See \S\ref{Suborbifolds-of-3orbifolds}.)
 
Now since $T$ is $\orbM$--minimal and therefore $\pi_1$--injective, 
the curve $C\doteq\mani \orbC=\bdy D$ 
bounds a disk
$D'\subset T$. 
Then
 $S \doteq D\cup D'$ is a 
smoothable 
topological
$2$--sphere
with $S\cap T=D'$. 
Since $M$ is irreducible, $S$ bounds a
topological ball $H\subset M$.
The $\pi_1$--injectivity of $T$ 
in $M$ implies that 
$T$ cannot be
contained in
$H$; hence $H\cap T=D'$. It follows that
the torus $T_1$ obtained by smoothing $(T-D')\cup D$ (which
by \S\ref{what's smoothing} has the same size as $(T-D')\cup D$)
is smoothly 
isotopic to $T$ in
$M$. Thus $T_1$  is $\pi_1$--injective, which is property \eqref{Itm:Comp1} of \S\ref{TorusComplexity}.
By construction, $T_1$ also satisfies property \eqref{Itm:Comp2} of \S\ref{TorusComplexity}.

The minimality of $T$ now implies $\comp(T) \leq \comp(T_1)$. Since $T$ and $T_1$ are isotopic, they are either both separating or both non-separating; hence $\size(T) \leq \size(T_1)$.
Thus
\[
\size(T) \leq \size(T_1)=\size(\overline{T-D'})+\size(D) = \size(T) - \size(D') + \size(D).
\]
Hence
$\size(D')\le\size(D)\le1$. Thus $\orbi {D'}$ is the
required discal suborbifold of $\orbT$ bounded by $\orbC$.
\EndProof

The next definition and two lemmas will be applied to a $3$--orbifold $\orbN$ obtained by splitting a given closed $3$--orbifold $\orbM$ along $\orbT$, where $\orbT = \orbi T$ is the $2$--orbifold obtained from an $\orbM$--minimal torus $T$.

\Custom{Dominant pages}
Let $\orbN$ be an orbifold book of $I$--bundles, and let $N \doteq \mani \orbN$.
A page $\orbP$ of $\orbN$ is called \emph{dominant} if for every component $K$ of $\mani \orbP \cap \bdy N$, the inclusion homomorphism $\pi_1(K)\to\pi_1(N)$ is
non-trivial. 
\EndCustom

\Lemma\label{Lem:DominantExists}
Let $\orbN$ be an orbifold book of $I$--bundles, and
  set $N = \mani \orbN$.
Suppose that $N$ is irreducible and that every component of $\bdy N$ is a $\pi_1$--injective torus. Then $\orbN$ has a dominant page $\orbP$.
\EndLemma

\Proof
Choose a component $T$ of $\bdy N $, and set $\orbT=\orbi T$.
Using the notation of \S\ref{wuzza book}, set
$\scrA=\scrA_{\orbN,\orbT}$, $\scrH=\scrH_{\orbN,\orbT}$, and $\boldC = \boldC_{\orbN,\orbT}$.
Then $\boldH\doteq\mani \scrH$ and $\boldA\doteq\mani \scrA$
are $2$--manifolds with boundary whose union is $T$,
and
$\boldC = \boldH\cap\boldA=\Fr_{T}\boldH=\Fr_{T}\boldA$
is a closed $1$--manifold.

We claim:

\Claim\label{Claim:PartiallyDominant}
There is a component $H$ of $\boldH$ such that the inclusion homomorphism $\pi_1(H)\to\pi_1(T)$ is
non-trivial. 
\EndClaim

To prove \ref{Claim:PartiallyDominant}, we consider two cases. First, suppose that
 some component $C$ of $\boldC$ is
homotopically non-trivial on the torus $T$. 
If $H$ denotes the component of 
$\boldH$
containing $C$, the inclusion homomorphism $\pi_1(H )\to\pi_1( T  )$ is
non-trivial, as desired.

Next, suppose that
 every component of $\boldC$ is
homotopically trivial in $T$.
In this case,  
$\boldC$ is a finite union of simple closed curves
in the torus $T$,  each of which bounds a disk in $T$. Hence
there is
a compact submanifold $\Delta$ of $T$, each component of which is a
disk, such that $\bdy\Delta\subset\boldC\subset\Delta$.
It follows that
$L\doteq\overline{T-\Delta}$ is connected,  that 
the inclusion homomorphism $\pi_1(L )\to\pi_1( T  )$ is
surjective, and that
$\inter L$ 
is disjoint from
$\boldC$. 
Since the $2$--manifold $L\subset T$ is connected, has boundary
contained in $\boldC$ and has interior disjoint from $\boldC$, we deduce
that $L$ is a
component of either $\boldA$ or $\boldH$. 
But every component $A$ of $\boldA$ is either a disk or an annulus, so in particular $\pi_1(A)$ cannot surject onto $\pi_1(T)$. 
Thus $L$ must be a component of  $\boldH$, and setting $H = L$ completes the proof of \ref{Claim:PartiallyDominant}.

Now, let $\orbP$
  denote
the unique page of $\orbN$ whose underlying space $P$ contains $H$. 
  Set
  $\boldK= P\cap \bdy N \subset \boldH$, so that $H$ is a component of $\boldK$. 
We claim:

\Claim\label{Claim:ConnectOrClassic}
Either $\boldK$ is connected, or $ P$ may be given the structure of
a trivial
\clasibun\ 
 in such a way that 
$\boldK=\horizbdyclas P$.
\EndClaim

To prove \ref{Claim:ConnectOrClassic}, recall  
from 
\S\ref{PagelikeBindinglike}
that the page $\orbP$ has the structure of an orbifold $I$--bundle in
which 
$\orbi \boldK=\horizbdy\orbP$  and 
$\Fr_\orbN\orbP=\vertbdy\orbP$.
In particular, according to Corollary \ref{Cor:VertBoundaryStructure},
each component of $\Fr_\orbN\orbP$
is an annular orbifold, and hence each component of $\Fr_NP$  is an annulus or a disk.

It follows from Proposition  \ref{Prop:IBundleTotalSpace} that
the topological $3$--manifold $ P$ may be given
the structure of a
\clasibun\ 
in such a way that
$\Fr_NP$ is a vertical subset of $\vertbdyclas P$.
In particular,
 if $V$ denotes the vertical boundary of the 
\clasibun\  $P$, 
we have $\Fr_NP\subset V$, and $Z\doteq \overline{V-\Fr_NP}$ is
also a vertical submanifold. If $Y$ denotes the horizontal boundary 
 of the \clasibun\  $P$,  we have $\boldK=Y\cup Z$. 
In a \clasibun, each
 component of a
 vertical
  subset
meets each component of the horizontal boundary;
 hence if $Z\ne\emptyset$, or if $Y$ is connected, then $\boldK$ is
 connected. But $Y$ is disconnected if and only if $P$ is a trivial
\clasibun;
and $Z = \emptyset$ if and only if $\boldK=Y$. 
This proves \ref{Claim:ConnectOrClassic}.

To prove the lemma, we claim that $\orbP$ is a dominant page.
That is, for every component $K$ of
 $\boldK = P \cap \bdy N$, the inclusion homomorphism $\pi_1(K )\to\pi_1( N  )$ is
  non-trivial.
Observe that if $K = H$, the assertion follows 
from \ref{Claim:PartiallyDominant} and the $\pi_1$--injectivity
 of $T$ in $N$. In particular, the assertion is true if $\boldK$ is
connected. If $\boldK$ is not connected, then by \ref{Claim:ConnectOrClassic},
$ P$ may be given the structure of
a trivial 
\clasibun\ 
in such a way that 
$\boldK=\horizbdyclas P$.
In this case, $\boldK$ has two
components, one of which is $H$. We need only prove the assertion in
the subcase when
$K$ is the other component of $\boldK$. But then the 
\clasibun\ 
structure of $ P$ gives a homeomorphism of $K$ onto $H$ which,
regarded as a map from $K$ to $N$, is homotopic to the inclusion
$K\hookrightarrow N$. Since the inclusion homomorphism $\pi_1(H)\to\pi_1( N  )$ is
non-trivial, 
we conclude that the inclusion homomorphism $\pi_1(K )\to\pi_1( N  )$ is
also non-trivial, 
and the lemma is proved.
\EndProof

\Lemma\label{doughnut hole}
Let $\orbN$ be an orbifold book of $I$--bundles, and let $N \doteq \mani \orbN$. Suppose that $N$ is irreducible and that every component of $\bdy N$ is a $\pi_1$--injective torus.  Let $\orbP$ be a dominant page in $\orbN$, which exists by Lemma~\ref{Lem:DominantExists}. Set 
$P = \mani \orbP$, and let $\hatP$  denote
 the union of $ P$ with all \harmless\
components 
of $\overline{ N  - P }$ (see \S\ref{Def:Harmless}).
Then every component of  
$\bdy(\hatP\cap \bdy N)=\bdy(\Fr_{ N  }\hatP)$ 
is $\pi_1$--injective in $ N  $.
\EndLemma

\Proof
Following the notation of 
Lemma~\ref{Lem:DominantExists}, 
set $\boldK = P \cap \bdy N$. We claim:

\Claim\label{i just did}
If $Q$ is any component of $\bdy\boldK$ which is a homotopically trivial
simple closed curve in $N$, then $Q$  bounds a disk which is a component of
$\overline{\bdy N -\boldK}$.
\EndClaim

To prove \ref{i just did}, first note that since 
$Q\subset\bdy N$ is homotopically
trivial in $N$, and $\bdy N$ is $\pi_1$--injective in $N$, the curve $Q$ 
bounds a disk $D\subset\bdy N$. Since 
$Q$ is a component of $\bdy\boldK$, either (a) $D$
is a component of
$\overline{\bdy N -\boldK}$, or (b) $D$
contains some component $K$ of $\boldK$. But since $D$
is a disk, (b) would imply that the inclusion homomorphism $\pi_1(K )\to\pi_1( N  )$ is
trivial, 
contradicting the assumption that $\orbP$ is dominant.
Hence (a) must hold,
and \ref{i just did} is established.

We also claim:
\Claim\label{kiddly divy}
If $F$ is any component of $\Fr_NP$ such that the inclusion
homomorphism $\pi_1(F)\to\pi_1(N)$ is trivial, then there is a smoothable 
$2$--sphere
$S\subset N$ which is the union of $F$ with some subsurface of
$\bdy N$.
\EndClaim

To prove \ref{kiddly divy}, suppose that $F$ is a component of $\Fr_NP$
such that the inclusion
homomorphism $\pi_1(F)\to\pi_1(N)$ is trivial. Then each component of
$\bdy F$ is a simple closed curve in $\bdy N$, homotopically trivial in
$N$, which is also a component of $\bdy\boldK$. 
By \ref{i just did}, each such component bounds  
a disk which is a component of
$\overline{\bdy N -\boldK}$.

In the
case where $F$ is an annulus, let $Q_1$ and $Q_2$ denote its boundary
components. Each $Q_i$  bounds a disk $D_i$ which is a component of
$\overline{\bdy N -\boldK}$. Since $Q_1\ne
Q_2$, the components $D_1$ and $D_2$ of $\overline{(\bdy
  N)-\boldK}$ are distinct and therefore
disjoint. The conclusion
of \ref{kiddly divy}
 then follows upon setting $S=D_1\cup F\cup D_2$.

In the
case where $F$ is a disk,
$Q\doteq\bdy F$ bounds a disk $D$ which is a component of
$\overline{\bdy N -\boldK}$. 
In this case the  conclusion 
of \ref{kiddly divy}
follows upon setting $S=D\cup F$. 

Now we claim:
\Claim\label{who wants to know}
If $F$ is any component of $\Fr_NP$ such that the inclusion
homomorphism $\pi_1(F)\to\pi_1(N)$ is trivial, then the component of
$\overline{N -P}$ 
containing $F$ is \harmless. 
\EndClaim

To prove \ref{who wants to know}, suppose that $F$ is a component of $\Fr_NP$ such that the inclusion
homomorphism $\pi_1(F)\to\pi_1(N)$ is trivial, and let $S$ be the
smoothable $2$--sphere given by \ref{kiddly divy}. Since $N$ is irreducible, $S$
bounds a topological ball $J\subset N$. 
According to \ref{kiddly divy}, $S=\bdy
J$ is the union of $F$ with some subsurface of
$\bdy N$. It now follows that $\Fr_NJ=F$. 

Since $\Fr_NJ=F$ is a component of $\Fr_NP$, either $J$ is  a component of $\overline{N-P}$, or 
$J \supset P$. But if
$J\supset P$ then in particular 
$\boldK\ne\emptyset$ and is contained in the topological ball $J\subset
N$; this is impossible since the inclusion homomorphism $\pi_1(K)\to\pi_1( N  )$ is 
non-trivial for each component $K$ of $\boldK$, by the definition of  a dominant page. 
Hence $J$ is  a component of $\overline{N-P}$. Since $J$
is a topological ball and $F=\Fr_NJ$ is connected, it now follows from the
definition that $J$ is a harmless component of $\overline{N-P}$, and
\ref{who wants to know} is proved.

To complete the proof of the lemma, suppose
that $C$ is a component of  $\bdy(\hatP\cap\bdy N
)=\bdy(\Fr_{ N  }\hatP)$.
The component $F$ of $\Fr_{ N 
}\hatP$ containing $C$ is in particular a component of $\Fr_NP$.
Assume that $C$
is not $\pi_1$--injective in $ N  $.  Since 
$C$ is contained in the $\pi_1$--injective submanifold $\bdy N$ of $N$,
the  simple closed curve
$C\subset T\subset N$ is homotopically trivial in $N$. Since 
$F$ is a disk or an
annulus, it follows that the inclusion homomorphism
$\pi_1(F)\to\pi_1(N)$ is trivial. Hence by \ref{who wants to know},
$F$ is contained in a harmless component of $\overline{N-P}$. But this
contradicts the fact that $F$ is a component of $\Fr_{N} \hatP$.
\EndProof

We are now ready to prove the main result of this section.

\Proposition\label{Prop:TorusBooksTotalSpace}
Let $\orbM$ be a closed, orientable, hyperbolic $3$--orbifold such that $M\doteq\mani \orbM $ is irreducible.
Suppose that $T$ is an $\orbM$--minimal torus in $M$. 
Let $N$ be a component of $M \split T$, and assume 
that $\orbN \doteq \orbi N$ admits the structure of  a book of $I$--bundles. 
Then either
\begin{enumerate}[\:\:(i)]
\item 
$T$ is separating, and 
$N$ admits a Seifert fibration over a disk with exactly two
singular fibers; or
\item $T$ is non-separating, 
and $N$ admits a Seifert fibration over an annulus  with at most
one singular fiber.
\end{enumerate}
\EndProposition

The proof of Proposition~\ref{Prop:TorusBooksTotalSpace} is organized as follows. 
Lemma~\ref{Lem:DominantExists} guarantees a dominant page $\orbP \subset \orbN$ with underlying space $P$.
We then form a $3$--manifold $\widehat P$ by taking the union of $P$ with all harmless components of $\overline{N - P}$, and prove that $\widehat P$ is a topological solid torus. In a series of claims, we will then prove that all of $N$ can be obtained by gluing one or two solid tori to $\widehat P$ along annuli. It will follow that $N$ has the claimed Seifert fibered structure.

\Proof[Proof of Proposition~\ref{Prop:TorusBooksTotalSpace}]
Let $R$  be a tubular neighborhood of $T$ in $M$. Then $N$ is diffeomorphic to a component $N_0$ of $\overline{M-R}$. We shall show that the conclusions of the proposition hold with $N_0$ in place of $N$, thereby proving the proposition.

We may suppose $R$ to have been chosen in such a way that $\bdy N_0$ is transverse to $\Sing_\orbM$, and that each component of $\bdy N_0$ has the same size as $T$. Since $\bdy N_0$ has one component if $T$ is separating, and has two components if $T$ is non-separating, the definition of complexity now implies:
\Equation\label{size is complexity}
\size(\bdy N_0)=\comp(T). 
\EndEquation

We set $\orbNnought=\orbi\Nnought$, and  fix a structure of a book of $I$--bundles on $\orbNnought$.

First note that 
$T   $ is $\pi_1$--injective in $M $ by the definition of an $\orbM$--minimal torus. It follows that $\Nnought $ is irreducible and that 
every component of  
$\bdy \Nnought $ is a $\pi_1$--injective torus
 in $\Nnought $.
In particular:

\Claim\label{sickle}
 $ \Nnought  $ is not a solid torus. 
Therefore
 $ \Nnought  $ admits no Seifert fibration over a disk with at most one singular fiber.
\EndClaim

We also observe that since $T$ is essential in $M$, 
the manifold case of Lemma~\ref{split injective} implies
that
\Claim\label{that too}
$N_0$ is $\pi_1$--injective in $M$.
\EndClaim

Now according to Lemma \ref{Lem:DominantExists},  we may fix a 
dominant page $\orbP$  of $\orbNnought$.
We set $P=\mani \orbP$.
We apply Proposition \ref{Prop:ExpandedPage}\eqref{ExpandedPageIsBundle}, taking $\boldV$ to be the union of
all harmless components of $\overline{\Nnought -P}$.
This shows that $\hatP 
\doteq P \cup \boldV
$ may be given the structure of a
\clasibun\ in
such a way that 
$F\doteq\Fr_{ \Nnought   }\hatP$
is a vertical submanifold of 
$\bdy\hatP$. 

We let 
$W$ denote the $3$--manifold with corners  $\overline{ \Nnought   - \hatP}$. It may happen that $W$ is empty. On the other hand,  since $\Nnought$ is connected and $\hatP\supset P\ne\emptyset$, we have that
\Claim\label{cooties}
Each component of $W$ contains a component of $F$. 
\EndClaim
We set
$C=\bdy F=\bdy(\hatP\cap\bdy \Nnought   )$. It follows from Lemma \ref{doughnut hole} that
\Claim\label{ronly honly bing}
 $C$
is $\pi_1$--injective in $ \Nnought   $. 
\EndClaim

It follows from \ref{ronly honly bing} that no component of $F$ is  a disk.
The components of $F$ must therefore be annuli, and, again by \ref{ronly honly bing}, these annuli are $\pi_1$--injective. But a vertical annulus in the boundary of a \clasibun\ must be a component of the vertical boundary.  
 Hence, if $E$ denotes the vertical boundary of the \clasibun\ $\hatP$, we may state: 
\Claim\label{frugal me}
$F$ is a union of components of  $E$, and each component of $F$ is a $\pi_1$--injective annulus in $N_0$.
\EndClaim

It follows from \ref{ronly honly bing}
that the
boundary components of $\hatP\cap\bdy \Nnought   $ are homotopically
non-trivial simple closed curves in 
$\Nnought $, and in particular in $\bdy \Nnought   $. 
Since 
$\bdy \Nnought $ consists of tori,
it follows that each component of $\hatP\cap\bdy \Nnought   $ or
$W\cap\bdy \Nnought   $
 is a torus or an annulus. Since the components of
 $F$ are annuli by
  \ref{frugal me},
  it now follows that:
\Claim\label{spell it out}
All of the
boundary components of the topological manifolds $\hatP$ and $W$ are
smoothable tori. 
\EndClaim

In particular we have
$\chi(\hatP)=0$, so that the base of the \clasibun\ $\hatP$ is a
topological torus,  Klein bottle,  annulus, or M\"obius band. 

If the base of  $\hatP$ is a topological torus or  Klein bottle, then
$E=\emptyset$, and in particular  \ref{frugal me} implies
$F=\emptyset$. Hence $ \Nnought   =\hatP$, i.e.\ $\Nnought$ is either a
trivial \clasibun\ over $T^2$ or a twisted \clasibun\ over the topological Klein
bottle. 
If $\Nnought$ is a
trivial \clasibun\ over $T^2$, then
$\Nnought$ admits a Seifert fibration over an
annulus with no singular fibers.
If  $\Nnought$ is a twisted \clasibun\ over the Klein
bottle, then
$\Nnought$ admits a
Seifert fibration over a disk with two singular fibers (see
\cite[Theorem 2.3(b)]{hatcher3M}).
Thus the conclusion of the proposition holds if  the base of  $\hatP$ is a topological torus or Klein bottle.

For the rest of the proof, we assume that the base of $\hatP$ is a topological annulus or M\"obius band. 
Since $M $ is orientable, $\hatP$  is either a trivial \clasibun\
over a
topological annulus or a twisted \clasibun\ over a topological M\"obius band. In
particular $\hatP$ is a topological solid torus,  $E$ has at most two components, and if $E$ has two components then each of them is an annulus whose winding number in $\hatP$ is $1$. By \ref{frugal me}
$F$ is a union of components of  $E$. We must have
$F\ne\emptyset$, for otherwise $ \Nnought   $ would be equal to the
topological solid torus $\hatP$, a contradiction to \ref{sickle}. 
Hence:
\Claim\label{pour chanter en choeur}
$\hatP$ is a topological solid torus, and  either (a) $F$ is a single annulus, or (b) $F$ is the union of
two disjoint annuli and $\hatP$ is a parallelism (see \S\ref{Pi1Injective})  between these two annuli. 
\EndClaim

According to Proposition~\ref{Prop:ExpandedPage}\eqref{ExpandedPageEuler}, each component of $F$ is the underlying surface of an annular $2$--orbifold. Since  every component of $F$ is an annulus, we have $\size(F)=0$, and therefore 
$\size(\bdy\hatP)+\size(\bdy W) =\size(\bdy \Nnought)$. 

On the other hand,
Proposition~\ref{Prop:ExpandedPage}\eqref{ExpandedPageEuler} also gives that 
$\chi(\orbi  \hatP)<0$, which implies that  $\chi(\orbi  {\bdy\hatP})=2\chi(\orbi  \hatP)<0$. Since $\hatP$ is a topological solid torus, we have $\chi(\bdy\hatP)=0$. Hence  $\size(\bdy\hatP)>0$. It now follows that $\size(\bdy W)<\size(\bdy  \Nnought   )$,
which with \eqref{size is complexity} gives
\Equation\label{and therefore}
\size(\bdy W)<\comp(T). 
\EndEquation

We claim:
\Claim\label{they separate}
Every  component of $\bdy W$  is a separating
smoothable
 torus in $M$.
\EndClaim

To prove \ref{they separate}, first note that each component of $\bdy W$ is a 
smoothable
torus by \ref{spell it out}. Next, note that since the $3$--submanifold $W$ of the closed $3$--manifold $M$ is compact, $\bdy W$ cannot have \emph{exactly} one non-separating component. Assume  that $\bdy W$ has at least two non-separating components. Choose two such components, and smooth them to obtain tori $T_1$ and $T_2$, with $\size (T_1)\le\size(T_2)$.
Using the definition of complexity and \eqref{and therefore}, we find
$$\comp(T_1)=2\size(T_1)\le\size(T_1)+\size(T_2)\le\size(\bdy W)<\comp(T).$$
Since $T$ is an $\orbM$--minimal torus, and $T_1\subset M$ is a torus transverse to $\Sing_\orbM$, it follows that $T_1$ is not $\pi_1$--injective in $M$. Since $M$ is irreducible, any non-$\pi_1$--injective torus in $M$ either bounds a solid torus in $M$ or is contained in a $3$--ball in $M$. In either case, $T_1$ must separate $M$, and we have a contradiction. This proves \eqref{they separate}.

On the other hand, we claim:
\Claim\label{connected complement}
For every component $K$ of $W$, the manifold $\overline{M-K}$ is connected. 
\EndClaim

Let $K$ be a component of $W = \overline{N_0-\hatP}$.
Note that $\overline{N_0-K}$ is the union of $\hatP$ with all components of $\overline{N_0-\hatP}$ distinct from $K$. Each such component meets $\hatP$, since $N_0$ is connected. Hence $\overline{N_0-K}$ is connected. But $\overline{M-N_0}$ is connected by construction, and $\overline{N_0-K}\cap\overline{M-N_0}$ contains $P \cap\bdy N_0$, which is the underlying set of the horizontal boundary of an orbifold $I$--bundle $\orbP$, and is therefore 
non-empty. 
It follows that $\overline{M-K}=\overline{N_0-K}\cup\overline{M-N_0}$ is connected, and \ref{connected complement} is proved.

We now claim more:
\Claim\label{how's this?}
Each component of $W$ is a topological solid torus.
\EndClaim

To prove \ref{how's this?}, 
let $K$ be any component  of $W$. 
It follows from \ref{they separate} and \ref{connected complement} that
 $\bdy K$ is 
 a single smoothable torus, which in particular  
separates $M$. Let $T'$ denote a torus obtained by smoothing $\bdy K$. By definition
 $T'$ is transverse to the singular set of $\orbM$. 

By \eqref{and therefore}
we have 
\[
\size(T')=\size(\bdy K)\le\size(\bdy W)<\comp(T).
\]
 But $T'$ separates $M$, so we have $\comp(T')=\size(T')$, hence $\comp(T')<\comp(T)$. If $T'$ were $\pi_1$--injective in $M $, we would now have a contradiction to the $\orbM$--minimality of $T$. Hence $T'$ is not $\pi_1$--injective in $M $. Since $M $ is irreducible, $T'$ either bounds a 
solid torus in $M $ or is contained in a 
ball in $M $. But $T'$ is topologically isotopic to $\bdy K$, which by \ref{cooties}  contains a component of $F\supset C$;
and by \ref{ronly honly bing}, $C$ is a $\pi_1$--injective closed one-dimensional submanifold of $\Nnought$, and hence of $M $ by \ref{that too}. 
Hence $T'$ cannot be contained in a ball. Since $T'$ is isotopic to the full boundary of $K $, it follows that either $K$ or $\overline{M -K}$ is a topological solid torus. 
But $\overline{M -K}$ contains $T$, which 
is an $\orbM$--minimal torus and is thus 
$\pi_1$--injective in $M $. Hence $K$ must be a topological solid torus, and \ref{how's this?} is proved.

To establish the conclusion of the proposition, it suffices to show that $N_0$ has a Seifert fibration such that either (i) the base is a disk and there are two singular fibers, or (ii) the base is an annulus and there is at most one singular fiber. Indeed, if (i) holds then $\bdy N_0\cong\bdy N$ is connected, and hence $T$ is separating; and if (ii) holds then $\bdy N_0\cong \bdy N$ has two components, and hence $T$ is non-separating.

First consider the case in which Alternative (a) of \ref{pour chanter en choeur} holds. Since $ \Nnought   $ is connected, it then follows from \ref{how's this?} that $W$ is a single topological solid torus. The topological solid tori $\hatP$ and $W$ meet in $F$, which by
  \ref{frugal me}
  is an annulus, $\pi_1$--injective in $ \Nnought    $ and therefore in both $\hatP$ and $W$. Hence $ \Nnought   =\hatP\cup W$ admits a Seifert fibration over the disk with at most two singular fibers. It follows from \ref{sickle} that this Seifert fibration must have exactly two singular fibers, and conclusion (i) follows in this case.

Now  consider the case in which Alternative (b) of \ref{pour chanter en choeur} holds. 
Let $F_1$ and $F_2$ denote the components of $F=\hatP\cap W=\bdy \hatP\cap \bdy W$,
which by
  \ref{frugal me}
are  annuli, and are $\pi_1$--injective in $ \Nnought   $ and hence in $W$.
Since $ \Nnought $ is connected, it then follows from  \ref{how's this?} that either 
(1) $W$ is the disjoint union of two topological solid tori $W_1$ and $W_2$, and we have $F_i\subset W_i$ for $i=1,2$, or
(2) $W$ is a single topological solid torus.
Since Alternative (b) includes the assertion that $\hatP$ is a parallelism between $F_1$ and $F_2$, the manifold $ \Nnought  $ is homeomorphic to a topological
manifold obtained from $W$ by gluing the annuli $F_1$ and $F_2$ together by some homeomorphism.
 In Subcase (1) it follows that $ \Nnought  $ admits a Seifert fibration over the disk with at most two singular fibers, and by \ref{sickle}  this Seifert fibration must have exactly two singular fibers. In Subcase (2) it follows that $ \Nnought  $ admits a Seifert fibration over
the annulus with at most one singular fiber.
\EndProof

\section{Characteristic suborbifolds}\label{characteristic section}

This section  generalizes the theory of characteristic submanifolds to the setting of $3$--orbifolds. 
The theory of  characteristic submanifolds was originally developed by  Johannson \cite{Jo} and Jaco and Shalen 
\cite{JS}.
According to the theory 
(which is reviewed in \S\ref{GS review})
every simple $3$--manifold $Q$
contains a characteristic submanifold $\boldV$
whose components are $I$--bundles and solid tori. The components of $Q - \boldV$ are either thickened annuli or acylindrical pieces with negative Euler characteristic. In fact, $\boldV$ is uniquely determined up to isotopy by slightly more precise versions of these properties; see Proposition~\ref{char-submflds, GS edition}.
The key topological fact is that $Q$ is a book of $I$--bundles if and only if 
$\chi(Q - \boldV) = 0$.

We begin by extending these definitions from $3$--manifolds to $3$--orbifolds, using manifold covers. If $\orbN$ is a very good, simple $3$--orbifold, then a characteristic suborbifold $\scrC \subset \orbN$ is obtained by constructing a regular manifold cover $\torbN$, finding an equivariant position for the characteristic submanifold $\boldV \subset \torbN$, and then projecting back down. See Proposition~\ref{CharSuborbifold} for details. While we do not concern ourselves with the uniqueness of characteristic suborbifolds,  Corollary~\ref{characteristic corollary} does establish their existence. Extending the manifold case, Proposition~\ref{nothing like a book} shows that $\orbN$ has the structure of a book of $I$--bundles if and only if $\chi(\orbN - \scrC) = 0$.

Our interest in characteristic suborbifolds stems from the fact that they lead to 
either geometric information (volume estimates) or topological information (the structure of a book of $I$--bundles).
Suppose  that $\orbM$ is a closed hyperbolic $3$--orbifold, 
that
$\orbS \subset \orbM$ is a two-dimensional essential suborbifold, 
that $\orbN$ is a component of $ \orbM \split \orbS$, and 
that $\scrC$ is a characteristic suborbifold of  $\orbN$. 
By applying the work of Agol, Storm, and Thurston~\cite{AST} in appropriate covers, we show in Theorem~\ref{orbifold AST} that 
\[
\vol \orbM \geq - \voct \cdot \chi(\orbN - \scrC).
\]
In particular, we either get a non-trivial lower bound on $\vol \orbM$, or learn that every component of $\orbM \split \orbS$ has the structure of a book of $I$--bundles.

\Custom{Definitions}\label{GS review}
We shall
generalize to the context of orbifolds some definitions that
are stated in Guzman and Shalen \cite[Section 3]{GS-homotoping} in the context of
manifolds. 

We shall say that a $3$--orbifold is \emph{simple} if
it is compact, orientable, irreducible
and boundary-irreducible, 
 is not discal, and has a fundamental group containing no rank--$2$ abelian
subgroup. 
Since irreducible $3$--orbifolds are connected by Definition~\ref{Pi1Injective}, every simple $3$--orbifold is also connected.
We observe that if a closed $2$--orbifold $\orbR$   in a
closed hyperbolic $3$--orbifold $\orbM$ is essential, then every component of
$\orbM\split\orbR$ is simple.

 Let $\orbQ$ be a compact, orientable $3$--orbifold.
The notion of a bindinglike (or purely  bindinglike)
suborbifold $\calK$ of $\calQ$, and that of a pagelike (or purely  pagelike)
suborbifold of $\calQ$, were defined in \S\ref{PagelikeBindinglike}.
(The definition of ``pagelike'' in \S\ref{PagelikeBindinglike}, when specialized to the manifold case, differs slightly from
the definition of a ``pagelike submanifold'' $P$ in \cite{GS-homotoping}, in that it requires non-positive Euler characteristic. However, this does not affect our quotation of any result from \cite{GS-homotoping}.)
According to \S\ref{PagelikeBindinglike},  if $\calK$ is a pagelike or bindinglike suborbifold  of $\calQ$ then
the components of $\Fr_\calQ\calK$ are \neatly embedded 
annular $2$--orbifolds 
in $\calQ$. We
shall say that $\calK$ has \emph{essential frontier} if all the
components of $\Fr_\calQ\calK$ are essential 
annular orbifolds 
in $\calQ$. 

A \emph{thickened annular orbifold} in $\calQ$ is a 
\clean\ 
suborbifold with corners
$\calY$ of $\calQ$ which is 
diffeomorphic to $\orbA \times[0,1]$, for some annular orbifold $\orbA$,
under a diffeomorphism that maps $\Fr_\orbQ\orbY$ onto
$\orbA \times\{0,1\}$.

By specializing the notions of simple orbifold, 
(purely) \pagelike\ or \bindinglike\  suborbifold (with essential frontier),
essential annular orbifold,   and thickened annular orbifold
to the manifold case, one obtains the
notions of simple manifold, 
 (purely) pagelike or bindinglike submanifold (with essential frontier),
essential annulus, and thickened annulus.
After translating between the PL and smooth categories, these
specialized notions coincide with the notions for manifolds that are used in
\cite{GS-homotoping}.
\EndCustom

\Custom{Acylindrical pairs}\label{only manifolds}
Next,
we review some notions involving manifolds which
are defined in \cite{GS-homotoping}. Unlike the notions discussed in
\S\ref{GS review}, these notions will not need to be generalized to
orbifolds.

An \emph{acylindrical pair} is a 
\clean\ manifold pair $(\boldL ,\boldE)$ 
satisfying the following conditions: 
\begin{enumerate}
\item each component of
the manifold obtained by \sta\ of
 $\boldL $ is irreducible, no  component of $\boldL$ is a 
topological ball, 
and no component has a fundamental group with a rank--$2$ free abelian subgroup;
\item each component of $\boldE$ is an annulus and is $\pi_1$--injective
  in $\boldL $;
\item no component (see \S\ref{Clean pairs}) of $(\boldL ,\boldE)$
  is  diffeomorphic
to a pair of the form $(L,E)$, where $L$ is an $I$--bundle
over a (possibly non-orientable) surface and $E$ is the vertical
boundary of the $I$--bundle $L$;
\item no component of $\boldL $ is a
topological solid torus; and
\item for every \neatly embedded, $\pi_1$--injective annulus
  $\Delta$ in $ \boldL -\boldE$, either
(a) $\Delta$ is boundary-parallel in $\boldL-\boldE$, or
(b) there is a component $E$ of $\boldE$ such that  $\Delta$ is
parallel  to $E$ in $(\boldL -\boldE)\cup E$. See \S\ref{Pi1Injective} for the notion of \emph{parallel}.
\end{enumerate}

We remark that the notion of an acylindrical pair is extremely similar to the notion of an \emph{acylindrical pared manifold} $(M, P)$ that is commonly used in the literature on Kleinian groups. See e.g.\ Namazi and Souto~\cite[Sections 3.1--3.2]{NamaziSouto:Density}.
\EndCustom

\Custom{Characteristic submanifolds}\label{char-submflds}
Our treatment of characteristic submanifolds follows the point of view of 
Guzman and Shalen
 \cite[Section 3]{GS-homotoping}, which is based on
Jaco--Shalen \cite{JS}. In \cite{GS-homotoping}, 
the authors
consider a class of manifolds which includes all manifolds that are simple in the sense of this paper;
and for each manifold $Q$ in this
class the characteristic submanifold is described as a certain \clean\
submanifold with corners $\boldV \subset Q$, 
 which is well-defined up to smooth isotopy. 

When
$Q$ is simple, $\boldV$ may be thought of as
a \clean\ submanifold with corners, each of whose components is either bindinglike or pagelike
and satisfies certain nondegeneracy conditions; and
among all such submanifolds, $\boldV$ is maximal in a suitable sense.

While the formal definition of the characteristic submanifold will not
be quoted directly in this paper, we will make strong use of the
following characterization of the characteristic submanifold of a
simple manifold, which is proved in
\cite{GS-homotoping}. 
\EndCustom

\Proposition\label{char-submflds, GS edition}
Let $Q$ be a simple $3$--manifold with non-empty boundary, 
and let $\boldV$ be a
\clean\  submanifold
  with corners
of $Q$. Then $\boldV$ is
isotopic to the characteristic submanifold of
$Q$ if and only if it has the following properties.
\begin{enumerate}
\item Each component of $\boldV$ 
is either \bindinglike\ or \pagelike, and has essential frontier
relative to $Q$.
\item
For each component $L$ of $\overline{Q-\boldV}$, 
either the pair $(L,\Fr_Q L)$  is acylindrical, or $L$ is a \ta;
and in the latter case,  the components of $\Fr_Q L$ are contained in
distinct components of $\boldV$, of which one is purely
\bindinglike\ and the other is purely \pagelike.
\end{enumerate}
\EndProposition

\Proof
This is a direct translation of \cite[Proposition 3.12]{GS-homotoping} to the smooth category. 
\EndProof

For most of this section, the characterization of Proposition~\ref{char-submflds, GS edition} can be
  treated as a definition. However, 
we note that Proposition~\ref{CharSuborbifold} and Remark~\ref{requark} 
rely on a theorem of Meeks and Scott about the characteristic submanifold \cite[Theorem 8.6]{MeeksScott}, which uses the original definition.

We will need the following consequence of Proposition~\ref{char-submflds, GS edition}:

\Lemma\label{what about manifolds}
Let $Q$ be a simple $3$--manifold with non-empty boundary, and let $\boldV$ denote its
characteristic submanifold. Let $L$ be a component of $\overline{Q -  \boldV}$. Then:
\begin{enumerate}[\:\:$(1)$]
\item Every component of $\bdy L$ (where we think of $L$ as a topological manifold) has non-positive Euler characteristic.
\item $\chi(L) \leq 0$.
\item If $\chi(L)=0$, then $L$ is a \ta; and  
the components of $\Fr_Q L$ are contained in
distinct components of $\boldV$, of which one is purely
\bindinglike\ and the other  is purely \pagelike.
\end{enumerate}
\EndLemma

\Proof
According to Proposition \ref{char-submflds, GS edition},
each component
  of $\Fr_Q\boldV$ is 
an essential annulus in $Q$.
In particular, the boundary
  components of $\overline{Q -  \boldV}\cap\bdy Q$ are $\pi_1$--injective simple closed
  curves in $\bdy Q$.
Furthermore, the simplicity of $Q$ implies that no component of
$\bdy Q$ is a $2$--sphere. Hence each component of $L\cap\bdy
Q$ has non-positive Euler characteristic. 
Again using that 
the frontier
components of $\boldV$ are annuli, we deduce:
\Claim\label{easy part}
Each boundary component of the topological manifold $\overline{Q -  \boldV}$ has non-positive
Euler characteristic.
\EndClaim

In particular, the claim implies Conclusion (1). Furthermore, 
since  $\chi(L)=\chi(\bdy L)/2$, 
claim \ref{easy part} also establishes Conclusion (2).

To prove the last assertion of the lemma, suppose that $L$ is a
component of $\overline{Q -  \boldV}$ such that
  $\chi(L)=0$. Then $\chi(\bdy L)=0$, and in view of \ref{easy part} it follows that every component of $\bdy L$ is a
  topological torus. 

The components of $\Fr_QL$ are among the components of $\Fr_Q\boldV$; hence they are
 essential annuli in $Q$, and  in particular are
$\pi_1$--injective annuli. The
$\pi_1$--injectivity of $\Fr_QL$, together with the simplicity of $Q$, implies that $L$ is irreducible and
$\pi_1$--injective in $Q$ and that $\bdy L\ne\emptyset$. Since $Q$
is simple, any component of $\bdy L$ fails to be $\pi_1$--injective
in $Q$, and therefore fails to be $\pi_1$--injective in $L$. But it is a standard
consequence of the Loop Theorem that a compact, orientable,
irreducible $3$--manifold, whose boundary contains a torus which is not
$\pi_1$--injective, is a solid torus. 
Hence $L$ is a topological solid torus.

According to Proposition \ref{char-submflds, GS edition}, 
either the pair $(L,\Fr_Q L)$  is acylindrical,
or $L$ is a \ta. Since $L$ is a topological solid torus, the
definition implies that $(L,\Fr_Q L)$  is not acylindrical. Thus
$L$ is a \ta. It also follows from 
Proposition \ref{char-submflds, GS edition} that
the components of $\Fr_Q L$ are contained in
distinct components of $\boldV$, of which one is purely
\bindinglike, and the other is purely \pagelike.
This
completes the proof of Conclusion (3).
\EndProof

We now turn our attention to orbifolds.

\Lemma\label{still simple}
Let $\orbN$ be a simple $3$--orbifold, and suppose that $\torbN$ is a
regular,
finite-sheeted covering of $\orbN$ which is a manifold. Then $\torbN$
is simple.
\EndLemma

\Proof
The compactness, orientability, 
boundary-irreducibility and non-discality of $\torbN$, 
as well as the non-existence of  rank--$2$ abelian
subgroups in $\pi_1(\torbN)$, follow immediately from the
corresponding properties of $\orbN$.
It remains to show that  $\torbN$ is irreducible. Since
$\orbN$ is irreducible, it follows from the equivariant sphere theorem \cite[Theorem 7]{MY-sphere}
that $\pi_2(\torbN)=0$. 
Hence 
every $2$--sphere in $\torbN$ bounds a contractible, compact
$3$--manifold $H\subset\torbN$
(see \cite[proof of Theorem 2]{Milnor}).
By the solution to the Poincar\'e Conjecture \cite{Morgan-Tian},
$H$ is a $3$--ball, and irreducibility is established.
\EndProof

\Proposition\label{CharSuborbifold}
Let $\orbN$ be a simple $3$--orbifold with non-empty boundary, suppose that $\torbN$ is a regular, finite-sheeted manifold cover of $\orbN$
(so that $\torbN$ is simple by Lemma \ref{still simple}, and therefore
has a well-defined characteristic submanifold, up to isotopy,
according to \S\ref{char-submflds}).
Then there is a  
clean suborbifold 
with corners
$\bigorbC \subset \orbN$ 
whose preimage in $\torbN$ is isotopic to the characteristic submanifold of $\torbN$.
\EndProposition

\Proof
 Let $G$ 
denote
the deck group of the 
covering space $\torbN$ of $\orbN$,
and let $p\from\torbN\to\orbN$ denote the covering projection.
Let $\boldV$ denote the characteristic submanifold of
$\torbN$. By a theorem of Meeks
and Scott~\cite[Theorem 8.6]{MeeksScott}, 
we may choose $\boldV$ within its isotopy class in such a way that it is
$G$--invariant. Then 
by an observation made in \S\ref{Suborbifolds}, $\bigorbC\doteq\orbi{\boldV / G}$ is defined (as a weak suborbifold with corners of $\orbN$). Since $\boldV$ is a clean submanifold of $\torbN$, 
the  frontier of  $\boldV=p^{-1}(\bigorbC)$ 
    is a \neatly
    embedded submanifold of $\torbN$, and
$(\boldV,\Fr_{\torbN}\boldV)=(p^{-1}(\bigorbC),p^{-1}(\Fr_{\orbN}\bigorbC))$ is a clean pair. It then  follows formally from the definitions that 
$\Fr_{\orbN}\bigorbC$ is defined and is a neatly embedded suborbifold of $\orbN$, and that
$(\bigorbC,\Fr_{\orbN}\bigorbC)$ is a clean pair. In view of the discussion in \S\ref{clean},
it follows
that $\bigorbC$ is a clean suborbifold with corners of $\orbN$, and is in particular a suborbifold with corners in the strong sense. 
\EndProof

\Custom{Characteristic suborbifolds}
\label{called characteristic part one}
Let $\orbN$ be a simple $3$--orbifold with non-empty boundary.
We define \emph{a
  characteristic suborbifold} of $\orbN$ to be a clean 
suborbifold with corners
$\bigorbC \subset \orbN$ with the property that
there exists a finite-sheeted regular manifold cover $\torbN$ of $\orbN$
(which is simple by Lemma \ref{still simple}), such that
the preimage of $\bigorbC$ in  $\torbN$  is isotopic to the characteristic submanifold of
$\torbN$.
\EndCustom

The following result follows immediately by combining  Lemma \ref{still simple}
and Proposition \ref{CharSuborbifold}:

\Corollary\label{characteristic corollary}
Every very good, simple $3$--orbifold $\orbN$ with non-empty boundary has a characteristic suborbifold.
\NoProof
\EndCorollary 

\Remarks\label{called characteristic part two}
It is a consequence of the Orbifold Theorem, included in \cite[Corollary
3.28]{BoileauMaillotPorti}, that every irreducible
 $3$--orbifold is very good.  Given that every simple $3$--orbifold is irreducible by \S\ref{GS review}, it follows that every simple  $3$--orbifold is very good. Using this fact,
the hypothesis ``very good'' could be removed from Corollary
\ref{characteristic corollary}. We will not need to do this, because
in all applications of  Corollary \ref{characteristic corollary},
$\orbN$ will be a suborbifold of a hyperbolic $3$--orbifold; hence the fact that $\orbN$ is very good will follow directly from observations that were
made in 
\S\ref{Orbifold covers}.

Similarly, in
this paper, we will not address the issue of uniqueness of a characteristic suborbifold
up to orbifold isotopy, because we will not need such a uniqueness
result. An argument for the uniqueness of $\bigorbC$ is sketched 
in Bonahon and Siebenmann's paper \cite{bon-sie} (see the paragraph
on page 445 beginning ``More generally \ldots''). 
\EndRemarks

\Proposition\label{Prop:CharSuborbifoldPieces}
Suppose that $\bigorbC$ is a characteristic suborbifold of a simple
$3$--orbifold $\orbN$ with non-empty boundary. Then
\begin{enumerate}[\:\:$(1)$]
\item \label{EveryFrontAnnular} every component of $\Fr_\orbN\bigorbC$
is a $\pi_1$--injective annular $2$--orbifold in
$\orbN$;
\item \label{EveryPiecePageOrBinding}
every component of $\bigorbC$ is
either pagelike or bindinglike as a suborbifold of $\orbN$;
\item \label{EveryCompNonPos}
every component of 
$\overline{\orbN -  \bigorbC}$ 
has non-positive Euler characteristic; and
\item \label{ZeroEulerProduct}
if $\orbY $ is a component of 
$\overline{\orbN -  \bigorbC}$ 
with $\chi(\orbY )=0$, then 
$\orbY $ is a thickened annular orbifold;
and  the components of $\Fr_\orbN \orbY $ are contained in
distinct components of $\bigorbC$, of which one is purely
\bindinglike\ and the other is purely \pagelike.
\end{enumerate}
\EndProposition

\Proof
Since $\bigorbC$ is a characteristic
suborbifold of $\orbN$, we may fix
a finite-sheeted regular covering space $p \from \torbN\to\orbN$
such that 
$\torbN$ is a manifold 
(and is therefore simple by 
Lemma \ref{still simple}), and such that  $\boldV\doteq p^{-1}(\bigorbC)$ is isotopic to the characteristic
submanifold of $\torbN$.  
Let $G$ be the deck group of the cover $p$.

If
$\orbF$ is any component of $\Fr_\orbN \bigorbC$, we may choose a component
$\torbF$ of $p^{-1}(\orbF)$; 
it follows from Proposition \ref{char-submflds, GS edition}
that the component $\torbF$ of $\Fr_{\torbN}\boldV$ is
an essential annulus in $\torbN$, and is in particular a $\pi_1$--injective annulus. 
Hence $\orbF$ is annular; and the inclusion homomorphism
$\iota\from \pi_1(\orbF)\to\pi_1(\orbN)$, whose domain group is infinite cyclic
or infinite dihedral, restricts to an injection on some finite-index
subgroup. It follows that $\iota$ is itself injective, and 
Assertion \eqref{EveryFrontAnnular} is proved.

 Proposition \ref{char-submflds, GS edition} also gives:
\Claim\label{components above bigorbC} Every component of 
$p^{-1}(\bigorbC)$
is a bindinglike or pagelike submanifold of $\torbN$.
\EndClaim

We claim:
\Claim\label{down to business}
Let $\orbC$ be any component of $\bigorbC$, and let $\torbC$ be any component
of $p^{-1}(\orbC)$. Then $\orbC$ is bindinglike if and only if
 $\torbC$ is bindinglike, and $\orbC$ is pagelike if and only if
 $\torbC$ is pagelike.
\EndClaim

In the following proof of \ref{down to business}, we shall implicitly use the facts that $\orbC$ is a clean suborbifold of $\orbN$, and that $\boldV$ is a clean submanifold of $\torbN$.
First 
suppose that $\orbC$ is
pagelike. Then $\orbC$ has an $I$--bundle structure in which
$\orbC\cap\bdy\orbN=\horizbdy\orbC$. 
The $I$--bundle
structure of $\orbC$ induces an $I$--bundle structure on $\torbC$ whose
horizontal boundary is $\torbC\cap\bdy\torbN$, and hence $\torbC$
is pagelike.

Conversely, suppose that $\torbC$
is pagelike. Then $\torbC$
may be given the
structure of an $I$--bundle  over a surface $F$
in such a way that 
$\torbC\cap\bdy \torbN= \horizbdy\torbC$. 
Since $\torbC$ is simple, $F$
cannot be a sphere or projective plane.
We may fix 
a
finite-sheeted covering $\torbC'$ of $\torbC$ such that (a) the $I$--bundle
structure which $\torbC'$ inherits from $\torbC$ is trivial, and (b)
the covering projection $p' \from \torbC'\to\orbC$ is regular. Let $G'$ denote the deck group of $p'$.

The horizontal
boundary of the trivial $I$--bundle $\torbC'$ is the preimage of
$\orbC\cap\bdy \orbN$ under the covering projection $p'$.
Hence the deck group $G'$ leaves 
$\horizbdy \torbC'$ invariant. 
Furthermore, since $F$ is not a sphere or projective plane, the  base
of the $I$--bundle $\torbC'$ is
not a sphere or projective plane.
It then follows
from Meeks and Scott's result
\cite[Theorem 8.1]{MeeksScott} that, after possibly modifying
the $I$--fibration of $\torbC'$, without changing its
vertical boundary, we may assume that the action of $G'$ preserves the
$I$--fibration. 
(The assertion about vertical boundaries is not stated in \cite[Theorem 8.1]{MeeksScott}, but it follows from the proof of that theorem, which is derived from \cite[Theorem 2.2]{MeeksScott} by a doubling argument.)
It now follows from Proposition \ref{FiberPreservingAction} that
the $I$--fibration of $\torbC'$ then induces an $I$--fibration of
$\orbC$ in which the horizontal boundary is $\orbC\cap\bdy\orbN$. 
Hence the vertical boundary of $\orbC$ in this fibration is $\Fr_\orbN\orbC$. This
shows that $\orbC$ is pagelike,
and the first assertion of \ref{down to business} is proved.

To prove the second assertion of \ref{down to business}, note that the manifold  $\torbC_0$ obtained by \sta\ of $\torbC$ is a finite-sheeted covering of the manifold $\orbC_0$ obtained by \sta\ of $\orbC$. Hence $\torbC_0$ is a solid torus if and only if $\orbC_0$ is a solid toric orbifold. Since the components of $\Fr_\orbN \orbC$ are annular by \eqref{EveryFrontAnnular}, and the components of $\Fr_{\torbN} \torbC$ are therefore annuli, the second assertion follows, and the proof of \ref{down to business} is complete.

Assertion \eqref{EveryPiecePageOrBinding} of the present proposition follows immediately from \ref{components above bigorbC} and \ref{down to business}.

We record the following direct consequence of \ref{down to business}:

\Claim\label{and purely so}
If $\orbC$ is a component of $\bigorbC$, and some component of
$p^{-1}(\orbC)$ is purely pagelike or purely bindinglike, then $\orbC$
is, respectively, purely pagelike or purely bindinglike.
\EndClaim

To prove Assertion \eqref{EveryCompNonPos}, we need only note that if $\orbY $ is a component of 
$\overline{\orbN -  \bigorbC}$, then every component of
$p^{-1}(\orbY)$ is a component of $\overline{\torbN-\boldV}$, and
therefore has non-positive Euler characteristic by Lemma \ref{what about manifolds}; 
it follows that $\chi(\orbY)\le0$.

To prove Assertion \eqref{ZeroEulerProduct}, suppose that $\orbY $ is a component of 
$\overline{\orbN -  \bigorbC}$ 
with $\chi(\orbY )=0$.
If we choose a component $L$ of
$p^{-1}(\orbY)$, then
$\chi(L)=0$. Hence  
Lemma \ref{what about manifolds} gives that 
$L$ is a \ta, and that the components of $\Fr_{\torbN}L$ are contained
in distinct components $\Vp$ and $\Vb$ of $\boldV$, where $\Vp$ is
purely pagelike and $\Vb$ is purely bindinglike.

Since $L$ is a \ta, it may be
given the structure of a trivial $I$--bundle over an annulus in such a
way that $\Fr_{\torbN} L= \horizbdy L$.
It follows from \cite[Theorem 8.1]{MeeksScott} that, after possibly modifying
the $I$--fibration of $L$, without changing its horizontal or
vertical boundary, 
we may assume that the action on $L$ of
$\Stab_G(L)$
preserves the $I$--fibration. 
(Recall that $G$ denotes the deck group of $p$.)
It now follows from Proposition \ref{FiberPreservingAction} that
the $I$--fibration of $L$ induces an $I$--fibration
of $\orbY$, whose base is an annular orbifold.
But since  $\Vp$ is
purely pagelike and $\Vb$ is purely bindinglike,
no element of $\Stab_G(L)$ can interchange the two 
components of $\horizbdy L$. Hence
the $I$--bundle $\orbY$ is trivial, i.e.\ $\orbY$ is a thickened annular
orbifold. 

It now also follows that the components of $\Fr_{\orbN}\orbY$ are
contained in distinct components of $\bigorbC$, which may be labeled
as $\orbC_{\rm p}$ and $\orbC_{\rm b}$ in such a way that $\Vp$ and
$\Vb$ are components of $p^{-1}(\orbC_{\rm p})$ and $p^{-1}(\orbC_{\rm
  b})$ respectively. Hence  by \ref{and purely so},
$\orbC_{\rm p}$ is purely pagelike and $\orbC_{\rm b}$
is purely bindinglike. This establishes Assertion \eqref{ZeroEulerProduct}.
\EndProof

\Proposition\label{nothing like a book}
Let $\orbN$ be a simple $3$--orbifold with non-empty boundary, and suppose that
$\bigorbC$ is a characteristic suborbifold of $\orbN$. Then we have
$\chi(\overline{\orbN - \bigorbC})\le0$, and if $\chi(\overline{\orbN - \bigorbC})=0$ then $\orbN$ admits the
structure of a book of $I$--bundles.
\EndProposition

\Proof
According to Assertion \eqref{EveryCompNonPos} of
Proposition \ref{Prop:CharSuborbifoldPieces}, each
component of $\overline{\orbN - \bigorbC}$ has non-positive
  Euler characteristic. 
Thus
  $\chi(\overline{\orbN - \bigorbC})\le0$.

Now suppose that $\chi(\overline{\orbN - \bigorbC})=0$. Then
each component of $\overline{\orbN - \bigorbC}$ must have Euler
characteristic $0$. 
According to Assertions \eqref{EveryFrontAnnular}, \eqref{EveryPiecePageOrBinding} and \eqref{ZeroEulerProduct} of
Proposition \ref{Prop:CharSuborbifoldPieces},
 every component of $\Fr_\orbN\bigorbC$
is a $\pi_1$--injective annular $2$--orbifold in
$\orbN$;
every component of $\bigorbC$ is a
 pagelike or bindinglike suborbifold of $\orbN$;
and  every  component of 
$\overline{\orbN -  \bigorbC}$ 
is a 
\ta\ whose frontier components are contained in
distinct components of $\bigorbC$, of which one is purely
\bindinglike\ and the other is purely \pagelike. It now follows from
Definition~\ref{wuzza book} that  $\orbN$ admits the
structure of a book of $I$--bundles. 
\EndProof

\Custom{The Gromov volume}
\label{gromov volume}
If $Q$ is a compact, orientable $3$--manifold, we define the \emph{Gromov
  volume} of $Q$, denoted $\volG(Q)$, to be the number
$\vtet\cdot\|DQ\|/2$, where $DQ$ denotes the double of $Q$, 
and $\|\cdot\|$ denotes the Gromov
norm 
(see \S\ref{Gromov norm}). 
This definition is inspired by \eqref{genuine gromov}, which implies
that if $Q$ is closed and hyperbolic then $\volG(Q)=\vol Q$.

Suppose that $d$ is a positive integer and that $\tQ$ is a $d$--sheeted cover
of a
compact, orientable topological $3$--manifold $Q$. Then the double
$D\tQ$ of $\tQ$ is a $d$--sheeted cover of $DQ$, and according to \cite[Section 0.2]{gromov}
we have $\|D\tQ\|= d\cdot\|DQ\|$. Hence
\Equation\label{gromov multiplicative}
\volG(\tQ)=d\cdot\volG(Q).
\EndEquation

Using \eqref{gromov multiplicative}, we can extend the definition of
$\volG$ to all very good, orientable compact $3$--orbifolds: for any
such orbifold $\orbN$ we set $\volG(\orbN)=\volG(\torbN)/d$, where
$\torbN$ is an arbitrary finite-sheeted manifold cover of $\orbN$ and
$d$ is the degree of the covering. If $\torbN_1$ and $\torbN_2$ are
two finite-sheeted manifold covers of $\orbN$, and $d_i$ denotes the
degree of the covering $\torbN_i \to \orbN$ for $i=1,2$, we may choose a 
finite covering  $\torbN_3$ of $\orbN$ whose covering
projection factors through those of $\torbN_1$ and $\torbN_2$. If
we let $n_i$ denote the degree of $\torbN_3 \to \orbN_i$
then $d_1 n_1= d_2 n_2$. But by \eqref{gromov multiplicative}  we have
$n_1\volG(\torbN_1)=\volG(\torbN_3)=n_2\volG(\torbN_2)$. Hence
$\volG(\torbN_1)/d_1=\volG(\torbN_2)/d_2$, and we have shown that
$\volG(\orbN)$ is well-defined.

The definitions given above do not require the
manifolds involved to be  connected. (A closed, 
orientable manifold with $n$ components
has $2^n$ fundamental classes, but they all have the same Gromov
norm. Furthermore, the existence of a common finite-sheeted covering $\torbN_3$ of
$\torbN_1$ and $\torbN_2$ does not depend on connectedness.) We remark that if $\mathscr{N}$ is a disconnected $3$--orbifold, then $\volG (\mathscr{N}) = \sum_\orbN \volG(\orbN)$, where $\orbN$ ranges over the connected components of $\mathscr{N}$.
\EndCustom

\Proposition\label{orbifold 7.2}
Let $\orbM$ be a closed, orientable hyperbolic $3$--orbifold,
and let $\orbR$ be an essential two-dimensional suborbifold of $\orbM$.
Then
$\vol \orbM \geq  \volG(\orbM \split \orbR)$.
\EndProposition
  
We remind the reader that a hyperbolic $3$--orbifold $\orbM$ is required to be connected, whereas $\orbM \split \orbR$ may have several components.

\Proof
In the special case where $\orbM$ is a manifold (so that
$\Sing_\orbM=\emptyset$ and $\orbR$ is an essential surface in
$\orbM$), this is a result of Agol, Storm, and Thurston~\cite{AST}. See, specifically, the first inequality in the statement of \cite[Theorem 9.1]{AST}. 

To prove the general case of the proposition, fix a 
regular
manifold cover
$p\from \torbM\to\orbM$ of some degree $n<\infty$. 
Then $\torbR\doteq
p^{-1}(\orbR)$ is an essential $2$--manifold in $\torbM$,
and $\torbM\split\torbR$ is an $n$--sheeted manifold cover of $\orbM \split \orbR$.
Thus we have
\[
n\cdot\vol\orbM = \vol\torbM
\geq \volG(\torbM\split\torbR) = n \cdot \volG(\orbM \split \orbR).
\]
Here, the inequality follows from the manifold case of the
proposition, and the last equality follows from \S\ref{gromov volume}. Now, dividing the first and last terms by $n$ gives the conclusion.
\EndProof

\Proposition\label{Prop:GromovVolEuler}
Suppose that $\orbN$ is a simple $3$--orbifold with non-empty boundary, and that 
$\bigorbC$ is a
characteristic suborbifold of $\orbN$. 
Then $\volG(\orbN)\ge\voct\cdot(-\chi(\overline{\orbN-\bigorbC}))$.
\EndProposition

\Proof
According to the definition of a characteristic suborbifold,
we may fix a regular manifold cover $p\from  Q \to\orbN$ such
that    $\boldV\doteq p^{-1}(\bigorbC)$ is (up to isotopy) the
characteristic submanifold of $Q$. 
In the notation of Agol, Storm, and Thurston \cite{AST},
the union of all components of $\overline{Q  -  \boldV}$ that are not thickened annuli is denoted by $\Guts(Q)$. (This rephrasing of \cite[Definition 2.1]{AST} essentially matches our characterization of the characteristic submanifold $\boldV$ in Proposition~\ref{char-submflds, GS edition}.)

Note that
$\chi(\Guts(Q))=\chi(\overline{Q -  \boldV})$.
If $d$ denotes the degree of the covering $Q$ of $\orbN$, then
$\overline{Q -  \boldV}$ is a (possibly disconnected) $d$--sheeted covering of
$\overline{\orbN-\bigorbC }$, and hence $\chi(\Guts(Q))=d\cdot\chi(\overline{\orbN-\bigorbC })$.

Agol, Storm, and Thurston point out 
in the short proof of \cite[Theorem 9.1]{AST} that,
as a consequence of \cite[Theorem 6.5.5]{Thurston:notes} and Miyamoto's inequality \cite{Miyamoto},
every simple $3$--manifold $Q$ satisfies
$\volG(Q)\ge\voct(-\chi(\Guts(Q)))$. 
Thus in the context of the present
argument, using the definition of the Gromov volume of the orbifold
$\orbN$ (see \S\ref{gromov volume}), we have
\[
\volG(\orbN) = \tfrac{1}{d}
\volG(Q)
\geq \tfrac{1}{d}
\voct \cdot (-\chi(\Guts(Q))) = \voct\cdot(-\chi(\overline{\orbN-\bigorbC })),
\]
as required.
\EndProof

\Theorem\label{orbifold AST}
Let $\orbM$ be a closed, orientable hyperbolic $3$--orbifold. Let
$\orbR$ be a non-empty, 
essential 
$2$--suborbifold of $\orbM$.
Then each component $\orbN$  of $\orbM \split
\orbR $ 
is simple and very good, and hence
has a characteristic suborbifold $\bigorbC_\orbN$ by Corollary \ref{characteristic corollary}.
Furthermore, 
we have
$$\vol\orbM\ge\voct\cdot\sum_\orbN(-\chi(\overline{\orbN-\bigorbC_\orbN})),$$
where $\orbN$ ranges over the components of $\orbM\split\orbR$.
\EndTheorem

\Proof
Since $\orbM$ is hyperbolic and $\orbR$ is essential, it follows from
an observation made in
\S\ref{GS review} that each component $\orbN$ of $\orbM \split \orbR$ is simple. Likewise, since $\orbM$ is hyperbolic, it follows from the discussion in
\S\ref{Orbifold covers} 
that $\orbM$ is  very good, hence every $\orbN$ is very good. The displayed
inequality now follows 
immediately from Propositions \ref{orbifold 7.2} and
\ref{Prop:GromovVolEuler},
together with the observation that orbifold Euler characteristic is
additive over components.
\EndProof

\Remark\label{requark}
Note that Proposition \ref{Prop:GromovVolEuler} and Theorem \ref{orbifold AST}
involve quantities of the form $\chi(\overline{\orbN  - 
  \bigorbC})$, where $\orbN$ is a simple $3$--orbifold with non-empty boundary and $\bigorbC$ is a characteristic suborbifold of $\orbN$. For the arguments in
this paper, we do not need to know that the quantity $\chi(\overline{\orbN  - 
  \bigorbC})$ is an invariant of the simple $3$--orbifold $\orbN$,
i.e.\ that it is independent of  $\bigorbC$. The assertion of invariance
of  $\chi(\overline{\orbN  - 
  \bigorbC})$
is weaker than the
assertion that $\bigorbC$ is unique up to orbifold isotopy; as we 
mentioned in \S\ref{called characteristic part two}, a proof of the stronger
assertion is sketched in \cite{bon-sie}. Here, for the benefit of the
interested reader, we sketch a proof of the weaker assertion
that $\chi(\overline{\orbN  - 
  \bigorbC})$ is an invariant of  $\orbN$. Using arguments similar to
those invoked in \S\ref{gromov volume} to show
the Gromov volume of an orbifold is well-defined, one can reduce the
proof of invariance of $\chi(\overline{\orbN  - 
  \bigorbC})$ to showing that if $p\from \tQ\to Q$ is an $n$--fold covering of
simple $3$--manifolds, and if $\boldV$ and $\boldtV$ denote the
respective characteristic submanifolds of $Q$ and $\tQ$, then $\chi(\overline{\tQ  - 
  \boldtV})=n\cdot \chi(\overline{Q  - 
  \boldV})$. For this purpose it is enough to know that $p^{-1}(\boldV)$ is
isotopic in $\tQ$ to $\boldtV$; and the latter fact is not hard to deduce
from \cite[Theorems 8.1 and 8.6]{MeeksScott} and the characteristic submanifold theory.
\EndRemark

\section{Proofs of the main results}\label{Sec:MainResults}

This section proves the main theorems of the paper, which were stated in the Introduction. We will first prove 
Theorem~\ref{Thm:IntroNoTurnoverMain}, which assumes the closed hyperbolic $3$--orbifold $\orbM$ has no essential turnovers, and then derive Theorems~\ref{Thm:FirstMain} and \ref{Thm:LinkMain} from it. 

If $\orbM$ has no essential turnovers, the plan is as follows.  Suppose that $\vol \orbM < \voct/12$, or else that $\orbM$ is a link orbifold and $\vol \orbM < \voct/6$. Then, by Corollary~\ref{Cor:SimpleVolBound}, the underlying space $M = \mani \orbM$ cannot have any hyperbolic pieces. If $M$ is reducible or toroidal, we find an essential suborbifold $\orbR \subset M$ by following the minimization procedures of either Section~\ref{sphere section} or Section~\ref{torus section}. The volume bound on $\orbM$, combined with the no-turnover hypothesis and Theorem~\ref{orbifold AST}, will imply that every component of $\orbM \split \orbR$ admits the structure of an orbifold book of $I$--bundles (Proposition \ref{why a book}). Then, the main results of Sections~\ref{sphere section} and~\ref{torus section} imply that $M$ has the claimed topological structure. This proves Theorem~\ref{Thm:NoTurnoverMain}, which is a restatement of Theorem~\ref{Thm:IntroNoTurnoverMain}.

If $\orbM$ has an essential turnover $\orbS$, then a theorem of Adams and Schoenfeld~\cite{AdamsSchoenfeld} shows that, after an isotopy, we may take $\orbS$ to be 
totally geodesic. The work of Miyamoto~\cite{Miyamoto} gives lower bounds for the volume of hyperbolic $3$--manifolds with totally geodesic boundary, and extends via covers to $3$--orbifolds. Miyamoto's work, combined with other results from the literature, implies that every hyperbolic $3$--orbifold containing an essential turnover has $\vol \orbM \geq 0.1491$; see Lemma~\ref{Turnover volume}. Combined with Theorem~\ref{Thm:NoTurnoverMain}, this yields Theorem~\ref{Thm:FirstMain}.

Finally, Theorem~\ref{Thm:LinkMain} about link orbifolds has both milder hypotheses on volume and a stronger topological conclusion. The milder volume hypotheses suffice by Lemma~\ref{Turnover volume}. The stronger topological conclusion comes from combining Theorem~\ref{Thm:NoTurnoverMain}, a covering argument, the work of Agol \cite{Agol:Pants}, and a theorem of Atkinson and Futer~\cite{af}.

\Proposition\label{what if no book}
Let $\orbM$ be a closed, orientable hyperbolic $3$--orbifold containing
no essential turnovers. Let
$ R$ be a closed orientable surface in $M\doteq\mani
\orbM$  which is transverse to $\Sing_\orbM$. 
Suppose that the two-dimensional suborbifold $\orbR\doteq\orbi R $ is essential in $\orbM$. Let $\orbN$ be a component of $\orbM \split \orbR $, and suppose that 
$\orbN$
does not admit the structure of 
an orbifold book of $I$--bundles.
Let $\bigorbC$ be a characteristic suborbifold of $\orbN$.  
Then $\chi(\overline{\orbN - \bigorbC})\le-1/12$. 
Furthermore, if $\orbM$ is a link orbifold then
 $\chi(\overline{\orbN - \bigorbC})\le-1/6$.
\EndProposition

\Proof
Under the hypotheses of the proposition, 
let $\orbZ$ denote the $3$--orbifold obtained by \sta\ 
of
$\overline{\orbN - \bigorbC}$. Set $Z=\mani\orbZ$.
Observe that  $\bdy Z$ can be identified with the smooth surface obtained by smoothing the boundary of the topological manifold $\mani {\overline{\orbN - \bigorbC}}$.

We first claim:

\Claim\label{big enough}
Every component of $\bdy \orbZ$ has non-positive Euler characteristic.
\EndClaim

To prove \ref{big enough}, recall from Proposition~\ref{CharSuborbifold} and Definition~\ref{called characteristic part one} that $\orbN$ has a regular, finite-sheeted manifold cover $\torbN$, such that $\scrC$ is covered by the characteristic submanifold $\boldV$ of $\torbN$. Thus every component of $\overline{\orbN - \bigorbC}$ is covered by a component of $\overline{\torbN - \boldV}$.
Now, by Lemma~\ref{what about manifolds}, every boundary component of the manifold obtained by \sta\ of  $\overline{\torbN - \boldV}$ has non-positive Euler characteristic, and \ref{big enough} follows.

Next, we claim:

\Claim\label{four or more}
Every 
component of $\bdy Z $ which is a topological $2$--sphere
has size\ at least $4$.
\EndClaim

To prove \ref{four or more}, suppose that $S$ is a $2$--sphere component of $\bdy Z$, and that 
$\size(S)\le 3$. 
By Proposition \ref{if three},  $\orbi S $ is $\pi_1$--injective in $\orbM$, and by \ref{big enough} we have $\chi(\orbi S )\le0$. If $\chi(\orbi S )=0$ then $\orbi S $ is a $\pi_1$--injective toric suborbifold of $\orbM$, a contradiction to hyperbolicity. If $\chi(\orbi S )<0$ then $\orbi S $ is an essential turnover
in $\orbM$, a contradiction to the hypotheses. This completes the proof of \ref{four or more}.

Next, we recall that
\Equation\label{chi oh my}
\chi(\bdy \orbZ)=2 \chi(\orbZ).
\EndEquation
On the other hand, the orbifold Euler characteristic formula 
\eqref{hurwitz} implies that every component $\orbW$ of $\bdy \orbZ$ satisfies
\Equation\label{snub ya}
\chi(\orbW)=\chi(\mani \orbW )-
\sum_{x\in\Sing_\orbW}\bigg(1-\frac1{\ord(x)}\bigg).
\EndEquation
Observe that each term of the sum 
in 
\eqref{snub ya} is at least $1/2$.

Now if $\bdy \orbZ$ has a component $\orbW$ such that 
\begin{enumerate}[\:\:(a)]
\item $W\doteq\mani \orbW $ has genus $g > 1$, or 
\item $W $ has genus $1$ and $\size(W )>0$, or 
\item $W $ has genus $0$ and $\size(W )\ge5$, 
\end{enumerate}
then 
it follows from \eqref{snub ya} that $\chi(\orbW)\le-1/2$; hence by \ref{big enough} we have $\chi(\bdy \orbZ)\le -1/2$, and by \eqref{chi oh my} we have $\chi(\orbZ)\le-1/4<-1/6$. Thus both conclusions of the proposition are true if (a), (b) or (c) holds. For the rest of the proof we assume that none of the alternatives (a), (b) or (c) holds. In view of \ref{four or more}, 
we may now assume that:

\Claim\label{four, basically}
Every component of $\bdy Z $ is either a size--$4$ topological $2$--sphere or a size--$0$  topological  torus.
\EndClaim

Now by hypothesis $\orbN$ 
does not admit the structure of a
book of $I$--bundles, which by
Proposition \ref{nothing like a book} implies
that $\chi(\orbZ)<0$. Hence 
$\bdy \orbZ$ has a component $\orbW_0$ such that $\chi(\orbW_0)<0$. By
 \ref{four, basically}, $\mani {\orbW_0}$ is a sphere of size\ $4$. If
 each of the singular points of $\orbW_0$ had order $2$, it would follow
 from \eqref{snub ya} that $\chi(\orbW_0)=0$, a contradiction. Hence
 some singular point 
$x_0$ of $\orbW_0$ has order at least $3$. Since each of the other three singular points is of order at least $2$, \eqref{snub ya} gives $\chi(\orbW_0)\le-1/6$; \eqref{big enough} then gives $\chi(\bdy \orbZ)\le-1/6$, and \eqref{chi oh my} gives $\chi(\orbZ)\le-1/12$. This proves the first assertion of the proposition.

To prove the second assertion, suppose that $\orbM$ is a link
orbifold. Then the component of $Z \cap\Sing_\orbM$ containing
$x_0$ is an arc, having $x_0$ as one endpoint. Let $x_1$ denote the
other endpoint. Then $x_1\in\Sing_{\bdy \orbZ}$ and $\ord x_1=\ord
x_0\ge 3$. If $x_1$ lies in the component $\mani{ \orbW_0}$ of $\bdy
Z $, then $\orbW_0$ has two singular points of order at least $3$,
while each of the other two has order at least $2$. In this case
\eqref{snub ya} gives $\chi(\orbW_0)\le-1/3$; 
hence  $\chi(\bdy \orbZ)\le-1/3$ and
$\chi(\orbZ)\le-1/6$, as required.   If $x_1\in\mani{ \orbW_1}$ for some component $\orbW_1\ne \orbW_0$ of $\bdy \orbZ$, then $\orbW_1$ has a singular point of order at least $3$, while each of the other three has order at least $2$. In this case \eqref{snub ya} gives $\chi(\orbW_1)\le-1/6$; 
hence  $\chi(\bdy \orbZ)\le-1/3$ and
$\chi(\orbZ)\le-1/6$. Thus the proof of the second assertion is complete.
\EndProof

\Proposition\label{why a book}
Suppose that $\orbM$ is a closed, orientable hyperbolic $3$--orbifold containing
no essential turnovers, and that
either 
\begin{enumerate}[\:\:(a)]
\item
$\vol \orbM < \voct / 12$, or 
\item $\orbM$ is a link orbifold and $\vol
\orbM < \voct / 6$. 
\end{enumerate}
Let $R $ be a closed, orientable surface in
$M\doteq\mani \orbM$ which is transverse to $\Sing_\orbM$. Suppose
that the two-dimensional suborbifold 
$\orbR\doteq\orbi R $ 
is essential in $\orbM$. Then every component of  $\orbM \split \orbR $ 
admits  the structure of
an orbifold 
book of $I$--bundles.
\EndProposition

\Proof
Let us define a number $\alpha$ by setting $\alpha=1/6$ if $\orbM$ is
a link orbifold, and $\alpha=1/12$ otherwise. Then the volume hypothesis
implies that $\vol \orbM <\alpha\,\voct$.

For each component $\orbN$ of $\orbM \split \orbR $, 
we use 
Corollary~\ref{characteristic corollary}
to choose 
a characteristic suborbifold $\bigorbC_\orbN \subset \orbN$.
(The hypothesis that $\orbN$ is very good follows from the hyperbolicity of $\orbM$.)

Assume that the conclusion is false, so that $\orbM \split \orbR $ has
a component $\orbN_0$ which does not
admit  the structure of
an orbifold 
book of $I$--bundles. Since $\orbM$ contains no essential turnovers,
it follows from Proposition \ref{what if no book} that
$\chi(\overline{\orbN_0 - \bigorbC_{\orbN_0}})\le-\alpha$. For any component
$\orbN\ne\orbN_0$ of $\orbM \split \orbR $, we have
$\chi(\overline{\orbN - \bigorbC_\orbN})\le0$. Hence
$\sum_\orbN(-\chi(\overline{\orbN - \bigorbC_\orbN}))\ge\alpha$,
where $\orbN$ ranges over the components of $\orbM\split\orbR$.
It therefore follows from 
Theorem \ref{orbifold AST} that 
$\vol \orbM\ge\alpha\voct$, a contradiction.
\EndProof

The following result was stated in the introduction as Theorem~\ref{Thm:IntroNoTurnoverMain}.

\Theorem\label{Thm:NoTurnoverMain}
Suppose that $\orbM$ is a closed, orientable hyperbolic $3$--orbifold containing
no essential turnovers, and that
either $\vol \orbM < \voct / 12$, or  $\orbM$ is a link orbifold and $\vol
\orbM < \voct / 6$. Set $M = \mani \orbM $. Then   one of the following alternatives holds.

\Alternatives
\item $M$ admits a Seifert fibration 
over $\SS^2$
and having at most
three singular fibers. 

\item $M$ is homeomorphic to  a connected sum of two non-trivial lens spaces, or the connected sum of $\SS^2\times \SS^1$ with a non-trivial lens space.

\item
$M$ contains 
a separating essential torus $T$ such that each component of $M \split T$ 
admits a Seifert fibration over $\BB^2$
having exactly two singular fibers.

\item
$M$ contains 
a non-separating essential torus $T$  such that  $M \split T $ 
admits a Seifert fibration 
over an annulus
having at most one singular fiber.
\EndAlternatives

\noindent Furthermore, if Alternative (i) does not hold, then every node of
$\Sing_\orbM$ is dihedral.
\EndTheorem

\Proof
We begin by observing that the volume hypotheses of the present theorem are stronger than those of Corollary~\ref{Cor:SimpleVolBound}, because $\vol \orbM < \voct/6 =0.61\ldots < \Vweeks$. 
Thus, by Corollary~\ref{Cor:SimpleVolBound}, $M$ cannot be a
hyperbolic manifold. Hence, according to Proposition \ref{because Perelman},
we must be in one of the following cases:

{\bf Case I.} $M$ admits a Seifert fibration over $\SS^2$ with at most three singular fibers.

{\bf Case II.} $M$ is reducible, i.e.\ it contains an essential $2$--sphere. 

{\bf Case III.} $M$ is irreducible and contains an essential torus. 

In Case I, Alternative (i) of the conclusion of the theorem holds.

Next, observe that the volume hypotheses of the present theorem include those of Proposition \ref{why a book}. Once we find an essential two-dimensional suborbifold  $\orbR \subset \orbM$ (as we will in cases II and III), we can apply that proposition.

In Case II, it follows from an observation made in \S\ref{minimal} that
$M$ contains a strongly $\orbM$--minimal $2$--sphere $S$. According
to Lemma \ref{it's incompressible}, 
$\orbS\doteq\orbi S $ 
is essential in $\orbM$. Now
we apply Proposition \ref{why a book},
taking $R=S$, to deduce that
each
component of $\orbM \split \orbS$ admits the structure of an orbifold
book of $I$--bundles. It then follows from Corollary
\ref{Cor:HomeoTypeReducible}
that $M$ is homeomorphic to either (a) a connected sum of two non-trivial lens spaces, or (b) the connected sum of $\SS^2\times \SS^1$ with a possibly trivial lens space. In Subcase (a), Alternative (ii) of the present theorem holds. In Subcase (b), 
either Alternative (i) or Alternative (ii) holds.

In Case III, it follows from 
an observation made in \S\ref{TorusComplexity}
that  $M $ contains an $\orbM$--minimal torus $T$ (which is in particular an essential torus).
 According to Lemma \ref{it's still incompressible}, 
$\orbT = \orbi T $ 
is essential in $\orbM$.
Now, we apply   Proposition \ref{why a book},
  this time taking  
$R=T$, to deduce that
each component of $\orbM \split \orbT$ admits the structure of an orbifold
book of $I$--bundles. If  $T$ 
separates $M$, then Proposition~\ref{Prop:TorusBooksTotalSpace} implies that
each component of $M \split T$ 
admits a Seifert fibration 
over $\BB^2$
having exactly two singular fibers.
If  $T$ does not
separate $M$, then Proposition~\ref{Prop:TorusBooksTotalSpace} implies that
$M \split
T $ 
admits a Seifert fibration 
over an annulus
having at most one singular fiber.
Thus one of the alternatives (iii) and (iv) of the present theorem holds in this case.

This completes the proof that, under the hypotheses of the theorem,
one of the alternatives (i)--(iv) must hold. 

To complete the proof of
the theorem, it suffices to show that in Cases II and III, every node of
$\orbM$ is dihedral. To this end, we observe that, in each of
these cases, the arguments given above provide a closed $2$--manifold
$R \subset M$, transverse to $\Sing_\orbM$, such that $\orbR \doteq \orbi R$ is
essential in $\orbM$, and each component of $\orbM\split\orbR$ admits
the structure of a book of $I$--bundles. By transversality, every node of $\orbM$ is  a
node of $\orbM\split\orbR$. But by Proposition
\ref{nodes of a book}, every node of $\orbM\split\orbR$ is
dihedral.
\EndProof

We will handle orbifolds containing 
essential turnovers using 
a theorem of Miyamoto \cite{Miyamoto}, which requires the following definition.

\Custom{Orbifolds with geodesic boundary}\label{Def:TotallyGeodesic}
Fix a dimension $n \geq 2$. A \emph{truncated hyperbolic space} is a closed, connected submanifold of $\HH^n$ whose boundary is a (non-empty) disjoint union of totally geodesic copies of $\HH^{n-1}$. A \emph{hyperbolic $n$--orbifold with totally geodesic boundary} is a very good $n$--orbifold $\orbM$ such that each component of $\orbM$ is the quotient of a truncated hyperbolic space by a 
discrete
group of isometries.
We remark that a hyperbolic orbifold with totally geodesic boundary does not satisfy the definition of ``hyperbolic orbifold'' given in \S\ref{Orbifold covers}, because the universal cover is not all of $\HH^n$. This departure from our usual convention (whereby boundaries are implicitly permitted) will only be needed in Theorem~\ref{MiyamotoOrbifold} and Lemma~\ref{Turnover volume}.
\EndCustom

\Theorem[Miyamoto \cite{Miyamoto}]\label{MiyamotoOrbifold}
Let $\orbN$ be a hyperbolic $3$--orbifold with totally geodesic boundary. Then $\vol \orbN \geq (\voct / 2) \cdot | \chi(\bdy \orbN) |$.
\EndTheorem

\begin{proof}
As volume and Euler characteristic are additive over components, we
may assume that $\orbN$ is connected. Let $\torbN$ be a
$d$--sheeted manifold cover of $\orbN$. In \cite[Theorem
4.2]{Miyamoto}, Miyamoto proved that
$\vol \torbN \geq (\voct/4\pi) \area(\bdy
\torbN)$.
Since volume and Euler characteristic are both multiplicative in the cover $\torbN \to \orbN$, we obtain
\[
d \cdot \vol \orbN = \vol \torbN \geq \frac{\voct}{4\pi} \area(\bdy \torbN) = \frac{\voct}{2}  | \chi(\bdy \torbN) | = d \cdot \frac{\voct}{2}  | \chi(\bdy \orbN) |,
\]
as desired.
\end{proof}

\Lemma\label{Turnover volume}
Let $\orbM$ be a closed hyperbolic $3$--orbifold that contains an essential turnover $\orbS$. Then we have the following volume estimates:
\begin{enumerate}[\:\:$(1)$]
\item\label{TurnoverVolSmallest} In all cases, $\vol \orbM \geq 0.1491$.
\item\label{TurnoverVolNot7} If $\orbS$ is not a $(2,3,7)$ turnover, then $\vol \orbM \geq  \voct / 20 \approx 0.1831$.
\item\label{TurnoverVolLink} If $\orbM$ is a link orbifold, then $\vol \orbM \geq 0.1658$.
\end{enumerate}
\EndLemma

\Proof
By a result of Adams and Schoenfeld \cite[Theorem
2.1]{AdamsSchoenfeld}, 
we may assume after an isotopy that
$\orbS$ is totally geodesic in $\orbM$. Let $\orbN =  \orbM \split \orbS$, 
and observe that the hyperbolic metric on $\orbM$ endows $\orbN$ with the structure of a (possibly disconnected) hyperbolic $3$--orbifold with totally geodesic boundary. Furthermore, $\bdy \orbN$ consists of two copies of $\orbS$, and  $\vol \orbM = \vol \orbN$.

Let $p,q,r$ be the orders of the three points of $\Sing_\orbS$, and arrange the labels so that $p \leq q \leq r$. We consider two cases.

First, suppose that $(p,q,r) \in \{(2,3,7), \, (2,3,8)\}$. In this case, we apply a volume estimate of Atkinson and Rafalski \cite[Theorem 3.4]{AtkinsonRafalski}, which builds on the work of Miyamoto \cite[Section 4]{Miyamoto}  by incorporating not only $\chi(\bdy \orbN)$ but also the largest order of torsion in $\bdy \orbN$.
Their theorem says that $\vol \orbN \geq 0.1491$ if $(p,q,r) = (2,3,7)$ and $\vol \orbN \geq 0.2204$ if $(p,q,r) = (2,3,8)$.

Next, suppose that $(p,q,r) \notin \{(2,3,7), \, (2,3,8)\}$. Since $\chi(\orbS) < 0$, we have $\frac{1}{p} + \frac{1}{q} + \frac{1}{r} < 1$. There are three subcases:
\[
p = 2, \: q = 3, \: r \geq 9;
\qquad
p = 2, \: q \geq 4, \: r \geq 5;
\qquad
p \geq 3, \:  q \geq 3, \: r \geq 4.
\]
In all three of these subcases, we have $\chi(\orbS) \leq -1/20$, hence $\chi(\bdy \orbN) \leq -1/10$. Now, Theorem \ref{MiyamotoOrbifold} implies $\vol \orbN \geq \voct/20$.

We conclude that $\vol \orbM \geq 0.1491$ if $(p,q,r) = (2,3,7)$, and $\vol \orbM \geq \voct/20 \approx 0.1831$ otherwise, establishing conclusions \eqref{TurnoverVolSmallest} and \eqref{TurnoverVolNot7}.

It remains to establish conclusion \eqref{TurnoverVolLink}. By
\eqref{TurnoverVolNot7}, 
a stronger estimate holds if $\orbS$ is not a $(2,3,7)$ turnover. If
$\orbS$ is a $(2,3,7)$ turnover, then in particular $\orbM$ is a
hyperbolic link orbifold with $7$--torsion. By a lemma of
Atkinson and Futer~\cite[Lemma 2.3]{af}, any such orbifold has volume
at least $0.1658$.
\EndProof

Observe that $\voct/12 = 0.305 \ldots > 0.1491$. Thus combining Lemma \ref{Turnover volume} with Theorem \ref{Thm:NoTurnoverMain} gives the following result, which was stated as Theorem~\ref{Thm:FirstMain} in the introduction.

\Theorem\label{Thm:VolWithTurnover}
Suppose that $\orbM$ is a closed, orientable hyperbolic $3$--orbifold with $\vol \orbM < 0.1491$. Then 
$M
\doteq\mani \orbM$ 
satisfies one of the alternatives (i)--(iv) of Theorem \ref{Thm:NoTurnoverMain}.
Furthermore, if Alternative (i) does not hold, then every node of
$\Sing_\orbM$ is dihedral.
\NoProof
\EndTheorem

We now turn our attention to link orbifolds.

\Lemma\label{essential or not}
If   $\orbM$ is a (closed, orientable) link orbifold, then every
turnover in  $\orbM$ is  essential.
\EndLemma

\Proof
  If $\orbS\subset\orbM$ is a turnover, then
  by Proposition \ref{if three}, $\orbS$ is
$\pi_1$--injective in $\orbM$. It remains to show that $\orbS$ does not
bound a discal suborbifold of $\orbM$.
Since $\orbS$ is an orientable $2$--suborbifold of $\orbM$, it follows
from an observation
made in \S\ref{Suborbifolds-of-3orbifolds} that the $2$--sphere
$S\doteq\mani\orbS$ is transverse to $\Sing_\orbM$.
If
$\orbH$ is a discal suborbifold of $\orbM$ with $\bdy\orbH=\orbS$,
then since $\orbM$ is a link orbifold, each component of the intersection
of $\Sing_\orbM$ with $H\doteq \mani \orbH$ is an arc or a simple closed
curve. This implies that $S\cap\Sing_\orbM=\Sing_\orbS$ consists of an
even number of points, which contradicts the fact that $\orbS$ is a turnover.
\EndProof

  \Custom{Constructing orbifold coverings from covering spaces}
  \label{orbi cover from mani cover}
Suppose that $\orbM$ is an orbifold and that $Q\from \tM\to M$ is a covering space of
$M\doteq\mani\orbM$. If $( U , W, \Phi, G)$ is a chart for $\orbM$ such that
$U\subset M$ is connected and evenly covered by $Q$, if $V$ is a
component of $Q^{-1}(U)$, and if $R$ denotes the inverse of the
homeomorphism $Q|V\from V\to U$, then $(V, W, R\circ\Phi, G)$ is an
orbifold chart in $\tM$ (in the abstract sense defined in
\S\ref{Def:OrbifoldChart}). The orbifold charts of this form, as
$( U , W, \Phi, G)$ ranges over charts for $\orbM$ such that
$U\subset M$ is connected and evenly covered by $Q$, and  $V$ ranges over
components of $Q^{-1}(U)$, form an orbifold atlas for $\tM$, which
defines an orbifold $\torbM$ with $\mani\torbM=\tM$. We then have $\Sing_\torbM=Q^{-1}(\Sing_\orbM)$, and $Q$ 
defines an orbifold covering $q\from \torbM\to\orbM$ having the same
degree as $Q$.
\EndCustom

\Proposition\label{uses Agol}
Suppose that   $\orbM$ is a 
hyperbolic link
orbifold containing no turnovers, and that  $Q\from \tM\to M$ is a
finite-sheeted covering space of
$M\doteq\mani\orbM$. Let  $q\from \torbM\to\orbM$ denote the orbifold
covering given by the construction of \S\ref{orbi cover from mani cover}.
Then $\torbM$ is a link
orbifold and contains no turnovers.
\EndProposition

\Proof
 Since by \S\ref{orbi cover from mani cover} we have
$\Sing_\torbM=Q^{-1}(\Sing_\orbM)$, and since $\orbM$ is
a link orbifold, $\torbM$ is also
a link orbifold. It remains to show that 
$\torbM$ contains no turnovers.

Let us fix a tubular neighborhood $J$ of the link $\Sing_\orbM$ in
$M$. Since
$\Sing_\torbM=Q^{-1}(\Sing_\orbM)$, the set $\tJ\doteq Q^{-1}(J)$ is a
tubular neighborhood of  the link $\Sing_\torbM$ in
$\tM$. We set $N=\overline{M-J}$ and $\tN=\overline{\tM-\tJ}$. The restriction
$Q|\tN \from \tN\to N$ is a covering map.  If
$\mu\subset\bdy\tN$ is a meridian of $\Sing_\torbM$ (i.e.\ a simple closed curve
bounding a \neatly embedded disk in $\tJ$ which meets $\Sing_\torbM$ transversely in exactly one point),
then $Q$ maps $\mu$ to a closed curve which, although not necessarily
simple, is homotopic in $\bdy N$ to a meridian of
$\Sing_\orbM$.

By the work of Kojima~\cite{Kojima:Deformations}, the $3$--manifold $M  -  \Sing_\orbM$
admits a finite-volume complete hyperbolic metric.
The manifold
$N$ is a compact core of $M  -  \Sing_\orbM$.  

If $\torbM$ contains a turnover, then $\tN$ contains a \neatly
embedded pair of pants $P$ whose boundary curves are meridians of 
$\Sing_\torbM$. If $P$ is not $\pi_1$--injective in $\tN$, then the
loop theorem provides a compressing disk $D$ for $P$. The boundary of
$D$ is a homotopically non-trivial simple closed curve on $P$, and is
therefore parallel to a boundary curve of $P$. This implies that some
meridian of $\Sing_\torbM$ is homotopically trivial in $\tN$, and
therefore that some
meridian of $\Sing_\orbM$ is homotopically trivial in $N$; but this
contradicts
the hyperbolicity of $M  - \Sing_\orbM$.
Hence $P$ is  $\pi_1$--injective in $\tN$.

Since $Q|\tN$ is a covering map, it now follows that  $f\doteq Q|P$ is a
$\pi_1$--injective map from the pair of pants $P$ to $N$. We have
$f(\bdy P)\subset\bdy N$; and $f$ maps each boundary curve of
$\bdy P$ to a closed curve which is homotopic in $\bdy N$ to a meridian of
$\Sing_\orbM$.  

We now apply a theorem due to Agol~\cite[Theorem 4.1]{Agol:Pants},
which asserts that if $N$ is a compact core of a finite-volume orientable
hyperbolic $3$--manifold, and $f$ is a $\pi_1$--injective map of a pair
of pants $P$ into $N$ such that $f(\bdy P)\subset\bdy N$, then
one of two possibilities must hold:

\begin{enumerate}
\item The map of pairs $f\from (P,\bdy P) \to (N,\bdy N)$ is
  homotopic to an embedding of pairs.
\item 
The interior of
$N$ is obtained by (possibly empty) Dehn filling on the exterior
$W$ of the Whitehead
link. That is, there is a $3$--manifold $K \subset\inter N$, which is
either empty or a solid torus, such that $\overline{N- K}$ is
homeomorphic to $W$. Furthermore,  $f(P)$ is disjoint from $K$, and
$f$ is homotopic in $N-K$ to a clasp of $W$, as in \cite[Figure
2]{Agol:Pants}. In particular, 
after modifying the map of pairs $f\from (P,\bdy P) \to (N,\bdy N)$
within its homotopy class, we may assume that $f$ maps the 
boundary components of $P$ homeomorphically to three simple closed curves in
the same 
component $T$ of $\bdy N$, and that these curves represent exactly
two distinct homotopy classes on $T$.
\end{enumerate}

In our setting,
  Alternative
(2) leads to a contradiction, because the simple
closed curves which are the images of the components of $\bdy P$
under $f$
must all be meridians of $\Sing_\orbM$, and hence
no two of them can represent distinct homotopy classes on the same boundary torus of
$N$.
  Now suppose that Alternative (1) holds, so that
the map of pairs $f\from (P,\bdy P) \to (N,\bdy N)$ is
  homotopic to an embedding $g$. In particular, $g|\bdy
  P\from \bdy P\to \bdy N$ is homotopic to $f|\bdy
  P\from \bdy P\to \bdy N$, so that each boundary component of the pair of
  pants $E\doteq g(P)$ is a meridian curve of $\Sing_\orbM$.   Hence
  the components of $\bdy E$ bound disjoint \neatly embedded disks in $J$,
  each of which meets $\Sing_\orbM$ transversely in one point. The
  union of $E$ with these disks is the underlying set of a turnover
  contained in $\orbM$.
  Since by hypothesis $\orbM$ contains no
  turnovers, we have a contradiction in this case as well.
\EndProof

\Theorem
\label{MainConnectSum}
Suppose that $\orbM$ is a closed,
orientable hyperbolic link orbifold, and
that $\vol \orbM \le \Vweeks / 6$. 
If  $M \doteq \mani \orbM$ is a non-trivial connected sum, then $M$ is homeomorphic  to 
either $\RR \PP^3 \# \RR \PP^3$, or to $\RR \PP^3 \# L(p,q)$, where $p > 1$ is relatively prime to $6$. 
\EndTheorem

\Proof
We first note that
since $\vol \orbM \leq \Vweeks / 6 < 0.1658$, 
it follows from Lemma~\ref{Turnover volume}
that $\orbM$ contains no essential turnovers, 
and hence by Lemma \ref{essential or not} it contains no turnovers at
all.

Since $M$ is a non-trivial connected sum, it must satisfy alternative (ii) of 
Theorem \ref{Thm:NoTurnoverMain}, i.e.\ $M$ is
 homeomorphic to a connected sum
$Y_1 \# Y_2$, where each $Y_i$ is a non-trivial lens space or $\SS^1
\times  \SS^2$.
Hence for $i = 1,2$, we have $\pi_1(Y_i) \cong \ZZ/p_i\ZZ$ for some integer $p_i\ge0$,
so $\pi_1(M) \cong \ZZ/p_1 \ZZ * \ZZ/p_2 \ZZ$. If $p_i=0$ then $Y_i$ is
homeomorphic to $\SS^2\times \SS^1$, and if $p_i>1$ then $Y_i$ is homeomorphic to $L(p_i,q_i)$ for
some positive integer $q_i$ which is relatively prime to $p_i$. 
 Since neither $Y_1$ nor $Y_2$ is the $3$--sphere, we have
$p_i \neq 1$ for $i=1,2$.

Next, we claim:

\Claim\label{however}
For each $i\in\{1,2\}$, either $p_i$ is relatively prime to $6$, or  $p_i=2$.
\EndClaim

To prove \ref{however}, it suffices by symmetry to prove that either
$p_1$ is relatively prime to $6$, or  $p_1=2$. Assume that  $p_1\ne2$
and that $p_1$ is not relatively prime to $6$.
Then $p_1$ is divisible by some $d\in\{2,3\}$. 
Set $\tp_1=p_1/d$. Then  $Y_1$ has a
$d$--sheeted covering space $\tY_1$, which is a lens space with
fundamental group isomorphic to $\ZZ/\tp_1\ZZ$ if $p_1\ne0$, and is
homeomorphic to $\SS^1\times \SS^2$ if $p_1=0$. It follows that $M$, which is
homeomorphic to $Y_1 \# Y_2$, has a $d$--sheeted covering $\tM$ which is
homeomorphic to the connected sum of $\tY_1$ with $d$ homeomorphic
copies of $Y_2$. 
Since $p_1\ne2$, we have either $d=3$ or $\tp_1 \neq 1$. Thus 
$\tM$ is homeomorphic to a connected sum of three or more
prime, non-trivial $3$--manifolds.

Since $\orbM$ is a closed orientable link
$3$--orbifold containing no turnovers, and $\tM$ is a
$d$--sheeted covering of $M$, \S\ref{orbi cover from mani cover} provides a
$d$--sheeted orbifold covering 
$\torbM$
of $\orbM$ such that $\mani\torbM=\tM$. According to Proposition \ref{uses Agol}, $\torbM$ is a
link orbifold containing no  turnovers. 
We have $\vol \torbM =d\cdot\vol \orbM\le 3
\vol \orbM < \Vweeks / 2 < \voct / 6$.
Thus the hypotheses of Theorem \ref{Thm:NoTurnoverMain}
hold with $\torbM$ in place of $\orbM$. Hence one of the alternatives (i),
(ii), (iii), (iv) of the conclusion of Theorem \ref{Thm:NoTurnoverMain} holds with $\tM$ in place of $M$. But each of these
conditions implies that  $\tM$ 
has a prime decomposition with at most two summands.
As we have already
shown that $\tM$ is 
homeomorphic 
to the
connected sum of  three or more prime, non-trivial $3$--manifolds, this
contradicts the uniqueness of the prime decomposition for a
closed oriented $3$--manifold and proves \ref{however}.

To complete the proof, we claim:

\Claim\label{one's divisible}
At least one of the integers $p_1,p_2$ is equal to $2$. 
\EndClaim

To prove \ref{one's divisible}, let us assume for a contradiction that neither $p_1$ nor
$p_2$ is equal to $2$. Then by \ref{however}, $p_1$ and $p_2$ are both
relatively prime to $6$. Since $H_1(M;\ZZ)$ is
isomorphic to $(\ZZ/p_1\ZZ)\oplus (\ZZ/p_2\ZZ)$, it follows that $M$
is a $\ZZ/6\ZZ$--homology $3$--sphere. But by hypothesis we have
$\vol\orbM\le \Vweeks / 6$; and
according to a theorem of Atkinson and Futer \cite[Theorem 1.5]{af}, a link orbifold which has volume at most $\Vweeks /
6$,  and whose underlying manifold is a
$\ZZ/6\ZZ$--homology $3$--sphere, 
must be the orbifold $\calO_L$ depicted in Figure~\ref{Fig:OL}. The underlying space of $\calO_L$ is $\SS^3$. This contradicts the hypothesis that $M$ is a non-trivial connected sum, proving \ref{one's divisible} and the theorem.
\EndProof

Combining Theorems \ref{Thm:NoTurnoverMain} and \ref{MainConnectSum}, we obtain the following corollary, which was stated in the Introduction as Theorem~\ref{Thm:LinkMain}.

\Corollary
\label{second main} Suppose that $\orbM$ is a closed,
orientable hyperbolic link orbifold, and
that $\vol \orbM \le \Vweeks / 6$. 
Set $M=\mani \orbM $. Then either  one of the alternatives (i),  (iii), or (iv)
of  Theorem \ref{Thm:NoTurnoverMain}
holds, 
or we have the following specialization of (ii):
\begin{enumerate}
\item[(ii$'$)]
$M$ is homeomorphic either  to $\RR \PP^3 \# \RR \PP^3$, or to a connected
sum $\RR \PP^3 \# L(p,q)$, where $p > 1$ is relatively prime to $6$.  \NoProof
\end{enumerate}
\EndCorollary

\bibliographystyle{plain}
\bibliography{orbifolds}

\end{document}

%% file: macros.tex
\newcommand\delete[1]{{\color{green}#1}}

\swapnumbers
\theoremstyle{definition}
\newtheorem{para}{}[section]

\newtheorem{remark}[para]{Remark}

\newtheorem{remarks}[para]{Remarks}

\newtheorem{convention}[para]{Convention}
\newtheorem{definition}[para]{Definition}
\newtheorem{example}[para]{Example}

\newcommand\Alternatives{\begin{enumerate}[(i)]}
\newcommand\EndAlternatives{\end{enumerate}}

\newtheorem{namedpara}[para]{\theoremname}
\newcommand{\theoremname}{testing}
\newenvironment{custom}[1]{\renewcommand{\theoremname}{#1}\begin{namedpara}}{\end{namedpara}}

\theoremstyle{plain}

\newtheorem{theorem}[para]{Theorem}

\newtheorem{lemma}[para]{Lemma}

\newtheorem{proposition}[para]{Proposition}

\newtheorem{corollary}[para]{Corollary}

\newtheorem{claim}[equation]{}
\numberwithin{equation}{para}
\numberwithin{figure}{section}

\newcommand\Definition{\begin{definition}}
\newcommand\EndDefinition{\end{definition}}

\newcommand\Theorem{\begin{theorem}}
\newcommand\EndTheorem{\end{theorem}}

\newcommand\Remark{\begin{remark}}
\newcommand\EndRemark{\end{remark}}
\newcommand\Remarks{\begin{remarks}}
\newcommand\EndRemarks{\end{remarks}}
\newcommand\Convention{\begin{convention}}
\newcommand\EndConvention{\end{convention}}

\newcommand\Lemma{\begin{lemma}}
\newcommand\EndLemma{\end{lemma}}
\newcommand\Proposition{\begin{proposition}}
\newcommand\EndProposition{\end{proposition}}
\newcommand\Corollary{\begin{corollary}}
\newcommand\EndCorollary{\end{corollary}}
\newcommand\Claim{\begin{claim}}
\newcommand\EndClaim{\end{claim}}
\newcommand\Proof{\begin{proof}}
\newcommand\EndProof{\end{proof}}
\newcommand\Equation{\begin{equation}}
\newcommand\EndEquation{\end{equation}}
\newcommand\NoProof{{\hfill$\square$}}

\newcommand\Example{\begin{example}}
\newcommand\EndExample{\end{example}}
\newcommand\Custom[1]{\begin{custom}{#1}}
\newcommand\EndCustom{\end{custom}}

\newcommand{\from}{\colon\thinspace}
\newcommand{\bdy}{\partial}
\newcommand{\vertbdy}{\partial_\mathrm{v}}
\newcommand{\vertbdyclas}{\partial_\mathrm{v}^{\rm cl}}
\newcommand{\horizbdyclas}{\partial_\mathrm{h}^{\rm cl}}
\newcommand{\horizbdy}{\partial_\mathrm{h}}
\newcommand\clasibun{classical $I$--bundle}
\newcommand\classical{classical}
\newcommand\Clasibun{Classical $I$--bundle}
\newcommand\sta{straightening the angles}
\newcommand\mani[1]{{{\rm X}_{#1}}}

\newcommand\Nnought{{N_0}}
\newcommand\orbNnought{{\orbN_0}}

\newcommand\orbi[1]{{\Omega_{#1}}}
\newcommand\mirror[1]{{\Sing_{#1}^{\rm mir}}}

\newcommand\neatly{neatly\ }
\newcommand\harmless{harmless}
\newcommand\spanningsize{{\rm spanning size}}
\newcommand\textspanningsize{spanning size}
\newcommand{\comp}{{\rm comp}} 

\newcommand\reflective{reflective}
\newcommand\Reflective{Reflective}
\newcommand\clean{clean}

\newcommand\locallift{local lift}

\newcommand\Clean{Clean}
\newcommand\cleansub{a clean suborbifold with corners of\ }

\newcommand\bb{\mathbb}

\newcommand\calT{{\mathcal T}}
\newcommand\calS{{\mathcal S}}
\newcommand\calY{{\mathcal Y}}
\newcommand\caln{{\mathcal N}}
\newcommand\orbN{{\caln}}
\newcommand\orbT{{\calT}}
\newcommand\orbE{{\cale}}
\newcommand\orbJ{{\calj}}
\newcommand\orbK{{\calK}}

\newcommand\orbZ{{\calz}}
\newcommand\orbW{{\calw}}
\newcommand\orbQ{{\calq}}
\newcommand\orbM{{\calM}}

\newcommand\orbC{{\calc}}
\newcommand\bigorbC{{\scrC}}
\newcommand\orbA{{\cala}}
\newcommand\orbY{{\calY}}
\newcommand\orbD{{\calD}}
\newcommand\orbH{{\calH}}

\newcommand\orbS{{\calS}}
\newcommand\torbM{{\widetilde{\calM}}}

\newcommand\torbU{{\widetilde{\orbU}}}
\newcommand\torbH{{\widetilde{\orbH}}}

\newcommand\tkappa{\widetilde{\kappa}}
\newcommand\torbS{{\widetilde{\orbS}}}
\newcommand\torbP{{\widetilde{\orbP}}}
\newcommand\torbW{{\widetilde{\orbW}}}
\newcommand\torbA{\widetilde{\cala}}
\newcommand\torbC{\widetilde{\calc}}

\newcommand\orbP{{\calp}}
\newcommand\orbF{{\calf}}

\newcommand\orbB{{\calb}}
\newcommand\orbR{{\calR}}
\newcommand\orbU{{\calU}}
\newcommand\orbV{{\calV}}
\newcommand\hatorbP{{\widehat{\calp}}}

\renewcommand\epsilon{\varepsilon}

\newcommand\Sbig{big}

\newcommand\Stab{{\rm Stab}}
\newcommand\Guts{{\rm Guts}}

\newcommand\Vp{V_{\rm p}}
\newcommand\Vb{V_{\rm b}}

\newcommand\RR{{\bb R}}
\newcommand\DD{{\bb D}}
\newcommand\PP{{\bb P}}

\newcommand{\boldC}{{\bf C}}

\newcommand{\boldD}{{\bf D}}
\newcommand{\boldB}{{\bf B}}
\newcommand{\boldA}{{\bf A}}
\newcommand{\boldE}{{\bf E}}
\newcommand{\boldF}{{\bf F}}

\newcommand{\boldH}{{\bf H}}
\newcommand{\boldK}{{\bf K}}
\newcommand{\boldL}{{\bf L}}

\newcommand\hatboldP{\widehat{\bf P}}

\newcommand{\boldT}{{\bf T}}
\newcommand{\boldV}{{\bf V}}

\newcommand{\boldtV}{\widetilde {\bf V}}

\newcommand\Sing{\Sigma}

\newcommand\hatP{{\widehat P}}

\newcommand\maybeD{D}

\newcommand\inter{\mathop{\rm int}}
\newcommand\size{\mathop{\rm size}}

\newcommand\bard{\overline{d}}
\newcommand\barsigma{\overline{\sigma}}

\newcommand\barG{\overline{G}}

\newcommand\ttau{\widetilde\tau}
\newcommand\tH{\widetilde H}
\newcommand\tM{\widetilde M}

\newcommand\tV{\widetilde V}

\newcommand\tN{\widetilde N}

\newcommand\tY{\widetilde Y}

\newcommand\tP{\widetilde P}

\newcommand\tU{{\widetilde U}}

\newcommand\tQ{\widetilde Q}
\newcommand\tq{\widetilde q}

\newcommand\barrho{\overline{\rho}}

\newcommand\tS{{\widetilde S}}
\newcommand\tw{{\widetilde w}}
\newcommand\tcalS{{\widetilde {\calS}}}

\newcommand\ta{thickened annulus}
\newcommand\talpha{{\widetilde\alpha}}

\newcommand\tF{{\widetilde F}}

\newcommand\tJ{{\widetilde J}}

\newcommand\tx{{\widetilde x}}

\newcommand\calc{{\mathcal C}}
\newcommand\calO{{\mathcal O}}

\newcommand\calD{{\mathcal D}}

\newcommand\cala{{\mathcal A}}
\newcommand\volG{\mathop{\rm vol_G}}
\newcommand\vtet{{v_{\rm tet}}}
\newcommand\bigO{{\rm O}}
\newcommand\bigSO{{\rm SO}}

\newcommand\calp{{\mathcal P}}

\newcommand\scrY{{\mathscr Y}}
\newcommand\scrZ{{\mathscr Z}}
\newcommand\scrF{{\mathscr F}}

\newcommand\calf{{\mathcal F}}

\newcommand\calK{{\mathcal K}}
\newcommand\calQ{{\mathcal Q}}

\newcommand\frakG{{\mathfrak G}}

\newcommand\calq{{\mathcal Q}}

\newcommand\calb{{\mathcal B}}

\newcommand\cale{{\mathcal E}}

\newcommand\calj{{\mathcal J}}

\newcommand\calU{{\mathcal U}}

\newcommand\baralpha{\overline{\alpha}}

\newcommand\calw{{\mathcal W}}
\newcommand\calV{{\mathcal V}}

\newcommand\id{{\rm id}}

\newcommand\arc{arc}

\newcommand\ZZ{{\mathbb Z}}

\newcommand\CC{{\mathbb C}}
\newcommand\BB{{\mathbb B}}

\renewcommand\SS{{\mathbb S}}

\newcommand\HH{{\bb H}}

\newcommand\scrA{{\mathscr A}}
\newcommand\scrB{{\mathscr B}}

\newcommand\scrbstar{{\scrB^\ast}}
\newcommand\boldbstar{\boldB^\ast}

\newcommand\scrC{{\mathscr C}}

\newcommand\scrH{{\mathscr H}}

\newcommand\scrM{{\mathscr M}}
\newcommand\scrP{{\mathscr P}}

\newcommand\scrS{{\mathscr S}}

\newcommand\calz{{\mathcal Z}}

\newcommand\weighttwo{\mathop{{\rm weight}}}
\newcommand\textweighttwo{weight}
\newcommand\ord{\mathop{\rm ord}}

\newcommand\vol{\mathop{\rm vol}}
\newcommand\area{\mathop{\rm area}}

\newcommand\voct{{v_{\rm oct}}}

\newcommand\torbN{\widetilde\orbN}

\newcommand\torbT{\widetilde\orbT}

\newcommand\torbF{\widetilde\orbF}

\newcommand\torbR{\widetilde\orbR}

\newcommand\card{\mathop{{\rm card}}}

\newcommand\isomplus{\mathop{{\rm Isom}^+}}

\newcommand\Fr{\mathop{\rm Fr}}

\newcommand\pagelike{pagelike}
\newcommand\bindinglike{bindinglike}

\newcommand\calR{{\mathcal R}}
\newcommand\calM{{\mathcal M}}
\newcommand\calH{{\mathcal H}}

\newcommand\tp{{\widetilde{p}}} 

\renewcommand{\split}{{\backslash \backslash}}

\newcommand{\Vweeks} {{V_{\rm Weeks}}}